\newtheorem{thm}{Theorem}[section]
\newtheorem{lemma}[thm]{Lemma}
\newtheorem{prop}[thm]{Proposition}
\theoremstyle{definition}
\newtheorem{remark}[thm]{Remark}
\newtheorem*{defn}{Definition}
\def\XXint#1#2#3{{\setbox0=\hbox{$#1{#2#3}{\int}$}
         \vcenter{\hbox{$#2#3$}}\kern-.5\wd0}}
\def\R{\mathbb{R}}
\def\C{\mathbb{C}}
\def\A{\mathbf{A}}
\def\B{\mathbf{B}}
\def\P{\mathbf{P}}
\def\b{\mathcal{B}}
\numberwithin{equation}{section}
\begin{document}

\title{The Magnetic Laplacian with a Higher-order \\ Vanishing Magnetic Field in a Bounded Domain}

\author{
Zhongwei Shen\thanks{Supported in part by the NSF grant DMS-2153585.}  }
\date{}
\maketitle

\begin{abstract}

This paper is concerned with spectrum properties of the magnetic Laplacian with a higher-order vanishing magnetic field in a bounded domain.
We study the asymptotic behaviors of ground state energies for  the Dirichlet  Laplacian, the Neumann Laplacian, and the Dirichlet-to-Neumann operator, 
as the field strength parameter  $\beta$ goes to infinite.
Assume that the magnetic field   does not vanish to infinite order, 
we establish the leading orders of $\beta$.
We also obtain the first terms in the asymptotic expansions with remainder estimates under additional assumptions on an invariant subspace 
for  a Taylor polynomial of the magnetic field.
Our aim is to provide a unified approach to all three cases.
 
\medskip

\noindent{\it Keywords.  } Schr\"odinger Operator; Magnetic Field; Ground State Energy; Bounded Domain.

\medskip

\noindent {\it MR (2020) Subject Classification}: 35P25.

\end{abstract}


\section{Introduction}\label{section-1}

Consider the magnetic Laplacian,
\begin{equation}\label{op-0}
H(\A)=(D +\A)^2
\end{equation}
in a bounded domain $\Omega$ in $\R^d$, $d\ge 2$,  where $D=-i \nabla$ and  $\A=(A_1, A_2, \dots, A_d): \overline{\Omega} \to \R^d$ is a magnetic potential.
Let  $\lambda^D(\A, \Omega)$, $\lambda^N(\A, \Omega)$, and $\lambda^{D\!N}(\A, \Omega)$ 
denote the ground state energies for the Dirichlet Laplacian, the Neumann Laplacian, and
the Dirichlet-to-Neumann operator, associated with $H(\A)$, defined by
\begin{equation}
\lambda^D (\A, \Omega)
=\inf_{\psi \in C_0^1(\Omega; \C) }
\frac{\int_\Omega |(D +\A)\psi|^2}{\int_\Omega |\psi|^2},
\end{equation}
 \begin{equation}
\lambda^N (\A, \Omega)
=\inf_{\psi \in C^1(\overline{\Omega}; \C ) }
\frac{\int_\Omega |(D +\A)\psi|^2}{\int_\Omega |\psi|^2},
\end{equation}
\begin{equation}
\lambda^{D\!N} (\A, \Omega)
=\inf_{\psi \in C^1(\overline{\Omega}; \C)  }
\frac{\int_\Omega |(D +\A)\psi|^2}{\int_{\partial\Omega} |\psi|^2},
\end{equation}
respectively.
In this paper we are interested in  asymptotic behaviors of $\lambda^D(\beta \A, \Omega)$, $\lambda^N (\beta \A, \Omega)$,  and 
$\lambda^{D\!N}(\beta\A, \Omega)$,  as the field strength parameter $\beta \to \infty$. 
The problems to be studied are equivalent to those  in the semi-classical analysis for the Schr\"odinger operator 
$(h D+\A)^2$ as $h \to 0$.

The spectrum properties  of the magnetic Laplacian  in a bounded domain have been 
investigated  extensively since 1990's.
Let $\B=\nabla \times \A$ denote the magnetic field. 
Much of the existing literature treats  the non-vanishing case, where $\min_{\overline{\Omega}}|\B|>0$, with the Dirichlet or Neumann condition.
The study of the vanishing case, where $\min_{\overline{\Omega}} |\B|=0$,  began in \cite{Montgomery} with the Dirichlet condition 
and has been carried out mostly in the case $d=2$  under the non-degenerate condition: 
$|\nabla B_{12} |\neq 0$ when $B_{12}=0$, where $B_{12}=\partial_1 A_2-\partial_2 A_1$.
In both cases, the first terms in the asymptotic expansions of $\lambda^D (\beta\A, \Omega)$ and $\lambda^N(\beta \A, \Omega)$, with remainder estimates,  
have been obtained  \cite{Montgomery, Helffer-1996, Lu-1999, Helffer-2001, Pan-2002, Miqueu-2018}.
The two-term asymptotic expansions are available under some strict conditions \cite{Helffer-2004, Helffer-2006,  Raymond-2009, Helffer-2023}. 
Also see related work in \cite{Helffer-1997,  Bauman-1998, Pino-2000,  Dauge-2005, Fournais-2007, 
 Raymond-2015, Kachmar-2023, Helffer-2009, Raymond-2013, Raymond-2015a, Attar-2015}.
For a survey and  references  in this area as well as applications to  the theory of  superconductivity, 
we  refer the reader to two expository books, \cite{Helffer-book} by S. Fournais and B. Helffer and \cite {Raymond-book}  by N. Raymond.
 To the best of the author's knowledge, the higher-order vanishing case was only studied in \cite{Helffer-1996, Dauge-2020} (see Remark \ref{re-01} for addtional comments on 
 these two papers).
 
The purpose of this paper is to investigate  the general case,  where $d\ge 2$ and  $|\B|$ may vanish to a higher order
in  $\overline{\Omega}$.
 Inspired by  \cite{CGHP, Helffer-F2024, Helffer-2024}, 
we also study the spectrum of the Dirichlet-to-Neumann operator associated with $H(\beta \A)$,  as $\beta \to \infty$.
We aim to provide a unified approach to the study  of  the asymptotic behaviors of 
$\lambda^D (\beta  \A, \Omega)$, 
$\lambda^N (\beta\A, \Omega)$, and $\lambda^{D\!N} (\beta\A, \Omega)$.
Although the techniques we develop in this paper may be used to establish localization estimates 
for associated  eigenfunctions as in \cite{Helffer-1996}, we will leave them to a future study.

The paper is divided into two parts.
In the first part, we establish  the leading orders of the ground state energies (or the bottoms of the spectra) as
$\beta \to  \infty$. For $\lambda^D(\beta \A, \Omega)$ and $\lambda^N (\beta \A, \Omega)$, 
we assume that $\A$ is smooth and  $|\B|$ does not vanish to infinite order at any point  in $\overline{\Omega}$.
As a result, for each $x\in \overline{\Omega}$, there exists an integer 
$\kappa=\kappa (x)\ge 0$ such that
 \begin{equation}\label{ex-1}
\left\{
\aligned
& \text{$D^\alpha \B(x)=0$ for all $\alpha$ with $|\alpha|\le  \kappa-1$, }\\
&\text{$D^\alpha \B(x) \neq 0$ for some $\alpha$
with $|\alpha|=\kappa$.}
\endaligned
\right.
\end{equation}
Define
\begin{equation}\label{kappa}
\aligned
\kappa_* & =\max \left\{ \kappa (x): x \in \overline{\Omega}\right\} . \\
 \endaligned
\end{equation}
It follows that there exist $C_0, c_0>0$ such that 
\begin{equation}\label{main-c-1}
c_0\le \sum_{|\alpha|\le \kappa_*} 
|\partial^\alpha \B(x)| \quad  \text{ and } \quad
\sum_{|\alpha|\le \kappa_*+1}  |\partial^\alpha \B(x)|  \le C_0
\end{equation}
for any $x\in \overline{\Omega}$.

\begin{thm}\label{main-thm-1}
Let $\Omega$ be a bounded Lipschitz domain in $\R^d, d\ge 2$.
Suppose that  $\A \in C^\infty(\overline{\Omega}; \R^d)$  and  $|\B|$ does not vanish to infinite order  at any point in $\overline{\Omega}$.
Let $\kappa_*\ge 0$ be defined by \eqref{kappa}.
Then
\begin{equation}\label{main-1a}
  c \, \beta^{\frac{2}{\kappa_* +2}}
  \le \lambda^N(\beta\A, \Omega) \le   \lambda^D (\beta \A, \Omega)  \le C  \beta^{\frac{2}{\kappa_*+2}}
 \end{equation}
for $\beta>C$, where $C, c>0$  depend only on $d$, $\kappa_*$, $\Omega$ and $(C_0, c_0)$ in \eqref{main-c-1}.
\end{thm}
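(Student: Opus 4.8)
\medskip

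\noindent\emph{Sketch of a possible proof.}

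The middle inequality is immediate: any $\psi\in C_0^1(\Omega;\C)$ extends by zero to an element of $C^1(\overline\Omega;\C)$, so the Neumann infimum is taken over a larger class than the Dirichlet one, whence $\lambda^N(\beta\A,\Omega)\le\lambda^D(\beta\A,\Omega)$. It remains to prove the upper bound for $\lambda^D$ and the lower bound for $\lambda^N$; I would obtain the former from a concentrated trial state and the latter from a multiscale localisation.

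\emph{Upper bound.} Since $\{x\in\overline\Omega:\kappa(x)\ge m\}$ is closed for each $m$, $\kappa$ is upper semicontinuous and attains its maximum $\kappa_*$ at some $x_0\in\overline\Omega$. Extending $\A$ smoothly across $\partial\Omega$, the bound \eqref{main-c-1} controls all derivatives of $\B$ of order $\le\kappa_*+1$, and $\partial^\alpha\B(x_0)=0$ for $|\alpha|\le\kappa_*-1$, so Taylor's theorem gives $|\B(y)|\le C|y-x_0|^{\kappa_*}$ for $y$ near $x_0$. Pick a ball $B=B(z,\rho)\subset\Omega$ with $|z-x_0|\le C\rho$ (take $z=x_0$ if $x_0\in\Omega$, otherwise use the interior cone property of the Lipschitz domain); then $\sup_B|\B|\le C\rho^{\kappa_*}$ for $0<\rho\le\rho_0$. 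In the transversal (Poincar\'e) gauge centred at $z$ — a choice that only multiplies a trial state supported in $B$ by a unimodular factor and hence leaves the Rayleigh quotient unchanged — one has $|\A|\le C\rho\sup_B|\B|\le C\rho^{\kappa_*+1}$ on $B$. Taking $\psi(x)=\eta((x-z)/\rho)$ for a fixed $\eta\in C_0^\infty(B(0,1))$ gives $\psi\in C_0^1(\Omega)$ and
\[
\frac{\int_\Omega|(D+\beta\A)\psi|^2}{\int_\Omega|\psi|^2}\ \le\ \frac{C}{\rho^2}+C\beta^2\rho^{2\kappa_*+2}\,.
\]
Choosing $\rho=\beta^{-1/(\kappa_*+2)}$, admissible once $\beta>\rho_0^{-(\kappa_*+2)}$, makes both terms a constant multiple of $\beta^{2/(\kappa_*+2)}$, whence $\lambda^D(\beta\A,\Omega)\le C\beta^{2/(\kappa_*+2)}$.

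\emph{Lower bound.} Set $h=\beta^{-1/(\kappa_*+2)}$; we must show $\lambda^N(\beta\A,\Omega)\ge c\,h^{-2}$. I would run a hierarchical (Fefferman--Phong / Helffer--Mohamed type) localisation: dyadically subdivide $\overline\Omega$ into cubes, declaring a cube $Q$ of side $r$ \emph{good} if $\beta\,(\inf_Q|\B|)\,r^2\ge M$ for a large fixed constant $M$, and otherwise splitting it into $2^d$ cubes of side $r/2$; stop the recursion at scale $\epsilon^{-1}h$ for a small fixed constant $\epsilon$. The recursion must terminate at that scale because, by \eqref{main-c-1} and a Chebyshev--Taylor estimate, every cube of side $r\le r_0$ has $\sup_Q|\B|\gtrsim r^{\kappa_*}$, so a not-yet-good cube of side $\gg h$ is adjacent to a good one one level below. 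Iterating the IMS identity down this hierarchy, with a subordinate partition of unity $\{\chi_Q\}$ ($\sum\chi_Q^2=1$, $|\nabla\chi_Q|\lesssim 1/(\text{side of }Q)$), bounds $\int_\Omega|(D+\beta\A)\psi|^2$ below by $\sum_{\text{leaves }Q}\lambda^N(\beta\A,Q\cap\Omega)\int|\chi_Q\psi|^2$ minus an accumulated localisation error that at each point is $\lesssim(\text{smallest side there})^{-2}$. On a good leaf of side $r$, a standard magnetic (commutator) lower bound gives local ground state energy $\gtrsim\beta\inf_Q|\B|\gtrsim M r^{-2}$, dominating the error $\lesssim r^{-2}$ once $M$ is large; moreover the smallest good cubes sit at distance $\asymp M^{1/(\kappa_*+2)}h$ from the zero set of $\B$, so that $\beta\inf_Q|\B|\gtrsim M^{\kappa_*/(\kappa_*+2)}h^{-2}\ge h^{-2}$ on \emph{every} good leaf. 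On a terminal leaf of side $\epsilon^{-1}h$, rescaling by $h$ produces a model Neumann problem on a radius-$\epsilon^{-1}$ ball in $\R^d$ — or, for leaves meeting $\partial\Omega$, in a Lipschitz cone (a blow-up of $\partial\Omega$) — carrying a magnetic field that converges, as $\beta\to\infty$, to a nonzero polynomial of degree $\le\kappa_*$; a compactness argument shows its ground state energy is $\ge c_*>0$ uniformly, for if it were $0$ a limit $v_\infty$ would satisfy $(D+\A_\infty)v_\infty=0$, so by the diamagnetic inequality $|v_\infty|$ would be a nonzero constant and $\curl\A_\infty\equiv0$, impossible for a nonzero polynomial field (uniform $H^1$-extension and compact embedding for the uniformly Lipschitz model domains are used here). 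Hence the terminal local energy is $\ge c_*h^{-2}$, against an error $\lesssim(\epsilon^{-1}h)^{-2}=\epsilon^2h^{-2}$; taking $\epsilon$ small (and $\epsilon<M^{-1/(\kappa_*+2)}$, so that terminal leaves are genuinely the ones near the zero set of $\B$) makes the gain dominate there too, and summing over leaves yields $\lambda^N(\beta\A,\Omega)\ge c\,h^{-2}=c\,\beta^{2/(\kappa_*+2)}$.

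\emph{Main obstacle.} The trial-state computation is routine, as is the compactness step once the model domains and fields are identified. The work lies in the lower bound, and specifically in two intertwined difficulties: (i) the localisation error is a priori of the \emph{same} order $h^{-2}$ as the quantity being estimated, and is tamed only by exploiting that magnetic confinement makes the local energy essentially independent of how much a cell exceeds its natural scale $h$ — this is precisely what lets the two free parameters $M$ (large) and $\epsilon$ (small) absorb the error; and (ii) making every local estimate, especially the model-problem positivity and the commutator bound, uniform up to the boundary of a general Lipschitz domain, whose boundary blow-ups are cones rather than half-spaces and which must be handled with uniform Lipschitz constants. I expect (i) — the multiscale bookkeeping — to be the genuine crux.
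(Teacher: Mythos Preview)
Your upper bound is correct and matches the paper's (Theorem~2.2 and Remark~2.4): trial state in Poincar\'e gauge at a point of maximal vanishing order, optimised radius $\rho=\beta^{-1/(\kappa_*+2)}$.

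For the lower bound the paper takes a different route from yours. It establishes a global operator inequality
\[
c\int_\Omega \{m(x,\beta\B)\}^2\,|\psi|^2 \;\le\; \int_\Omega |(D+\beta\A)\psi|^2
\qquad\text{for all }\psi\in C^1(\overline\Omega;\C),
\]
where $m(x,\B)^{-1}=\sup\{r:\max_{Q(x,r)}|\B|\le r^{-2}\}$ is the Fefferman--Phong scale, and then observes $m(x,\beta\B)\ge c\,\beta^{1/(\kappa_*+2)}$ uniformly on $\overline\Omega$. The operator inequality is proved via a two-dimensional flux estimate (Lemma~3.5, from \cite{Shen-1998}) amplified by a Fefferman--Phong uncertainty step (Lemma~3.4); the paper emphasises that no IMS localisation and no commutator integration by parts are used, precisely to avoid the boundary terms and localisation errors you identify as the crux.

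Your multiscale scheme has a genuine gap in the terminal-leaf step. Terminal leaves occur not only near $\Gamma_*=\{\kappa(\cdot)=\kappa_*\}$ but along the entire zero set of $|\B|$, since any leaf through which $\{|\B|=0\}$ passes has $\inf_Q|\B|=0$ and is never declared good. At a terminal leaf centred at $x_0$ with $\kappa(x_0)=k<\kappa_*$, the rescaled field
\[
\tilde\B(y)=\beta(\epsilon^{-1}h)^2\,\B(x_0+\epsilon^{-1}h\,y)
\]
has leading coefficient of order $\beta(\epsilon^{-1}h)^{k+2}=\epsilon^{-k-2}\beta^{(\kappa_*-k)/(\kappa_*+2)}\to\infty$. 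So the rescaled fields are \emph{not} uniformly bounded and do not converge to a fixed nonzero polynomial; your compactness argument, as written, does not apply at such leaves. (Concretely: take $d=2$, $B_{12}=x_1x_2$, $\kappa_*=2$; terminal leaves near $(1,0)$ carry a rescaled field of size $\sim\beta^{1/4}$.) What one actually needs there is a \emph{quantitative} lower bound for the Neumann energy of a magnetic Laplacian with a large, inhomogeneous polynomial field on a unit domain --- and making that precise is exactly the content of the $m(x,\B)$ inequality the paper develops. A secondary issue you already flag: on boundary good leaves the commutator identity $\int|(L_j\pm iL_k)u|^2=\int|L_ju|^2+\int|L_ku|^2\mp\int B_{jk}|u|^2$ produces boundary terms for Neumann data, so ``standard magnetic lower bound'' is not immediate there either; the paper's approach sidesteps this entirely.
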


\begin{remark}\label{re-01}
{\rm
For  $d=2$ and $\kappa_*=1$, the estimate $\lambda^D(\beta\A, \Omega) \approx \beta^{\frac{2}{3}}$ was proved in \cite{Montgomery}, assuming that 
$\Gamma=\{ x\in \overline{\Omega}: B_{12}(x) =0 \} $ is a smooth curve in $\Omega$ and that  $\nabla B_{12}\neq 0$ on $\Gamma$.
The case of higher-order vanishing was first studied in \cite{Helffer-1996}, where it was proved that
 \begin{equation}\label{pre-4}
 \lambda^D(\beta \A, \Omega)
 \approx \beta^{\frac{2}{\kappa_*+2}}, 
 \end{equation}
 assuming that   $\Gamma=\{ x\in \overline{\Omega}: |\B(x)|=0 \}$ is a submanifold of $\Omega$ or  $\Gamma\subset \Omega$ is discrete and that 
 $|\B (x)| \approx  [\text{\rm dist}(x, \Gamma)]^{\kappa_*} $ for $x\in \Omega$.
 Furthermore, in the case of discrete wells,  a reminder estimate 
 was also established.
 The vanishing case for the Neumann condition was studied in \cite{Pan-2002, Miqueu-2018} for $d=2$ and $\kappa_*=1$, where a one-term asymptotic 
 expansion with a remainder  for $\lambda^N(\beta\A, \Omega)$ was established.
  In \cite{Dauge-2020}, the authors consider the case of second-order vanishing for $d=2$,  assuming  that  $\Sigma=\{ x\in \Gamma: \nabla B_{12} (x)=0 \}$ is finite.
}
\end{remark}

For $\lambda^{D\!N} (\beta\A, \Omega)$, we assume that  $|\B|$ does not vanish to infinite order at any point on $\partial\Omega$.
Define
\begin{equation}\label{kappa-1}
 \kappa_0 =\max \left\{ \kappa (x): x \in \partial\Omega\right\}.
\end{equation}
It follows that there exists $c_1>0$ such that 
\begin{equation}\label{main-c-2}
c_1\le \sum_{|\alpha|\le \kappa_0} 
|\partial^\alpha \B(x)|  
\end{equation}
for any $x\in \partial{\Omega}$.

\begin{thm}\label{main-thm-2}
Let $\Omega$ be a bounded Lipschitz domain in $\R^d, d\ge 2$.
Suppose that  $\A\in C^\infty(\overline{\Omega}; \R^d)$  and $|\B|$ does not vanish to infinite order  at any point on $\partial {\Omega}$.
Let $\kappa_0\ge 0$ be defined by \eqref{kappa-1}.
Then
\begin{equation}\label{main-2a}
  c \, \beta^{\frac{1}{\kappa_0 +2}}
  \le \lambda^{D\!N}(\beta\A, \Omega)  \le C  \beta^{\frac{1}{\kappa_0+2}}
 \end{equation}
for $\beta>C$, where $C, c>0$ depend only on $d$, $\kappa_0$, $\Omega$,   $ \| \B \|_{C^{\kappa_0 +1} (\overline{\Omega})}$ and
$ c_1$ in \eqref{main-c-2}.
\end{thm}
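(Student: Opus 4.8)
The plan is to prove Theorem \ref{main-thm-2} by the same philosophy that underlies Theorem \ref{main-thm-1}: reduce the Rayleigh quotient to a local model via a partition of unity, and estimate the local model by rescaling at a point of the boundary where the vanishing order is maximal (for the lower bound) and where it is as small as possible along a whole boundary patch (for the upper bound). The new feature compared with the Dirichlet/Neumann case is that the normalization is by $\int_{\partial\Omega}|\psi|^2$ rather than $\int_\Omega|\psi|^2$, which changes the natural scaling exponent from $\frac{2}{\kappa_*+2}$ to $\frac{1}{\kappa_0+2}$; heuristically, if a trial function is concentrated in a ball of radius $\rho$ around a boundary point, then $\int_\Omega|\psi|^2\sim\rho^d$ while $\int_{\partial\Omega}|\psi|^2\sim\rho^{d-1}$, so the Dirichlet-to-Neumann quotient is essentially $\rho$ times the Dirichlet one, and optimizing $\rho^{-2}\beta^{?}\rho^{?}$ against the magnetic term $\beta^2\rho^{2\kappa_0}$ gives the new exponent. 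The whole argument should be gauge invariant: only $\B=\curl\A$ enters, so I would freely subtract exact forms.

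For the upper bound I would proceed as follows. First, by \eqref{main-c-2} and the fact that $|\B|$ does not vanish to infinite order on $\partial\Omega$, choose a point $x_0\in\partial\Omega$ with $\kappa(x_0)=\kappa_0$, and flatten the boundary near $x_0$ so that $\Omega$ locally becomes $\{x_d>0\}$ after a bi-Lipschitz (indeed smooth, since $\Omega$ can be taken $C^\infty$ near a good point — or Lipschitz suffices with harmless losses) change of variables. Next, in a ball $B(x_0,\rho)$ pick a gauge in which $\A(x)=O(|x-x_0|^{\kappa_0+1})$; this is possible because after subtracting the Taylor polynomial of a primitive, the remaining potential has the size governed by the Taylor polynomial of $\B$ at $x_0$, which is $O(|x-x_0|^{\kappa_0})$ for $\B$ and hence $O(|x-x_0|^{\kappa_0+1})$ for $\A$. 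Then take the trial function $\psi(x)=\eta\bigl((x-x_0)/\rho\bigr)$ for a fixed bump $\eta$ supported in the unit ball and equal to $1$ near $x_0$ on the boundary; one computes
\begin{equation}\label{ub-scaling}
\int_\Omega |(D+\beta\A)\psi|^2 \le C\bigl(\rho^{d-2}+\beta^2\rho^{2\kappa_0+2}\rho^{d-2}\bigr)=C\rho^{d-2}\bigl(1+\beta^2\rho^{2\kappa_0+2}\bigr),
\end{equation}
while $\int_{\partial\Omega}|\psi|^2\ge c\rho^{d-1}$. Balancing by taking $\rho=\beta^{-\frac{1}{\kappa_0+2}}$ yields $\lambda^{D\!N}(\beta\A,\Omega)\le C\rho^{-1}(1+1)=C\beta^{\frac{1}{\kappa_0+2}}$, which is the desired upper bound.

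For the lower bound the structure mirrors the proof of the lower bound in Theorem \ref{main-thm-1}, with modifications to handle the boundary norm. Set $\rho=\beta^{-\frac{1}{\kappa_0+2}}$. Cover $\overline{\Omega}$ by balls $\{B(x_j,\rho)\}$ with bounded overlap; use a subordinate partition of unity $\{\chi_j\}$ with $|\nabla\chi_j|\le C\rho^{-1}$, so that the IMS localization formula gives $\int_\Omega|(D+\beta\A)\psi|^2=\sum_j\int_\Omega|(D+\beta\A)(\chi_j\psi)|^2 - \sum_j\int_\Omega|\nabla\chi_j|^2|\psi|^2$, with the error term bounded by $C\rho^{-2}\int_\Omega|\psi|^2$. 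The key local estimate is a lower bound of the form $\int_\Omega|(D+\beta\A)\phi|^2\ge c\rho^{-2}\int_\Omega|\phi|^2$ for $\phi$ supported in $B(x_j,\rho)\cap\overline{\Omega}$: on balls where $x_j$ is far from the vanishing set of $\B$ this comes from the standard diamagnetic/magnetic bottom-of-spectrum bound $\gtrsim\beta|\B(x_j)|\gtrsim\beta\rho^{\kappa_0}=\rho^{-2}$ after rescaling, and on balls meeting the vanishing set one rescales $y=(x-x_j)/\rho$ so that the operator becomes $\rho^{-2}(D_y+\beta\rho\,\widetilde\A)^2$ with $\beta\rho\,\widetilde\A$ of unit order and $\curl(\beta\rho\widetilde\A)=\beta\rho^2\B$ which by \eqref{main-c-2} has a unit-size derivative tensor of order $\le\kappa_0$; hence the rescaled magnetic Laplacian on the unit ball has a strictly positive bottom of spectrum bounded below uniformly (this is a compactness/unique-continuation argument: a magnetic field that does not vanish to infinite order cannot be gauged away, so the Rayleigh quotient on a fixed ball is bounded below by a positive constant depending only on the bound \eqref{main-c-1} or \eqref{main-c-2} and $d$). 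Combining, $\int_\Omega|(D+\beta\A)\psi|^2\ge (c\rho^{-2}-C\rho^{-2}\cdot 0)\ldots$ — more precisely one must be slightly careful, but the standard trick of shrinking $\rho$ by a fixed constant absorbs the IMS error, giving $\int_\Omega|(D+\beta\A)\psi|^2\ge c\rho^{-2}\int_\Omega|\psi|^2$. Finally, to convert this to the Dirichlet-to-Neumann quotient, invoke the trace-type inequality $\int_{\partial\Omega}|\psi|^2\le C\bigl(\rho^{-1}\int_\Omega|\psi|^2+\rho\int_\Omega|\nabla\psi|^2\bigr)$, valid on a Lipschitz domain at scale $\rho$ by scaling the standard trace theorem, and combine it with the diamagnetic inequality $\int_\Omega|\nabla|\psi||^2\le\int_\Omega|(D+\beta\A)\psi|^2$ to get $\int_{\partial\Omega}|\psi|^2\le C\rho^{-1}\int_\Omega|\psi|^2+C\rho\int_\Omega|(D+\beta\A)\psi|^2\le C\rho\bigl(\rho^{-2}+1\bigr)\int_\Omega|(D+\beta\A)\psi|^2\cdot(c\rho^{-2})^{-1}\cdot\ldots$; cleaning up, $\int_{\partial\Omega}|\psi|^2\le C\rho\int_\Omega|(D+\beta\A)\psi|^2$, i.e. $\lambda^{D\!N}(\beta\A,\Omega)\ge c\rho^{-1}=c\beta^{\frac{1}{\kappa_0+2}}$.

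The main obstacle is the uniform positivity of the bottom of the spectrum of the rescaled magnetic Laplacian on the unit half-ball (or ball), uniform over the family of magnetic fields arising in the rescaling — equivalently, a quantitative statement that a magnetic field with a controlled, non-degenerate Taylor jet of order $\le\kappa_0$ cannot be gauged to be too small on a fixed ball. One proves this by contradiction and compactness: if the infimum of the Rayleigh quotients over this family were zero, extract a sequence of magnetic fields converging in $C^{\kappa_0}$ on the closed ball (using the uniform $C^{\kappa_0+1}$ bound and Arzelà–Ascoli) to a limit field that still does not vanish to infinite order, together with a sequence of normalized near-minimizers converging weakly in $H^1$; the limit is a genuine zero mode of a magnetic Laplacian with a nontrivial field, which forces $|\psi|$ to satisfy $\Delta|\psi|\ge 0$ weakly and $\B\equiv 0$ on the support of $\psi$, contradicting that $\B$ does not vanish to infinite order. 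A secondary technical point is ensuring all geometric constants (partition-of-unity bounds, trace constants, the boundary flattening) depend only on the Lipschitz character of $\Omega$ and not on $\beta$, which is routine but must be tracked since the balls shrink as $\beta\to\infty$; this is handled by always rescaling to unit scale before applying any fixed-domain inequality. Everything else — the IMS formula, the diamagnetic inequality, the elementary gauge choice making $\A=O(|x-x_0|^{\kappa_0+1})$, and the scaling arithmetic in \eqref{ub-scaling} — is standard.
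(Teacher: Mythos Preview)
Your upper bound is essentially the paper's (Theorem~\ref{thm-u4}): pick $x_0\in\partial\Omega$ with $\kappa(x_0)=\kappa_0$, choose a Poincar\'e gauge so that $|\A|\le C|x-x_0|^{\kappa_0+1}$ near $x_0$, and test with a bump of scale $\rho=\beta^{-1/(\kappa_0+2)}$. (There is an arithmetic slip in your display \eqref{ub-scaling}: the magnetic term should be $\beta^2\rho^{2(\kappa_0+1)}\cdot\rho^d$, so the bracket is $1+\beta^2\rho^{2\kappa_0+4}$, which does balance at $\rho=\beta^{-1/(\kappa_0+2)}$.)

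Your lower bound has a real gap. You pass through the \emph{global} Neumann bound $\int_\Omega|(D+\beta\A)\psi|^2\ge c\rho^{-2}\int_\Omega|\psi|^2$, obtained by IMS at scale $\rho$ together with a local bound on every ball of the cover. But Theorem~\ref{main-thm-2} only assumes \eqref{main-c-2} on $\partial\Omega$; in the interior $|\B|$ may vanish to arbitrarily high (even infinite) order, and then your local bound fails on interior balls and the global Neumann estimate is simply false. A second issue is that even near the boundary the IMS error $C\rho^{-2}$ and the local gain $c_0\rho^{-2}$ are of the same order, and ``shrinking $\rho$'' does not help: smaller balls make the IMS error worse, while larger balls leave the rescaled compactness class, so your $c_0$ is not at your disposal.

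Both issues are repaired by working only in a boundary layer and dispensing with IMS: the compactness argument gives the genuine \emph{Neumann} bound $\int_{B(x_j,\rho)\cap\Omega}|(D+\beta\A)\psi|^2\ge c_0\rho^{-2}\int_{B(x_j,\rho)\cap\Omega}|\psi|^2$ for \emph{every} $\psi$ (no support condition), and summing over a bounded-overlap cover of a layer $\Omega_{C\rho}$ yields $\int_\Omega|(D+\beta\A)\psi|^2\ge c\rho^{-2}\int_{\Omega_\rho}|\psi|^2$; then your trace inequality, applied in the layer, finishes. The paper does exactly this but with a different, quantitative engine: instead of compactness it proves the operator inequality \eqref{in-2} (Theorem~\ref{thm-op-2}),
\[
c\int_{\partial\Omega} m(x,\beta\B)\,|\psi|^2 \le \int_{\Omega_b}|(D+\beta\A)\psi|^2,
\]
via a Fefferman--Phong uncertainty-principle argument and the auxiliary function $m(x,\B)$ of \eqref{m}, and then observes $m(x,\beta\B)\ge c\beta^{1/(\kappa_0+2)}$ on $\partial\Omega$. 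That route gives explicit constants and avoids the soft compactness step; your route, once localized to the boundary layer, is a legitimate alternative but non-quantitative.
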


\begin{remark}\label{re-02}
{\rm
The non-vanishing case  $\kappa_0=0$  for $d=2$ was studied recently  in \cite{Helffer-2024}, where  the limit of 
$ \beta^{-\frac12} \lambda^{D\!N} (\beta \A, \Omega) $
as $\beta\to \infty$, was identified.
The paper also obtained a two-term asymptotic expansion for $\lambda^{D\!N}(\beta \A, \Omega)$ in 
the case of constant magnetic fields.
}
\end{remark}

Theorems \ref{main-thm-1} and \ref{main-thm-2}  give the leading orders of  $\beta$ in $\lambda^D(\beta\A, \Omega)$, $\lambda^N(\beta\A, \Omega)$, and
$\lambda^{D\!N} (\beta \A, \Omega)$ for large $\beta$, under minimal assumptions  on $\B$ and $\Omega$.
The proofs for the upper bounds in \eqref{main-1a} and \eqref{main-2a} use quasimodes (test functions) and are  fairly straightforward.
The proofs for the lower bounds  rely on two inequalities,
\begin{equation}\label{in-1}
c\int_\Omega \{ m(x, \B)\}^2 |\psi|^2
\le \int_\Omega |(D+\A)\psi|^2
\end{equation}
and
\begin{equation}\label{in-2}
c\int_{\partial \Omega}   m(x, \B)  |\psi|^2
\le \int_\Omega |(D+\A)\psi|^2
\end{equation}
for $\psi \in C^1(\overline{\Omega}; \C)$, where $m(x, \B)$ defined by \eqref{m}  is a function introduced by the present author  in \cite{Shen-1995}.
To prove \eqref{in-1}-\eqref{in-2}, we use the approach in  \cite{Shen-1998} to connect the
magnetic field with the magnetic potential as well as 
a version of the uncertainty principle by Feffereman and Phong \cite{Fefferman-1983}.
The method of commutators, which involves  integration by parts and has been an important tool  in the study of the magnetic 
Schr\"odinger operators (see \cite{Helffer-book, Raymond-book} for references),  is not used in this paper. 
Our approach avoids the error terms introduced by localization and by the presence of boundaries.
As a result, it works equally well for  all three operators.

In the second part of this paper, 
we derive the formulas for  the leading terms in the asymptotic expansions of
$\lambda^D (\beta\A, \Omega)$,
$\lambda^N(\beta\A, \Omega)$, and
$\lambda^{D\!N} (\beta \A, \Omega)$,   with remainder estimates.
More precisely, if $\Omega$ is a bounded $C^{1, 1}$ domain in $\R^d$, $d\ge 2$, , 
under some general  conditions on $\B$, we show that 
\begin{equation}\label{a-1}
\aligned
\lambda^D (\beta \A, \Omega)
 & =\Theta_D \beta^{\frac{2}{\kappa_* +2}} + O(\beta^{\frac{1}{\kappa_*+2} +\frac{1}{k_*+4}}),\\
\lambda^N (\beta \A, \Omega)
 & =\Theta_N \beta^{\frac{2}{\kappa_* +2}} + O(\beta^{\frac{1}{\kappa_*+2} +\frac{1}{k_*+4}}),\\
 \endaligned
 \end{equation}
 and
 \begin{equation}\label{aa-1}
\Theta_{D\!N} \beta^{\frac{1}{\kappa_0+2}}
-C \beta^{\frac{\kappa_0+3}{(\kappa_0+2)(\kappa_0+4)}} \le   \lambda^{D\!N} (\beta \A, \Omega)
 \le \Theta_{D\!N} \beta^{\frac{1}{\kappa_0 +2}} + C \beta^{\frac{1}{\kappa_0+4}},
\end{equation}
for $\beta>1$, where the coefficients $\theta_D$, $\Theta_N$, and $\Theta_{D\!N}$ are given by \eqref{co-1}-\eqref{co-2} below.
See Theorems \ref{main-thm-g1} and \ref{main-thm-gL1}.
 Our results recover most of the known cases for $\lambda^D(\beta \A, \Omega)$ and $\lambda^N(\beta \A, \Omega)$, including 
\begin{itemize}

\item
 the case of the non-vanishing  fields (see subsection  \ref{sub-nv});
 
 \item
 
 the case of discrete wells (see subsection  \ref{sub-dis});

\item

 the case of first-order vanishing in  two dimensions (see subsection  \ref{sub-order}).

\end{itemize}
  As we indicated earlier, our aim is to provide a unified approach to all three eigenvalue problems.
 We further point out that  our    analysis also yields the upper bounds
 \begin{equation}\label{sup}
 \left\{
 \aligned
 \limsup_{\beta \to \infty}
 \beta^{-\frac{2}{\kappa_*+2}}
 \lambda^D (\beta \A, \Omega)
  & \le \Theta_D,\\
\limsup_{\beta \to \infty}
 \beta^{-\frac{2}{\kappa_*+2}}
 \lambda^N (\beta \A, \Omega)
  & \le \Theta_N,
\endaligned
\right.
\end{equation}
under the assumptions that $\Omega$ is $C^1$ and $|\B|$ does not vanish to infinite order at any point in  $\overline{\Omega}$.
See Theorem \ref{thm-up2a}.
If $\Omega$ is $C^1$ and $|\B|$ does not vanish to infinite order at any point on $\partial\Omega$, we obtain 
\begin{equation}\label{sup-1}
\limsup_{\beta \to \infty} 
\beta^{-\frac{1}{\kappa_0+2}}
 \lambda^{D\!N}  (\beta \A, \Omega)
   \le \Theta_{D\!N}.
  \end{equation}
  See Theorem \ref{thm-up2c}.
  It would be interesting to establish explicit  lower bounds for $\liminf_{\beta \to \infty}$ under the same conditions on $\B$ and $\Omega$.
  
 To describe the constants in \eqref{a-1}-\eqref{aa-1} and \eqref{sup}-\eqref{sup-1}, let 
\begin{equation}\label{gamma}
\aligned
\Gamma_1
&= \left\{ x\in \Omega: \kappa (x)= \kappa_*\right \},\\
\Gamma_2
&= \left\{ x\in \partial\Omega: \kappa (x)= \kappa_*\right \},
\endaligned
\end{equation}
and
\begin{equation}\label{ga-1}
\Gamma_0
= \left\{ x\in  \partial\Omega: \kappa (x)= \kappa_0 \right\},
\end{equation}
where the magnetic field $\B$ vanishes to the maximal orders in $\Omega$ and on $\partial\Omega$, respectively.
For $n\in \mathbb{S}^{d-1} $, let
\begin{equation}\label{hs}
\mathbb{H}_n
=\big\{ x\in \R^d: \  \langle x, n \rangle< 0 \big\}
\end{equation}
denote the half-space  with outward normal $n$.  For each $y \in \R^d$,
let $\A_y$ denote the homogenous (vector-valued) polynomial of degree $\kappa(y)+1$ such that 
$\nabla \times \A_y$ is  the $\kappa(y)^{th}$  (matrix-valued) Taylor polynomial of $\B(x+y)$ at $0$.
The coefficients in \eqref{a-1}-\eqref{aa-1} are given by 
\begin{equation}\label{co-1}
\aligned
\Theta_D & =\min 
\left\{
\inf_{y \in \Gamma_1} \lambda (\A_y, \R^d),
\inf_{y\in \Gamma_2}  \lambda^D(\A_y, \mathbb{H}_{n(y)} ) \right\},  \\
\Theta_N & =\min 
\left\{
\inf_{y \in \Gamma_1} \lambda (\A_y, \R^d),
\inf_{y\in \Gamma_2}  \lambda^N(\A_y, \mathbb{H}_{n(y)})  \right\}, 
\endaligned
\end{equation}
where $\lambda(\A_y, \R^d)=\lambda^D (\A_y, \R^d)$, and
\begin{equation}\label{co-2}
\Theta_{D\!N}
=\inf_{y \in \Gamma_0}
\lambda^{D\!N} (\A_y, \mathbb{H}_{n(y)}),
\end{equation}
where $n(y)$ denotes the outward unit normal to $\partial\Omega$ at $y$.

To prove \eqref{a-1}-\eqref{aa-1}, we fix $y\in \partial\Omega$ and let $\b(y, r)$ denote the ball centered at $y$ with
radius $r$. 
Let 
$$
r=\beta^{-\frac{\kappa+3}{(\kappa+2)(\kappa+4)}},
$$
where  $\kappa =\kappa (y)$ and $\beta$ is large.
The key step is to  establish error estimates for
\begin{equation}\label{est-1}
\aligned
\beta^{-\frac{2}{\kappa+2}} \lambda^D (\beta \A, \b(y, r)\cap \Omega)  & -\lambda^D (\A_{y}, \mathbb{H}_{n(y)} ),\\
\beta^{-\frac{2}{\kappa+2}} \mu^N (\beta \A, \b(y, r)\cap \Omega, \Omega)  & -\lambda^N (\A_{y}, \mathbb{H}_{n(y)} ), 
\endaligned
\end{equation}
and
\begin{equation}\label{est-2}
\beta^{-\frac{1}{\kappa+2}} \mu^{D\!N}  (\beta \A, \b(y, r)\cap \Omega, \Omega) -\lambda^{D\!N}  (\A_{y}, \mathbb{H}_{n(y)} ),
\end{equation}
where 
\begin{equation}\label{mu-1}
\left\{
\aligned
\mu^{N} ( \beta \A, \Omega \cap \b(y, r) ,  \Omega)
& =\inf_{\psi} \frac{\int_{ \b(y, r)\cap \Omega} |(D+\beta \A)\psi|^2}
{\int_{\b(y, r)\cap \Omega} |\psi|^2},\\
\mu^{D\!N} ( \beta \A, \Omega \cap \b(y, r) ,  \Omega)
&=\inf_{\psi} \frac{\int_{ \b(y, r)\cap \Omega} |(D+\beta \A)\psi|^2}
{\int_{\b(y, r)\cap \partial\Omega} |\psi|^2},
\endaligned
\right.
\end{equation}
and the infimums  are  taken over those functions $\psi$  in $C^1(\overline{\Omega\cap \b(y, r)}; \C)$ with $\psi=0$ on
$\Omega\cap \partial \b (y, r)$.
A similar estimate is also needed for the interior case $\b(y, r)\subset \Omega$.
By a perturbation argument, the problem is reduced  to the study of the magnetic Laplacian in $\R^d_+$
with a homogeneous polynomial magnetic potential.
Using  inequalities \eqref{in-1}-\eqref{in-2} obtained in part one and by tiling  the half-space with suitable parallelotopes,
we obtain  bounds for \eqref{est-1}-\eqref{est-2}.
See Theorems \ref{thm-local-1}, \ref{thm-local-2}, \ref{thm-local-3} and \ref{thm-local-4}.

The bounding constants $C$  for  \eqref{est-1}-\eqref{est-2}  depend  crucially on the invariant  subspace $V_y$ for  the 
homogenous polynomial $ \P(x) =\nabla \times \A_{y} (x)$.
This subspace of $\R^d$  is the largest subspace with the property that
$$
\P(x+z) =\P(x) \quad \text{ for any } x\in \R^d \text{ and }  z\in V_y.
$$
It is not hard to show that
\begin{equation}\label{V-0}
V_y =\left\{ z \in \R^d: \langle z, \nabla \partial^\alpha B_{j \ell } (y) \rangle=0
\text{ for any } 1\le j < \ell \le d \text{ and } |\alpha|=\kappa-1 \right\},
\end{equation}
where $\kappa=\kappa(y)$.
As a result, in order to establish the asymptotic expansions in \eqref{a-1}-\eqref{aa-1}
using error estimates for \eqref{est-1}-\eqref{est-2}, 
by some localization and covering arguments, one needs to impose 
some  conditions on $V_y$  to ensure that the estimates  hold uniformly  for $y\in \Gamma_j$, $j=0, 1, 2$.
More precisely, we will assume that  there exists $c>0$ such that
\begin{equation}\label{V-1}
\min_{\substack{v \in \mathbb{S}^{d-1} \cap V_y^\perp}}
\sum_{j, \ell} \sum_{|\alpha|=\kappa_*-1} 
|\langle v, \nabla \partial^\alpha B_{j\ell} (y) \rangle | \ge c
\end{equation}
for any $y \in \Gamma_*=\Gamma_1\cup \Gamma_2$ (for $y\in \Gamma_0$ in the case of \eqref{aa-1}), and that
\begin{equation}\label{V-2}
\max_{z\in V_y} |\langle  z, n(y) \rangle|
\ge c
\end{equation}
for any $y\in \Gamma_2$  (for $y\in \Gamma_0$ in the case of  \eqref{aa-1}). 
We remark that these conditions are satisfied for all existing results on  $\lambda^D(\beta\A, \Omega)$ and
$\lambda^N (\beta \A, \Omega)$ in the vanishing case $\kappa_*= 1$.
Our results on $\lambda^D(\beta \A, \Omega)$ and $\lambda^N(\beta \A, \Omega)$ for $k_*\ge 2$, except the cases considered  in \cite{Helffer-1996, Dauge-2020}, 
as well as those on $\lambda^{D\!N} (\beta\A, \Omega)$ for $\kappa_0\ge 1$ are  new.

Throughout the paper, unless indicated otherwise,
 we assume that $\A\in C^1(\R^d; \R^d)$ and $\Omega$ is a bounded Lipschitz domain.
Thus, there exists $r_0>0$ such that for any $x_0\in \partial\Omega$, 
up to a translation and rotation of the coordinate system, we have
\begin{equation}\label{Lip}
\Omega\cap \b(x_0, r_0)
=\left\{ (x^\prime, x_d) \in \R^d:  \ x_d> \phi(x^\prime) \right\}\cap \b(x_0, r_0),
\end{equation}
where $\phi: \R^{d-1} \to \R$ is a Lipschitz continuous function and  $\b(x_0, r_0)=\{ x\in \R^d: |x-x_0|< r_0 \}$
denotes the ball centered at $x_0$  with radius $r_0$ .
We will use $Q(x_0, r)$ to denote the cube centered at $x_0$ with side length $r$. As usual, 
 $\B=\nabla \times \A$.


\section{Upper bounds, part I}\label{section-up}

Let $\Omega$ be a bounded Lipschitz domain in $\R^d$, $d\ge 2$.
In this section we establish the upper bounds for $\lambda^D(\beta\A, \Omega)$, $\lambda^N(\beta \A, \Omega)$, and
$\lambda^{D\!N}(\beta\A, \Omega)$  in \eqref{main-1a} and \eqref{main-2a} .

\begin{lemma}\label{lemma-u1}
\begin{enumerate}

\item

Assume  $\b (y, r)\subset \Omega$. There  exists 
$\theta\in C^1(\b (y, r); \R)$ such that
\begin{equation}\label{u-1aa}
\left(\fint_{\b (y, r)} | \A+\nabla \theta |^2\right)^{1/2}
\le C r \left(\fint_{\b (y, r) } |\B|^2 \right)^{1/2},
\end{equation}
where $C$ depends only on $d$.

\item

Assume $x_0\in \partial\Omega$. There exist $c_0, C_0>0$, depending on $d$ and $\Omega$, 
with the properties that  if $0< r< c_0 r_0$, there exists
$\theta \in C^1(\b (x_0, r)\cap \Omega; \R)$ such that
\begin{equation}\label{u-1a}
\left(\fint_{\b (x_0, c_0 r)\cap \Omega} | \A+\nabla \theta |^2\right)^{1/2}
\le C r \left(\fint_{\b (x_0, C_0 r) \cap \Omega} |\B|^2 \right)^{1/2},
\end{equation}
where $C$ depends  on $d$ and $\Omega$.

\end{enumerate}

\end{lemma}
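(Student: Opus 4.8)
The plan is to take $\theta$ to be minus a Poincar\'e--Bogovskii (homotopy) potential of $\A$, following the gauge-fixing scheme of \cite{Shen-1998}. On a bounded open set $U$ that is star-shaped with respect to an interior ball $\b(y_*,\rho)$ there is an averaged homotopy operator $K=K_U$ on differential forms with $dK+Kd=\mathrm{Id}$; applied to the $1$-form $\A$ it yields the pointwise identity $\A=\nabla(K\A)+K(\curl\A)$ on $U$. Its Schwartz kernel is dominated by $C_U|x-z|^{1-d}$, so $K$ maps bounded continuous forms to continuous forms; since $\A\in C^1$ and $\B=\curl\A$ is continuous, I would set $\theta:=-K\A$, which is then $C^1$ (its gradient $K\B-\A$ is continuous) and satisfies $\A+\nabla\theta=K\B$. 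Finally $K$ obeys the scale-invariant bound $\|K\B\|_{L^2(U)}\le C\,(\mathrm{diam}\,U)\,\|\B\|_{L^2(U)}$, with $C$ depending only on $d$ and the ratio $(\mathrm{diam}\,U)/\rho$ (this follows from the kernel bound and Schur's test, since $\int_{|x-z|<R}|x-z|^{1-d}\,dz\le CR$).

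For Part (1), $U=\b(y,r)$ is convex, hence star-shaped with respect to $\b(y,r/2)$ with ratio at most $4$, so $\theta=-K_{\b(y,r)}\A$ gives $\A+\nabla\theta=K_{\b(y,r)}\B$ and $\|\A+\nabla\theta\|_{L^2(\b(y,r))}\le Cr\|\B\|_{L^2(\b(y,r))}$ with $C=C(d)$; dividing by $|\b(y,r)|^{1/2}$ produces \eqref{u-1aa}. (Equivalently one could take $\theta$ solving the Neumann problem $\Delta\theta=-\diverg\A$ in $\b(y,r)$, $\partial_\nu\theta=-\A\cdot\nu$ on $\partial\b(y,r)$, and use the div-curl inequality on convex domains together with the Poincar\'e inequality, since $\int_{\b(y,r)}(\A+\nabla\theta)=0$.)

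For Part (2), the point is that $\b(x_0,r)\cap\Omega$ need not be star-shaped with respect to any interior ball, so I would embed it into a slightly larger set that is. Fix coordinates as in \eqref{Lip} with $x_0=0$, $\phi(0)=0$, $\|\nabla\phi\|_\infty\le L$. From $\phi(z'+v')\le\phi(z')+L|v'|$ one sees that $z+\mathcal C\subset\Omega\cap\b(0,r_0)$ for every $z\in\overline\Omega\cap\b(0,r_0/2)$, where $\mathcal C=\{v:\ v_d>L|v'|,\ |v|<\eta\}$ and $\eta=c(\Omega)r_0$. Put $y_*=C_1re_d$, $\rho=r/2$ with $C_1=2(1+L)$, and set $U_*=\bigcup_{z\in\b(0,r)\cap\Omega}\bigcup_{w\in\b(y_*,\rho)}[z,w]$. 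For $0<r<c_0r_0$ with $c_0$ small, one checks: (i) $w-z\in\mathcal C$ for all $z\in\b(0,r)\cap\Omega$ and $w\in\b(y_*,\rho)$ (here $C_1$ must be large relative to $L$ and $r$ small relative to $r_0$), so $[z,w]\subset\Omega$ and $U_*\subset\Omega$; (ii) $U_*$ is star-shaped with respect to $\b(y_*,\rho)$ (a union of all segments from a set to a ball always is), with ratio at most $C(\Omega)$; (iii) $\b(x_0,c_0r)\cap\Omega\subset\b(0,r)\cap\Omega\subset U_*\subset\b(x_0,C_0r)\cap\Omega$. Applying the construction above with $U=U_*$, restricting $\theta=-K_{U_*}\A$ to $\b(x_0,r)\cap\Omega\subset U_*$, and using (iii):
\[
\int_{\b(x_0,c_0r)\cap\Omega}|\A+\nabla\theta|^2\ \le\ \int_{U_*}|K_{U_*}\B|^2\ \le\ Cr^2\int_{U_*}|\B|^2\ \le\ Cr^2\int_{\b(x_0,C_0r)\cap\Omega}|\B|^2 .
\]
Passing from integrals to averages costs only the volume ratio $|\b(x_0,C_0r)\cap\Omega|/|\b(x_0,c_0r)\cap\Omega|$, which is bounded in terms of $d$ and $\Omega$ by the measure-density property $|\b(x_0,\rho)\cap\Omega|\approx\rho^d$ ($0<\rho<r_0$) of Lipschitz domains; this gives \eqref{u-1a}.

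The hard part will be the geometry in Part (2): producing a set $U_*$ that is at once contained in $\Omega$, star-shaped with respect to a ball of radius comparable to $r$, and squeezed between $\b(x_0,c_0r)\cap\Omega$ and $\b(x_0,C_0r)\cap\Omega$, with all constants governed only by the Lipschitz character of $\Omega$ — in particular, placing the corkscrew point $y_*$ at height $\sim r$ along the inner-normal direction and verifying the cone inclusion $w-z\in\mathcal C$ uniformly. Everything else is the standard homotopy-operator computation, carried out on a star-shaped set exactly as on a ball.
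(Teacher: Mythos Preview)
Your proposal is correct and follows essentially the same approach as the paper: both construct the gauge via an averaged Poincar\'e homotopy operator on a domain star-shaped with respect to an interior ball of radius $\sim r$, obtain the kernel bound $|x-z|^{1-d}$, and deduce the $L^2$ estimate by Young/Minkowski. The only difference is the choice of star-shaped set in Part (2): the paper uses the simpler truncated box $E_r=\{|x'|<r,\ \phi(x')<x_d<\Lambda r\}$ with a high enough lid, whereas you form the join $U_*$ of $\b(0,r)\cap\Omega$ with the corkscrew ball --- your verification that this join is star-shaped with respect to the ball (via the convex-combination identity) is correct, though the paper's box is a bit more direct.
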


\begin{proof}

The estimate \eqref{u-1aa} for the case $\b (y, r) \subset  \Omega$ is well known.
We give a proof of \eqref{u-1a}  for  $x_0\in \partial\Omega$. 
Without the loss of generality, we assume that $x_0=0$ and 
$$
\Omega \cap \b (0, r_0)=
\left\{ (x^\prime, x_d)\in \R^d: x_d >\phi (x^\prime) \right\}\cap \b (0, r_0 ),
$$
where $\phi: \R^{d-1} \to \R$ is a Lipschitz continuous function with  $\phi (0)=0$ and $\|\nabla \phi \|_\infty\le M$.
Observe  that the set
\begin{equation*}
E_r=\{ (x^\prime, x_d):  \ |x^\prime|< r \text{ and } \phi (x^\prime) < x_d < \Lambda  r \}
\end{equation*}
is star-shape with respect to any point in $\b (y_0, \delta r)$, where $y_0=(0, \Lambda/2)$, if $\Lambda=\Lambda(d, M)>1$ is sufficiently large and $\delta=\delta(d, M)>0$ is sufficiently small.
Choose $c_0, C_0>0$, depending on $d$ and $M$,  so  that 
\begin{equation}\label{u-1b}
\b (0, c_0 r)\cap \Omega  \subset E_r \subset \b (0, C_0 r) \cap \Omega\subset \b (0, r_0) \cap \Omega
\end{equation}
 for $0< r< c_0 r_0$.
Let  $\widetilde{\A}=(\widetilde{A}_1, \widetilde{A}_2, \dots, \widetilde{A}_d)$, where
\begin{equation}
\widetilde{A}_j (x)
=- \fint_{\b (y_0,\delta r)}
\left\{
\sum_{k=1}^d (x_k-y_k)
\int_0^1 B_{jk} (y + t(x-y)) t \, dt \right\} dy, 
\end{equation}
 and $B_{jk }=\partial_j A_k- \partial_k A_j$.
 A computation shows that 
$
\widetilde{\A}= \A +\nabla \theta
$
in $E_r$, where
$$
\theta (x)=-\fint_{B(y_0, \delta r)}
\left\{ \sum_{k=1}^d (x_k-y_k)
\int_0^1 A_k (y + t(x-y)) t \, dt \right\} dy.
$$
Note that for $x\in E_r$,
$$
\aligned
|\widetilde{\A}(x)|
&\le \fint_{B(y_0, \delta r)}
\left\{
\int_0^{|y-x|} |\B \Big(x + t \frac{y-x}{|y-x|}\Big) |dt \right\} dy\\
&\le C \int_{\mathbb{S}^{d-1}} d\omega
\int_0^\infty W (x+ t\omega) dt\\
&=C \int_{\b (0, Cr) } \frac{W (x+ y)}{|y|^{d-1}}\, dy,
\endaligned
$$
where $W=|\B|\chi_{E_r}$. It follows by Minkowski's inequality for integrals that
$$
\left(\int_{E_r} |\widetilde{\A}|^2 \right)^{1/2}
\le C r \left(\int_{E_r} |\B|^2 \right)^{1/2}.
$$
In view of \eqref{u-1b}, this gives \eqref{u-1a}.
\end{proof}

\begin{thm}\label{thm-u1}
Suppose that there exist $x_0\in \overline{\Omega}$ and $\kappa\ge 0$ such that
\begin{equation}\label{u2-0}
\left(\fint_{\b (x_0, r)\cap \Omega} |\B|^2 \right)^{1/2}
\le C_0 r^\kappa
\end{equation}
for any $0< r< r_0$. Then
\begin{equation}\label{u2-1}
\lambda^N(\beta \A, \Omega) \le 
\lambda^D(\beta \A, \Omega) \le C \beta^{\frac{2}{\kappa+2}}
\end{equation}
for any $\beta>  1$, where $C$ depends only on $d$, $\Omega$, and $(C_0, \kappa)$.
\end{thm}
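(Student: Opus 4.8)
The plan is to construct a quasimode (a single good test function) for the Dirichlet Rayleigh quotient and optimize the scale. The inequality $\lambda^N(\beta\A,\Omega)\le\lambda^D(\beta\A,\Omega)$ is immediate, since the Neumann infimum runs over the larger class $C^1(\overline{\Omega};\C)\supset C_0^1(\Omega;\C)$, so it suffices to bound $\lambda^D(\beta\A,\Omega)$ from above.

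\textbf{Step 1: reduction to an interior ball.} First I would use the interior cone property of the bounded Lipschitz domain $\Omega$ to produce, for every $x_0\in\overline{\Omega}$ and every sufficiently small $r$, a point $y=y(x_0,r)\in\Omega$ and a constant $c=c(d,\Omega)\in(0,1]$ such that $\b(y,cr)\subset\Omega\cap\b(x_0,Cr)$ for some $C=C(d,\Omega)$. Since $|\b(y,cr)|$ and $|\Omega\cap\b(x_0,Cr)|$ are comparable up to a constant depending only on $d$ and $\Omega$, the hypothesis \eqref{u2-0} transfers to
$$
\Big(\fint_{\b(y,cr)}|\B|^2\Big)^{1/2}\le C_1\,r^\kappa,
$$
with $C_1$ depending only on $d,\Omega,C_0$. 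This makes the support of the eventual test function sit strictly inside $\Omega$, which is what the Dirichlet condition requires, while keeping a power-decay bound on the $L^2$-average of $\B$.

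\textbf{Step 2: gauge and quasimode.} On $\b(y,cr)$ I would apply Lemma~\ref{lemma-u1}(1) to obtain $\theta\in C^1(\b(y,cr);\R)$ with $\big(\fint_{\b(y,cr)}|\A+\nabla\theta|^2\big)^{1/2}\le C(cr)\big(\fint_{\b(y,cr)}|\B|^2\big)^{1/2}\le C_2\,r^{\kappa+1}$. Fixing once and for all a nonzero $\varphi\in C_0^\infty(\b(0,1);\R)$ and setting $\psi(x)=e^{i\beta\theta(x)}\varphi\big(\tfrac{x-y}{cr}\big)\in C_0^1(\Omega;\C)$, the gauge identity $(D+\beta\A)(e^{i\beta\theta}u)=e^{i\beta\theta}\big(Du+\beta(\A+\nabla\theta)u\big)$ gives
$$
\int_\Omega|(D+\beta\A)\psi|^2\le 2\,(cr)^{d-2}\|\nabla\varphi\|_{L^2}^2+2\beta^2\|\varphi\|_{L^\infty}^2\int_{\b(y,cr)}|\A+\nabla\theta|^2,
$$
while $\int_\Omega|\psi|^2=(cr)^d\|\varphi\|_{L^2}^2$. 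Inserting $\int_{\b(y,cr)}|\A+\nabla\theta|^2\le C\,(cr)^d\,r^{2\kappa+2}$ (from the previous display) and dividing, I obtain, with $C_3$ depending only on $d,\Omega,C_0$ and the fixed $\varphi$,
$$
\lambda^D(\beta\A,\Omega)\le\frac{\int_\Omega|(D+\beta\A)\psi|^2}{\int_\Omega|\psi|^2}\le C_3\big(r^{-2}+\beta^2 r^{2\kappa+2}\big).
$$
For $\beta$ large I would then take $r=\beta^{-1/(\kappa+2)}$, so both terms on the right equal $\beta^{2/(\kappa+2)}$, giving $\lambda^D(\beta\A,\Omega)\le 2C_3\,\beta^{2/(\kappa+2)}$; for the remaining bounded range $1<\beta\le r_0^{-(\kappa+2)}$ one takes $r=r_0/2$, obtaining a uniform constant bound, which is still $\le C\,\beta^{2/(\kappa+2)}$ since $\beta^{2/(\kappa+2)}>1$ there. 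Combining the two ranges yields \eqref{u2-1}.

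\textbf{Main obstacle.} The analytic core — the gauge estimate relating $\A+\nabla\theta$ to $\B$ — is already provided by Lemma~\ref{lemma-u1}, and the rest of Step 2 is routine scaling. The one genuinely technical point is Step~1: when $x_0\in\partial\Omega$ one cannot take $\Omega\cap\b(x_0,r)$ itself as the support of a Dirichlet test function, so one must inscribe an interior ball of comparable radius using the Lipschitz (interior-cone) structure of $\Omega$ and transfer the decay of $\fint_{\Omega\cap\b(x_0,r)}|\B|^2$ to it by a volume comparison. This is exactly where the dependence of the constant $C$ on $\Omega$ enters.
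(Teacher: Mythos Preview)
Your proposal is correct and follows essentially the same route as the paper: inscribe an interior ball $\b(y,cr)\subset\Omega\cap\b(x_0,r)$ using the Lipschitz geometry, transfer the $L^2$ bound on $\B$, apply Lemma~\ref{lemma-u1}(1) for the gauge $\theta$, test with $e^{i\beta\theta}$ times a bump, and optimize at $r\sim\beta^{-1/(\kappa+2)}$. The only cosmetic difference is that the paper takes $\psi$ real so the identity $|(D+\beta\A)(\psi e^{i\beta\theta})|^2=|\nabla\psi|^2+\beta^2|\A+\nabla\theta|^2|\psi|^2$ holds exactly, avoiding your factor $2$; and it absorbs your small-$\beta$ case into the choice of the constant $c$ in $r_1=c\beta^{-1/(\kappa+2)}$.
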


\begin{proof}

Let $\b (y, r)\subset \Omega$.
By Lemma \ref{lemma-u1}, there exists $\theta  \in C^1(\b (y, r); \R^d)$ such that \eqref{u-1aa} holds.
Let $\psi \in C_0^1 (\b (y, r); \R)$ to be determined.
Note that
$$
(D+\beta \A)(\psi e^{i\beta  \theta})
= e^{i\beta  \theta} (D+\beta \A+\beta \nabla \theta)\psi.
$$
It follows that
$$
|(D+\beta \A)(\psi e^{i \beta \theta})|^2
= |\nabla \psi|^2 +\beta^2  |\A +\nabla \theta|^2 |\psi|^2.
$$
We now choose $\psi \in C_0^1(\b (y, r); \R)$ such that
$0\le \psi \le 1$, $\psi=1$ in $\b (y, r/2)$ and $|\nabla \psi |\le C /r$.
This gives
\begin{equation}\label{u2-3}
\aligned
\lambda^D (\beta \A, \Omega)
&\le
\frac{\int_\Omega |(D+\beta \A)(\psi e^{i \beta \theta} )|^2}
{\int_\Omega |\psi  e^{i\beta \theta}|^2} \\
& \le C \left\{
\frac{1}{r^2}
+ \beta^2 \fint_{\b (y, r)} |\A +\nabla \theta |^2 \right\}\\
&\le C \left\{ \frac{1}{r^2}
+ \beta^2 r^2 
\fint_{\b (y, r)} |\B|^2 \right\},
\endaligned
\end{equation}
where we have used \eqref{u-1aa} for the last inequality.
Now,  for  $\beta>  1$, let  $r_1= c\,  \beta^{-\frac{1}{\kappa+2}}< r_0$.
Choose $y\in \Omega$ so that $\b (y, cr_1) \subset \b (x_0, r_1)\cap \Omega$.
It follows from  \eqref{u2-0} that
$$
\left(\fint_{\b (y, cr_1)} |\B|^2 \right)^{1/2}
\le C r_1^\kappa,
$$
where $C$ depends on $\Omega$ and $(C_0, \kappa)$. This, together with \eqref{u2-3} with $r=cr_1$, yields
$$
\aligned
\lambda^D(\beta \A, \Omega)
 & \le C \left\{ \frac{1}{r_1^2}
 + \beta^2  r_1^{2+2\kappa} \right\}\\
 &= C \beta^{\frac{2}{\kappa+2}},
 \endaligned
$$
where $C$ depends only on $d$, $\Omega$, and $(C_0, \kappa)$.
\end{proof}

\begin{thm}\label{thm-u4}
Suppose that there exist $x_0\in \partial{\Omega}$ and $\kappa\ge 0$ such that
\begin{equation}\label{u4-0}
\left(\fint_{\b (x_0, r)\cap \Omega} |\B|^2 \right)^{1/2}
\le C_0 r^\kappa
\end{equation}
for any $0< r< r_0$. Then
\begin{equation}\label{u4-1}
\lambda^{D\!N}(\beta \A, \Omega) \le C \beta^{\frac{1}{\kappa+2}}
\end{equation}
for any $\beta> 1$, where $C$ depends only on $d$, $\Omega$,  and $(C_0, \kappa)$.
\end{thm}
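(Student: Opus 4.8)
The plan is to mimic the proof of Theorem~\ref{thm-u1}, but working at the boundary point $x_0$ instead of an interior ball and with a quasimode that does \emph{not} vanish on $\partial\Omega$. Two adjustments are needed relative to the Dirichlet case. First, in place of the interior gauge \eqref{u-1aa} I will invoke the boundary gauge \eqref{u-1a} of Lemma~\ref{lemma-u1}(2): it produces a phase $\theta\in C^1(\b(x_0,r)\cap\Omega;\R)$ with
\[
\Big(\fint_{\b(x_0,c_0r)\cap\Omega}|\A+\nabla\theta|^2\Big)^{1/2}\le Cr\Big(\fint_{\b(x_0,C_0r)\cap\Omega}|\B|^2\Big)^{1/2}.
\]
Second, I will take the cutoff $\psi$ to equal $1$ on $\b(x_0,c_0r/2)\cap\overline\Omega$, supported in $\b(x_0,c_0r)$ with $|\nabla\psi|\le C/r$, so that $\psi e^{i\beta\theta}$ is an admissible competitor in the Rayleigh quotient defining $\lambda^{D\!N}(\beta\A,\Omega)$ and, crucially, its $L^2(\partial\Omega)$-norm is not too small.

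Concretely, after a translation and rotation putting $x_0=0$ with $\Omega$ locally the region above a Lipschitz graph, I would fix $\beta>1$ and set $r=c_1\beta^{-1/(\kappa+2)}$ with $c_1=c_1(d,\Omega)$ small enough that $0<r<c_0r_0$ (since $\beta>1$ forces $r<c_1$, this is only a condition on $c_1$). Using the identity
\[
|(D+\beta\A)(\psi e^{i\beta\theta})|^2=|\nabla\psi|^2+\beta^2|\A+\nabla\theta|^2|\psi|^2
\]
already exploited in \eqref{u2-3}, together with the Lipschitz bound $|\b(0,c_0r)\cap\Omega|\le Cr^d$, the numerator is controlled by $Cr^{d-2}+C\beta^2r^{d+2\kappa+2}$, where the second term combines the gauge estimate above with hypothesis \eqref{u4-0} applied at radius $C_0r$. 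For the denominator I would use that $\partial\Omega$ is a Lipschitz graph near $0$, so $\mathcal{H}^{d-1}(\partial\Omega\cap\b(0,c_0r/2))\ge cr^{d-1}$, hence $\int_{\partial\Omega}|\psi e^{i\beta\theta}|^2=\int_{\partial\Omega}|\psi|^2\ge cr^{d-1}$. Dividing,
\[
\lambda^{D\!N}(\beta\A,\Omega)\le C\Big(\frac1r+\beta^2r^{2\kappa+3}\Big),
\]
and the choice $r=c_1\beta^{-1/(\kappa+2)}$ balances the two terms because $2-\tfrac{2\kappa+3}{\kappa+2}=\tfrac1{\kappa+2}$, yielding $\lambda^{D\!N}(\beta\A,\Omega)\le C\beta^{1/(\kappa+2)}$ with $C$ depending only on $d$, $\Omega$, and $(C_0,\kappa)$.

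I do not anticipate a real obstacle: since Lemma~\ref{lemma-u1}(2) already delivers the gauge near the boundary, this is a one-scale quasimode computation, completely parallel to the argument for $\lambda^D$ in Theorem~\ref{thm-u1}, except that the test function must be nonvanishing on a boundary patch of surface measure $\sim r^{d-1}$ while remaining localized in $\b(0,c_0r)$. The only points requiring a line of justification are the admissibility of $\psi e^{i\beta\theta}$ as a $C^1(\overline\Omega)$ function (immediate, since $\psi$ vanishes near $\partial\b(0,c_0r)$ and $\theta$ is $C^1$ on the support of $\psi$) and the elementary Lipschitz-domain estimates $|\b(x_0,\rho)\cap\Omega|\approx\rho^d$ and $\mathcal{H}^{d-1}(\partial\Omega\cap\b(x_0,\rho))\approx\rho^{d-1}$ for small $\rho$.
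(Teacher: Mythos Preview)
Your proposal is correct and follows essentially the same argument as the paper: invoke Lemma~\ref{lemma-u1}(2) for the boundary gauge, test with $\psi e^{i\beta\theta}$ where $\psi$ is a cutoff equal to $1$ on $\b(x_0,c_0r/2)$, bound the numerator by $C(r^{d-2}+\beta^2 r^{d+2\kappa+2})$ and the denominator from below by $cr^{d-1}$, and optimize at $r\sim\beta^{-1/(\kappa+2)}$. The paper presents the same computation slightly more tersely, writing the Rayleigh quotient directly as $C\{r^{-1}+\beta^2 r^3\fint_{\b(x_0,C_0r)\cap\Omega}|\B|^2\}$ before inserting \eqref{u4-0}.
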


\begin{proof}

Let  $0< r< c_0r_0$, where $c_0>0$ is given 
by Lemma \ref{lemma-u1}. There exists $\theta \in C^1 (\b (x_0, c_0 r)\cap \Omega; \R)$ such that 
\eqref{u-1a} holds.
Choose  $\psi \in C_0^1(\b (x_0, c_0 r); \R)$  such that $0\le \psi \le 1$,
$\psi=1$ in $\b (x_0, c_0 r/2)$ and $|\nabla \psi|\le C/r$.
As in the proof of Theorem \ref{thm-u1}, 
\begin{equation}\label{u4-3}
\aligned
\lambda^{D\!N} (\beta \A, \Omega)
&\le
\frac{\int_\Omega |(D+\beta \A)(\psi e^{i \beta \theta} )|^2}
{\int_{\partial\Omega} |\psi  e^{i\beta \theta}|^2} \\
& \le C \left\{
\frac{1}{r}
+ \beta^2  r \fint_{\b (x_0, c_0 r)\cap \Omega} |\A +\nabla \phi|^2 \right\}\\
&\le C \left\{ \frac{1}{r}
+ \beta^2 r^3 
\fint_{\b (x_0, C_0r)\cap \Omega} |\B|^2 \right\}.
\endaligned
\end{equation}
For $\beta > 1$, let $r=c\,  \beta^{-\frac{1}{2+\kappa}}< c_0 r_0$.
It follows from \eqref{u4-3} that
$$
\aligned
\lambda^{D\!N}(\beta \A, \Omega)
 & \le C \left\{ \frac{1}{r} + \beta^2 r^{3+2\kappa}\right\}\\
& = C \beta^{-\frac{1}{\kappa+2}},
\endaligned
$$
where $C$ depends on $d$, $\Omega$ and $(C_0, \kappa)$.
\end{proof}

\begin{remark}\label{re-up}
{\rm
Let $\kappa_*$ be defined by \eqref{kappa}.
Choose  $x_0\in \overline{\Omega}$ such that $\kappa (x_0)=\kappa_*$.
It follows that $\partial^\alpha \B(x_0)=0$ for any $\alpha$ with $|\alpha|\le \kappa_*-1$.
Then, by Taylor's Theorem,
$$
|\B(x)|\le C | x-x_0|^{\kappa_*} \qquad 
\text{ for any  $x\in \b(x_0, r_0)\cap \Omega$},
$$
 where $C$ depends on $d$, $\kappa_*$, $\Omega$ and $\| \B\|_{C^{\kappa}(\overline{\Omega}) }$.
In view of  Theorem \ref{thm-u1}, this gives the upper bound in \eqref{main-1a}.
Similarly, the upper bound in \eqref{main-2a} follows readily  from Theorem \ref{thm-u4}.
}
\end{remark}


\section{Operator lower bounds}\label{section-op}

Throughout this section we assume that $\A\in C^1(\R^d; \R^d)$ and
$\Omega$ is a bounded Lipschitz domain in $\R^d$.
Let 
$$
\widetilde{\Omega}=\{ x\in \R^d: \text{\rm dist}(x, \Omega)< r_0 \}.
$$
We will   prove the inequalities \eqref{in-1}-\eqref{in-2} under  the  additional condition:
\begin{itemize}

\item

 There exists an increasing continuous function $\eta$ on $[0, 2]$ with $\eta (0)=0$ such that
 if $\b(x_0, r)\subset \widetilde{\Omega}$, then 
 \begin{equation}\label{c-1} 
|\B(x)-\B(y)|\le \eta \left( \frac{|x-y|}{r}\right) \fint_{\b(x_0, r)} |\B|
\end{equation}
for any  $x, y\in \b (x_0, r)$.

\end{itemize}

A few remarks about the condition \eqref{c-1} are in order.

\begin{remark}\label{re-op-0}
{\rm
It follows from \eqref{c-1}  that 
\begin{equation}\label{A}
\max_{\b(x_0, r)} |\B|
\le C_0 \fint_{\b(x_0, r)} |\B|
\end{equation}
for any $\b(x_0, r) \subset \widetilde{\Omega}$, where $C_0=1+ \eta (2)$.
This  implies that the function $|\B|$ is an $A_\infty$ weight in $\widetilde{\Omega}$.
 Consequently,   there exist
positive constants $C, c, \delta_0, \delta_1$, depending only on $d$ and $C_0$ in \eqref{A},  such that  
\begin{equation}\label{A-inf}
c \left( \frac{|E|}{|\b (x_0, r) |} \right)^{\delta_0}
\le \frac{\int_E |\B|}{\int_{\b (x_0, r) } |\B|}
\le C  \left( \frac{|E|}{|\b (x_0, r) |} \right)^{\delta_1},
\end{equation}
whenever $\b(x_0, r) \subset \widetilde{\Omega}$ and $E\subset \b (x_0, r) $ is measurable \cite{Coifman}.
In particular, $|\B|$ satisfies the doubling condition, 
\begin{equation}\label{cond-d}
\int_{\b (x_0, 2r)} |\B| \le C \int_{\b (x_0, r)} |\B|
\end{equation}
for any $\b(x_0, 2r) \subset \widetilde{\Omega}$.
As a result, the inequalities  \eqref{c-1}-\eqref{cond-d}
continue to hold  (with different constants and $\eta$), if we replace $\b (x_0, r)$ with the cube $Q(x_0, r)$.
}
\end{remark}

\begin{remark}\label{re-op-1}
{\rm
Suppose $\B$ is a (matrix-valued) polynomial of degree $\kappa$ in $\R^d$.
Then
\begin{equation}\label{c-1a}
\max_{\b(x_0, r)} |\nabla \B|\le \frac{C}{r} \fint_{\b(x_0, r)} |\B|
\end{equation}
for any $\b(x_0, r)\subset \R^d$, where $C$ depends only on $d$ and $\kappa$.
It follows that the condition \eqref{c-1} holds with $\eta (t)= Ct$.
}
\end{remark}

\begin{remark}\label{re-op-2}
{\rm
Suppose $\B\in C^{\kappa+1}(\widetilde{\Omega}; \mathbb{R}^{d\times d})$.
Assume there exist $C_0, c_0>0$ such that 
\begin{equation}\label{c-1b}
c_0\le \sum_{|\alpha|\le \kappa} |\partial^\alpha \B(x)|
\quad \text{ and } \quad
\sum_{|\alpha|\le \kappa+1} |\partial^\alpha \B(x)|\le C_0
\end{equation}
for any $x\in \widetilde{\Omega}$.
Then the inequality \eqref{c-1a} holds  for any $\b(x_0, r)\subset \widetilde{\Omega}$. As a result, 
the condition \eqref{c-1b} implies \eqref{c-1}, with $\eta (t)=C t$, where  $C$ depending on $d$, $\Omega$,  and $(\kappa, C_0, c_0)$.

To see this, let $Q(x_0, r)\subset \widetilde{\Omega}$.
Let $\P$ denote the $\kappa^{th}$ Taylor polynomial of $\B$ at $x_0$. Note that 
$$
\max_{Q(x_0, r)} |\nabla \P|
\le \frac{C}{r} \fint_{Q(x_0, r)} |\P| 
$$
and
$$
\max_{Q(x_0, r)} |\P|  \le C \fint_{Q(x_0, r)} |\P|
\approx \sum_{|\alpha|\le \kappa} 
|\partial^\alpha \B(x_0)| r^{|\alpha|},
$$
where $C$ depends only on $d$ and  $\kappa$.
It follows that  for $0< r< 1$,
$$
\aligned
c_0\,  r^\kappa  & \le \sum_{|\alpha|\le \kappa} |\partial^\alpha \B(x_0)| r^{|\alpha|}
  \le C \fint_{Q(x_0, r)} |\P|\\
 & \le C \fint_{Q(x_0, r)} |\B| + C r^{\kappa+1},
\endaligned
$$
where we have used \eqref{c-1b}.
Thus,  for  $0<r<c$, where $c$ is sufficiently small,
$$
r^\kappa \le C  \fint_{Q(x_0, r)} |\B |.
$$
Furthermore, if $Q(x_0, r)\subset \widetilde{\Omega}$ and $0< r< c$, 
$$
\aligned
\max_{Q(x_0, r)} |\nabla \B|
 & \le \max_{Q(x_0, r)}  |\nabla \P| + C r^{\kappa}
 \le \frac{C}{r} \fint_{Q(x_0, r)} |\P| + C r^\kappa\\
 &\le \frac{C}{r} \fint_{Q(x_0, r)} |\B| + C r^\kappa
 \le \frac{C}{r} \fint_{Q(x_0, r)} |\B|. 
 \endaligned
 $$
 By dividing cubes into subcubes, one may eliminate  the condition $0< r<c$.
 Using the doubling condition, one may also replace $Q(x_0, r)$ by $\b(x_0, r)$.
}
\end{remark}

\begin{lemma}\label{lemma-op1}
 Let $x_0 \in \overline{\Omega}$ and $0< r< r_0$,
where either $x_0\in \partial\Omega$ or $\b (x_0, r) \subset  \Omega$.
Assume  that  for some  measurable set $E\subset \b (x_0, r) \cap \Omega$ and  some $\psi \in C^1(\b (x_0, r) \cap\Omega; \C)$, 
\begin{equation}\label{op1-1}
\frac{\alpha_0}{r^2}
\int_E |\psi|^2
\le \int_{\b (x_0, r) \cap \Omega} |(D+\A)\psi|^2,
\end{equation}
where $\alpha_0\in (0, 1)$.
Then
\begin{equation}\label{op1-2}
\frac{ c\,  \alpha_0 |E|}{ r^{d+2}}
\int_{\b (x_0, r)\cap\Omega} |\psi|^2
\le \int_{\b (x_0, r) \cap \Omega} |(D+\A)\psi|^2,
\end{equation}
where $c>0$ depends only on $d$ if $\b (x_0, r)\subset  \Omega$,  and   $c$ also depends on  the Lipschitz character of $\Omega$ if $x_0\in \partial\Omega$.
\end{lemma}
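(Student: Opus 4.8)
The plan is to upgrade the hypothesis \eqref{op1-1}, which controls $\int_E |\psi|^2$ on the possibly small set $E$, to a bound on the full $L^2$-mass $\int_{\b(x_0,r)\cap\Omega}|\psi|^2$, at the cost of the extra geometric factor $|E|/r^d$. The natural mechanism is a Poincaré-type inequality on $\b(x_0,r)\cap\Omega$ that compares the $L^2$-norm of a function to its $L^2$-norm over a subset $E$ plus the Dirichlet energy of the function; the diamagnetic inequality $|\nabla|\psi||\le|(D+\A)\psi|$ then converts the Dirichlet energy $\int|\nabla|\psi||^2$ back into the magnetic energy on the right-hand side of \eqref{op1-2}.

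First I would reduce to a real-valued statement: set $u=|\psi|\ge 0$, so $u\in W^{1,2}(\b(x_0,r)\cap\Omega)$ with $\int|\nabla u|^2\le\int|(D+\A)\psi|^2$ by the diamagnetic inequality, and \eqref{op1-1} reads $\tfrac{\alpha_0}{r^2}\int_E u^2\le\int_{\b(x_0,r)\cap\Omega}|(D+\A)\psi|^2=:\mathcal E$. Next I would invoke the following rescaled Poincaré inequality on $\omega:=\b(x_0,r)\cap\Omega$ (an open set which is either a ball or, by the Lipschitz condition \eqref{Lip}, a bi-Lipschitz image of a half-ball, hence a John domain with constants controlled by $d$ and the Lipschitz character): for any measurable $E\subset\omega$ and any $u\in W^{1,2}(\omega)$,
\begin{equation*}
\int_\omega u^2 \le \frac{C r^d}{|E|}\int_E u^2 + \frac{C r^{d+2}}{|E|}\int_\omega |\nabla u|^2,
\end{equation*}
with $C$ depending only on $d$ (and the Lipschitz character if $x_0\in\partial\Omega$). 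This is the standard "Poincaré with respect to a set of positive measure," proved by writing $u = (u-u_E) + u_E$ where $u_E=\fint_E u$, estimating $\int_\omega(u-u_E)^2\le \int_\omega(u-u_\omega)^2 + |\omega|(u_\omega-u_E)^2$, using the $W^{1,2}$ Poincaré inequality $\int_\omega(u-u_\omega)^2\le Cr^2\int_\omega|\nabla u|^2$ for the first term, Cauchy–Schwarz $|u_\omega-u_E|^2\le\tfrac{1}{|E|}\int_E(u-u_\omega)^2\le\tfrac{|\omega|}{|E|}\fint_\omega(u-u_\omega)^2$ for the second, and $|\omega|\le Cr^d$ together with $u_E^2\le\tfrac1{|E|}\int_E u^2$ throughout.

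Then I would combine the two displays: substituting the Poincaré inequality and then \eqref{op1-1} (in the form $\int_E u^2\le \tfrac{r^2}{\alpha_0}\mathcal E$, and also $\int_\omega|\nabla u|^2\le\mathcal E$), we get
\begin{equation*}
\int_\omega u^2 \le \frac{C r^d}{|E|}\cdot\frac{r^2}{\alpha_0}\mathcal E + \frac{C r^{d+2}}{|E|}\mathcal E \le \frac{C r^{d+2}}{\alpha_0|E|}\,\mathcal E,
\end{equation*}
using $\alpha_0<1$ to absorb the second term into the first. Rearranging gives exactly \eqref{op1-2} with $c=1/C$. The only genuine point requiring care — the "main obstacle" — is justifying that $\b(x_0,r)\cap\Omega$ admits the scale-invariant Poincaré inequality with a constant depending only on $d$ and the Lipschitz character of $\Omega$, uniformly in $x_0\in\partial\Omega$ and $0<r<r_0$; this follows because, after the translation and rotation of \eqref{Lip}, $\Omega\cap\b(x_0,r_0)$ is the subgraph region of a Lipschitz function with norm $\le M$, so $\b(x_0,r)\cap\Omega$ is, up to a fixed bi-Lipschitz change of variables flattening the graph, comparable to a half-ball, and the Poincaré constant of a half-ball of radius $r$ scales like $r$ with a dimensional constant. (If $\b(x_0,r)\subset\Omega$ the domain is literally a ball and no boundary geometry enters, giving the stated $c=c(d)$.)
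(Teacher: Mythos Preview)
Your proof is correct, but it takes a genuinely different route from the paper's. Both arguments reduce to the scalar function $u=|\psi|$ via the diamagnetic inequality and both rely on the scale-invariant Poincar\'e inequality on the (half-)ball $\omega=\b(x_0,r)\cap\Omega$, so the underlying analytic input is the same. The difference is in how the pieces are assembled. You use the standard mean-value decomposition $u=(u-u_E)+u_E$ to prove directly the ``Poincar\'e relative to $E$'' inequality $\int_\omega u^2\le \tfrac{Cr^d}{|E|}\int_E u^2+\tfrac{Cr^{d+2}}{|E|}\int_\omega|\nabla u|^2$, and then substitute the hypothesis \eqref{op1-1}. The paper instead argues in the Fefferman--Phong style: it writes the Poincar\'e inequality in the symmetric double-integral form $\tfrac{c}{r^2}\fint_\omega\int_\omega\big||\psi(x)|-|\psi(y)|\big|^2\le\int_\omega|(D+\A)\psi|^2$, adds to it $t$ times the hypothesis (viewed as a potential term $V=\alpha_0 r^{-2}\chi_E$), chooses $t=c/\alpha_0$, and then uses the pointwise trick $\tfrac{c}{r^2}|a-b|^2+tV(x)a^2\ge\tfrac12\min\{\tfrac{c}{r^2},tV(x)\}b^2$ to extract $\tfrac{c|E|}{r^2|\omega|}\int_\omega|\psi|^2$ on the left.

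Your argument is more elementary and transparent; it gives the result in one clean step once the relative Poincar\'e inequality is stated. The paper's argument is slightly more opaque here but is the natural one if one has the Fefferman--Phong uncertainty-principle framework in mind (which the paper invokes elsewhere), and it avoids explicitly tracking the averages $u_E,u_\omega$. Either way the only nontrivial point, which you correctly flag, is the uniform Poincar\'e constant for $\b(x_0,r)\cap\Omega$ when $x_0\in\partial\Omega$; the paper handles this the same way you do, by the Lipschitz graph representation \eqref{Lip}.
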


\begin{proof}

We give the proof  for the case $x_0\in \partial\Omega$. The other case, where $\b (x_0, r) \subset \Omega$, is similar.
By Poincar\'e inequality,
\begin{equation}\label{op1-3}
\aligned
\frac{c}{r^{2}}
\fint_{\b (x_0, r) \cap \Omega}
\int_{\b (x_0, r) \cap \Omega}
\big| |\psi (x)| -|\psi (y)|\big|^2\, dx dy
 & \le \int_{\b (x_0, r)\cap \Omega} 
\big|\nabla |\psi |\big |^2\\
& \le \int_{\b (x_0, r) \cap \Omega} 
|(D+\A)\psi |^2, 
\endaligned
\end{equation}
where we have used the diamagnetic  inequality $\big| \nabla |\psi | \big|\le | (D+\A)\psi|$
for the last step. The constant $c$ in \eqref{op1-3} depends only on $d$ and the Lipschitz character of $\Omega$.
Let $V(x)=\alpha_0 r^{-2} \chi_E (x)$.
Note that  by the assumption \eqref{op1-1}, 
\begin{equation}\label{op1-4}
t \fint_{\b (x_0, r) \cap \Omega}
\int_{\b(x_0, r) \cap \Omega}
 V(x) |\psi (x)|^2 \, dx dy
  \le  t \int_{\b (x_0, r) \cap \Omega} 
|(D+\A)\psi |^2
\end{equation}
for any $t>0$.
It follows by adding  \eqref{op1-3} and \eqref{op1-4} that 
\begin{equation}\label{op1-5}
\aligned
 & \fint_{\b (x_0, r) \cap \Omega}\int_{\b (x_0, r) \cap \Omega}
\left\{
\frac{c}{r^2} \big| |\psi (x)| -|\psi (y)| \big|^2
+ t V(x) |\psi (x)|^2 \right\} dx dy\\
&\qquad  \le (1+t) 
  \int_{\b (x_0, r) \cap \Omega} 
|(D+\A)\psi |^2.
\endaligned
\end{equation}
Choose $t$ so that $\alpha_0 t=c$.  Using 
$$
\aligned
\frac{c}{r^2} \big| |\psi (x)| -|\psi (y)| \big|^2
+ t V(x) |\psi (x)|^2
 & \ge \min \left\{ \frac{c}{r^2}, t V(x) \right\}
\left\{ \big | |\psi (x)| - |\psi (y)| \big|^2
+ |\psi(x)|^2  \right\}\\
& \ge \frac12  \min \left\{ \frac{c}{r^2}, t V(x) \right\}
|\psi (y)|^2\\
& = \frac{c}{2r^2} \chi_ E (x) |\psi (y)|^2, 
\endaligned
$$
we deduce from \eqref{op1-5} that
\begin{equation}\label{op1-6}
\frac{c |E|}{2r^2 |\b (x_0, r) \cap \Omega|}
\int_{\b (x_0, r)\cap \Omega} |\psi|^2
\le 
(1+t) 
  \int_{\b (x_0, r) \cap \Omega} 
|(D+\A)\psi |^2, 
\end{equation}
where $t=c \alpha^{-1}_0$.
Since $x_0 \in \partial\Omega $ and $\Omega$ is Lipschitz, we have $|\b (x_0, r) \cap \Omega| \ge c\,  r^d$.
As a result, \eqref{op1-2} follows readily  from \eqref{op1-6}.
\end{proof}

Recall that $B_{jk}=\partial_j A_k -\partial_k A_j$ for $1\le j, k \le d$.

\begin{lemma}\label{lemma-op2}
Let $d=2$ and $Q$ be a square in $\R^2$. Suppose that
\begin{equation}\label{op2-1}
|Q|\left(\fint_Q |B_{12}|^2 \right)^{1/2}\le \pi.
\end{equation}
Then
\begin{equation}\label{op2-2}
\frac{c}{|Q|}
\left| \int_Q B_{12} \right|^4
\int_Q |\psi|^2 \le  \int_Q | (D+\A) \psi |^2
\end{equation}
for any $\psi\in C^1(Q; \C)$, where $c>0$ is an absolute constant.
\end{lemma}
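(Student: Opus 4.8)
The plan is to combine a holonomy (Stokes) estimate on a one–parameter family of concentric square loops with the spreading mechanism of Lemma~\ref{lemma-op1}. First I would rescale: replacing $\psi(x)$ by $\psi(\ell x)$ and $\A(x)$ by $\ell\,\A(\ell x)$, where $\ell$ is the side of $Q$, transforms \eqref{op2-1} and \eqref{op2-2} into the same inequalities on the \emph{unit} square, so I may assume $|Q|=1$. Write $g=|(D+\A)\psi|$ and $F=\big|\int_Q B_{12}\big|$; we may assume $\int_Q g^2<\infty$ and $F>0$, otherwise \eqref{op2-2} is trivial. Then \eqref{op2-1} becomes $\|B_{12}\|_{L^2(Q)}\le\pi$, which already forces $F\le\|B_{12}\|_{L^1(Q)}\le\|B_{12}\|_{L^2(Q)}\le\pi$.

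Next I would record the gauge–invariant holonomy inequality: for a piecewise-$C^1$ closed curve $\gamma$ in $Q$ with base point $x_*$, differentiating $t\mapsto\psi(\gamma(t))\exp\!\big(i\int_0^t\A(\gamma)\cdot\gamma'\big)$ and integrating gives
\[
|\psi(x_*)|\,\big|e^{\,i\int_\gamma\A\cdot d\ell}-1\big|\le\int_\gamma g\,|d\ell|.
\]
I apply this with $\gamma=\partial Q_s$, where $Q_s\subset Q$ is the concentric square of side $s\in(0,1)$, so that by Stokes $\int_{\partial Q_s}\A\cdot d\ell=\int_{Q_s}B_{12}=:\Phi(s)$. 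Since $|\Phi(s)-\Phi(1)|\le|Q\setminus Q_s|^{1/2}\|B_{12}\|_{L^2(Q)}\le\sqrt{1-s^2}\,\pi$ and $|\Phi(1)|=F$, one gets $|\Phi(s)|\ge F/2$ for $s$ in the interval $I=[s_0,s_1]$ with $s_0=\sqrt{1-F^2/(4\pi^2)}$, $s_1=\sqrt{1-F^2/(16\pi^2)}$; here $\tfrac12<s_0<s_1<1$ because $0<F\le\pi$, and the closed square annulus $A=\overline{Q_{s_1}}\setminus Q_{s_0}\subset Q$ has $|A|=s_1^2-s_0^2=3F^2/(16\pi^2)$. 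Moreover $|\Phi(s)|\le\|B_{12}\|_{L^1(Q)}\le\pi$, so $|e^{i\Phi(s)}-1|=2|\sin(\Phi(s)/2)|\ge(2/\pi)|\Phi(s)|\ge F/\pi$ on $I$. Hence the holonomy inequality yields $|\psi(x)|\le(\pi/F)\int_{\partial Q_s}g\,d\ell$ for every $x\in\partial Q_s$ and every $s\in I$.

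Then I would square this (using $(\int_{\partial Q_s}g)^2\le|\partial Q_s|\int_{\partial Q_s}g^2\le4\int_{\partial Q_s}g^2$), integrate over $x\in\partial Q_s$ (the bound is independent of $x$) and over $s\in I$, and use the coarea identity $\int_A f\,dx=\tfrac12\int_I\int_{\partial Q_s}f\,d\ell\,ds$ (the factor $\tfrac12$ because enlarging the side by $ds$ pushes each edge outward by $ds/2$) to obtain
\[
\int_A|\psi|^2\le\frac{C}{F^2}\int_A g^2\le\frac{C}{F^2}\int_Q|(D+\A)\psi|^2,
\]
with $C$ absolute; equivalently $\alpha_0\int_A|\psi|^2\le\int_Q|(D+\A)\psi|^2$ with $\alpha_0=F^2/C\in(0,1)$. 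Finally I would invoke the argument in the proof of Lemma~\ref{lemma-op1} with the unit cube $Q$ in place of $\b(x_0,r)\cap\Omega$ (that proof uses only the Poincar\'e inequality on $Q$, the diamagnetic inequality, and $|Q|=1$): together with $|A|\ge cF^2$ it gives $c'F^2|A|\int_Q|\psi|^2\le\int_Q|(D+\A)\psi|^2$, i.e. $c''F^4\int_Q|\psi|^2\le\int_Q|(D+\A)\psi|^2$ with $c''$ absolute. Undoing the rescaling gives \eqref{op2-2}.

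The main obstacle is the non-locality of $B_{12}$: the flux through a \emph{small} sub-square of $Q$ is not controlled by $F$, so one cannot simply localize the estimate to a region of side comparable to $\ell$. Working with concentric squares of side close to $1$ together with the H\"older bound $|\Phi(1)-\Phi(s)|\lesssim\sqrt{1-s^2}$ circumvents this, but it forces the annulus $A$ on which $|\Phi|\gtrsim F$ to have area only $O(F^2)$; combined with the holonomy gap $|e^{i\Phi}-1|\gtrsim F$ and the fact that the spreading step of Lemma~\ref{lemma-op1} carries a factor $|A|$, this is exactly what produces the exponent $4$ on $\big|\int_Q B_{12}\big|$ in \eqref{op2-2} rather than $2$.
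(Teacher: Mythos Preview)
Your argument is correct. The paper does not actually prove this lemma; it simply cites \cite[Theorem~3.2 with $p=2$]{Shen-1998}. Your self-contained proof follows precisely the philosophy the author describes in the introduction for this circle of estimates: relate the magnetic field to the potential via Stokes' theorem and the holonomy bound $|\psi(x_*)|\,|e^{i\Phi}-1|\le\int_\gamma|(D+\A)\psi|\,|d\ell|$, and then upgrade the resulting inequality on a set of positive measure to an inequality on the whole square by the Fefferman--Phong mechanism encoded in Lemma~\ref{lemma-op1}. The choice of concentric square loops with side $s$ close to $1$, together with the Cauchy--Schwarz bound $|\Phi(1)-\Phi(s)|\le\pi\sqrt{1-s^2}$ coming from \eqref{op2-1}, is exactly what makes the hypothesis $|Q|\,(\fint_Q|B_{12}|^2)^{1/2}\le\pi$ enter, and your bookkeeping of the factors (annulus area $|A|\approx F^2$, holonomy gap $\approx F$, and the factor $\alpha_0|E|$ from Lemma~\ref{lemma-op1}) explains transparently why the power $F^4$ appears in \eqref{op2-2}. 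The rescaling to the unit square, the coarea identity $\int_A f=\tfrac12\int_I\int_{\partial Q_s}f\,d\ell\,ds$, and the observation that the proof of Lemma~\ref{lemma-op1} only uses Poincar\'e and the diamagnetic inequality (hence works verbatim on the unit cube) are all valid.

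In short, you have reconstructed the Shen-1998 argument that the paper invokes, with no gaps.
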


\begin{proof}

See \cite[Theorem 3.2 with $p=2$]{Shen-1998}.
\end{proof}

\begin{lemma}\label{lemma-op3}
Assume   $\B$ satisfies \eqref{c-1}.
Let $Q\subset \Omega$ be a cube with side length $r< r_0$. Suppose that 
\begin{equation}\label{op3-1}
\frac{\alpha_0}{r^2}
\le \max_{Q} |\B|  \le \frac{1}{r^2}
\end{equation}
for some $\alpha_0\in (0, 1)$. Then, for any $\psi \in C^1(Q; \C)$, 
\begin{equation} \label{op3-2}
\frac{c }{r^2}
\int_{Q}  |\psi |^2 
\le \int_{Q}  |(D+\A)\psi|^2,
\end{equation}
where $c>0$ depends only on $d$, $\alpha_0$  and $\eta$ in \eqref{c-1}.
\end{lemma}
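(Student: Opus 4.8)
The plan is to reduce the $d$-dimensional estimate to the planar estimate of Lemma \ref{lemma-op2} by slicing, after first trading the pointwise bound $\max_Q|\B|\ge \alpha_0 r^{-2}$ for an averaged bound on a fixed fraction of $Q$. First I would use the $A_\infty$/doubling machinery recorded in Remark \ref{re-op-0}: since \eqref{c-1} holds, $|\B|$ is an $A_\infty$ weight, so from $\max_Q|\B|\approx \fint_Q|\B|$ and \eqref{op3-1} we get $\fint_Q|\B|\gtrsim \alpha_0 r^{-2}$, and by the lower bound in \eqref{A-inf} there is an absolute fraction $\sigma=\sigma(d,\alpha_0,\eta)>0$ and a subcube (or measurable subset) on which $|\B|$ is comparable to its average; more usefully, the oscillation control \eqref{c-1} gives a point $x_*\in Q$ with $|\B(x_*)|\ge c\alpha_0 r^{-2}$, hence $|B_{j\ell}(x_*)|\ge c\alpha_0 r^{-2}$ for some pair $j<\ell$, and then \eqref{c-1} forces $|B_{j\ell}(x)|\ge \tfrac12 c\alpha_0 r^{-2}$ on a concentric sub-cube $Q'\subset Q$ with side length $c'r$, $c'=c'(d,\alpha_0,\eta)$. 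Thus after shrinking we may assume a single component $B_{j\ell}$ is bounded below by $c\alpha_0 r^{-2}$ throughout $Q'$.

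Next I would fix coordinates so that $\{j,\ell\}=\{1,2\}$ and write $x=(x_1,x_2,x'')$ with $x''\in\R^{d-2}$. For a.e. fixed $x''$, restrict $\psi$ to the two-dimensional square slice $Q'_{x''}$; the two-dimensional magnetic potential is $(A_1,A_2)$ evaluated on that slice, whose curl is exactly $B_{12}$. The smallness hypothesis \eqref{op2-1} of Lemma \ref{lemma-op2} reads $|Q'_{x''}|(\fint |B_{12}|^2)^{1/2}\le\pi$; since $|Q'_{x''}|\le (c'r)^2$ and, by \eqref{op3-1} together with Remark \ref{re-op-0}, $\max_Q|\B|\le r^{-2}$ controls $(\fint|B_{12}|^2)^{1/2}\le C_0 r^{-2}$, after possibly shrinking $c'$ once more (depending only on $d,\alpha_0,\eta$) this smallness condition holds on every slice. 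Applying Lemma \ref{lemma-op2} on each slice gives
\[
\frac{c}{|Q'_{x''}|}\Big|\int_{Q'_{x''}}B_{12}\Big|^4\int_{Q'_{x''}}|\psi|^2\,dx_1dx_2\le\int_{Q'_{x''}}|(D+\A)\psi|^2\,dx_1dx_2,
\]
where the left factor is $\gtrsim r^{-2}$ because $B_{12}\ge c\alpha_0 r^{-2}$ on $Q'_{x''}$ of area $\approx r^2$, so $|\int B_{12}|\gtrsim \alpha_0$ and $|\int B_{12}|^4/|Q'_{x''}|\gtrsim \alpha_0^4 r^{-2}$. Integrating in $x''\in\R^{d-2}$ over the slab yields $\tfrac{c}{r^2}\int_{Q'}|\psi|^2\le\int_{Q'}|(D+\A)\psi|^2$.

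Finally, I would promote the bound from the subcube $Q'$ to $Q$. This is exactly the shape of Lemma \ref{lemma-op1}: the inequality just proved is hypothesis \eqref{op1-1} with $E=Q'$ and $\alpha_0$ there replaced by an absolute multiple of $\alpha_0^4$ (note $Q\subset\Omega$, so we are in the interior case where the constant depends only on $d$), and since $|Q'|/|Q|\approx (c')^d$ is a positive constant depending only on $d,\alpha_0,\eta$, the conclusion \eqref{op1-2} gives precisely $\tfrac{c}{r^2}\int_Q|\psi|^2\le\int_Q|(D+\A)\psi|^2$ with $c=c(d,\alpha_0,\eta)$, as claimed. The main obstacle is the bookkeeping in the first step: one must verify that shrinking to $Q'$ can be done so that \emph{simultaneously} a fixed component $B_{j\ell}$ stays bounded below and the slice-wise smallness \eqref{op2-1} holds, with all the shrinking factors depending only on the allowed quantities $d,\alpha_0,\eta$ — this is where the quantitative modulus $\eta$ in \eqref{c-1} (equivalently \eqref{A}) is essential, and where one has to be careful that the upper bound $\max_Q|\B|\le r^{-2}$ in \eqref{op3-1}, not just the lower bound, is used.
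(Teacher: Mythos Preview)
Your proof is correct and uses the same three ingredients as the paper: the oscillation control \eqref{c-1} to force a single component $B_{j\ell}$ to have a definite sign on a subcube, the planar estimate of Lemma~\ref{lemma-op2} applied slice by slice, and Lemma~\ref{lemma-op1} to pass from a subset back to the full cube. The difference is purely one of ordering. The paper first passes to the average $\fint_Q|\B|$, picks a pair $(j,\ell)$ with large average, slices, uses Fubini to find a good $x'$, then runs a dyadic stopping argument on that two-dimensional slice to locate a small sub-square $S$ where $\fint_S|B_{12}|$ is large, and only then invokes \eqref{c-1} to upgrade this to $|\fint_S B_{12}|\gtrsim 1$; finally it uses \eqref{c-1} a second time to propagate the slice estimate to a $\delta r$-thick slab of nearby slices. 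You instead invoke \eqref{c-1} once, in $\R^d$, at the very start, to obtain a whole subcube $Q'$ on which $B_{j\ell}$ keeps its sign; after that every slice of $Q'$ is automatically good and the dyadic and propagation steps disappear. Your route is shorter, at the cost of using the pointwise maximum directly rather than the average (which the paper's argument shows is equally accessible). One minor wording issue: ``concentric'' is not quite accurate since your $x_*$ need not be the center of $Q$, but since $x_*\in Q$ one can always fit a cube of side $\approx c'r$ inside $Q\cap Q(x_*,c'r)$, so nothing is lost.
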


\begin{proof}

It follows from \eqref{op3-1}, \eqref{A} and the doubling condition \eqref{cond-d} that
\begin{equation}\label{op3-3}
\fint_Q |\B| \ge \frac{c}{r^2}, 
\end{equation}
where $c>0$ depends on $d$, $\alpha_0$ and $\eta$. This implies that 
there exist $1\le j < k  \le d$ such that 
\begin{equation}\label{op3-4}
\fint_Q |B_{jk} | \ge \frac {c }{r^2},
\end{equation}
where $c>0$ depends on $d$, $\alpha_0$ and $\eta$.
Without the  loss of generality, we may assume $j=1$ and $k=2$.
Write $Q=I_1\times I_2\times \cdots \times I_d$, where $\{I_m\}$ are open intervals of $\R$ with length $r$.
By Fubini's Theorem, there exists some $x^\prime \in I_3 \times\cdots \times  I_d$ such that 
\begin{equation}\label{op3-5}
\fint_{I_1 \times I_2} |B_{12} (x_1, x_2, x^\prime)|\, dx_1 dx_2 \ge \frac{c}{r^2}.
\end{equation}
Let $g(x_1, x_2)=B_{12}(x_1, x_2, x^\prime)$.
For each  $\gamma=2^{-\ell}$, 
by a partition of  $I_1\times I_2$ into dyadic sub-squares,  it is not hard to see that there exists a square $S$ in $I_1\times I_2$ with side length $\gamma r$ such that  
\begin{equation}\label{op3-6}
\fint_S |g| \ge c\,   r^{-2} .
\end{equation}
Note that by \eqref{c-1} and \eqref{op3-1}, 
\begin{equation}\label{op3-6a}
|\B(x)-\B(y)|\le C r^{-2} \eta \left(\frac{c|x-y|}{r} \right)
\end{equation}
for any $x, y\in Q$. It follows that 
$$
\fint_S \big|g -\fint_S  g \big|
\le  C \eta (C\gamma)   r^{-2}.
$$
Thus, if we choose $\gamma>0 $ so small  that $C \eta (C\gamma)  \le c/2$, then
$$
\aligned
\big| \fint_S g \big|
& \ge \fint_S |g|
- \fint_S \big| g -\fint_S g \big|\\
& \ge  c  r^{-2} - C  \eta (C \gamma)   r^{-2}
\ge (c/2)  r^{-2}.
\endaligned
$$
As a result, we obtain 
$$
\big| \int_S g \big| \ge c\gamma^2/2.
$$
In view of Lemma \ref{lemma-op2}, this leads to 
\begin{equation}\label{op3-7}
\frac{c}{r^2}
\int_{I_1 \times I_2} |\psi |^2\, dx_1 dx_2
\le 
\int_{I_1 \times I_2}
\left\{
|(D_1+A_1)\psi|^2 + |(D_2 +A_2) \psi |^2 \right\}\, dx_1 dx_2,
\end{equation}
where $A_1=A_1(x_1, x_2, x^\prime)$ and $A_2=A_2(x_1, x_2, x^\prime)$.

Next, we observe that  by \eqref{op3-5} and \eqref{op3-6a},
$$
\int_{I_1\times I_2} |B_{12}(x_1, x_2, y^\prime)|\, dx_1 dx_2 \ge c/2, 
$$
if $y^\prime \in I_3 \times \cdots \times I_d $ and $|y^\prime -x^\prime| < \delta r$, where $\delta >0$ 
is sufficiently small.
Hence, the inequality \eqref{op3-7} continues to  hold  with
$A_1=A_1(x_1, x_2, y^\prime)$ and $A_2=A_2 (x_1, x_2, y^\prime)$.
By integrating in $y^\prime$, we see that
\begin{equation}\label{op3-8}
\frac{c}{r^2}
\int_E
|\psi |^2 \, dx
\le \int_{Q} | (D+\A)\psi|^2,
\end{equation}
where $E=I_1 \times I_2 \times \{ y^\prime\in I_3 \times\cdots \times  I_d: |y^\prime -x^\prime|< \delta r\}$.
Finally, note that $|E|\ge c\,  r^d$. By Lemma \ref{lemma-op1} and \eqref{op3-8},  we obtain \eqref{op3-2}. 
\end{proof}

For $x\in \R^d$, define $m(x, \B)$ by
\begin{equation}\label{m}
\frac{1}{m(x, \B)}
=\sup \left\{ r>0:
\max_{Q(x, r)} |\B| \le \frac{1}{r^2} \right\}.
\end{equation}

\begin{lemma}\label{lemma-op3a}
Suppose $ \B$ satisfies the condition \eqref{A} in $ \widetilde{\Omega}$.
Also assume that 
\begin{equation}\label{op3a-1}
\max_{Q(x, r_0) }  |\B|> r_0^{-2} \quad \text{ for any $x\in \overline{\Omega} $.}
\end{equation}
Then, if  $x, y\in \overline{\Omega}$ and $|x-y|<  \frac{1}{m(x, \B)}$,
\begin{equation}\label{m-2}
m(x, \B) \le C m(y, \B)  \quad \text{ and }\quad
m(y, \B) \le C m( x, \B),
\end{equation}
where $C$ depends on $d$ and the constant in \eqref{A}.
\end{lemma}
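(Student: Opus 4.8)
The plan is to work with the scale function $\rho(x):=1/m(x,\B)$ and to show that $\rho(x)$ and $\rho(y)$ are comparable whenever $|x-y|<\rho(x)$; indeed the two inequalities in \eqref{m-2} are precisely $\rho(y)\le C\rho(x)$ and $\rho(x)\le C\rho(y)$. First I would record the basic structure of $\rho$. Since $t\mapsto\max_{\overline{Q(x,t)}}|\B|$ is continuous and nondecreasing while $t\mapsto t^{-2}$ is continuous and decreasing, the set $S_x=\{r>0:\max_{Q(x,r)}|\B|\le r^{-2}\}$ is an interval $(0,\rho(x)]$; in particular $\max_{Q(x,\rho(x))}|\B|\le\rho(x)^{-2}$. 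The hypothesis \eqref{op3a-1} says $r_0\notin S_x$, so $\rho(x)\le r_0$, and the continuity of $\B$ together with $\B\not\equiv 0$ on $Q(x,r_0)$ (again by \eqref{op3a-1}) gives $\rho(x)>0$. Thus $0<\rho(x)\le r_0$ on $\overline\Omega$, which is what lets me invoke \eqref{A} and the doubling inequality \eqref{cond-d}, in their cube forms (see Remark \ref{re-op-0}), on the cubes appearing below.

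The core step is a one-sided bound: \emph{if $|x-y|<\rho(x)$, then $\rho(x)\le C_1\rho(y)$}, with $C_1$ depending only on $d$ and the constant $C_0$ in \eqref{A}. Writing $\rho=\rho(x)$, the inequality $|x-y|<\rho$ forces the cube inclusions $Q(x,\rho)\subset Q(y,3\rho)\subset Q(x,5\rho)$. Applying \eqref{A} on $Q(y,3\rho)$, iterating \eqref{cond-d} to descend from $Q(x,5\rho)$ to $Q(x,\rho)$, and using $\fint_{Q(x,\rho)}|\B|\le\max_{Q(x,\rho)}|\B|\le\rho^{-2}$, one obtains
\begin{equation*}
\max_{Q(y,3\rho)}|\B|\ \le\ C_0\fint_{Q(y,3\rho)}|\B|\ \le\ \frac{C}{\rho^{d}}\int_{Q(x,\rho)}|\B|\ \le\ \frac{C}{\rho^{2}} .
\end{equation*}
Setting $c_1=\min\{1,C^{-1/2}\}$ and using monotonicity, $\max_{Q(y,c_1\rho)}|\B|\le C\rho^{-2}=Cc_1^{2}(c_1\rho)^{-2}\le(c_1\rho)^{-2}$, so $c_1\rho\in S_y$ and hence $\rho(y)\ge c_1\rho$; this is the claim with $C_1=c_1^{-1}$.

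Finally I would deduce \eqref{m-2} from this one-sided claim by a short case split, which is needed because the hypothesis $|x-y|<\rho(x)$ is not symmetric in $x$ and $y$. The claim applied as is gives $\rho(x)\le C_1\rho(y)$, i.e.\ $m(y,\B)\le C_1 m(x,\B)$. For the reverse inequality: if $\rho(y)\le\rho(x)$ it is immediate (take $C=1$), while if $\rho(y)>\rho(x)$ then $|y-x|=|x-y|<\rho(x)<\rho(y)$, so the claim with the roles of $x$ and $y$ interchanged gives $\rho(y)\le C_1\rho(x)$, i.e.\ $m(x,\B)\le C_1 m(y,\B)$; in either case \eqref{m-2} holds with $C=C_1$. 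The one genuinely delicate point in all of this is the bookkeeping that keeps the auxiliary cubes $Q(y,3\rho)$, $Q(x,5\rho)$ and their dyadic dilates inside the region where \eqref{A} and \eqref{cond-d} are available: this rests on $\rho\le r_0$ and $x,y\in\overline\Omega$, with a harmless enlargement (by a dimensional factor) of the neighborhood of $\Omega$ on which \eqref{A} is assumed if one wants to be fully careful at scales $\rho$ comparable to $r_0$. Everything else is the classical fact that the critical scale of a doubling $A_\infty$ weight varies slowly.
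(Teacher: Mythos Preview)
Your proof is correct and follows essentially the same approach as the paper's, just spelled out in far more detail. The paper's three-sentence argument uses \eqref{A-inf} and \eqref{cond-d} to conclude directly that $\max_{Q(y,r)}|\B|\approx\max_{Q(x,r)}|\B|=r^{-2}$ at $r=\rho(x)$, obtaining both inequalities at once; you instead use \eqref{A} and doubling to get only the upper bound $\max_{Q(y,3\rho)}|\B|\le C\rho^{-2}$ (yielding $m(y,\B)\le C\,m(x,\B)$) and then recover the reverse inequality via the symmetry case-split, which is a perfectly valid and arguably more transparent variant of the same idea. Your remark about the auxiliary cubes $Q(x,5\rho)$ possibly protruding slightly from $\widetilde{\Omega}$ when $\rho$ is comparable to $r_0$ is a fair caveat that the paper glosses over as well.
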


\begin{proof}
Let $r=\{ m(x, \B)\}^{-1}$.
The condition \eqref{op3a-1} implies  $r< r_0$.
By \eqref{A-inf} and \eqref{cond-d}, if  $|x-y|< r$, then
$$
\max_{Q(y, r)} |\B|
\approx \max_{Q(x, r) } |\B| = \frac{1}{r^2},
$$
from which the estimates in \eqref{m-2} follow by the definition  \eqref{m}.
\end{proof}

\begin{thm}\label{thm-op1}
Suppose $\B$ satisfies the condition \eqref{c-1} in $\widetilde{\Omega}$.
 Also assume \eqref{op3a-1} holds.
Then for any $\psi \in C^1(\overline{\Omega}; \C)$, 
\begin{equation}\label{op4-1}
 c \int_\Omega \{ m(x, \B)\}^2 |\psi |^2
\le \int_\Omega |(D+\A)\psi |^2,
\end{equation}
 where $c>0$ depends only on $d$, $\eta$ in \eqref{c-1},  and the Lipschitz character of $\Omega$.
\end{thm}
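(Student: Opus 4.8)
The plan is to prove \eqref{op4-1} by localization: I would cover $\overline{\Omega}$ by boundedly many pieces — interior cubes and boundary half-balls — each at the natural length scale $1/m(x,\B)$, establish on every such piece $V$ a local inequality $\int_{V\cap\Omega}m(x,\B)^2|\psi|^2\le C\int_{V\cap\Omega}|(D+\A)\psi|^2$, and then sum. For $x\in\overline{\Omega}$ put $r(x)=1/m(x,\B)$. By \eqref{op3a-1} and the definition \eqref{m}, $r(x)\le r_0$; since $\B$ is continuous it is bounded, so $m(x,\B)$ is bounded above and $r(\cdot)$ takes values in a fixed interval $[c_*,r_0]$. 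Continuity of $s\mapsto s^2\max_{Q(x,s)}|\B|$ gives $\max_{Q(x,r(x))}|\B|=r(x)^{-2}$. Call $x$ \emph{interior} if $\b(x,C_1r(x))\subset\Omega$ for a fixed large $C_1=C_1(d)$, and \emph{boundary} otherwise, in which case there is $z\in\partial\Omega$ with $|x-z|<C_1r(x)$.

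For interior $x$, I apply Lemma \ref{lemma-op3} to $Q=Q(x,r(x))\subset\Omega$ with $\alpha_0=1$: since $\max_Q|\B|=r(x)^{-2}$, condition \eqref{op3-1} holds and we get $\tfrac{c}{r(x)^2}\int_Q|\psi|^2\le\int_Q|(D+\A)\psi|^2$. Because $r(\cdot)$ is slowly varying — Lemma \ref{lemma-op3a} applied along a chain of boundedly many steps, uniformly since $r(\cdot)\in[c_*,r_0]$ — we have $m(y,\B)\approx r(x)^{-1}$ on $Q$, so $\int_Q m(y,\B)^2|\psi|^2\le C\int_Q|(D+\A)\psi|^2$, and $V=Q$ works.

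The boundary pieces are the crux. Fix a boundary $x$ and the associated $z\in\partial\Omega$, set $\rho=r(z)$, so $\rho\approx r(x)$ (Lemma \ref{lemma-op3a}) and $\max_{Q(z,\rho)}|\B|=\rho^{-2}$. Combining this with \eqref{A} and the doubling property \eqref{cond-d} yields $\fint_{Q(z,C\rho)}|\B|\ge c\rho^{-2}$. By the Lipschitz structure of $\Omega$ — exactly as in the construction of the star-shaped set $E_r$ in the proof of Lemma \ref{lemma-u1} — there is a \emph{full} box $R\subset\Omega\cap\b(z,C\rho)$ of side length $\approx\rho$, lying above the graph of the boundary function at height $\approx\rho$, so that $|R|\approx|Q(z,C\rho)|$. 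The $A_\infty$ lower bound in \eqref{A-inf} transfers the mass of $\B$ to $R$: $\int_R|\B|\ge c\int_{Q(z,C\rho)}|\B|\ge c\rho^{d-2}$, hence $\fint_R|\B|\ge c\rho^{-2}$. Since $R$ is a genuine box, I now run the argument of Lemma \ref{lemma-op3} inside $R$ verbatim: choose $1\le j<k\le d$ with $\fint_R|B_{jk}|\ge c\rho^{-2}$; slice $R$ in the $(x_j,x_k)$-plane (the slices are honest squares because $R$ is a box); use the oscillation control \eqref{c-1} to pick a dyadic sub-square $S$ with $\bigl|\int_S B_{jk}\bigr|\ge c$; apply the two-dimensional Lemma \ref{lemma-op2} on $S$; and integrate over a transverse sub-box to obtain a set $E\subset R\subset\Omega$ with $|E|\ge c\rho^d$ and $\tfrac{c}{\rho^2}\int_E|\psi|^2\le\int_R|(D+\A)\psi|^2\le\int_{\b(z,C\rho)\cap\Omega}|(D+\A)\psi|^2$. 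This is precisely the hypothesis \eqref{op1-1} of Lemma \ref{lemma-op1} (with $x_0=z$, $r=C\rho$, and $\alpha_0\in(0,1)$ after shrinking the constant), whose conclusion gives $\tfrac{c}{\rho^2}\int_{\b(z,C\rho)\cap\Omega}|\psi|^2\le\int_{\b(z,C\rho)\cap\Omega}|(D+\A)\psi|^2$. Using $m(y,\B)\approx\rho^{-1}$ on $\b(z,C\rho)\cap\Omega$ (Lemma \ref{lemma-op3a} again), this is the local estimate with $V=\b(z,C\rho)$.

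It remains to cover $\overline{\Omega}$. Since $r(\cdot)=1/m(\cdot,\B)$ is slowly varying and bounded in $[c_*,r_0]$, a standard Vitali-type argument produces a countable family $\{V_i\}$ of the two types above covering $\overline{\Omega}$ with overlap bounded by some $N=N(d,\eta,\Omega)$. Summing the local estimates,
\begin{equation*}
\int_\Omega m(x,\B)^2|\psi|^2\le\sum_i\int_{V_i\cap\Omega}m(x,\B)^2|\psi|^2\le C\sum_i\int_{V_i\cap\Omega}|(D+\A)\psi|^2\le CN\int_\Omega|(D+\A)\psi|^2,
\end{equation*}
which is \eqref{op4-1}. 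I expect the main obstacle to be the boundary case: the local Fefferman--Phong bound is obtained by slicing a \emph{full} cube into two-dimensional pieces so that Lemma \ref{lemma-op2} applies, and this fails on a truncated cube $Q\cap\Omega$ near $\partial\Omega$. The remedy is to locate a genuine box $R\subset\Omega$ near the boundary point that still carries a definite fraction of the mass of $\B$ — this is exactly where the $A_\infty$ property coming from \eqref{c-1} is used — and then to invoke Lemma \ref{lemma-op1}, which propagates the bound from the subset $E\subset R$ to the whole half-ball; this is the mechanism that lets interior and boundary points be treated on the same footing. A secondary, more routine difficulty is the bookkeeping of constants in the chaining of Lemma \ref{lemma-op3a} and in keeping all dilated balls inside $\widetilde{\Omega}$ so that \eqref{c-1}--\eqref{cond-d} remain applicable.
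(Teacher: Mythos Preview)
Your proposal is correct and follows essentially the same line as the paper's proof: both establish the local inequality $\tfrac{c}{r^2}\int_{\b(x_0,r)\cap\Omega}|\psi|^2\le\int_{\b(x_0,Cr)\cap\Omega}|(D+\A)\psi|^2$ at scale $r\approx m(x_0,\B)^{-1}$, and both handle the boundary case by locating a full interior cube $R\subset\Omega$ near $x_0$, transferring the mass of $|\B|$ to $R$ via the $A_\infty$ bound \eqref{A-inf}, running the slicing argument of Lemma~\ref{lemma-op3} on $R$, and then invoking Lemma~\ref{lemma-op1} to propagate to the half-ball.

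The one genuine difference is the passage from local to global. You sum over a Vitali-type bounded-overlap cover, whereas the paper multiplies the local estimate by $\{m(x,\B)\}^{2-d}$ on the left and $\{m(x,\B)\}^{-d}$ on the right (using the slow variation from Lemma~\ref{lemma-op3a}), integrates in $x_0$ over $\Omega$, and applies Fubini; the measure of the set $\{x_0:|x-x_0|<c/m(x_0,\B)\}$ is then estimated by $\{m(x,\B)\}^{-d}$ from above and below. Both mechanisms are standard and equivalent in strength here; the Fubini trick avoids any explicit selection of a cover, while your Vitali approach is perhaps more transparent but requires the routine check that balls of slowly-varying radius have bounded overlap.
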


\begin{proof}

Let $x_0\in \overline{\Omega}$ and $r= c \{ m(x_0, \B\}^{-1}\le r_0/4$. We will show that 
\begin{equation}\label{op4-2}
\frac{1}{r^2} \int_{\b (x_0, r)\cap \Omega}    |\psi |^2\, dx
\le C \int_{\b (x_0, 3r)\cap \Omega} |(D+\A)\psi|^2\, dx.
\end{equation}
Assume \eqref{op4-2} for a moment. Then, by \eqref{m-2}, 
\begin{equation}\label{op4-2a}
 \int_{\b (x_0, r)\cap \Omega}   \{ m(x, \B)\}^{2-d}   |\psi |^2\, dx
\le C \int_{\b (x_0, 3r)\cap \Omega} \{ m(x, \B) \}^{-d}  |(D+\A)\psi|^2\, dx.
\end{equation}
By integrating both sides of \eqref{op4-2a} in $x_0$ over the domain $\Omega$ and applying Fubini's Theorem, we obtain 
\begin{equation}\label{op4-3}
\aligned
& \int_\Omega \{ m(x, \B)\}^{2-d} |\psi (x)|^2  |\big\{ x_0 \in \Omega: |x_0-x|< c \{ m(x_0, \B)\}^{-1}\big\}| \, dx\\
&\qquad
\le C \int_\Omega \{ m(x, \B)\}^{-d}  |(D+\A)\psi |^2 
|\big\{ x_0\in \Omega: |x_0-x|< 3 c \{ m(x_0, \B) \}^{-1}\big\} |\, dx.
\endaligned
\end{equation}
Note that by Lemma \ref{lemma-op3a}, 
$$
\aligned
 |\big\{ x_0 \in \Omega: |x_0-x|< c \{ m(x_0, \B)\}^{-1}\big\}|
&\ge c \{ m(x, \B)\}^d,  \\
 |\big\{ x_0 \in \Omega: |x_0-x|< 3c \{ m(x_0, \B)\}^{-1}\big\}|
&\le C \{ m(x, \B)\}^d\\
\endaligned
$$
for $x\in \Omega$. The inequality \eqref{op4-1} follows readily from \eqref{op4-3}.

To prove \eqref{op4-2}, we consider three cases.

Case (1). Suppose $\b (x_0, r)\subset \Omega$. Let $Q(x_0, c_d r) \subset B(x_0, r)$.
Since 
$$
\frac{c}{r^2}\le \max_{Q(x_0,c_d r)} |\B|\le \frac{1}{r^2}, 
$$
it follows by  Lemma \ref{lemma-op3} that 
$$
\frac{c}{r^2} \int_{Q(x_0, c_dr)} |\psi|^2
\le \int_{Q(x_0, c_d r)} |(D+\A)\psi|^2.
$$
In view of Lemma \ref{lemma-op1}, this  yields
$$
\frac{c}{r^2} \int_{\b (x_0, r)} |\psi|^2
\le \int_{\b (x_0, r)} |(D+\A)\psi|^2.
$$

Case (2). Suppose $x_0 \in \partial\Omega$.
Since $\Omega$ is a Lipschitz domain, there exists $y_0\in \R^d$ such that 
$Q(y_0, cr) \subset Q(x_0, r) \cap \Omega$, where $c$ depends on $d$ and the Lipschitz character of $\Omega$.
Note that by \eqref{A-inf},
$$
\frac{c}{r^2} \le \max_{Q(y_0, c r)} |\B | \le \frac{1}{r^2}.
$$
It follows by Lemma \ref{lemma-op3} that 
$$
\frac{c}{r^2} \int_{Q(y_0, c  r)}  |\psi|^2 
\le \int_{Q(y_0, c r)} |(D+\A ) \psi|^2.
$$
Again, by Lemma \ref{lemma-op1}, this  gives 
$$
\frac{c}{r^2} \int_{\b(x_0, r)\cap \Omega} |\psi^2
\le \int_{\b (x_0,  r)\cap \Omega} |(D+\A)\psi|^2.
$$

Case (3). Suppose $x_0 \in \Omega$ and $\b(x_0, r) \cap \partial\Omega \neq \emptyset$.
Let $y_0 \in \b (x_0, r) \cap \partial\Omega$.
Observe that  $\b (x_0, r) \subset \b (y_0, 2r) \subset \b (x_0, 3r)$.
It follows that
\begin{equation}\label{op4-8}
\aligned
\frac{1}{r^2} \int_{\b (x_0, r)\cap \Omega} |\psi|^2
&\le \frac{1}{r^2}
\int_{\b (y_0, 2r)\cap \Omega} |\psi |^2
 \le C \int_{\b (y_0, 2r) \cap \Omega} |(D+\A)\psi |^2\\
& \le C \int_{\b (x_0, 3r)\cap \Omega} |(D+\A)\psi|^2.
\endaligned
\end{equation}
We  point out  that the second inequality in \eqref{op4-8} follows from the proof  of case (2) as well as the fact that
$r\approx \{ m(y_0, \B)\}^{-1}$. 
This completes the proof of \eqref{op4-2}. 
\end{proof}

\begin{remark}\label{re-op1}
{\rm
Suppose that  the magnetic  field satisfies the condition \eqref{c-1} for any $\b(x_0, r) \subset \R^d$. 
Using the argument in  the proof of Theorem \ref{thm-op1} with $\Omega=\R^d$ or $\Omega=\R^d_+$, we obtain  
\begin{equation}\label{op-g1}
\aligned
c\int_{\R^d} \{ m(x, \B )  \}^2  |\psi|^2
 & \le \int_{\R^d} 
|(D+\A)\psi|^2,\\
\endaligned
\end{equation}
and
\begin{equation}\label{op-g1a}
c\int_{\R_+^d} \{ m(x, \B )  \}^2  |\psi|^2
  \le \int_{\R_+^d} 
|(D+\A)\psi|^2,\\
\end{equation}
for any $\psi \in C_0^1(\R^d; \C)$, where $c>0$ depends only on $d$ and $\eta$ in \eqref{c-1}.
}
\end{remark}

Let
\begin{equation}\label{bO}
\Omega_b =\left\{ x\in \Omega: \text{\rm dist} (x, \partial\Omega ) < \{ m(x,\B )\}^{-1}\right \}
\end{equation}
be a boundary layer of $\Omega$.

\begin{thm}\label{thm-op-2}
Let $\Omega$ be a bounded Lipschitz domain in $\R^d$.
Suppose $\B$ satisfies the condition \eqref{c-1} for any $\b(x_0, r) \subset \{ x\in \R^d: \text{\rm dist}(x, \partial\Omega)< r_0\}$.
Also assume that  $m(x, \B)> r_0^{-1}$ for any $x\in  \partial\Omega$. 
Then for any $\psi\in  C^1(\overline{\Omega}; \C)$,
\begin{equation}\label{op5-1}
 c\int_{\partial \Omega}
m(x, \B) |\psi|^2\, d\sigma 
\le  \int_{\Omega_b} 
|(D+\A)\psi|^2\, dx, 
\end{equation}
where  
 $c>0$ depends only on $d$, $\eta$ in \eqref{c-1} and the Lipschitz character of $\Omega$.
\end{thm}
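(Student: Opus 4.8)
The plan is to mirror the proof of Theorem \ref{thm-op1}, with the interior Poincar\'e-type bound replaced by a boundary trace estimate and the final assembly carried out by a weighted Fubini argument in which one integrates over $\partial\Omega$ against surface measure (so the dimension count shifts from $d$ to $d-1$).

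First I would prove a local trace estimate. Fix $x_0\in\partial\Omega$ and put $r=c\{m(x_0,\B)\}^{-1}$ with $c=c(d,\Omega)\in(0,1)$ small; the hypothesis $m(x_0,\B)>r_0^{-1}$ forces $r<r_0$. Since $\b(x_0,r)\cap\Omega$ is a Lipschitz domain whose Lipschitz character is controlled by that of $\Omega$, the scaled trace inequality applied to $|\psi|$ gives
\[
\int_{\partial\Omega\cap\b(x_0,r)}|\psi|^2\,d\sigma
\le\frac{C}{r}\int_{\b(x_0,r)\cap\Omega}|\psi|^2+Cr\int_{\b(x_0,r)\cap\Omega}\big|\nabla|\psi|\big|^2 .
\]
The last term is $\le Cr\int_{\b(x_0,r)\cap\Omega}|(D+\A)\psi|^2$ by the diamagnetic inequality. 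For the first term, the argument in Case (2) of the proof of Theorem \ref{thm-op1} (Lemmas \ref{lemma-op3} and \ref{lemma-op1}, the $A_\infty$ estimate \eqref{A-inf}, and $r\approx\{m(x_0,\B)\}^{-1}$) gives $r^{-2}\int_{\b(x_0,r)\cap\Omega}|\psi|^2\le C\int_{\b(x_0,r)\cap\Omega}|(D+\A)\psi|^2$, hence $r^{-1}\int_{\b(x_0,r)\cap\Omega}|\psi|^2\le Cr\int_{\b(x_0,r)\cap\Omega}|(D+\A)\psi|^2$. Together these yield
\[
\int_{\partial\Omega\cap\b(x_0,r)}|\psi|^2\,d\sigma\le C\{m(x_0,\B)\}^{-1}\int_{\b(x_0,r)\cap\Omega}|(D+\A)\psi|^2 .
\]

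Next I would record two consequences of Lemma \ref{lemma-op3a}: if $x\in\b(x_0,r)\cap\Omega$ then $|x-x_0|<r\le\{m(x_0,\B)\}^{-1}$, so $m(x,\B)\approx m(x_0,\B)\approx r^{-1}$, and moreover $\text{\rm dist}(x,\partial\Omega)\le|x-x_0|<c\{m(x_0,\B)\}^{-1}\le cC\{m(x,\B)\}^{-1}<\{m(x,\B)\}^{-1}$ once $c$ is taken with $cC<1$; hence $\b(x_0,r)\cap\Omega\subset\Omega_b$. Inserting $m(x,\B)\approx r^{-1}$ into the local estimate, I would rewrite it as
\[
\int_{\partial\Omega\cap\b(x_0,r)}\{m(x,\B)\}^{d}|\psi|^2\,d\sigma\le C\int_{\b(x_0,r)\cap\Omega}\{m(x,\B)\}^{d-1}|(D+\A)\psi|^2 ,
\]
then integrate in $x_0$ over $\partial\Omega$ against $d\sigma(x_0)$ and apply Fubini. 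On the left, for fixed $x\in\partial\Omega$ the set $\{x_0\in\partial\Omega:|x-x_0|<c\{m(x_0,\B)\}^{-1}\}$ is, by Lemma \ref{lemma-op3a}, comparable to $\partial\Omega\cap\b(x,c'\{m(x,\B)\}^{-1})$, whose surface measure is $\approx\{m(x,\B)\}^{-(d-1)}$ since $\Omega$ is Lipschitz; so the left side dominates $c\int_{\partial\Omega}m(x,\B)|\psi|^2\,d\sigma$. On the right, the outer integral only picks up $x\in\Omega_b$ (by the displayed inclusion), and for each such $x$ the same set has surface measure $\le C\{m(x,\B)\}^{-(d-1)}$, so the right side is $\le C\int_{\Omega_b}|(D+\A)\psi|^2$. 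This is \eqref{op5-1}.

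The hard part will be the two technical inputs: (i) the scaled boundary trace inequality with a constant depending only on $d$ and the Lipschitz character of $\Omega$, valid uniformly over all small $r$, so that it can be invoked at the natural scale $r\approx\{m(x_0,\B)\}^{-1}$; and (ii) the bookkeeping in the Fubini step, specifically the observation --- central to obtaining $\Omega_b$ rather than all of $\Omega$ on the right --- that the double integral only sees points whose distance to $\partial\Omega$ is less than $\{m(\cdot,\B)\}^{-1}$, which relies on the definition of $\Omega_b$ together with the doubling and comparability of $m(\cdot,\B)$ from Lemma \ref{lemma-op3a}. Apart from these, the proof runs parallel to that of Theorem \ref{thm-op1}.
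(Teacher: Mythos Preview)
Your proposal is correct and follows essentially the same route as the paper: local trace inequality plus diamagnetic inequality, then the boundary Case~(2) estimate from the proof of Theorem~\ref{thm-op1} to absorb the $r^{-1}\int|\psi|^2$ term, then the weighted Fubini assembly over $\partial\Omega$ with Lemma~\ref{lemma-op3a} controlling the measure of the fibers. The only cosmetic differences are that the paper writes the trace inequality with $\b(x_0,2r)$ on the right and ends up with $\b(x_0,6r)$ in the local bound, whereas you keep all radii equal to $r$; and the paper cites the packaged estimate \eqref{op4-2} rather than naming Case~(2) explicitly. Your explicit verification that $\b(x_0,r)\cap\Omega\subset\Omega_b$ when $c$ is small is exactly what the paper leaves implicit in restricting the right-hand Fubini count to $x\in\Omega_b$.
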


\begin{proof}

Let $x_0 \in \partial\Omega$ and $r= c\{ m(x_0, \B) \}^{-1}< r_0$.
Then
$$
\aligned
\int_{\b (x_0, r) \cap \partial\Omega}
|\psi|^2\, d\sigma
 &\le  \frac{C}{r} \int_{\b (x_0,  2r ) \cap \Omega} |\psi|^2\, dx
 + C r \int_{\b (x_0, 2r) \cap \Omega}
\big |\nabla |\psi| \big|^2\, dx\\
 & \le  \frac{C}{r} \int_{\b x_0,  2r ) \cap \Omega} |\psi|^2\, dx
 + C r \int_{\b (x_0, 2r) \cap \Omega}
 | (D+\A) \psi |^2\, dx, 
\endaligned
$$
where we have used a trace inequality in a  Lipschitz domain for the first inequality and $\big|\nabla |\psi|\big |\le |(D+\A) \psi|$ for the second.
In view of \eqref{op4-2},  we obtain 
$$
\int_{\b (x_0, r) \cap \partial\Omega}
|\psi|^2\, d\sigma
\le C r \int_{\b (x_0, 6r) \cap \Omega}
 | (D+\A) \psi |^2\, dx.
$$
It follows that 
\begin{equation}\label{op5-2}
\aligned
 \int_{\b (x_0, r)\cap \partial\Omega } \{ m(x, \B)\}^d |\psi|^2 \, d\sigma
& \le 
 C \int_{\b (x_0, 6r)\cap \Omega} \{ m(x, \B)\}^{d-1} |(D+\A)\psi |^2\, dx,
\endaligned
\end{equation}
where we have used \eqref{m-2}. 
We now integrate both sides of \eqref{op5-2} with respect to $x_0$ on  $\partial\Omega$ and
then apply  Fubini's Theorem.
This, together with the observations that for $x\in \partial\Omega$, 
$$
|\{ x_0 \in \partial\Omega: |x_0 -x|< c \{ m(x, \B)\}^{-1} \} |\ge  c \{ m(x, \B )\}^{1-d},
$$
 and  that  for $x\in \Omega_b$,
 $$
 |\{ x_0 \in \partial\Omega: |x_0 -x|< 6 c \{ m(x, \B )\}^{-1} \} |  \le  C \{ m(x, \B )\}^{1-d},
 $$
 yields \eqref{op5-1}. 
\end{proof}

\begin{remark}\label{re-op2}
{\rm
Let
\begin{equation}
\Omega=\{ (x^\prime, x_d) \in \R^d:  x_d > \phi (x^\prime) \}
\end{equation}
be a graph domain in $\R^d$, where $\phi: \R^{d-1} \to \R$ is a Lipschitz function with $\|\nabla \phi\|_\infty\le M$.
Suppose that  the magnetic field  $\B$ satisfies the condition \eqref{c-1} for any $\b(x_0, r)\subset \R^d$.
Then
\begin{equation}\label{op-g1b}
c \int_{\partial\Omega} m(x, \B )  |\psi|^2
\le \int_\Omega |(D+\A) \psi|^2
\end{equation}
for any $\psi \in C_0^1(\R^d; \C)$, 
where $c>0$ depends only on $d$, $M$ and $\eta$ in \eqref{c-1}.
This follows from the proof of Theorem \ref{thm-op-2}.
}
\end{remark}


\section{Proofs of Theorems \ref{main-thm-1} and \ref{main-thm-2}}\label{sec-proof}

In this section we 
use the operator lower bounds obtained in Section \ref{section-op}
to prove lower bounds in \eqref{main-1a} and \eqref{main-2a} 
 for  the ground state energies. As a result, we establish the leading orders of
 $\lambda^D(\beta\A, \Omega)$, $\lambda^N(\beta\A, \Omega)$, and
$\lambda^{D\!N} (\beta\A, \Omega)$, as $\beta \to \infty$, under the conditions \eqref{main-c-1} and
\eqref{main-c-2}.

\begin{thm}\label{main-thm-1a}
Let $\Omega$ be a bounded Lipschitz domain in $\R^d$.
Suppose that the magnetic field $\B$ satisfies the conditions \eqref{c-1} for any $\b(x_0, r)\subset
\{ x\in \R^d: \text{\rm dist}(x, \Omega)< r_0\}$.
Also assume that there exist $\kappa, C_0, c_0>0$ such that 
 \begin{equation}\label{c-2}
\left(\fint_{\b(x, r)} |\B|^2 \right)^{1/2} \ge c_0 r^{\kappa}
\end{equation}
for any $0< r< r_0$ and any $x\in \overline{\Omega}$, and that 
\begin{equation}\label{c-3}
\left(\fint_{\b(x_0, r)} |\B|^2 \right)^{1/2} \le C_0 r^{\kappa}
\end{equation}
for any $0< r< r_0$ and some $x_0\in \overline{\Omega}$.
Then, for any $\beta>  C r_0^{-\kappa-2}$, 
\begin{equation}\label{main-1}
c\,  \beta^{\frac{2}{\kappa+2} }\le \lambda^N(\beta\A, \Omega)\le    \lambda^D (\beta \A, \Omega)  \le C  \beta^{\frac{2}{\kappa+2}},
\end{equation}
 where $C, c>0$ depend only on $d$, $\Omega$, $(c_0,C_0, \kappa)$ in \eqref{c-2}-\eqref{c-3} and  the function $\eta$ in \eqref{c-1}.
\end{thm}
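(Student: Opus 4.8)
The plan is to obtain the upper bound from Theorem~\ref{thm-u1} and the lower bound from Theorem~\ref{thm-op1}, the latter applied to the rescaled potential $\beta\A$, whose magnetic field is $\beta\B$. For the upper bound: since $\Omega$ is Lipschitz and $x_0\in\overline{\Omega}$, one has $|\b(x_0,r)\cap\Omega|\ge c\, r^d$ for $0<r<r_0$, so \eqref{c-3} implies $\big(\fint_{\b(x_0,r)\cap\Omega}|\B|^2\big)^{1/2}\le C r^\kappa$, which is exactly hypothesis \eqref{u2-0}. Hence Theorem~\ref{thm-u1} gives $\lambda^D(\beta\A,\Omega)\le C\beta^{2/(\kappa+2)}$ for $\beta>1$, and $\lambda^N(\beta\A,\Omega)\le\lambda^D(\beta\A,\Omega)$ because the Dirichlet Rayleigh quotient is minimized over the smaller class $C_0^1(\Omega;\C)\subset C^1(\overline{\Omega};\C)$; this settles the two upper inequalities in \eqref{main-1}.

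For the lower bound $\lambda^N(\beta\A,\Omega)\ge c\beta^{2/(\kappa+2)}$, note first that \eqref{c-1} is invariant under replacing $\B$ by $\beta\B$ (with the same modulus of continuity $\eta$), so $\beta\B$ satisfies \eqref{c-1} on $\widetilde{\Omega}$. For every $x\in\overline{\Omega}$ and $0<r<r_0$, using $\b(x,r/2)\subset Q(x,r)$, the trivial bound $(\fint_S|\B|^2)^{1/2}\le\max_S|\B|$, and \eqref{c-2},
\begin{equation*}
\max_{Q(x,r)}|\B|\ \ge\ \max_{\b(x,r/2)}|\B|\ \ge\ \Big(\fint_{\b(x,r/2)}|\B|^2\Big)^{1/2}\ \ge\ c_0\,(r/2)^\kappa\ \ge\ c\, r^\kappa,
\end{equation*}
hence $\max_{Q(x,r)}|\beta\B|=\beta\max_{Q(x,r)}|\B|\ge c\,\beta r^\kappa$. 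Taking $r=r_0$ shows $\max_{Q(x,r_0)}|\beta\B|>r_0^{-2}$ for all $x\in\overline{\Omega}$ once $\beta>C r_0^{-\kappa-2}$, which is condition \eqref{op3a-1} for $\beta\B$. For the same range of $\beta$, put $r_*=(c\beta)^{-1/(\kappa+2)}<r_0$; then $\max_{Q(x,r)}|\beta\B|\ge c\beta r^\kappa>r^{-2}$ for all $r\in(r_*,r_0)$, and since $r\mapsto\max_{Q(x,r)}|\beta\B|$ is nondecreasing while $r\mapsto r^{-2}$ is decreasing, the definition \eqref{m} forces $\{m(x,\beta\B)\}^{-1}\le r_*$, i.e.\ $m(x,\beta\B)\ge c\,\beta^{1/(\kappa+2)}$ uniformly in $x\in\overline{\Omega}$.

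To conclude, apply Theorem~\ref{thm-op1} to $\beta\A$: since $\beta\B$ satisfies \eqref{c-1} on $\widetilde{\Omega}$ and \eqref{op3a-1}, we obtain $c\int_\Omega\{m(x,\beta\B)\}^2|\psi|^2\le\int_\Omega|(D+\beta\A)\psi|^2$ for all $\psi\in C^1(\overline{\Omega};\C)$; inserting the pointwise bound $m(x,\beta\B)\ge c\beta^{1/(\kappa+2)}$ and taking the infimum over $\psi$ yields $\lambda^N(\beta\A,\Omega)\ge c\,\beta^{2/(\kappa+2)}$. Tracking the constants in Theorems~\ref{thm-u1} and \ref{thm-op1} shows that $c,C$ depend only on $d$, $\Omega$, $(c_0,C_0,\kappa)$ and $\eta$.

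The analytic core — the uncertainty-principle inequality behind Theorem~\ref{thm-op1} — is already available, so the only point requiring care is the uniformity in $x\in\overline{\Omega}$ of the bound $m(x,\beta\B)\ge c\beta^{1/(\kappa+2)}$, together with the identification of the precise threshold $\beta>C r_0^{-\kappa-2}$ beyond which both $r_*<r_0$ and \eqref{op3a-1} hold; both follow immediately from the uniform estimate \eqref{c-2}, so no genuine obstacle remains.
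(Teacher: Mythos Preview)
Your proof is correct and follows essentially the same route as the paper: the upper bound comes from Theorem~\ref{thm-u1} applied at the point $x_0$ where \eqref{c-3} holds, and the lower bound from Theorem~\ref{thm-op1} applied to $\beta\A$ together with the uniform estimate $m(x,\beta\B)\ge c\,\beta^{1/(\kappa+2)}$ derived from \eqref{c-2}. You simply spell out a few steps (e.g., passing from \eqref{c-3} to hypothesis \eqref{u2-0} via the Lipschitz lower volume bound, and the monotonicity argument behind the bound on $\{m(x,\beta\B)\}^{-1}$) that the paper leaves implicit.
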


\begin{proof}

The upper bound for $\lambda^D(\beta\A, \Omega)$  is  given by Theorem \ref{thm-u1}.
To prove the lower bound  for $\lambda^N (\beta\A, \Omega)$, we use Theorem \ref{thm-op1}.
Note that the magnetic field $\beta \B$ satisfies  the condition \eqref{c-1} with the  same $\eta$.
Also, under the condition \eqref{c-2}, we have
$$
\beta \max_{Q(x, r_0 )}  |\B| \ge c\beta r_0^\kappa > r_0^{-2}
$$
if  $x\in \overline{\Omega}$ and $\beta> C r_0^{-\kappa-2}$.
As a result, by Theorem \ref{thm-op1}, we obtain 
$$
c\int_\Omega
\{ m(x, \beta \B) \}^2 |\psi |^2\le 
\int_\Omega |(D+\beta \A)\psi|^2
$$
for any $\psi \in C^1(\overline{\Omega}; \C)$,
if $\beta>Cr_0^{-\kappa-2}$.
It follows that
\begin{equation}\label{pt-1}
\lambda^N(\beta\A, \Omega)\ge c \inf_{x\in {\Omega}} \{ m(x, \beta \B)\}^2.
\end{equation}
Since the condition \eqref{c-2} implies that
$$
  \max_{Q(x, r)} |\beta \B|
\ge c \beta  r^\kappa
$$
for any $x\in {\Omega}$ and $0< r< r_0$, 
by \eqref{m}, 
we have  $\{ m(x, \beta\B)\}^{-1} \le C \beta^{-\frac{1}{\kappa+2}}$.
Hence,  $$
m(x,  \beta \B)\ge c\,  \beta^{\frac{1}{\kappa+2}}
$$
 for any $x\in {\Omega}$.
 This, together with \eqref{pt-1}, gives 
 $$
 \lambda^D (\beta\A, \Omega) \ge \lambda^N (\beta\A, \Omega) \ge c\, \beta^{\frac{2}{\kappa+2}}
 $$
 for any $\beta>C r_0^{-\kappa-2}$.
\end{proof}

Let $ \mathcal{O}= \{ x\in \R^d: \text{\rm dist}(x, \partial\Omega)< r_0\}$.

\begin{thm}\label{main-thm-2a}
Let $\Omega$ be a bounded Lipschitz domain in $\R^d$.
Suppose $\B$ satisfies the conditions \eqref{c-1} for any $\b(x_0, r) \subset \mathcal{O}$. 
Also assume that  there exist $\kappa, C_0, c_0>0$ such that \eqref{c-2} holds for any $0<r<r_0$ and any $x\in \partial\Omega$, and 
that \eqref{c-3} holds for any $0<r<r_0$ and some $x_0\in \partial\Omega$.
Then, for any $\beta> C r_0^{-\kappa-2}$, 
\begin{equation}\label{main-2}
c\, \beta^{\frac{1}{\kappa+2} }\le \lambda^{D\!N}(\beta\A, \Omega) \le C \beta^{\frac{1}{\kappa+2}},
\end{equation}
 where $C>0, c>0$ depends only on $d$, $\Omega$,  $(c_0,C_0,  \kappa)$ in \eqref{c-2}-\eqref{c-3} and the function $\eta$ in \eqref{c-1}.
\end{thm}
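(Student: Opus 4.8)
The plan is to mirror the proof of Theorem \ref{main-thm-1a}, replacing the bulk operator bound from Theorem \ref{thm-op1} with the boundary operator bound from Theorem \ref{thm-op-2}. First I would dispose of the upper bound: under \eqref{c-3} at the point $x_0\in\partial\Omega$, Theorem \ref{thm-u4} immediately gives $\lambda^{D\!N}(\beta\A,\Omega)\le C\beta^{\frac{1}{\kappa+2}}$ for $\beta>1$, so only the lower bound requires work.

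For the lower bound, note first that the rescaled field $\beta\B$ satisfies condition \eqref{c-1} on balls in $\mathcal{O}$ with the same modulus $\eta$ (the constant $\fint|\beta\B|$ scales out). Next I would check the hypothesis $m(x,\beta\B)>r_0^{-1}$ for $x\in\partial\Omega$ required by Theorem \ref{thm-op-2}: from \eqref{c-2} we have $\beta\max_{Q(x,r_0)}|\B|\ge c\beta r_0^{\kappa}>r_0^{-2}$ once $\beta>Cr_0^{-\kappa-2}$, so by the definition \eqref{m} of $m$, $\{m(x,\beta\B)\}^{-1}<r_0$, i.e. $m(x,\beta\B)>r_0^{-1}$. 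Theorem \ref{thm-op-2} then yields, for any $\psi\in C^1(\overline{\Omega};\C)$,
\begin{equation*}
c\int_{\partial\Omega} m(x,\beta\B)\,|\psi|^2\,d\sigma \le \int_{\Omega_b} |(D+\beta\A)\psi|^2\,dx \le \int_{\Omega} |(D+\beta\A)\psi|^2\,dx,
\end{equation*}
whence $\lambda^{D\!N}(\beta\A,\Omega)\ge c\inf_{x\in\partial\Omega} m(x,\beta\B)$.

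It remains to bound $m(x,\beta\B)$ from below on $\partial\Omega$. Exactly as in the proof of Theorem \ref{main-thm-1a}, condition \eqref{c-2} gives $\max_{Q(x,r)}|\beta\B|\ge c\beta r^{\kappa}$ for $0<r<r_0$ and $x\in\partial\Omega$ (using \eqref{A} to pass from the $L^2$ average to the sup); choosing $r$ so that $c\beta r^{\kappa}=r^{-2}$, i.e. $r\approx\beta^{-\frac{1}{\kappa+2}}$, forces $\{m(x,\beta\B)\}^{-1}\le C\beta^{-\frac{1}{\kappa+2}}$, so $m(x,\beta\B)\ge c\beta^{\frac{1}{\kappa+2}}$ uniformly in $x\in\partial\Omega$. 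Combining the two displays gives $\lambda^{D\!N}(\beta\A,\Omega)\ge c\beta^{\frac{1}{\kappa+2}}$ for $\beta>Cr_0^{-\kappa-2}$, which together with the upper bound proves \eqref{main-2}.

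The only genuine subtlety — and the step I would be most careful about — is verifying that the $A_\infty$/doubling machinery of Section \ref{section-op} applies to $\beta\B$ uniformly in $\beta$: one needs \eqref{c-2} to bridge the gap between $\left(\fint|\B|^2\right)^{1/2}$ appearing in the hypotheses and $\max_Q|\B|$ appearing in the definition of $m$, and this bridging relies on \eqref{A} (Remark \ref{re-op-0}), which in turn needs \eqref{c-1}. Since all the constants produced by \eqref{A-inf} and \eqref{cond-d} depend only on $d$ and $\eta$ (not on $\beta$), the argument goes through with constants of the advertised dependence; but one should state explicitly that \eqref{c-2} plus \eqref{c-1} together imply the lower doubling bound used to pass from $L^2$ averages to pointwise maxima of $|\beta\B|$ on cubes, which is the one place where the two structural hypotheses genuinely interact.
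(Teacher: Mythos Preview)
Your proposal is correct and follows exactly the paper's approach: upper bound via Theorem~\ref{thm-u4}, lower bound via Theorem~\ref{thm-op-2} yielding $\lambda^{D\!N}(\beta\A,\Omega)\ge c\inf_{x\in\partial\Omega} m(x,\beta\B)$, then \eqref{c-2} to force $m(x,\beta\B)\ge c\,\beta^{1/(\kappa+2)}$. One minor simplification: the parenthetical ``using \eqref{A} to pass from the $L^2$ average to the sup'' and the worry in your final paragraph are unnecessary, since the trivial bound $\max_{Q(x,r)}|\B|\ge\bigl(\fint_{\b(x,cr)}|\B|^2\bigr)^{1/2}$ already delivers $\max_{Q(x,r)}|\beta\B|\ge c\beta r^\kappa$ without invoking any $A_\infty$ machinery.
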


\begin{proof}
The upper bound in \ref{main-2} is given by Theorem \ref{thm-u4}.
To prove the lower bound, we use Theorem \ref{thm-op-2}.
As in the proof of Theorem \ref{main-thm-1a}, this yields 
\begin{equation}\label{pt-2}
\lambda^{D\!N}(\beta \A, \Omega)
\ge c\inf_{x\in \partial\Omega} m (x, \beta \B).
\end{equation}
Finally, the condition \eqref{c-2} for any  $x\in \partial\Omega$ implies that 
$$
m(x, \beta \B)\ge c\,  \beta^{\frac{1}{\kappa+2}}
$$
 for any $x\in \partial\Omega$.
\end{proof}

\begin{proof}[Proof of Theorem \ref{main-thm-1}]

By extension, with the loss of generality,  we may assume that $\A\in C^\infty(\R^d; \R^d)$.
By continuity, we may also assume the condition \eqref{main-c-1} holds for any $x\in \widetilde{\Omega}
=\{x\in \R^d: \text{dist}(x, \Omega)< r_0\}$.
As a result, the condition  \eqref{c-1} holds for any $\b(x_0, r) \subset  \widetilde{\Omega}$.
See Remark \ref{re-op-2}. 
Moreover, by  Taylor's Theorem,  the condition \eqref{c-2} with $\kappa=\kappa_* $ holds for any $x\in \overline{\Omega}$, and
the condition \eqref{c-3} with $\kappa=\kappa_*$ holds for any $x$ with $\kappa(x)=\kappa_*$.
Consequently, Theorem \ref{main-thm-1} follows readily from Theorem \ref{main-thm-1a}.
\end{proof}

\begin{proof}[Proof of Theorem \ref{main-thm-2}]

By continuity,  we may assume \eqref{main-c-2} holds for any $x\in \mathcal{O}$.
It follows that the condition \eqref{c-1} holds for any $\b(x_0, r)\subset \mathcal{O}$.
As a result, Theorem \ref{main-thm-1} follows from Theorem \ref{main-thm-2a}.
\end{proof}


\section{Polynomial magnetic fields}\label{section-p}

Throughout this section we assume that the magnetic potential $\A$ is a (vector-valued) homogeneous polynomial of degree
$\kappa+1$. Thus, 
  the magnetic field
$\B=\nabla \times \A$ is a (matrix-valued) homogeneous polynomial of degree $\kappa$; i.e., 
\begin{equation}\label{hb}
\B(x)=\sum_{|\alpha|=\kappa} b_\alpha x^\alpha
\end{equation}
for some constant $d\times d$ matrices $\{b_\alpha\}$.
We also  assume that $\sum_{|\alpha|=\kappa} |b_\alpha|=1$.
Note  that  such $\B$ satisfies the condition \eqref{c-1} for any $\b(x_0, r)\subset \R^d$
with  $\eta(t)=C_0t$, where  $C_0$ depends only on $d$ and $\kappa$.
Moreover,
\begin{equation}\label{mp}
c\,  \widetilde{m}(x, \B)\le m(x, \B) 
\le C  \widetilde{m}(x, \B),
\end{equation}
for some $C, c>0$ depending only on $d$ and $\kappa$, where
\begin{equation}\label{mp-1}
\widetilde{m}(x, \B)=
 \sum_{|\alpha|\le \kappa} |\partial^\alpha \B(x)|^{\frac{1}{|\alpha|+2}}.
\end{equation}
The inequalities in \eqref{mp} follow  from the observation that if $\P$ is a polynomial of degree $\kappa$, then
$$
 c\max_{Q(x_0, r)} |\P|
\le
\sum_{|\alpha|\le \kappa} |\partial^\alpha \P(x_0)| r^{|\alpha|}
\le C \max_{Q(x_0, r)} |\P|,
$$
for any $x_0\in \R^d$ and $r>0$, 
where $C, c>0$ depend only on $d$ and $\kappa$.
In particular, by \eqref{mp} and the assumption $\sum_{|\alpha|=k}  |b_\alpha|=1$, 
\begin{equation}\label{m-l}
m(x, \B) \ge c \sum_{|\alpha|=\kappa} | b_\alpha|^{\frac{1}{\kappa+2}}\ge c_0, 
\end{equation}
where $c_0>0$ depends only on $d$ and $\kappa$.
This, together with \eqref{op-g1}, \eqref{op-g1a} and \eqref{op-g1b}, shows that 
\begin{equation}\label{low-a}
c\le \lambda (\A, \R^d), \lambda^D (\A, \R^d_+), \lambda^N(\A, \R^d_+), \lambda^{D\!N} (\A, \R^d_+) \le C,
\end{equation}
where $C, c>0$ depend only on $d$ and $\kappa$.

\begin{defn}
Let $\B$ be given by \eqref{hb}.
The  set 
\begin{equation}\label{md}
\left\{ y \in \R^d: \ \B (x+y)=\B (x) \quad \text{ for any } x\in \R^d \right\}
\end{equation}
is called the invariant subspace for $\B$.
\end{defn}

\begin{prop}\label{prop-1}
Let $\B$ be a homogeneous polynomial given by \eqref{hb}. 
The invariant subspace for $\B$  is given by  
\begin{equation}
V=\left\{ y \in \R^d: \langle y, \nabla \partial^\alpha B_{j\ell } (0) \rangle =0  \text{ for any } 1\le j < \ell  \le d 
\text{ and } |\alpha|=\kappa-1 \right\}
\end{equation}
\end{prop}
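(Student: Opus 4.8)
The plan is to show set equality between the invariant subspace, call it $V_0$, and the candidate space $V$ by two inclusions. Since $\B(x) = \sum_{|\alpha|=\kappa} b_\alpha x^\alpha$ is a homogeneous polynomial of degree $\kappa$, the map $y \mapsto \B(x+y)$ is a polynomial in both $x$ and $y$, and the translation-invariance condition $\B(x+y) = \B(x)$ for all $x$ can be analyzed through Taylor expansion in $y$. First I would observe that $\B(x+y) - \B(x) = \sum_{j=1}^{d} y_j \partial_j \B(x) + (\text{higher order in } y)$, where each coefficient entry of $\partial_j \B$ is a homogeneous polynomial of degree $\kappa - 1$. The condition $\B(x+y) = \B(x)$ for \emph{all} $x$, being a polynomial identity in $x$ for fixed $y$, forces the directional derivative $\sum_j y_j \partial_j \B \equiv 0$; conversely, if $\sum_j y_j \partial_j \B \equiv 0$ then $t \mapsto \B(x + ty)$ is constant, giving $y \in V_0$. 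So $V_0 = \{ y : \sum_{j} y_j \partial_j \B \equiv 0 \text{ as a polynomial} \}$, which is already a linear subspace (no need to argue that separately).

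Next I would unwind the polynomial identity $\sum_{j} y_j \partial_j \B \equiv 0$ entrywise and degree by degree. For each pair $1 \le j < \ell \le d$, the $(j,\ell)$ entry $B_{j\ell}$ is a homogeneous polynomial of degree $\kappa$, and $\sum_{m} y_m \partial_m B_{j\ell} \equiv 0$ is a homogeneous polynomial identity of degree $\kappa - 1$ in $x$. Equating coefficients of each monomial $x^\alpha$ with $|\alpha| = \kappa - 1$, the coefficient of $x^\alpha$ in $\sum_m y_m \partial_m B_{j\ell}$ is, up to a nonzero combinatorial constant, exactly $\langle y, \nabla \partial^\alpha B_{j\ell}(0) \rangle$ — indeed $\partial^\alpha B_{j\ell}$ is a homogeneous polynomial of degree $1$ (since $|\alpha| = \kappa - 1$), hence linear, so $\partial^\alpha B_{j\ell}(x) = \langle x, \nabla \partial^\alpha B_{j\ell}(0)\rangle$, and applying $\sum_m y_m \partial_m$ to it and evaluating gives $\langle y, \nabla \partial^\alpha B_{j\ell}(0)\rangle$. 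The antisymmetry of $\B$ ($B_{\ell j} = -B_{j\ell}$) means the constraints for $j > \ell$ are redundant and the diagonal is zero, so it suffices to range over $j < \ell$. This shows $y \in V_0$ iff $\langle y, \nabla \partial^\alpha B_{j\ell}(0)\rangle = 0$ for all $1 \le j < \ell \le d$ and all $|\alpha| = \kappa - 1$, i.e.\ $V_0 = V$.

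The only mildly delicate point — the main obstacle, such as it is — is keeping the bookkeeping of multi-indices and combinatorial constants straight and confirming that the coefficient extraction genuinely produces $\langle y, \nabla \partial^\alpha B_{j\ell}(0)\rangle$ rather than some twisted variant; the cleanest route is to avoid explicit constants entirely by using the observation that $\partial^\alpha B_{j\ell}$ is linear for $|\alpha| = \kappa - 1$ and that $\sum_m y_m \partial_m$ of a linear form evaluated anywhere equals its pairing with $y$. One should also double-check the degenerate cases $\kappa = 0$ (then $\B$ is constant, every $y$ is invariant, and the constraint set is empty, so $V = \R^d$ — consistent) and $\kappa = 1$ (then $|\alpha| = 0$, $\partial^\alpha B_{j\ell} = B_{j\ell}$ itself, and $\nabla B_{j\ell}(0)$ is the constant gradient — consistent with the standard picture). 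Everything else is routine polynomial algebra.
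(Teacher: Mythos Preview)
Your proof is correct. The degree-separation observation (that the order-$k$ term in the Taylor expansion in $y$ is homogeneous of degree $\kappa-k$ in $x$, so the vanishing of $\B(x+y)-\B(x)$ as a polynomial in $x$ forces each order separately to vanish) is exactly the point that makes the forward direction work, and your converse via the flow $t\mapsto \B(x+ty)$ is clean.

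The paper's argument is a close cousin but organized differently. For the forward direction, instead of Taylor-expanding and separating by degree, the paper uses the scaling identity $\B(x+ty)=t^{\kappa}\B(t^{-1}x+y)=t^{\kappa}\B(t^{-1}x)=\B(x)$ to get invariance along the whole line $t\mapsto x+ty$ in one stroke, then differentiates in $t$ after applying $\partial^\alpha$ with $|\alpha|=\kappa-1$. For the backward direction, rather than the flow argument, it passes to an orthonormal basis with $e_1=y/|y|$ and reads off that every coefficient with $\alpha_1\neq 0$ vanishes. Your route is slightly more systematic (first isolate the characterization $V_0=\{y:\langle y,\nabla\rangle\B\equiv 0\}$, then unwind it coefficient-wise), and it has the minor advantage of never needing to choose coordinates; the paper's scaling trick is marginally slicker for the forward direction. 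Both exploit homogeneity in essentially the same way and arrive at the same place with comparable effort.
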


\begin{proof}
Let $y\in \R^d$.
Suppose $\B(x+y)=\B(x)$ for any $x\in \R^d$.
Since
$$
\B(x + t y )= t^\kappa \B(t^{-1} x +y) = t^\kappa \B  (t^{-1} x) =\B (x)
$$
for any  $t\in \R$ and $t \neq 0$, it follows that $\partial_t  \{ \partial_x^\alpha B_{j\ell } (x+ty) \}=0$
for any $1\le j< \ell\le d$ and $|\alpha|=\kappa-1$.
Hence, $ \langle y, \nabla \partial^\alpha B_{j\ell } (0) \rangle =0$   for any  $ 1\le j < \ell  \le d$ and  
 and  $|\alpha|=\kappa-1$.

Suppose $y\in V$ and $y\neq 0$.
 Let $e_1, e_2, \dots, e_d$ be an orthonormal basis for $\R^d$ with $e_1= y/|y|$.
 Write
 \begin{equation}\label{B-t}
 \B (x)= \sum_{|\alpha|=\kappa} \widetilde{b}_\alpha \langle x, e_1 \rangle^{\alpha_1} \cdots
 \langle x, e_d \rangle^{\alpha_d},
 \end{equation}
 where $\alpha=(\alpha_1, \dots, \alpha_d)$ and $\widetilde{b}_\alpha = \partial_{e_1}^{\alpha_1} \cdots \partial_{e_d}^{\alpha_d} \B (0)/\alpha!$.
 Since $\partial_{e_1} \partial ^\alpha B (0)=0$  for any $|\alpha|=\kappa-1$, 
 it follows that $\widetilde{b}_\alpha=0$ if $\alpha_1\neq 0$.
 As a result, by \eqref{B-t}, we obtain $\B(x+y)= \B (x)$ for any $x\in \R^d$.
\end{proof}

Clearly,  if  the  invariant subspace $V=\R^d$, then $\B$ is constant and $\kappa=0$.
Suppose that  dim$(V)<d$. 
Then 
\begin{equation}\label{mp-1b}
\min_{\substack{z\in V^\perp\\ |z|=1} }\sum_{|\alpha|= \kappa-1}\sum_{j, \ell} 
|\langle z, \nabla \partial^\alpha B_{j\ell} (0)\rangle | = \sigma>0.
\end{equation}
Note that if $|\alpha|=\kappa-1$,  we have $\partial^\alpha B_{j\ell} (z)= \langle z, \nabla \partial^\alpha B_{j \ell}(0)\rangle $.
It follows that if $x=y+z$, where $y\in V$ and $z\in V^\perp$, then
$$
\aligned
\widetilde{m}(x, \B) & =\widetilde{m}(z, \B )\ge
\sum_{|\alpha|= \kappa-1} |\partial ^\alpha \B(z)|^{\frac{1}{\kappa+1}}\\
&=\sum_{|\alpha|= \kappa-1}
|z|^{\frac{1}{\kappa+1}}
 |\partial^\alpha \B(z/|z|)|^{\frac{1}{\kappa+1}}\\
&\ge c\, \sigma^{\frac{1}{\kappa+1} }  |z|^{\frac{1}{\kappa+1}},
\endaligned
$$
provided $|z|> 0$.
This shows that
\begin{equation}\label{mp-1c}
\widetilde{m}(x, \B) \ge c\,  \sigma^{\frac{1}{\kappa+1}}  \left\{ \text{dist}(x, V)\right\}^{\frac{1}{\kappa+1}}
\end{equation}
for any $x\in \R^d$, 
where $\sigma>0 $ is given by \eqref{mp-1b}.

\begin{lemma}\label{lemma-p1}
Let $\A$ be a homogeneous polynomial of degree $\kappa+1$.
Then,  for $R> 1$, 
\begin{equation}\label{p1-0}
\lambda(\A, \R^d)  \le \lambda^D (\A, \b (0, R)) \le \lambda (\A, \R^d)  + C R^{-2},
\end{equation}
where $C$ depends only on  $d$, $\sigma$ in \eqref{mp-1b} and  $\kappa$.
\end{lemma}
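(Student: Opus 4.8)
The plan is as follows. The left-hand inequality is immediate: extending any $\psi\in C_0^1(\b(0,R);\C)$ by zero produces an admissible function in $C_0^1(\R^d;\C)$ with the same Rayleigh quotient, so $\lambda(\A,\R^d)\le\lambda^D(\A,\b(0,R))$. The content of the lemma is the right-hand inequality, which amounts to producing, for $R$ large, a test function supported in $\b(0,R)$ whose Rayleigh quotient for $(D+\A)^2$ exceeds $\lambda_0:=\lambda(\A,\R^d)$ by at most $O(R^{-2})$. Since $\sigma$ in \eqref{mp-1b} is being invoked we have $\dim V<d$, hence (using $\sum_{|\alpha|=\kappa}|b_\alpha|=1$) $\kappa\ge1$ and $\B$ non-constant.

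First I would reorganize coordinates by the invariant subspace $V$ of $\B$ (Proposition~\ref{prop-1}): write $x=(s,t)$ with $s=P_Vx$ and $t=P_{V^\perp}x$. Because $\B=\curl\A$ is a closed form and is translation-invariant along $V$, the Bianchi-type identity $\partial_iB_{j\ell}+\partial_jB_{\ell i}+\partial_\ell B_{ij}=0$ forces $B_{ij}\equiv0$ whenever both $i,j$ are $V$-directions ($B_{ij}$ is then constant, hence zero as it is homogeneous of positive degree), and the remaining compatibility relations — again consequences of $d\B=0$ — allow one to choose a gauge in which $\A$ depends only on $t$ and is still a homogeneous polynomial of degree $\kappa+1$. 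In this gauge $(D+\A)^2$ is invariant under the magnetic translations $u(x)\mapsto e^{i\theta_y(x)}u(x-y)$, $y\in V$, which preserve Rayleigh quotients and shift supports by $-y$.

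The analytic heart is a uniform transverse Agmon estimate for quasimodes: if $u\in C_0^1(\R^d;\C)$, $\|u\|_2=1$ and $\int_{\R^d}|(D+\A)u|^2\le\lambda_0+\delta$ with $\delta\le1$, then combining \eqref{op-g1} with the lower bound \eqref{mp-1c} for $m(x,\B)$ in terms of $\mathrm{dist}(x,V)$ shows that $(D+\A)^2$ controls the confining potential $\mathrm{dist}(x,V)^{2/(\kappa+1)}$; carrying out the Agmon argument at the level of the quadratic form — necessary because $(D+\A)^2$ on $\R^d$ need not have a bottom eigenvalue when $V\neq\{0\}$ — with a weight $e^{g}$ where $g$ has order $\mathrm{dist}(x,V)^{(\kappa+2)/(\kappa+1)}$ and $|\nabla g|$ is a small multiple of $m(x,\B)$, yields $\int_{\R^d}e^{2c_1\mathrm{dist}(x,V)^{(\kappa+2)/(\kappa+1)}}|u|^2\le C_1$ with $c_1,C_1$ depending only on $d,\sigma,\kappa$. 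Granting this, the construction is: (i) truncate $u$ in the $V^\perp$-directions at radius $\Lambda\approx(\log R)^{(\kappa+1)/(\kappa+2)}$, which by the exponential decay alters numerator and denominator of the Rayleigh quotient by only $O(R^{-3})$, yielding $\tilde u=u\rho_\Lambda$ supported in $\{\mathrm{dist}(x,V)\le2\Lambda\}$; (ii) decompose $\tilde u$ along $V$ by a partition of unity $\sum_i\chi_i^2\equiv1$ on its support, subordinate to cubes in $V$ of side $\approx R/\sqrt d$, so that $|\nabla\chi_i|\le C/R$; since $\sum\chi_i^2$ is constant one has the identity $\sum_i\int|(D+\A)(\chi_i\tilde u)|^2=\int|(D+\A)\tilde u|^2+\int(\sum_i|\nabla\chi_i|^2)|\tilde u|^2$, whence $\sum_i\int|(D+\A)(\chi_i\tilde u)|^2\le(\lambda_0+\delta+CR^{-2})\sum_i\|\chi_i\tilde u\|^2$. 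Therefore some piece $\chi_{i_0}\tilde u$ has Rayleigh quotient $\le\lambda_0+\delta+CR^{-2}$; it is supported in a set of $V$-extent $\le R/2$ and $V^\perp$-extent $\le2\Lambda=o(R)$, so a magnetic translation along $V$ (together with the accompanying gauge phase) carries it into $C_0^1(\b(0,R);\C)$ without changing its Rayleigh quotient. Letting $\delta\to0$ gives $\lambda^D(\A,\b(0,R))\le\lambda_0+CR^{-2}$ once $R$ exceeds a constant depending on $d,\sigma,\kappa$; for $R$ below that constant the inequality is trivial, since $\lambda^D(\A,\b(0,R))\le\lambda^D(\A,\b(0,1))$ is bounded in terms of $d$ and $\kappa$ (take a fixed bump function) while $R^{-2}$ is bounded below.

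The step I expect to be the main obstacle is the transverse Agmon estimate with constants depending only on $d,\sigma,\kappa$. The merely polynomial bound $\int\mathrm{dist}(x,V)^{2/(\kappa+1)}|u|^2\le C$, which \eqref{op-g1} and \eqref{mp-1c} supply at once, is not enough: cutting off the transverse profile at radius $\approx R$ — as one is forced to in order to fit it inside $\b(0,R)$ — would leave a tail of order $R^{-2/(\kappa+1)}\gg R^{-2}$. One needs genuine exponential decay, and because the whole-space operator may have no ground state, this decay must be extracted from near-minimizers through the quadratic form rather than from an eigenvalue equation; the homogeneity of $\A$ and the normalization $\sum_{|\alpha|=\kappa}|b_\alpha|=1$ are what keep the constants uniform. (When $V=\{0\}$ the operator $(D+\A)^2$ has compact resolvent and a genuine, exponentially decaying ground state, which may simply be truncated; the argument above is the adaptation needed in general.)
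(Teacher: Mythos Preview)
Your approach is plausible but substantially more involved than the paper's, and the step you yourself flag as ``the main obstacle'' --- the Agmon-type exponential decay for quasimodes --- is genuinely delicate: the identity you would use, $\int|(D+\A)(e^gu)|^2=\int e^{2g}|(D+\A)u|^2+\int|\nabla g|^2e^{2g}|u|^2+\text{cross}$, leaves you needing to control the weighted quantity $\int e^{2g}|(D+\A)u|^2$, and the quasimode condition only bounds the unweighted integral. For eigenfunctions the equation closes this loop; for quasimodes one needs an extra argument (e.g.\ working fiberwise after Fourier transform in the $V$-directions, where each fiber operator has compact resolvent and genuine ground states). So the proof can be completed along your lines, but not as cheaply as your sketch suggests.

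The paper sidesteps all of this by running the localization in the opposite direction. Rather than starting from a near-minimizer on $\R^d$ and trying to squeeze it into $\b(0,R)$, it covers $\R^d$ by balls $\b(x_\ell,R)$ and applies the IMS identity \eqref{partition} to obtain
\[
\lambda(\A,\R^d)\ \ge\ \inf_{x\in\R^d}\lambda^D(\A,\b(x,R))-CR^{-2}.
\]
It then shows the infimum on the right is at least $\lambda^D(\A,\b(0,3R))$: translation invariance along $V$ reduces to $x=z\in V^\perp$; for $|z|<2R$ one has $\b(z,R)\subset\b(0,3R)$ and monotonicity applies; for $|z|\ge 2R$ the operator lower bound \eqref{op-g1} together with \eqref{mp-1c} gives $\lambda^D(\A,\b(z,R))\ge c\,\sigma^{2/(\kappa+1)}R^{2/(\kappa+1)}$, which dominates $\lambda^D(\A,\b(0,1))\ge\lambda^D(\A,\b(0,3R))$ for $R$ large. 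No decay estimates for minimizers, no gauge adapted to $V$, no magnetic translations --- just the IMS formula and the operator lower bound already established in Section~\ref{section-op}. What your approach buys, by contrast, is a more explicit picture of where near-ground-state mass lives; that is useful elsewhere (e.g.\ for eigenfunction localization) but is unnecessary overhead for this lemma.
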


\begin{proof}

The first inequality in \eqref{p1-0}  is obvious,  since $C_0^1(\b(0, R); \C)\subset C_0^1(\R^d; \C)$.
To show the second inequality,  we first note that 
$$
\aligned
\lambda^D(\A, \b(0, R))  & \le \lambda^D (\A, \b(0, 1)) \\
& \le C (1+ \| \B \|_{L^\infty(\b(0, 1))}) \le C.
\endaligned
$$
Thus, we only need to consider the case  where $R>1$ is large.
Let $\{ \varphi_\ell \}$ be a sequence of functions  such that
$
\sum_{\ell=1}^\infty \varphi_\ell^2 =1  \text{ in } \R^d,
$
where  $\varphi_\ell \in C_0^\infty(\b (x_\ell, R); \R)$, $|\nabla \varphi_\ell |\le CR^{-1}$, and $\sum_\ell \chi_{\b (x_\ell, R)}
\le C$.
Using the identity, 
\begin{equation}\label{partition}
\int_{\R^d} |(D+\A)\psi|^2
=\sum_\ell \int_{\R^d} |(D+\A)(\psi \varphi_\ell)|^2
-\sum_\ell \int_{\R^d} |\nabla \varphi_\ell|^2 |\psi|^2,
\end{equation}
we obtain
$$
\aligned
\int_{\R^d} |(D+\A)\psi|^2
 & \ge \sum_\ell  \lambda^D (\A, \b (x_\ell, R)) \int_{\R^d} |\varphi_\ell  \psi|^2
- C \sum_\ell R^{-2} \int_{\b (x_\ell, R)} |\psi|^2\\
&\ge  \left\{  \inf_{x\in \R^d}  \lambda^D (\A, \b (x, R) )- CR^{-2} \right\}
\int_{\R^d} |\psi|^2
\endaligned
$$
for any $\psi \in C_0^1(\R^d; \C)$.
It follows that  
\begin{equation}\label{p1-3}
\lambda(\A, \R^d)  \ge \inf_{x\in \R^d} \lambda^D (\A, \b (x, R)) -CR^{-2},
\end{equation}
where $C$ depends only on $d$.

Next,
let $V$ be the invariant  subspace for $\B$.
Note  that 
$$
\lambda^D  (\A, \b (x+y, R))=\lambda^D (\A, \b (x, R))
\quad \text{ for any } x\in \R^d \text{ and } y\in V.
$$
As a result, it suffices to  show that 
$$
\lambda^D (\A, \b (z, R)) \ge \lambda^D (\A, \b (0, 3R)).
$$
 for any $z\in V^\perp$.
To this end, we observe that by \eqref{mp} and \eqref{op-g1}, 
$$
c \int_{\R^d}  \{ \widetilde{m} (x, \B)\}^2 |\psi|^2
\le \int_{\R^d} |(D+ \A)\psi|^2.
$$
In view of \eqref{mp-1c}, for $z\in V^\perp$, we obtain 
$$
\aligned
\lambda^D (\A, \b (z, R))& \ge c\inf_{x\in \b (z, R)} \{ \widetilde{m} (x, \B)\}^2 \\
& \ge c\, \sigma^{\frac{2}{\kappa+1}}  \inf_{x\in \b (z,R)} \left\{ \text{\rm dist}(x, V)\right\}^{\frac{2}{\kappa+1}}\\
&\ge  c\, \sigma^{\frac{2}{\kappa+1}}   \left\{ \text{\rm dist}(z, V) - R \right\}^{\frac{2}{\kappa+1}}\\
&\ge \lambda^D(\A, \b (0, 1))\ge \lambda^D (\A, \b (0, 3R)),
\endaligned
$$
if $\text{\rm dist}(z, V) \ge 2R $ and $R>C $ is large.
Here we also use the fact $\lambda^D(\A, \b(0,1))\approx 1$,
under the normalization $\sum_{|\alpha|=\kappa} |b_\alpha|=1$.

Finally, if $z\in V^\perp$ and dist$(z, V) =|z|  < 2R$, then   $\b(z, R) \subset \b (0, 3R)$. Hence,
$$
\lambda^D (\A, \b (z, R) ) \ge \lambda^D (\A, \b (0, 3R)), 
$$
which completes the proof.
\end{proof}

For $x\in \R^{d-1} \times \{ 0 \}=\partial \R^d_+$, let
\begin{equation}\label{p-i}
Q_+(x, R)= Q(x, R)\cap \R^d_+.
\end{equation}

\begin{lemma}\label{lemma-p2}
Let  $\A$ be  a homogeneous polynomial of degree $\kappa+1$.
Then,  
\begin{equation}\label{p2-0}
 \lambda^D (\A, \R^d_+) \le \lambda^D (\A, Q_+(0, R))
  \le  \lambda^D (\A, \R^d_+) +  CR^{-2} 
\end{equation}
for $R> 1$, where $C$ depends on $\B$.
\end{lemma}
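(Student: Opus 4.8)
The plan is to prove Lemma \ref{lemma-p2} by adapting the localization argument from Lemma \ref{lemma-p1} to the half-space, with the extra complication that the partition of the half-space must be organized so that the boundary $\partial\R^d_+$ is respected by the covering. The first inequality in \eqref{p2-0} is immediate: any $\psi \in C_0^1(\overline{Q_+(0,R)};\C)$ extends by zero to a competitor for $\lambda^D(\A,\R^d_+)$ (the Dirichlet condition on $\R^d_+$ refers only to the bottom face $\partial\R^d_+$; one must check the test-function space, but functions compactly supported in the interior of $Q(0,R)$ away from the lateral faces are fine). For the boundedness in $R$ of $\lambda^D(\A,Q_+(0,R))$ from above by $\lambda^D(\A,\R^d_+)+CR^{-2}$, I would run the IMS-type localization formula \eqref{partition} on the half-space.

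\textbf{Main steps.} First I would cover $\overline{\R^d_+}$ by a family $\{\varphi_\ell\}$ with $\sum_\ell \varphi_\ell^2 = 1$ on $\overline{\R^d_+}$, $\varphi_\ell \in C_0^\infty(\b(x_\ell,R);\R)$, $|\nabla\varphi_\ell|\le CR^{-1}$, and bounded overlap, where the centers $x_\ell$ are arranged so that each ball $\b(x_\ell,R)$ either lies in $\R^d_+$ or is centered at a point of $\partial\R^d_+$; then $\b(x_\ell,R)\cap\R^d_+$ is, up to translation along $\partial\R^d_+$, contained in $Q_+(0, CR)$. Applying \eqref{partition} on $\R^d_+$ to any $\psi\in C_0^1(\overline{\R^d_+};\C)$ gives
\begin{equation*}
\int_{\R^d_+} |(D+\A)\psi|^2 \ge \sum_\ell \lambda^D(\A, \b(x_\ell,R)\cap\R^d_+)\int_{\R^d_+}|\varphi_\ell\psi|^2 - C\sum_\ell R^{-2}\int_{\b(x_\ell,R)}|\psi|^2,
\end{equation*}
so $\lambda^D(\A,\R^d_+)\ge \inf_\ell \lambda^D(\A,\b(x_\ell,R)\cap\R^d_+) - CR^{-2}$. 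Next, exactly as in Lemma \ref{lemma-p1}, I use translation-invariance of $\A$ along the invariant subspace $V$ of $\B$: $\lambda^D(\A,\b(x+y,R)\cap\R^d_+) = \lambda^D(\A,\b(x,R)\cap\R^d_+)$ for $y\in V\cap\partial\R^d_+$ (more precisely $y\in V$ with $\langle y,e_d\rangle=0$, so the half-space is preserved). For the interior balls ($\b(x_\ell,R)\subset\R^d_+$) I reuse the comparison from Lemma \ref{lemma-p1}: by \eqref{mp}, \eqref{mp-1c} and the half-space inequality \eqref{op-g1a}, balls far from $V$ have $\lambda^D(\A,\b(x_\ell,R))\ge \lambda^D(\A,\b(0,1))\ge \lambda^D(\A,Q_+(0,CR))$ once $R$ is large, while balls near $V$ can be slid by a vector in $V$ parallel to $\partial\R^d_+$ into $Q_+(0,CR)$; for the boundary-centered balls, after sliding along $V\cap\partial\R^d_+$ the relevant part of $\R^d_+$ near the boundary is captured inside $Q_+(0,CR)$, while the component of the center transverse to $\partial\R^d_+$ is automatically bounded by $CR$. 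Adjusting constants (replacing $R$ by $R/C$ at the outset) then yields $\lambda^D(\A,\R^d_+)\ge \lambda^D(\A,Q_+(0,R)) - CR^{-2}$, which is the second inequality.

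\textbf{Expected main obstacle.} The delicate point is bookkeeping the geometry of the covering near $\partial\R^d_+$ together with the invariant subspace $V$ simultaneously: one needs a vector in $V$ that is also parallel to $\partial\R^d_+$ in order to translate a boundary-centered ball into a fixed cube $Q_+(0,CR)$ without moving it off the half-space, and it is not obvious a priori that $V\cap\partial\R^d_+$ is large enough for this. In fact, this is where condition \eqref{V-2} — $\max_{z\in V}|\langle z, n(y)\rangle|\ge c$ — is not available at the level of this lemma, so the argument must instead split: if the transverse displacement of the center stays $\le CR$ one uses $\b(z,R)\subset\b(0,CR)\cap\R^d_+ \subset Q_+(0,CR)$ directly; if it is large, then $z$ is far from $V$ (since $V$'s transverse extent is bounded in the normalized setting, or because we only slide along $V\cap\partial\R^d_+$), so \eqref{mp-1c} forces $\widetilde m$, hence $\lambda^D$ on that ball, to be large, dominating $\lambda^D(\A,Q_+(0,CR))\le C$. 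Handling both the $V$-translations-along-the-boundary and the "far from $V$" case uniformly, and checking that the test-function class on $Q_+(0,R)$ (vanishing on $Q(0,R)\cap\R^d_+$'s lateral boundary but free on the bottom face) is correctly matched to the Dirichlet problem on $\R^d_+$, is the part requiring care; the rest is a routine repetition of Lemma \ref{lemma-p1}.
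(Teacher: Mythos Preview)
Your overall strategy (IMS localization on $\R^d_+$, reduce via $V$-invariance, and use \eqref{mp-1c} to discard pieces far from $V$) matches the paper's. The difference is in the covering: the paper does \emph{not} use balls but constructs a tiling of $\overline{\R^d_+}$ by parallelotopes $R(z+P)$ generated by a basis $\{e_1,\dots,e_d\}$ that is adapted to both $V$ and $\partial\R^d_+$. The crucial device is that when $V\not\subset\partial\R^d_+$, the paper chooses the ``vertical'' generator $e_d\in V\cap\R^d_+$. Then translating a tile by $-n_d e_d\in V$ reduces its height to zero \emph{and} preserves $\B$, so every tile is reduced to one sitting on $\partial\R^d_+$; the remaining tangential reduction along $e_1,\dots,e_{k-1}\in V\cap\partial\R^d_+$ brings the index into a finite set near the origin, where the paper checks $R(z+\widetilde P)\subset Q_+(0,CR)$ via a short linear-algebra computation.

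Your ball-covering argument has a genuine gap precisely at the point the parallelotope trick is designed to handle. For interior balls $\b(x_\ell,R)\subset\R^d_+$ you propose to slide ``by a vector in $V$ parallel to $\partial\R^d_+$'', i.e.\ by an element of $V\cap\partial\R^d_+$. But if $V\not\subset\partial\R^d_+$ this is insufficient: take $d=2$, $V=\mathrm{span}\{(1,1)\}$, and $x_\ell=(N,N)$ with $N$ large; then $\mathrm{dist}(x_\ell,V)=0$ while $V\cap\partial\R^2_+=\{0\}$, so no tangential $V$-slide moves $x_\ell$, yet its height $N$ is unbounded. Your fallback ``large transverse displacement $\Rightarrow$ far from $V$'' is false in this regime. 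The repair is to slide along the full $V$ (not just $V\cap\partial\R^d_+$), accept that the translated ball may leave $\R^d_+$, and then use one more $V$-translation in the $e_d\in V\cap\R^d_+$ direction to push it back inside; the constants then depend on $\langle e_d,n\rangle$, exactly as the paper notes in Remark \ref{re-de-1}. You identified this issue correctly for the boundary-centered pieces but not for the interior ones.
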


\begin{proof}
Since $Q_+(0, R) \subset \R^d_+$, we have $\lambda^D (\A, \R^d_+) \le \lambda^D (\A, Q_+ (0, R))\le C $ for any $R>1$.
To show 
\begin{equation}\label{p2-1a}
\lambda^D  (\A,  Q_+(0, R))\le  \lambda^D (\A, \R^d_+) +  CR^{-2}
\end{equation}
for $R$ large, 
let $V$ be the invariant subspace for $\B$ and $k=\text{\rm dim}(V)$.
If $k=d$, then $\B$ is constant.
The estimate follows readily by using a partition of unity.
Assume dim$(V) \le d-1$.
We construct a parallelotope, 
\begin{equation}\label{p2-p}
P=\big\{  t_1 e_1 + t_2 e_2+ \cdots +  t_d e_d: 0< t_j <1 \text{ for } 1\le j \le d \big \} \subset \R^d_+,
\end{equation}
where $\{e_1, e_2, \dots, e_d \}$ are unit vectors that  form a basis for $\R^d$.
These  unit vectors are constructed  as follows.

\begin{itemize}

\item

If $V \subset \R^{d-1}\times \{ 0\}$,  choose $\{ e_1, e_2, \dots, e_{d-1}\}$  to  form  an orthonormal basis for $\R^{d-1}\times \{0\}$ 
so that   $\{ e_1, \dots, e_k\}$ is an orthonormal basis for $V$.
Also, choose  $e_d =(0, \dots,0,  1) \in \R^d_+ \cap  V^\perp$.
In this case, $P$ is a cube.

\item

If  $V \not\subset \R^{d-1}\times \{ 0\} $, choose $\{e_1, e_2, \dots, e_{d-1} \}$ to form an orthonormal basis for $\R^{d-1} \times \{0\}$
so that $\{ e_1, \dots, e_{k-1} \}\subset V$.
Also, choose  $e_d\in V \cap \R^{d}_+$ such that  $\{e_1, \dots, e_{k-1}, e_d\}$ forms an orthonormal basis for $V$.

\end{itemize}
As a result, we obtain a tiling of $\R^d_+$, 
\begin{equation}\label{p2-b}
\overline{\R^d_+ }  = \bigcup_{z\in \mathcal{Z}} \overline{ R  (z+ P)},
\end{equation}
where
$$
\mathcal{Z}= \big\{ n_1 e_1 + n_2 e_2 +\cdots  + n_d e_d: n_j \in \mathbb{Z} \text{ for } 1\le j \le d \text{ and } n_d \ge 0 \big\}
$$
forms  a lattice for $\overline{\R^d_+}$.
Let 
\begin{equation}\label{p2-p1}
\widetilde{P}= \big\{ t_1 e_1 + t_2 e_2 + \cdots + t_d e_d: 0< t_j < 2  \text{ for } 1\le j \le d  \big\}.
\end{equation}
By using a partition of unity adapted to \eqref{p2-b} and \eqref{p2-p1},  we obtain 
\begin{equation}\label{p2-1}
\aligned
\lambda^D (\A, \R^d_+)
 & \ge \inf_{z\in \mathcal{Z}} \lambda^D (\A, R (z+ \widetilde{P} )) - C R^{-2}\\
   & = \inf_{z\in \mathcal{Z} _1} \lambda^D (\A,  R (z+ \widetilde{P})) - C R^{-2},
  \endaligned
\end{equation}
where  $\mathcal{Z}_1\subset \mathcal{Z}$ is given in the following, by 
using the fact that $\B$ is  invariant with  respect to $V$ to eliminate the components of $z$  in $V$.
If $V\subset \R^{d-1}\times \{ 0\}$, the set $\mathcal{Z}_1$ is given by
$
\mathcal{Z}_1 = \mathcal{Z}\cap V^\perp.
$
If $V\not\subset \R^{d-1}\times \{ 0\}$,
$$
\mathcal{Z}_1
=\big\{ n_k e_k + \cdots + n_{d-1} e_{d-1} \in \mathcal{Z}: n_j \in \mathbb{Z}  \text{ for } k\le j \le d-1  \big\}.
$$

Note that  for $z\in \mathcal{Z}_1$, 
$$
\aligned
\lambda^D (\A,  R (z+\widetilde{P}))
 & \ge c \inf_{x\in  R (z+\widetilde{P})}   
\{ \widetilde{m}(x, \B) \}^2\\
& \ge c\, \sigma^{\frac{2}{\kappa+1}}  \inf_{ x\in  R (z+\widetilde{P})}  \{ \text{\rm dist}(x, V)\}^{\frac {2}{\kappa+1}}\\
 & \ge c\,  \sigma^{\frac{2}{\kappa+1}} R^{\frac{2}{\kappa+1}} \{ \text{\rm dist}(z, V) - 2d  \}^{\frac {2}{\kappa+1}}.
\endaligned
$$
It  follows that if $z\in \mathcal{Z}_1$ and dist$(z, V) \ge  2d+1$, we have
$$
\lambda^D (\A,  R  (z+\widetilde{P}))
\ge \lambda^D (\A, Q_+ (0, 1))
$$
if $R>1$ is large.
If  $z\in \mathcal{Z}_1 $ and dist$(z, V)< 2d+1 $, we claim that 
\begin{equation}\label{claim-1}
 R (z+\widetilde{P}) \subset Q_+ (0, C R).
 \end{equation}
 This implies  that  for any  $z\in \mathcal{Z}_1$,
$$
\lambda^D (\A,  R (z+ \widetilde{P}))
\ge \lambda^D (\A, Q_+(0,  CR)).
$$
which, together with \eqref{p2-1}, gives \eqref{p2-1a}.

Finally, to prove the claim \eqref{claim-1}, we consider two cases.
If $V\subset \R^{d-1} \times \{ 0\}$, then $\mathcal{Z}_1= Z\cap V^\perp$. Hence,
$|z| =\text{\rm dist}(z, V)< 2d+1$, which  yields \eqref{claim-1}.
If $V\not\subset  \R^{d-1}\times\{ 0\}$ and $z\in \mathcal{Z}_1$, then 
$
z=n_k e_k +\cdots+ n_{d-1} e_{d-1}\in \R^{d-1}\times \{ 0\}.
$
Note that  if $|z|\neq 0$, 
$$
\aligned
\{ \text{\rm dist} (z, V) \}^2 & = |z|^2 - \langle z, e_1\rangle^2 -\cdots - \langle z, e_{k-1}\rangle^2 - \langle z, e_d \rangle^2\\
& =|z|^2- \langle z, e_d \rangle^2
=|z|^2 \left\{ 1-  \langle z/|z|, e_d \rangle^2\right\} \\
&\ge |z|^2 \{ \text{\rm dist}(e_d, \R^{d-1} \times \{ 0 \} ) \}^2
\ge c|z|^2,
\endaligned
$$
where we have used the facts that $\{ e_1, \dots, e_{k-1}, e_d\}$  forms an orthonormal basis for $V$ and 
$e_d\not \in \R^{d-1} \times \{ 0 \}$.
It follows that  $|z|\le C$, which leads to  \eqref{claim-1}.
\end{proof}

\begin{remark}\label{re-de-1}
{\rm 
The constants $C$ in \eqref{p2-1} and \eqref{claim-1} depend on $\sigma$ and  the shape of $P$.
Indeed, in the case $V\not\subset  \partial\R^d_+$,
 the constant $C$ in \eqref{p2-0} depends on $\sigma$  and
 $\max_{v\in V} |\langle v, n \rangle|$, where $n=(0, \dots, 0, -1)$ is the outward normal to $\partial\R^d_+$.
 This follows from the fact $|\langle v, n\rangle | =\text{\rm dist}(v, \partial \R^d_+)$.
}
\end{remark}

Let $\mathcal{O}$ and $\Omega$ be two Lipschitz domains in $\R^d$ such that  $\mathcal{O}\subset \Omega$ and
$\partial\mathcal{O}\cap \partial\Omega \neq \emptyset$.
Define

\begin{equation}\label{p-i-1}
\aligned
\mu^N (\A, \mathcal{O}, \Omega)  & = \inf _{\psi} 
\frac{\int_{\mathcal{O}} |(D+\A)\psi|^2}{\int_{\mathcal{O} } |\psi|^2}, \\
\mu^{D\!N} (\A, \mathcal{O}, \Omega )  & = \inf_{\psi }
\frac{\int_{\mathcal{O}} |(D+\A)\psi|^2}{\int_{\partial \mathcal{O} \cap \partial \Omega} |\psi|^2},
\endaligned
\end{equation}
where the  infimums   are  taken over  non-zero  functions in $C^1(\overline{\mathcal{O}};  \C)$ such that
$\psi =0 $ on $\Omega \cap \partial \mathcal{O}$.
Observe that for $\Omega$ fixed, both $\mu^N (\A, \mathcal{O}, \Omega)$ and
$\mu^{D\!N}(\A, \mathcal{O}, \Omega)$  are monotonic with respect to  $\mathcal{O}$.
Indeed, if $\mathcal{O}_1\subset \mathcal{O}_2\subset\Omega$, then
$\mu^N (\A, \mathcal{O}_2, \Omega) \le \mu^N (\A, \mathcal{O}_1, \Omega)$ and
$\mu^{D\!N} (\A, \mathcal{O}_2, \Omega) \le \mu^N (\A, \mathcal{O}_1, \Omega)$.

\begin{lemma}\label{lemma-p3}
Let  $\A$ be  a homogeneous polynomial of degree $\kappa+1$.
Then,  
\begin{equation}\label{p3-0}
 \lambda^N (\A, \R^d_+) \le  \mu^N (\A, Q_+(0, R), \R^d_+)
  \le  \lambda^N (\A, \R^d_+)  +  CR^{-2} ,
\end{equation}
for $R> 1$, where $C$ depends on $\B$.
\end{lemma}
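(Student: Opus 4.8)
The plan is to mimic the proof of Lemma \ref{lemma-p2}, replacing the Dirichlet quantity $\lambda^D$ on a cell by the mixed quantity $\mu^N$ on the portion of the cell lying in $\R^d_+$.

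The left-hand inequality and the reduction to large $R$ are routine. Given a competitor $\psi$ for $\mu^N(\A, Q_+(0,R),\R^d_+)$, i.e. $\psi\in C^1(\overline{Q_+(0,R)};\C)$ with $\psi=0$ on $\R^d_+\cap\partial Q_+(0,R)=\partial Q(0,R)\cap\R^d_+$, extend $\psi$ by zero to $\overline{\R^d_+}$; the extension is Lipschitz and compactly supported, hence (after the standard density argument, the infimum defining $\lambda^N$ being unchanged if $H^1$ competitors are allowed) admissible for $\lambda^N(\A,\R^d_+)$, with the same Rayleigh quotient since numerator and denominator are volume integrals and $\partial\R^d_+$ is free on both sides; this gives $\lambda^N(\A,\R^d_+)\le\mu^N(\A, Q_+(0,R),\R^d_+)$. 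For the reduction, the domain-monotonicity of $\mu^N$ noted after \eqref{p-i-1}, together with $Q_+(0,1)\subset Q_+(0,R)$ and the bound $\mu^N(\A, Q_+(0,1),\R^d_+)\le C(1+\|\B\|_{L^\infty(Q_+(0,1))})\le C$ from a fixed bump function, shows $\mu^N(\A, Q_+(0,R),\R^d_+)\le C$ for all $R>1$; so it suffices to prove $\mu^N(\A, Q_+(0,R),\R^d_+)\le\lambda^N(\A,\R^d_+)+CR^{-2}$ for $R$ large.

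For the hard inequality I argue as in Lemma \ref{lemma-p2}. If the invariant subspace $V$ of $\B$ equals $\R^d$ then $\B$ is constant, and tiling $\overline{\R^d_+}$ by cubes of side $R$ with an IMS partition $\sum_\ell\varphi_\ell^2=1$, $|\nabla\varphi_\ell|\le CR^{-1}$, in \eqref{partition} gives the claim directly. Otherwise $\dim V\le d-1$; build $P$ and the lattice $\mathcal Z$ as in \eqref{p2-p}--\eqref{p2-b}, take the doubled parallelotope $\widetilde P$ of \eqref{p2-p1}, and choose a partition of unity $\{\varphi_\ell^2\}$ subordinate to $\{R(z_\ell+\widetilde P)\}_{z_\ell\in\mathcal Z}$ with $|\nabla\varphi_\ell|\le CR^{-1}$ and bounded overlap. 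For a competitor $\psi$ for $\lambda^N(\A,\R^d_+)$ (i.e. $C^1$ and compactly supported in $\overline{\R^d_+}$), each $\varphi_\ell\psi$ is supported in $R(z_\ell+\widetilde P)\cap\overline{\R^d_+}$ and, by the support of $\varphi_\ell$, vanishes on $\R^d_+\cap\partial\big(R(z_\ell+\widetilde P)\big)$; hence it is admissible for $\mu^N(\A, R(z_\ell+\widetilde P)\cap\R^d_+,\R^d_+)$ when the cell meets $\partial\R^d_+$, and for $\lambda^D(\A, R(z_\ell+\widetilde P))$ when it does not. Writing $\Lambda_\ell$ for the corresponding ground-state energy, \eqref{partition} gives $\lambda^N(\A,\R^d_+)\ge\inf_\ell\Lambda_\ell-CR^{-2}$. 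Using the $V$-invariance of $\B$ exactly as in the passage to \eqref{p2-1}, the infimum is restricted to the finite set $\mathcal Z_1$ of lattice points with vanishing $V$-component. For $z\in\mathcal Z_1$ with $\mathrm{dist}(z,V)\ge 2d+1$, extending a competitor by zero and invoking \eqref{op-g1a} together with \eqref{mp} and \eqref{mp-1c} yields $\Lambda_\ell\ge c\,\sigma^{2/(\kappa+1)}(R\,\mathrm{dist}(z,V)-2dR)^{2/(\kappa+1)}\ge c\,\sigma^{2/(\kappa+1)}R^{2/(\kappa+1)}$, which for $R$ large exceeds the bounded quantity $\mu^N(\A, Q_+(0,CR),\R^d_+)$. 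For $z\in\mathcal Z_1$ with $\mathrm{dist}(z,V)<2d+1$, the estimate \eqref{claim-1} gives $R(z_\ell+\widetilde P)\subset Q_+(0,CR)$, so by domain-monotonicity of $\mu^N$ (cell meeting $\partial\R^d_+$) or by extending a $\lambda^D$-competitor by zero into $Q_+(0,CR)$ (cell interior) we obtain $\Lambda_\ell\ge\mu^N(\A, Q_+(0,CR),\R^d_+)$. Combining, $\lambda^N(\A,\R^d_+)\ge\mu^N(\A, Q_+(0,CR),\R^d_+)-CR^{-2}$ for $R$ large, and replacing $CR$ by $R$ finishes the argument; as in Remark \ref{re-de-1}, the constant depends on $\B$ through $\sigma$ and $\max_{v\in V}|\langle v,n\rangle|$.

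The main obstacle is purely bookkeeping: after the IMS localization one must check that each piece is genuinely admissible for $\mu^N$ (or $\lambda^D$) on its cell, i.e. that it vanishes on the interior faces of the cell while the face lying in $\partial\R^d_+$ remains free, and that the $V$-invariance reduction and the "$\mathrm{dist}(z,V)$ bounded $\Rightarrow$ cell $\subset Q_+(0,CR)$" step carry over verbatim with $\mu^N$ in place of $\lambda^D$. No analytic input beyond \eqref{partition}, \eqref{op-g1a}, \eqref{mp}, \eqref{mp-1c} and \eqref{claim-1} is required.
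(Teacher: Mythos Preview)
Your proof is correct and follows the same route as the paper, which simply says the argument mirrors Lemma~\ref{lemma-p2} together with the observation $\mu^N(\A,\mathcal O,\R^d_+)\le\lambda^D(\A,\mathcal O)$ (your ``extend a $\lambda^D$-competitor by zero'' step). One small slip: $\mathcal Z_1$ is not finite and, in the case $V\not\subset\partial\R^d_+$, it is not literally the set of lattice points with vanishing $V$-component---but your subsequent far/near-cell dichotomy shows you are using $\mathcal Z_1$ exactly as in Lemma~\ref{lemma-p2}, so the argument stands.
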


\begin{proof}
The first  inequality in \eqref{p3-0} follows readily from the definition.
The proof for the second is  similar to  that of  Lemma \ref{lemma-p2}.
The translation argument  for interior parallelotopes uses  the observation that 
$\mu^N (\A, \mathcal{O}, \R^d_+) \le \lambda^D (\A, \mathcal{O})$.
\end{proof}

\begin{lemma}\label{lemma-p4}
Let  $\A$ be  a homogeneous polynomial of degree $\kappa+1$.
Then,  
\begin{equation}\label{p4-0}
 \lambda^{D\!N} (\A, \R^d_+) \le  \mu^{D\!N} (\A, Q_+(0, R), \R^d_+)
  \le  \lambda^{D\!N }(\A, \R^d_+)  +  CR^{-2} ,
\end{equation}
for $R>1$, where $C$ depends on $\B$.
\end{lemma}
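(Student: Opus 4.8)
The plan is to follow the tiling scheme used for Lemma \ref{lemma-p2} and Lemma \ref{lemma-p3}, isolating the one genuinely new difficulty caused by the Dirichlet-to-Neumann normalization. The first inequality in \eqref{p4-0} is immediate: the extension by zero to $\R^d_+$ of an admissible test function for $\mu^{D\!N}(\A,Q_+(0,R),\R^d_+)$ (which vanishes on $\R^d_+\cap\partial Q_+(0,R)$) is admissible for $\lambda^{D\!N}(\A,\R^d_+)$ and has the same Rayleigh quotient, since its boundary trace on $\partial\R^d_+$ is supported in $Q(0,R)\cap\partial\R^d_+$. For the second inequality, after inserting a partition of unity one splits $\int_{\R^d_+}|(D+\A)\psi|^2$ into pieces localized to parallelotopes, but the resulting IMS error term is $\sum_\ell\int_{\R^d_+}|\nabla\varphi_\ell|^2|\psi|^2\lesssim R^{-2}\int_{\R^d_+}|\psi|^2$, a bulk integral, whereas the Dirichlet-to-Neumann normalization only sees $\int_{\partial\R^d_+}|\psi|^2$. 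These two quantities are not comparable for general $\psi$, and closing this gap is the crux of the argument.

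The key observation is that the gap can be closed for free using the operator lower bounds of Section \ref{section-op}: for any $\psi\in C_0^1(\R^d;\C)$, the half-space inequality \eqref{op-g1a} together with $m(x,\B)\ge c_0$ from \eqref{m-l} (valid because $\A$ is homogeneous with $\sum_{|\alpha|=\kappa}|b_\alpha|=1$) gives $\int_{\R^d_+}|\psi|^2\le C\int_{\R^d_+}|(D+\A)\psi|^2$ with $C=C(d,\kappa)$. With this in hand I would run the geometric part of the proof of Lemma \ref{lemma-p2} verbatim: let $V$ be the invariant subspace of $\B$, construct the parallelotope $P$, the lattice $\mathcal{Z}$, the enlarged parallelotope $\widetilde{P}$, and a partition of unity $\{\varphi_\ell\}$ with $\sum_\ell\varphi_\ell^2=1$ on $\overline{\R^d_+}$, $\varphi_\ell\in C_0^\infty(R(z_\ell+\widetilde{P});\R)$, $|\nabla\varphi_\ell|\le CR^{-1}$, and bounded overlap. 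Writing $\mathcal{O}_\ell=R(z_\ell+\widetilde{P})\cap\R^d_+$, the function $\psi\varphi_\ell$ vanishes on $\R^d_+\cap\partial\mathcal{O}_\ell$, so $\int_{\R^d_+}|(D+\A)(\psi\varphi_\ell)|^2\ge\mu^{D\!N}(\A,\mathcal{O}_\ell,\R^d_+)\int_{\partial\R^d_+}|\psi\varphi_\ell|^2$ (tiles with $\overline{\mathcal{O}_\ell}\cap\partial\R^d_+=\emptyset$ contribute nothing and are discarded); summing the localization identity and using $\sum_\ell\int_{\partial\R^d_+}|\psi\varphi_\ell|^2=\int_{\partial\R^d_+}|\psi|^2$ yields
\begin{equation*}
\int_{\R^d_+}|(D+\A)\psi|^2\ge\Big(\inf_{\ell}\mu^{D\!N}(\A,\mathcal{O}_\ell,\R^d_+)\Big)\int_{\partial\R^d_+}|\psi|^2-CR^{-2}\int_{\R^d_+}|\psi|^2 .
\end{equation*}

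To bound $\inf_\ell\mu^{D\!N}(\A,\mathcal{O}_\ell,\R^d_+)$ from below by $\mu^{D\!N}(\A,Q_+(0,CR),\R^d_+)$ I would argue as in Lemma \ref{lemma-p2}: using invariance of $\B$ under translations in $V\cap\partial\R^d_+$, the boundary-touching indices $z_\ell$ may be moved into the lattice $\mathcal{Z}_1\subset\partial\R^d_+$ without changing $\mu^{D\!N}$; for $z\in\mathcal{Z}_1$ with $\text{\rm dist}(z,V)<2d+1$ the claim \eqref{claim-1} gives $R(z+\widetilde{P})\cap\R^d_+\subset Q_+(0,CR)$, so monotonicity of $\mu^{D\!N}$ in its domain gives $\mu^{D\!N}(\A,R(z+\widetilde{P})\cap\R^d_+,\R^d_+)\ge\mu^{D\!N}(\A,Q_+(0,CR),\R^d_+)$; for $z\in\mathcal{Z}_1$ with $\text{\rm dist}(z,V)\ge 2d+1$, extending competitors by zero and applying the graph-domain inequality \eqref{op-g1b} on $\R^d_+$ gives $\mu^{D\!N}(\A,R(z+\widetilde{P})\cap\R^d_+,\R^d_+)\ge c\inf_{x\in R(z+\widetilde{P})}m(x,\B)\ge c\,\sigma^{\frac{1}{\kappa+1}}R^{\frac{1}{\kappa+1}}(\text{\rm dist}(z,V)-2d)^{\frac{1}{\kappa+1}}$ by \eqref{mp} and \eqref{mp-1c}, which for $R$ large exceeds the fixed constant $\mu^{D\!N}(\A,Q_+(0,1),\R^d_+)\ge\mu^{D\!N}(\A,Q_+(0,CR),\R^d_+)$. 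Feeding this back into the displayed inequality and then using the bulk bound $\int_{\R^d_+}|\psi|^2\le C\int_{\R^d_+}|(D+\A)\psi|^2$ and $\mu^{D\!N}(\A,Q_+(0,CR),\R^d_+)\le C$ from \eqref{low-a}, one obtains $\int_{\R^d_+}|(D+\A)\psi|^2\ge(\mu^{D\!N}(\A,Q_+(0,CR),\R^d_+)-CR^{-2})\int_{\partial\R^d_+}|\psi|^2$; taking the infimum over $\psi$ yields $\lambda^{D\!N}(\A,\R^d_+)\ge\mu^{D\!N}(\A,Q_+(0,CR),\R^d_+)-CR^{-2}$, and replacing $R$ by $R/C$ (the case of bounded $R$ being trivial by monotonicity) finishes the proof.

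The step I expect to be the main obstacle is precisely the bulk-versus-boundary mismatch in the localization error: unlike in the Neumann or interior-Dirichlet cases, the error term is not of the same type as the quantity being normalized, and the translation/covering argument alone cannot absorb it. The resolution — controlling $\int_{\R^d_+}|\psi|^2$ by $\int_{\R^d_+}|(D+\A)\psi|^2$ through the Fefferman--Phong-type inequality \eqref{op-g1a}, available here because the magnetic length $m(\cdot,\B)^{-1}$ is bounded for a normalized homogeneous polynomial field — is the one place where the structure of this paper's operator bounds is really needed. A secondary technical point is that the far-tile estimate requires a half-space operator inequality carrying a boundary term (namely \eqref{op-g1b}) rather than the purely interior bound used for Lemma \ref{lemma-p2}, applied to zero-extensions of test functions, which is routine. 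Everything else — the parallelotope tiling, the reduction to $\mathcal{Z}_1$, and the near/far dichotomy — is identical to the Dirichlet case already treated.
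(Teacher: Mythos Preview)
Your proposal is correct and follows essentially the same route as the paper: the same parallelotope tiling and $\mathcal{Z}_1$ reduction from Lemma~\ref{lemma-p2}, the same near/far dichotomy with \eqref{op-g1b} handling the far boundary tiles, and the same resolution of the bulk-versus-boundary mismatch by bounding $\int_{\R^d_+}|\psi|^2$ by $\int_{\R^d_+}|(D+\A)\psi|^2$. The only cosmetic difference is that the paper phrases this last control as $\int_{\R^d_+}|\psi|^2\le [\lambda^N(\A,\R^d_+)]^{-1}\int_{\R^d_+}|(D+\A)\psi|^2$ and then invokes $\lambda^N(\A,\R^d_+)\approx 1$, whereas you invoke \eqref{op-g1a} and \eqref{m-l} directly; these are equivalent, since \eqref{low-a} derives the lower bound on $\lambda^N$ from precisely those two ingredients.
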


\begin{proof}
The first inequality in \eqref{p4-0} follows readily  from the definitions.
The proof of the second  uses the  inequality, 
\begin{equation}\label{p4-1}
 c\int_{\R^{d-1}}
\widetilde{m} (x, \B) |\psi |^2
\le  \int_{\R^d_+} |(D+\A)\psi|^2.
\end{equation}
(see Remark \ref{re-op2})
as well as the basis $\{e_1, e_2, \dots, e_d\}$  for $\R^d$, constructed in the proof of Lemma \ref{lemma-p2}.
Let $\psi \in C_0^1(\R^d; \C)$,
by using a partition of unity adapted  to \eqref{p2-b} and \eqref{p2-p1}, 
$$
\sum_{z\in \mathcal{Z}}  \int_{\R^d_+} |(D+\A)(\psi \varphi_z )|^2
\le \int_{\R^d_+} |(D+\A)\psi|^2
+  C R^{-2} \int_{\R^d_+} |\psi |^2,
$$
we obtain
\begin{equation}\label{p4-2}
\aligned
 \inf_{z\in \mathcal{Z}_0}  & \mu^{D\!N} (\A, R(z+\widetilde{P}), \R^d_+) \int_{\R^{d-1}} |\psi|^2\\
& \le \sum_{z\in \mathcal{Z}_0}
\mu^{D\!N} (\A, R(z+\widetilde{P}), \R^d_+) \int_{\R^{d-1}} |\psi  \varphi_z|^2\\
&\le \sum_{z\in \mathcal{Z}_0}
\int_{\R^d_+} |(D+\A) (\psi \varphi_z) |^2\\
&\le \int_{\R^d_+} |(D+\A)\psi|^2
+  C R^{-2} \int_{\R^d_+} |\psi |^2\\
& \le \left( 1+ C R^{-2}  [ \lambda^N (\A, \R^d_+) ]^{-1} \right)
\int_{\R^d_+} | (D+\A)\psi)|^2,
\endaligned
\end{equation}
where
$
\mathcal{Z}_0 
= \mathcal{Z} \cap  (\R^{d-1} \times \{ 0\} ).
$ 
We will show  that  for any $z\in \mathcal{Z}_0$,
\begin{equation}\label{p4-3}
\mu^{D\!N} (\A, R(z+ \widetilde{P}), \R^d_+)
\ge \mu^{D\!N} (\A, Q_+(0,  CR), \R^d_+).
\end{equation}
This, together with \eqref{p4-2}, implies that
$$
\lambda^{D\!N} (\A, \R^d_+)
\ge
\frac{\mu^{D\!N} (\A, Q_+ (0, C R), \R^d_+)}
{ 1+ C R^{-2} [\lambda^N (\A, \R^d_+)]^{-1} },
$$
which  gives the second inequality in \eqref{p4-0}, using the observation that $\lambda^N(\A, \R^d_+)\approx  1$.

To prove  \eqref{p4-3}, we use the same argument as  in the proofs of the last two lemmas.
Since $\B$ is invariant with respect to $V$, we may assume $z\in \mathcal{Z}_1$, where
$\mathcal{Z}_1\subset \mathcal{Z}$ is  the same as in the proof of Lemma \ref{lemma-p2}. 
For $z\in \mathcal{Z}_1 $, we use \eqref{p4-1} to obtain 
$$
\aligned
\mu^{D\!N}(\A, R(z+\widetilde{P}), \R^d_+)
 & \ge c \inf_{x\in R(z+ \widetilde{P})}
\widetilde{m}(x, \B)\\
&\ge c\, \sigma^{\frac{1}{\kappa+1}}  \inf_{x\in R(z+\widetilde{P})} \{ \text{\rm dist}(x, V) \}^{\frac{1}{\kappa +1}}\\
&\ge c\, \sigma^{\frac{1}{\kappa+1}}  R^{\frac{1}{\kappa+1}} \{ \text{\rm dist} (z, V) - 2d \}^{\frac{1}{\kappa+1}}.
\endaligned
$$
Hence, if  $z\in \mathcal{Z}_1$ and  dist$(z, V)\ge 2d+1$,  we have
$$
\mu^{D\!N}(\A, R(z+\widetilde{P}), \R^d_+)
\ge \mu^{D\!N}(\A, Q_+(0, 1), \R^d_+)
$$
for $R>1$ large.
If $z\in \mathcal{Z}_1$ and dist$(z, V)< 2d+1$, then $R(z+\widetilde{P})\subset Q_+ (0, C R)$ and  thus
$$
\mu^{D\!N} (\A, R(z+\widetilde{P}), \R^d_+)
\ge \mu^{D\!N} (\A, Q_+ (0, C R), \R^d_+).
$$
This completes the proof.
\end{proof}

Let 
\begin{equation}\label{omega}
\Omega=\big \{ (x^\prime, x_d)\in \R^d: \ x_d >\phi(x^\prime) \big\}, 
\end{equation}
where $\phi: \R^{d-1} \to \R$ is a $C^{1}$ function such that $\phi(0)=0$ and $\nabla \phi (0)=0$.

\begin{thm}\label{thm-p5}
Let $\A$ be a homogeneous polynomial of degree $\kappa+1$ and $\Omega$  given by \eqref{omega}.
Then
\begin{equation}\label{p5-0}
|\lambda^D (\A, \Omega\cap Q(0, 2R))
-\lambda^D ( \A ,  \R^d_+) |\le C\left \{ R^{\kappa+1} M_R + R^{-2} \right\},
\end{equation}
where $R>1$ and $M_R=\max \{ |\nabla \phi (x^\prime)|: |x^\prime|< R \}$.
\end{thm}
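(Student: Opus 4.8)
The plan is to straighten the boundary by a volume‑preserving change of variables and then transplant quasimodes between $\Omega\cap Q(0,2R)$ and $\R^d_+$, with the error controlled by $M_R$. We may assume that $R^{\kappa+1}M_R$ is smaller than a fixed constant depending only on $d$ and $\kappa$; otherwise \eqref{p5-0} is immediate, since $\lambda^D(\A,\R^d_+)\le C$ by \eqref{low-a}, while $\lambda^D(\A,\Omega\cap Q(0,2R))\le C$ as well, by testing with a fixed bump supported near the origin inside $\Omega$ (such a test function exists because $\phi\in C^1$ and $\nabla\phi(0)=0$, so $\Omega$ contains a small ball near the origin). Under this assumption $M_R<1$, and
\[
\Phi(x',x_d)=(x',\,x_d-\phi(x'))
\]
is a $C^1$ diffeomorphism of Jacobian determinant $1$ mapping $\Omega$ onto $\R^d_+$, fixing the horizontal variable $x'$, and satisfying $Q_+(0,R)\subset\Phi\bigl(\Omega\cap Q(0,2R)\bigr)\subset Q_+(0,CR)$.

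Given a function $\psi$ on one side, put $\tilde\psi=\psi\circ\Phi$ or $\psi\circ\Phi^{-1}$ on the other. Since $\Phi$ preserves Lebesgue measure, $\int|\tilde\psi|^2=\int|\psi|^2$ exactly, so only the quadratic forms must be compared. Differentiating, $\partial_d\tilde\psi=(\partial_d\psi)\circ\Phi^{\pm1}$ and $\partial_j\tilde\psi=(\partial_j\psi)\circ\Phi^{\pm1}\mp\partial_j\phi\,(\partial_d\psi)\circ\Phi^{\pm1}$ for $j<d$, hence
\[
(D_j+A_j)\tilde\psi=\bigl((D_j+A_j)\psi\bigr)\circ\Phi^{\pm1}+E_j,\qquad
|E_j|\le M_R\,\bigl(|\nabla\psi|\circ\Phi^{\pm1}\bigr)+\bigl|A_j-A_j\circ\Phi^{\pm1}\bigr|\,\bigl(|\psi|\circ\Phi^{\pm1}\bigr).
\]
Because $\A$ is homogeneous of degree $\kappa+1$, one has $|\nabla\A|\le CR^{\kappa}$ on $Q(0,CR)$, while on the relevant supports (where $|x'|<R$) $|x-\Phi^{\pm1}(x)|=|\phi(x')|\le CM_RR$; therefore $|A_j-A_j\circ\Phi^{\pm1}|\le CR^{\kappa+1}M_R$. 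Moreover, for any near‑minimizer $\psi$ one has $\int|(D+\A)\psi|^2\le C\int|\psi|^2$ (the ground state energies in play being $\le C$), so the diamagnetic inequality together with $|\A|\le CR^{\kappa+1}$ on the relevant cube gives $\int|\nabla\psi|^2\le CR^{2\kappa+2}\int|\psi|^2$. Feeding these bounds into $|a+b|^2\le(1+\delta)|a|^2+(1+\delta^{-1})|b|^2$ with $\delta=R^{\kappa+1}M_R<1$ and using the change of variables, we arrive at
\[
\frac{\int|(D+\A)\tilde\psi|^2}{\int|\tilde\psi|^2}\le\bigl(1+CR^{\kappa+1}M_R\bigr)\,\frac{\int|(D+\A)\psi|^2}{\int|\psi|^2}+CR^{\kappa+1}M_R .
\]

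It then remains to apply this in both directions. For the upper bound, pick $\psi\in C_0^1(Q_+(0,R))$ with Rayleigh quotient at most $\lambda^D(\A,Q_+(0,R))\le\lambda^D(\A,\R^d_+)+CR^{-2}$, which is legitimate by Lemma \ref{lemma-p2}; then $\psi\circ\Phi$ is compactly supported in $\Omega\cap Q(0,2R)$, and testing yields $\lambda^D(\A,\Omega\cap Q(0,2R))\le\lambda^D(\A,\R^d_+)+C(R^{\kappa+1}M_R+R^{-2})$, after absorbing $\lambda^D(\A,\R^d_+)\le C$. For the lower bound, pick a near‑minimizer $\psi\in C_0^1(\Omega\cap Q(0,2R))$; then $\psi\circ\Phi^{-1}$ is compactly supported in $Q_+(0,CR)\subset\R^d_+$, and since $\lambda^D(\A,\R^d_+)\le\lambda^D(\A,Q_+(0,CR))$ by inclusion, testing yields $\lambda^D(\A,\R^d_+)\le\lambda^D(\A,\Omega\cap Q(0,2R))+CR^{\kappa+1}M_R$. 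Combining the two inequalities gives \eqref{p5-0}.

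The main point to watch is the cross term $\partial_j\phi\,\partial_d\psi$ produced by the change of variables. The quasimodes oscillate at frequency of order $R^{\kappa+1}$ — the size of $|\A|$ on $Q(0,2R)$ — so the gradient bound $\|\nabla\psi\|\le CR^{\kappa+1}\|\psi\|$ is essentially sharp, and it is exactly this factor that degrades the boundary flatness error from $M_R$ to $R^{\kappa+1}M_R$. Everything else (the Jacobian being $1$, the potential comparison $|A_j-A_j\circ\Phi^{\pm1}|\lesssim R^{\kappa+1}M_R$, the reduction from $\R^d_+$ to a large truncated cube via Lemma \ref{lemma-p2}) is routine once this gradient estimate is in hand.
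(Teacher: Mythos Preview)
Your proof is correct, but it takes a different route from the paper's. You use the boundary--flattening map $\Phi(x',x_d)=(x',x_d-\phi(x'))$ to transplant test functions between $\Omega\cap Q(0,2R)$ and $\R^d_+$; this is exactly the method the paper employs for the Neumann and Dirichlet--to--Neumann analogues (Theorems~\ref{thm-p6} and~\ref{thm-p7}). For the Dirichlet case the paper instead exploits monotonicity directly: since Dirichlet test functions vanish on the whole boundary, one can sandwich $\Omega\cap Q(0,2R)$ between two rectangular slabs $F(R)\subset\Omega\cap Q(0,2R)\subset E(R)$ sitting at heights $\pm RM_R$, then translate vertically by $RM_R$ so that the slabs contain $Q_+(0,R)$; only the potential shifts, and by at most $CR^{\kappa+1}M_R$. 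This avoids tracking $\nabla\psi$ altogether and gives a shorter argument. Your approach is slightly heavier here because it needs the crude gradient bound $\|\nabla\psi\|\le CR^{\kappa+1}\|\psi\|$, but it has the virtue of handling all three boundary conditions uniformly.

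One minor point: your a priori bound $\lambda^D(\A,\Omega\cap Q(0,2R))\le C$ via ``a fixed bump supported near the origin inside $\Omega$'' introduces dependence on $\phi$ beyond $M_R$. It is cleaner to run the upper--bound direction first (that step only needs the gradient estimate on the near--minimizer in $Q_+(0,R)$, where $\lambda^D\le C$ by Lemma~\ref{lemma-p2}) and then feed the resulting bound $\lambda^D(\A,\Omega\cap Q(0,2R))\le C$ into the lower--bound direction. Also, what you invoke for $\|\nabla\psi\|\le CR^{\kappa+1}\|\psi\|$ is simply the triangle inequality $|D\psi|\le|(D+\A)\psi|+|\A||\psi|$, not the diamagnetic inequality.
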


\begin{proof}

We assume $M_R\le (1/2)$ and $R$ is large,   for otherwise the estimate is trivial.
Since $\phi (0)=0$,  we have $|\phi (x^\prime) |\le R M_R$ for $|x^\prime|< R$.
It follows that $F(R) \subset \Omega\cap Q(0, 2R)\subset E(R)$, where
$$
\aligned
E(R) & =\big\{ (x^\prime, x_d): \ |x^\prime|< R \text{ and } -R M_R < x_d< R\big \}, \\
F(R) & =\big\{ (x^\prime, x_d): \ |x^\prime|< R \text{ and } R M_R < x_d< R \big\}.
\endaligned
$$
Hence,
$$
\lambda^D (\A, E(R))\le \lambda^D (\A, \Omega\cap Q(0, 2R))
\le \mu^D (\A, F(R)).
$$
By translation,
$$
\lambda^D (\A, F(R)) =\lambda^D (\widetilde{\A}, \widetilde{F}(R)),
$$
where $\widetilde{A} (x)=\A (x+ (0, \dots, 0, RM_R))$ and
$\widetilde{F}(R) = F(R) - (0, \dots, 0, RM_R)$.
Note that for $x\in \widetilde{F}(R)$,
$$
|\widetilde{\A}(x) -\A(x)|
\le C R^{\kappa+1} M_R.
$$
It follows that
$$
|\lambda^D (\widetilde{\A}, \widetilde{F}(R))
-\lambda^D (\A, \widetilde{F}(R))|
\le C R^{\kappa+1} M_R.
$$
As a result,
$$
\aligned
\lambda^D (\A, \Omega\cap Q(0, 2R))
& \le \lambda^D(\A, \widetilde{F}(R)) + C R^{\kappa+1} M_R\\
& \le \lambda^D (\A, Q_+ (0, R)) +C R^{\kappa+1} M_R\\
&\le  \lambda^D(\A, \R^d_+) + CR^{-2} + C R^{\kappa+1} M_R,
\endaligned
$$
where we have used the fact $Q_+(0, R) \subset \widetilde{F}(R)$ and Lemma \ref{lemma-p2}.
The lower bound for $\lambda^D(\A, \Omega\cap Q(0, R))$ may be established in 
a similar manner, using $E(R)$. We omit the details. 
\end{proof}

\begin{thm}\label{thm-p6}
Let $\A$ be a homogeneous polynomial of degree $\kappa+1$ and $\Omega$ given by \eqref{omega}.
Then
\begin{equation}\label{p6-0}
|\mu^N (\A, \Omega\cap Q(0, 2R), \Omega)
-\lambda^N ( \A ,  \R^d_+) |\le C \{ R^{\kappa+1} M_R + R^{-2} \},
\end{equation}
where $R>1$ and $M_R=\max \{ |\nabla \phi (x^\prime)|: |x^\prime|< R \}$.
\end{thm}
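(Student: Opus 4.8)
The plan is to follow the proof of Theorem \ref{thm-p5}, but since the Neumann (free) part of the boundary of $\Omega\cap Q(0,2R)$ is now the curved graph $\partial\Omega\cap Q(0,2R)$ rather than a flat face, the box sandwich used there is no longer adequate (for the lower bound an extension of a test function by zero across the graph is not even in $H^1$, and for the upper bound shrinking to an inner box produces a Dirichlet constant). Instead I would flatten the boundary by the diffeomorphism $\Phi(y',y_d)=(y',\,y_d+\phi(y'))$, which maps $\R^d_+$ onto $\Omega$ and $\partial\R^d_+$ onto $\partial\Omega$, has $\det D\Phi\equiv1$, and on $\{|y'|<CR\}$ satisfies $|D\Phi-I|+|(D\Phi)^{-1}-I|\le CM_R$ together with $|\A(\Phi(y))-\A(y)|\le CR^{\kappa+1}M_R$ (the latter from $|\Phi(y)-y|=|\phi(y')|\le RM_R$ and $\|\nabla\A\|_{L^\infty(|x|<CR)}\le CR^{\kappa}$, $\A$ being homogeneous of degree $\kappa+1$). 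As in Theorem \ref{thm-p5} we may assume $M_R\le\tfrac12$ and, writing $\e:=R^{\kappa+1}M_R$, that $\e$ is small; otherwise \eqref{low-a} makes \eqref{p6-0} trivial.

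For the upper bound, let $\psi_0\in C^1(\overline{Q_+(0,R)};\C)$ be a near-minimizer for $\mu^N(\A,Q_+(0,R),\R^d_+)$ (so $\psi_0=0$ on $\R^d_+\cap\partial Q(0,R)$), extend it by zero, and transplant it by $\Phi$. For $M_R$ small $\Phi(Q_+(0,R))$ is a compact subset of $Q(0,2R)$, so $\psi:=\psi_0\circ\Phi^{-1}$ (extended by zero) is an admissible test function for $\mu^N(\A,\Omega\cap Q(0,2R),\Omega)$. Changing variables ($dx=dy$), $\int_{\Omega\cap Q(0,2R)}|\psi|^2=\int_{Q_+(0,R)}|\psi_0|^2$ and $\int_{\Omega\cap Q(0,2R)}|(D+\A)\psi|^2=\int_{Q_+(0,R)}\bigl|-i((D\Phi)^{-1})^{T}\nabla\psi_0+\A(\Phi)\psi_0\bigr|^2$. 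Writing this integrand as $(D+\A)\psi_0$ plus a term of modulus $\le CM_R|\nabla\psi_0|+C\e|\psi_0|$ and using $|a+b|^2\le(1+\e)|a|^2+\e^{-1}|b|^2$, the excess is controlled once $\int|\nabla\psi_0|^2$ is bounded: the diamagnetic splitting $|\nabla\psi_0|^2\le2|(D+\A)\psi_0|^2+2|\A|^2|\psi_0|^2$, the bound $|\A|\le CR^{\kappa+1}$ on $Q_+(0,R)$, and $\int|(D+\A)\psi_0|^2=\mu^N(\A,Q_+(0,R),\R^d_+)\int|\psi_0|^2\le C\int|\psi_0|^2$ (Lemma \ref{lemma-p3} and \eqref{low-a}) give $\int|\nabla\psi_0|^2\le CR^{2\kappa+2}\int|\psi_0|^2$. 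With $\e=R^{\kappa+1}M_R$ both error contributions reduce to $\le CR^{\kappa+1}M_R\int|\psi_0|^2$, hence $\mu^N(\A,\Omega\cap Q(0,2R),\Omega)\le\mu^N(\A,Q_+(0,R),\R^d_+)+CR^{\kappa+1}M_R\le\lambda^N(\A,\R^d_+)+CR^{-2}+CR^{\kappa+1}M_R$ by Lemma \ref{lemma-p3}.

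For the lower bound, let $\psi$ be any admissible test function for $\mu^N(\A,\Omega\cap Q(0,2R),\Omega)$ and set $u:=\psi\circ\Phi$; since $\psi$ vanishes on $\Omega\cap\partial Q(0,2R)$ and $\Phi$ leaves $x'$ fixed, $u$ is supported in $\{|y'|<R,\ 0<y_d<2R\}$ and (after a routine mollification/density step) is admissible for $\lambda^N(\A,\R^d_+)$. Reversing the change of variables, $\int_{\Omega\cap Q(0,2R)}|\psi|^2=\int_{\R^d_+}|u|^2=:N$ and $\int_{\Omega\cap Q(0,2R)}|(D+\A)\psi|^2=\int_{\R^d_+}\bigl|-i((D\Phi)^{-1})^{T}\nabla u+\A(\Phi)u\bigr|^2\ge(1-\e)\int_{\R^d_+}|(D+\A)u|^2-C\e^{-1}\bigl(M_R^2\!\int|\nabla u|^2+R^{2\kappa+2}M_R^2N\bigr)$. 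Here $\int_{\R^d_+}|(D+\A)u|^2\ge\lambda^N(\A,\R^d_+)N$ by definition, while $|\nabla u(y)|\le(1+M_R)|\nabla\psi(\Phi(y))|$ and the diamagnetic splitting applied to $\psi$ give $\int|\nabla u|^2\le C\bigl(\int_{\Omega\cap Q(0,2R)}|(D+\A)\psi|^2+R^{2\kappa+2}N\bigr)$. Substituting and again taking $\e=R^{\kappa+1}M_R$ (so $\e^{-1}M_R^2\le\e$ and $\e^{-1}R^{2\kappa+2}M_R^2=\e$), one solves the resulting inequality for $\int_{\Omega\cap Q(0,2R)}|(D+\A)\psi|^2$ to obtain $\ge\bigl(\lambda^N(\A,\R^d_+)-CR^{\kappa+1}M_R\bigr)N$; infimizing over $\psi$ gives $\mu^N(\A,\Omega\cap Q(0,2R),\Omega)\ge\lambda^N(\A,\R^d_+)-CR^{\kappa+1}M_R$. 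Together with the upper bound this is \eqref{p6-0}.

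The step I expect to be the main obstacle is the control of the gradient part of the perturbation, $\int M_R^2|\nabla u|^2$ (resp. $\int M_R^2|\nabla\psi_0|^2$): unlike the bare magnetic-potential error $\le C\e|u|$, this term is not pointwise comparable to the Rayleigh quotient and must be absorbed via the a priori bound $\int|\nabla u|^2\lesssim\int|(D+\A)u|^2+R^{2\kappa+2}\int|u|^2$ (coming from $|\A|\lesssim R^{\kappa+1}$ on the cube together with $\mu^N\lesssim1$), with the weight $\e=R^{\kappa+1}M_R$ chosen precisely to balance the $(1\pm\e)$ and $\e^{-1}$ contributions. Once this is in place, the remainder is a routine transcription of the Dirichlet argument, with Lemma \ref{lemma-p3} playing the role of Lemma \ref{lemma-p2}.
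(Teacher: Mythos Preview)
Your proposal is correct and follows the same approach as the paper: flatten the boundary via the volume-preserving map $\Phi(y',y_d)=(y',y_d+\phi(y'))$, compare the Rayleigh quotients on $\Omega\cap Q(0,2R)$ and on a half-space cube, and invoke Lemma~\ref{lemma-p3} in place of Lemma~\ref{lemma-p2}. The only cosmetic difference is that the paper absorbs the gradient error $M_R|D\varphi|$ by writing $D\varphi=(D+\widetilde\A)\varphi-\widetilde\A\varphi$ and using $|\widetilde\A|\le CR^{\kappa+1}$ together with Minkowski, rather than your $(1\pm\e)$ splitting and diamagnetic bound on $\int|\nabla\psi_0|^2$; both routes give the same $R^{\kappa+1}M_R$ error.
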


\begin{proof}
Assume $M_R< (1/2)$.
Consider the map
$$
\Phi: (x^\prime, x_d ) \to (x^\prime, x_d -\phi (x^\prime)),
$$
which flattens the boundary of $\Omega$.
Let $\psi\in C_0^1( Q(0, R); \C)$ and
$\varphi (x)=\psi (x^\prime, x_d + \phi(x^\prime))$.
Note that
$$
\aligned
&\left(\int_{\Omega\cap Q (0, 2R)} |(D+\A) \psi|^2\right)^{1/2}\\
&\le \left(\int_{\Omega\cap Q(0, 2R)}
|(D+\widetilde{\A}) \varphi (\Phi(x))|^2 \right)^{1/2}
+ M_R \left(\int_{\Omega\cap Q(0, 2R)}
|D\varphi (\Phi (x))|^2 \right)^{1/2}\\
& \le (1+M_R) \left(\int_{\Omega\cap Q(0, 2R)}
|(D+\widetilde{\A}) \varphi (\Phi(x))|^2 \right)^{1/2}
+ M_R \left(\int_{\Omega\cap Q(0, 2R)}
|\widetilde{\A} \varphi (\Phi (x))|^2 \right)^{1/2}\\
&  \le (1+M_R) \left(\int_{\R^d_+\cap \Phi(Q(0, 2R))}
|(D+\widetilde{\A}) \varphi )|^2 \right)^{1/2}
+ M_R \left(\int_{\R^d_+\cap \Phi (Q(0, 2R))}
|\widetilde{\A} \varphi |^2 \right)^{1/2}\\
 & \le (1+M_R) \left(\int_{\R^d_+\cap \Phi(Q(0, 2R))}
|(D+{\A}) \varphi )|^2 \right)^{1/2}
+ M_R \left(\int_{\R^d_+\cap \Phi (Q(0, 2R))}
|\widetilde{\A} \varphi |^2 \right)^{1/2}\\
& \qquad\qquad
+ (1+M_R) \left(\int_{\R^d_+\cap \Phi (Q(0, 2R))}
|(\A-\widetilde{\A} ) \varphi |^2 \right)^{1/2}, \\
\endaligned
$$
where $\widetilde{\A} (x) =\A (\Phi^{-1}(x))$.
It follows that
$$
\aligned
\mu^N (\A, \Omega\cap Q(0, 2R), \Omega)
& \le  (1+ M_R)
\mu^N (\A, \R^d_+\cap \Phi(Q(0, 2R)), \R^d_+)
+ C M_R R^{\kappa+1} \\
&\le \mu^N(\A,  Q_+(0, cR), \R^d_+)
+ CM_R R^{\kappa+1}\\
&\le \lambda^N(\A, \R^d_+) + C R^{-2} + C M_R R^{\kappa+1},
\endaligned
$$
where we have used the fact that  $Q(0, cR)\subset \Phi(Q(0, 2R))$ for some small $c>0$ for the second inequality 
and Lemma \ref{lemma-p3} for the third.
The lower bound for $\mu^N (\A, \Omega\cap Q(0, 2R), \Omega)$ may be proved by a similar perturbation argument.
\end{proof}

\begin{thm}\label{thm-p7}
Let $\A$ be a homogeneous polynomial of degree $\kappa+1$. and $\Omega$  given by \eqref{omega}.
Then
\begin{equation}\label{p7-0}
|\mu^{D\!N} (\A, \Omega\cap Q(0, 2R), \Omega)
-\lambda^{D\!N} ( \A ,  \R^d_+) |\le C \{ R^{\kappa+1} M_R + R^{-2} \},
\end{equation}
where $R>1$ and $M_R=\max \{ |\nabla \phi (x^\prime)|: |x^\prime|< R \}$.
\end{thm}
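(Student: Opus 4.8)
The plan is to follow the proof of Theorem~\ref{thm-p6} almost verbatim, replacing the volume Rayleigh quotient defining $\mu^N$ by the Dirichlet-to-Neumann quotient defining $\mu^{D\!N}$ and reducing in the end to Lemma~\ref{lemma-p4} instead of Lemma~\ref{lemma-p3}. As before, one may assume $M_R<1/2$ and $R$ large, the estimate being trivial otherwise in view of \eqref{low-a}. Let $\Phi(x',x_d)=(x',x_d-\phi(x'))$ be the boundary-flattening diffeomorphism; it has $\det D\Phi\equiv 1$, maps $\Omega$ onto $\R^d_+$, and since $\phi(0)=0$ and $|\nabla\phi|\le M_R$ on $\{|x'|<R\}$, one has $Q_+(0,cR)\subset\R^d_+\cap\Phi(Q(0,2R))\subset Q_+(0,CR)$ for suitable $0<c<C$. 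Because $\Phi$ carries $\Omega\cap\partial Q(0,2R)$ onto the interior part of $\partial(\R^d_+\cap\Phi(Q(0,2R)))$ and $\partial(\Omega\cap Q(0,2R))\cap\partial\Omega$ onto a subset of $\partial\R^d_+$, the map $\psi\mapsto\varphi:=\psi\circ\Phi^{-1}$ is a bijection between the admissible competitors for $\mu^{D\!N}(\A,\Omega\cap Q(0,2R),\Omega)$ and those for $\mu^{D\!N}(\A,\R^d_+\cap\Phi(Q(0,2R)),\R^d_+)$.

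The first step is the energy comparison, carried out exactly as in Theorem~\ref{thm-p6}: writing $\widetilde{\A}(x)=\A(\Phi^{-1}(x))$ and using $|\nabla\phi|\le M_R$ together with $|\widetilde{\A}(x)-\A(x)|\le CR^{\kappa+1}M_R$ on the relevant region (here $|\phi(x')|\le RM_R$), one gets
$$
\Bigl(\int_{\Omega\cap Q(0,2R)}|(D+\A)\psi|^2\Bigr)^{1/2}\le(1+CM_R)\Bigl(\int_{\R^d_+\cap\Phi(Q(0,2R))}|(D+\A)\varphi|^2\Bigr)^{1/2}+CM_RR^{\kappa+1}\Bigl(\int_{\R^d_+}|\varphi|^2\Bigr)^{1/2},
$$
and symmetrically with $\psi$ and $\varphi$ interchanged. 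The second step is the comparison of the denominators: near $0$, $\partial\Omega$ is the graph $x'\mapsto(x',\phi(x'))$ with surface element $\sqrt{1+|\nabla\phi|^2}\,dx'$, so $\Phi$ distorts the boundary measure by a factor in $[1,1+CM_R^2]$, giving
$$
\int_{\partial(\Omega\cap Q(0,2R))\cap\partial\Omega}|\psi|^2\,d\sigma=\bigl(1+O(M_R^2)\bigr)\int_{\partial\R^d_+\cap\Phi(Q(0,2R))}|\varphi|^2\,dx'.
$$
The one point genuinely new relative to Theorem~\ref{thm-p6} is that the error term carries the \emph{bulk} $L^2$ norm of $\varphi$ while the Rayleigh quotient is normalized by a \emph{boundary} norm; to absorb it I would argue as in the proof of Lemma~\ref{lemma-p4}, bounding $\int_{\R^d_+}|\varphi|^2\le[\lambda^N(\A,\R^d_+)]^{-1}\int_{\R^d_+}|(D+\A)\varphi|^2\le C\int_{\R^d_+}|(D+\A)\varphi|^2$ with $\lambda^N(\A,\R^d_+)\approx1$ from \eqref{low-a} (using that $\varphi$ is supported in $\R^d_+\cap\Phi(Q(0,2R))$). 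Combining these estimates yields
$$
\mu^{D\!N}(\A,\Omega\cap Q(0,2R),\Omega)\le(1+CM_R)\,\mu^{D\!N}(\A,\R^d_+\cap\Phi(Q(0,2R)),\R^d_+)+CM_RR^{\kappa+1},
$$
together with the reverse inequality carrying a factor $(1-CM_R)$ in place of $(1+CM_R)$.

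The third step closes the argument using the monotonicity of $\mu^{D\!N}(\A,\cdot,\R^d_+)$ in its middle argument together with Lemma~\ref{lemma-p4}. For the upper bound, $Q_+(0,cR)\subset\R^d_+\cap\Phi(Q(0,2R))$ gives $\mu^{D\!N}(\A,\R^d_+\cap\Phi(Q(0,2R)),\R^d_+)\le\mu^{D\!N}(\A,Q_+(0,cR),\R^d_+)\le\lambda^{D\!N}(\A,\R^d_+)+CR^{-2}$, so with $\lambda^{D\!N}(\A,\R^d_+)\approx1$ and $M_R\le M_RR^{\kappa+1}$ (since $R\ge1$) one obtains the upper half of \eqref{p7-0}. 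For the lower bound, $\R^d_+\cap\Phi(Q(0,2R))\subset Q_+(0,CR)$ gives $\mu^{D\!N}(\A,\R^d_+\cap\Phi(Q(0,2R)),\R^d_+)\ge\mu^{D\!N}(\A,Q_+(0,CR),\R^d_+)\ge\lambda^{D\!N}(\A,\R^d_+)$, the last step being the first (trivial) inequality in Lemma~\ref{lemma-p4}; combined with the $(1-CM_R)$ form of the perturbation estimate and $\lambda^{D\!N}(\A,\R^d_+)\approx1$, this gives the lower half. I expect the main obstacle to be purely bookkeeping: tracking precisely how the interior-vanishing constraint and the boundary surface measure transform under $\Phi$, and checking that the bulk-to-boundary conversion of the $\widetilde{\A}-\A$ error is legitimate — i.e.\ that the pushed-forward function is genuinely an admissible competitor for $\lambda^N(\A,\R^d_+)$ so that \eqref{low-a} may be invoked. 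No essentially new idea beyond those in Theorem~\ref{thm-p6} and Lemma~\ref{lemma-p4} seems to be required.
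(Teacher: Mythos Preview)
Your proposal is correct and follows essentially the same route as the paper's proof: flatten via $\Phi$, compare energies as in Theorem~\ref{thm-p6}, compare boundary measures via the $\sqrt{1+|\nabla\phi|^2}$ factor, absorb the stray bulk $L^2$ term using $\lambda^N(\A,\R^d_+)\approx 1$, and finish with monotonicity plus Lemma~\ref{lemma-p4}. The paper packages the perturbation multiplicatively as $(1+CM_RR^{\kappa+1})(1+CM_R^2)\,\mu^{D\!N}(\ldots)$ rather than your mixed multiplicative--additive form, but since $\mu^{D\!N}\approx 1$ the two are equivalent; your treatment of the lower bound is actually slightly more explicit than the paper's ``similar argument''.
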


\begin{proof}

Assume $M_R<(1/2)$.
Let $\psi$, $\varphi$ and $\Phi$ be the same as in the proof of Theorem  \ref{thm-p6}.
It follows that
$$
\aligned
&\left(\int_{\Omega\cap Q (0, 2R)} |(D+\A) \psi|^2\right)^{1/2}\\
& \le 
(1+M_R) \left(\int_{\R^d_+\cap \Phi(Q(0, 2R))}
|(D+ {\A}) \varphi )|^2 \right)^{1/2}
+C  M_R R^{\kappa+1}  \left(\int_{\R^d_+\cap \Phi (Q(0, 2R))}
| \varphi |^2 \right)^{1/2}\\
&\le (1+ C M_R R^{\kappa+1})
\left(\int_{\R^d_+\cap \Phi (Q(0, 2R))}
| (D+\A) \varphi |^2 \right)^{1/2}.
\endaligned
$$
Also, note that
$$
\aligned
 & \int_{\partial\Omega\cap Q(0, 2R)} |\psi|^2\, d\sigma
-\int_{|x^\prime|< R} |\varphi|^2\, dx\\
&=\int_{|x^\prime|< R}
|\psi (x^\prime, \phi (x^\prime))|^2 
\big\{ \sqrt{1+|\nabla \phi|^2 } -1 \big\} dx^\prime\\
&\le M_R^2  \int_{|x^\prime|< R}
|\varphi |^2  dx^\prime.
\endaligned
$$
This implies that 
$$
\aligned
\mu^{D\!N} (\A, \Omega
\cap Q(0, 2R), \Omega)
 & \le (1+ CM_R R^{\kappa+1}) (1+ C M_R^2)
\mu^{D\!N}(\A, \R^d_+\cap \Phi(Q(0, R)), \R^d_+)\\
&\le  \mu^{D\!N}(\A, \R^d_+\cap \Phi(Q(0, R), \R^d_+)
+ C M_R R^{\kappa+1}\\
&\le \mu^{D\!N} (\A, \R^d_+ \cap Q (0, cR), \R^d_+)
+C M_R R^{\kappa+1}\\
&\le \lambda^{D\!N} (\A, \R^d_+ )+ C R^{-2} 
+C M_R R^{\kappa+1},
\endaligned
$$
where we have used the  fact $Q(0, cR) \subset \Phi(Q(0, R))$ for the third inequality and
Lemma \ref{lemma-p4} for the fourth.
The lower bound for $\mu^{D\!N} (\A, \Omega\cap Q(0, 2R), \Omega)$ may be proved by a similar argument.
\end{proof}


\section{Local asymptotic expansions }\label{section-local}

In this section we establish the error estimates for \eqref{est-1}-\eqref{est-2}.
For $y\in \overline{\Omega}$, let $\kappa =\kappa(y)$ be defined by \eqref{ex-1}.
Let $
\P_{y} (x)= (P_{j \ell} (x))=\sum_{|\alpha|=\kappa}  b_\alpha x^\alpha$, where
$b_\alpha =\partial^\alpha \B (y)/\alpha!$, denote the $\kappa^{th}$ Taylor polynomial of 
$\B(x+y)$ at $0$.
Let $\A_{y} =(A_{y, 1}, \dots, A_{y, d} )$ be  the homogeneous polynomial of degree $\kappa +1$, given by 
\begin{equation}\label{A-0}
A_{y, j} (x)=\sum_{\ell=1}^d x_\ell \int_0^1 P_{ \ell j } (tx) t\,  dt
\end{equation}
for $1\le j\le d$.
Then $\nabla \times \A_{y} = \P_{y}$.
Let $V_y$ denote the invariance subspace for $\P_y$.
Since $\nabla \partial^\alpha P_{j\ell } (0) =\nabla \partial^\alpha B_{j \ell}  (y)$ for $|\alpha |\le \kappa-1$, in view of Proposition \ref{prop-1}, we have
\begin{equation}\label{sigma-1a}
V_y =\Big \{ z\in \R^d: 
 \sum_{|\alpha|=\kappa-1} \sum_{j, \ell}  | \langle z, \nabla \partial^\alpha B_{j \ell}  (y) \rangle | = 0 \Big\}.
\end{equation}
Also, by \eqref{mp-1b}, 
\begin{equation}\label{sigma-1}
\sigma (y)=\min_{{\substack{z\in V_y^\perp\\  |z| =1}}}
\sum_{|\alpha|= \kappa-1 } \sum_{j,  \ell} 
| \langle z, \nabla \partial^\alpha B_{j \ell} (y) \rangle |.
\end{equation}

Let $\widetilde{\Gamma}=\{ y \in \widetilde{\Omega}: \kappa (y)=\kappa_*\}$, where $\kappa_*$ is defined by 
\eqref{kappa} and $\widetilde{\Omega}=\{ x\in \R^d: \text{dist}(x, \Omega)< r_0 \}$.
Let $y\in \widetilde{\Gamma}$.
If  $\gamma (t): (-c, c) \to \widetilde{\Gamma}$ is a $C^1$ function such that $\gamma (0)=y$, 
 then  $\partial^\alpha B_{j\ell} (\gamma (t)) =0$ for $|\alpha|=\kappa_*-1$.
 It follows that $\langle \gamma^\prime (0), \nabla \partial^\alpha B_{j\ell} (y) \rangle =0$.
Thus,  $\gamma^\prime (0) \in V_y$.
This shows that $V_y$ contains the tangent space for $\widetilde{\Gamma}$ at $y$ if $\widetilde{\Gamma}$ is a smooth manifold near $y$. 

\begin{thm}\label{thm-local-1}
Let $y\in \Omega$ and
 $r=\gamma  \beta^{-\frac{\kappa+3}{(\kappa+2)(\kappa+4)}}$, where $\beta>1$, $ 0< \gamma\le  1$  and $\kappa =\kappa (y)$. Suppose $\B(y, r)\subset \Omega$. Then 
\begin{equation}\label{AE-I}
|\lambda^D(\beta \A, \b(y, r))-\beta^{\frac{2}{\kappa+2} } \lambda (\A_{y}, \R^d)|
\le C \beta^{\frac{1}{\kappa+2} +\frac{1}{\kappa+4}},
\end{equation}
where $C$ depends on  $\gamma$,  $\kappa$, $\| \B \|_{C^{\kappa+1}(\overline{\Omega})}$ and $\sigma (y)$.
\end{thm}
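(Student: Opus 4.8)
The plan is to reduce, via a gauge transformation followed by a parabolic rescaling, to a small perturbation of the model operator $(D+\A_y)^2$ on a ball of slowly growing radius, and then to invoke Lemma~\ref{lemma-p1}. Normalize so that $y=0$. Since $\P_0$ is the $\kappa^{\text{th}}$ Taylor polynomial of $\B$ at $0$ and $\B\in C^{\kappa+1}(\overline{\Omega})$, Taylor's theorem gives $\B=\P_0+\mathcal R$ on $\b(0,r)$ with $|\mathcal R(x)|\le C|x|^{\kappa+1}$, the constant depending only on $d,\kappa$ and $\|\B\|_{C^{\kappa+1}(\overline{\Omega})}$. Since $\nabla\times(\A-\A_0)=\mathcal R$, the transversal gauge (the formula \eqref{A-0} with $\mathcal R$ in place of $\P_0$) produces $\A_{\mathcal R}$ with $\nabla\times\A_{\mathcal R}=\mathcal R$ and, from the pointwise bound on $\mathcal R$, $|\A_{\mathcal R}(x)|\le C|x|^{\kappa+2}$ on $\b(0,r)$. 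As $\nabla\times(\A-\A_0-\A_{\mathcal R})=0$ on the simply connected ball, we may write $\A=\A_0+\A_{\mathcal R}+\nabla\theta$ with $\theta\in C^1(\b(0,r))$, and multiplying trial functions by $e^{i\beta\theta}$ gives $\lambda^D(\beta\A,\b(0,r))=\lambda^D(\beta\A_0+\beta\A_{\mathcal R},\b(0,r))$.

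Next I would rescale. Put $\rho=\beta^{-1/(\kappa+2)}$ and $R=r/\rho=\gamma\,\beta^{1/((\kappa+2)(\kappa+4))}$, and substitute $x=\rho\xi$, $u(\xi)=\psi(\rho\xi)$. Since $\A_0$ is homogeneous of degree $\kappa+1$ and $\beta\rho^{\kappa+2}=1$, the Rayleigh quotient transforms by the factor $\rho^{-2}=\beta^{2/(\kappa+2)}$, so
\[
\lambda^D(\beta\A,\b(0,r))=\beta^{\frac{2}{\kappa+2}}\,\lambda^D(\A_0+\mathcal E,\b(0,R)),\qquad \mathcal E(\xi)=\rho\beta\,\A_{\mathcal R}(\rho\xi).
\]
From $|\A_{\mathcal R}(x)|\le C|x|^{\kappa+2}$ and $\beta\rho^{\kappa+3}=\beta^{-1/(\kappa+2)}$ one gets $|\mathcal E(\xi)|\le C\beta^{-1/(\kappa+2)}|\xi|^{\kappa+2}$, hence
\[
\|\mathcal E\|_{L^\infty(\b(0,R))}\le C\beta^{-1/(\kappa+2)}R^{\kappa+2}=C\gamma^{\kappa+2}\,\beta^{-\delta},\qquad \delta:=\tfrac{1}{\kappa+2}-\tfrac{1}{\kappa+4}=\tfrac{2}{(\kappa+2)(\kappa+4)}.
\]
The exponent $-\tfrac{\kappa+3}{(\kappa+2)(\kappa+4)}$ in the definition of $r$ is chosen precisely so that $\|\mathcal E\|_\infty$ and the finite-ball truncation error $R^{-2}=\gamma^{-2}\beta^{-\delta}$ coming from Lemma~\ref{lemma-p1} are of the same size $\beta^{-\delta}$.

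Then I would run the perturbation estimate. Using $|v+w|^2\le(1+\varep)|v|^2+(1+\varep^{-1})|w|^2$ with $v=(D+\A_0)u$, $w=\mathcal E u$, together with the crude bound $\lambda^D(\A_0,\b(0,R))\le\lambda^D(\A_0,\b(0,1))\le C(1+\|\P_0\|_{L^\infty(\b(0,1))})\le C$, and choosing $\varep=\|\mathcal E\|_{L^\infty(\b(0,R))}\le1$ (valid for $\beta$ large), one obtains $|\lambda^D(\A_0+\mathcal E,\b(0,R))-\lambda^D(\A_0,\b(0,R))|\le C\|\mathcal E\|_{L^\infty(\b(0,R))}\le C\gamma^{\kappa+2}\beta^{-\delta}$. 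On the other hand, after the routine rescaling that normalizes $\sum_{|\alpha|=\kappa}|b_\alpha|$ to $1$ (the normalizing constant being controlled above by $\|\B\|_{C^{\kappa+1}(\overline{\Omega})}$ and below by $\sigma(y)$, since both are comparable to $\sum_{j,\ell,|\alpha|=\kappa}|\partial^\alpha B_{j\ell}(y)|$), Lemma~\ref{lemma-p1} gives $0\le\lambda^D(\A_0,\b(0,R))-\lambda(\A_0,\R^d)\le CR^{-2}$ with $C$ depending only on $d,\kappa$ and $\sigma(y)$. Adding the two estimates and multiplying by $\beta^{2/(\kappa+2)}$ yields
\[
|\lambda^D(\beta\A,\b(0,r))-\beta^{\frac{2}{\kappa+2}}\lambda(\A_0,\R^d)|\le C\,\beta^{\frac{2}{\kappa+2}-\delta}=C\,\beta^{\frac{2(\kappa+3)}{(\kappa+2)(\kappa+4)}}=C\,\beta^{\frac{1}{\kappa+2}+\frac{1}{\kappa+4}},
\]
which is \eqref{AE-I}; for $\beta$ in any bounded range (where $R\le1$) the estimate is trivial since both terms are then bounded by a constant depending only on $\gamma$ and $\kappa$.

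The main obstacle is the interplay of the first two steps: one must pick the gauge so that the Taylor remainder of the magnetic field contributes an honestly pointwise-small potential after rescaling, and then verify that the two independent error sources — the polynomial approximation of $\B$ near $y$ and the truncation of $\R^d$ to a finite ball in Lemma~\ref{lemma-p1} — balance at the same rate $\beta^{-\delta}$, which is exactly what forces the scaling $r=\gamma\beta^{-(\kappa+3)/((\kappa+2)(\kappa+4))}$. Once the rescaling is arranged, the perturbation and truncation estimates are routine.
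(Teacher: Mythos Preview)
Your proposal is correct and follows essentially the same approach as the paper's proof: gauge to a homogeneous polynomial potential plus a remainder of size $|x|^{\kappa+2}$, rescale parabolically via $x\mapsto \beta^{-1/(\kappa+2)}\xi$, compare with $\lambda^D(\A_0,\b(0,R))$ by a perturbation estimate, and then invoke Lemma~\ref{lemma-p1} for the passage from $\b(0,R)$ to $\R^d$. The only cosmetic differences are that you rescale before perturbing (the paper perturbs first, then rescales) and you use the inequality $|v+w|^2\le(1+\varep)|v|^2+(1+\varep^{-1})|w|^2$ where the paper uses the square-root/Minkowski form; both yield the same error bounds and the same optimized choice of $R$.
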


\begin{proof}

By translation we may assume $y=0$.
Suppose  $\b(0, r) \subset \Omega$.
Let $\P (x)$ denote the $\kappa^{th}$ Taylor polynomial of $\B$ at $0$.
Up to a gauge transformation, we may assume that for $x\in \b(0, r)$,
$\A (x)  =\A_{0}  (x)  + \mathcal{R} (x) $, 
where $\nabla\times \A_{0} =\P$,  
\begin{equation}\label{up2-1-a}
\aligned
\A_{0} (x)  & =\sum_{|\alpha|=\kappa+1}
a_\alpha x ^\alpha, \\
|\mathcal{R} (x)|  & \le C |x|^{\kappa+2},
\endaligned
\end{equation}
and  $C$ depends on $\| \B \|_{C^{\kappa+1}(\overline{\Omega})}$.
Using
$$
\aligned
& \left(\fint_{\b(0, r)} |(D+\beta \A)\psi|^2 \right)^{1/2}\\
&\le 
\left(\fint_{\b(0, r)} |(D+\beta \A_0)\psi|^2 \right)^{1/2}
+ \beta \max_{\b(0, r)} | \A-\A_0|
\left(\fint_{\b(0, r)} |\psi|^2 \right)^{1/2},
\endaligned
$$
we obtain 
$$
\sqrt{ \lambda^D(\beta\A, \b(0, r))}
\le \sqrt{\lambda^D (\beta\A_0, \b(0,  r))}
+\beta \max_{\b(0, r)} |\A-\A_0|.
$$
Hence, by \eqref{up2-1-a}
$$
\lambda^D (\beta\A, \b (0, r))
\le \lambda^D (\beta \A_0, \b (0, r))
+ C \beta r^{\kappa+2} \sqrt{\lambda^D(\beta \A_0, \b(0, r))}
+ C \beta^2 r^{2 (\kappa+2)}.
$$
A similar argument gives\
$$
\aligned
\lambda^D (\beta\A_0, \b (0, r))
 & \le \lambda^D (\beta \A, \b (0, r))
+ C \beta r^{\kappa+2} \sqrt{\lambda^D(\beta \A, \b(0, r))}
+ C \beta^2 r^{2 (\kappa+2)}.\\
&  \le \lambda^D (\beta \A, \b (0, r))
+ C \beta r^{\kappa+2} \sqrt{\lambda^D(\beta \A_0, \b(0, r))}
+ C \beta^2 r^{2 (\kappa+2)}.
\endaligned
$$
Thus, 
\begin{equation}\label{local-1a}
\aligned
 | \lambda^D (\beta\A, \b (0, r)) - \lambda^D (\beta \A_0, \b (0, r) |
\le  C \beta r^{\kappa+2} \sqrt{\lambda^D(\beta \A_0, \b(0, r))}
+ C \beta^2 r^{2 (\kappa+2)}.\\
\endaligned
\end{equation}
Let $r= R \beta^{-\frac{1}{\kappa+2}}$.
Since $\A_0 (tx)= t^{\kappa+1} \A_0(x)$, 
by a rescaling argument, it follows that
\begin{equation}
\lambda^D(\beta \A_0, \b(0, r))
=\beta^{\frac{2}{\kappa+2}}
\lambda^D (\A_0, \b (0, R)).
\end{equation}
As a result,
\begin{equation}
 | \lambda^D(\beta \A, \b(0, r))
- 
\beta^{\frac{2}{\kappa+2}}
\lambda^D (\A_0, \b (0, R)) |
\le  C \beta^{\frac{1}{\kappa+2}} R^{\kappa+2}
+ C R^{2(\kappa+2)},
\end{equation}
where we have used the estimate $\lambda^D (\A_0, \b(0, R))\le C$ for $R>1$.
In view of Lemma \ref{lemma-p1}, this gives
\begin{equation}\label{up1ab}
 | \lambda^D(\beta \A, \b(0, r))- 
\beta^{\frac{2}{\kappa+2}}
\lambda (\A_0, \R^d)|
\le  C \beta^{\frac{2}{\kappa+2}} R^{-2} 
+ C \beta^{\frac{1}{\kappa+2}} R^{\kappa+2}
+ C R^{2(\kappa+2)},
\end{equation}
where $C$ depends on $\sigma(y)$.
Finally,  choosing $R=\gamma \beta^{\frac{1}{(\kappa+2)(\kappa+4)}}$ to optimize \eqref{up1ab}, we see that 
$r=R\beta^{-\frac{1}{\kappa+2}}=\gamma \beta^{-\frac{\kappa+3}{(\kappa+2)(\kappa+4)}}$ and 
\begin{equation}\nonumber
\aligned
 | \lambda^D (\beta \A, \b (0, r))
 -  \beta^{\frac{2}{\kappa+2}}
\lambda (\A_0, \R^d)|
\le  C \beta^{\frac{1}{\kappa+2} +\frac{1}{\kappa+4}},
\endaligned
\end{equation}
where $C$ depend on $\gamma$,  $\sigma (y)$ and $\|\B \|_{C^{\kappa+1}(\overline{\Omega})}$.
\end{proof}

For $n \in \mathbb{S}^{d-1}$, let
$$
\mathbb{H}_n
=\big\{ x\in \R^d: \  \langle x, n \rangle < 0 \big\}
$$
denote the half-space  with outer normal $n$.
In particular, if $n=(0, \dots, 0, -1)$, we have $\mathbb{H}_n =\R^d_+$.
For $y \in \partial\Omega$, let $n(y)$ denote the outward unit normal to $\partial\Omega$ at $y$ and 
\begin{equation}\label{tau}
\tau (y)=\max_{v\in V_y} | \langle v, n(y) \rangle |, 
\end{equation}
where $V_y$ is given by \eqref{sigma-1a}.

\begin{thm}\label{thm-local-2}
Let $\Omega$ be a bounded $C^{1, 1}$ domain.
Let $y\in \partial  \Omega$ and
 $r=\gamma \beta^{-\frac{\kappa+3}{(\kappa+2)(\kappa+4)}}$, where $\beta>1$ and $0< \gamma\le 1$. 
 Let $\kappa=\kappa (y)$ and $n =n (y)$.
 Then 
\begin{equation}\label{AE-D}
|\lambda^D(\beta \A, \b(y, r)\cap \Omega)-\beta^{\frac{2}{\kappa+2} } \lambda ^D(\A_{y}, \mathbb{H}_n)|
\le C \beta^{\frac{1}{\kappa+2} +\frac{1}{\kappa+4}},
\end{equation}
where $C$ depends on $\gamma$,  $\Omega$,  $\kappa$, $\|\B \|_{C^{\kappa+1}(\overline{\Omega})}$, $\sigma (y)$ and  $\tau(y)>0$.
\end{thm}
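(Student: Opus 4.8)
The plan is to mimic the proof of Theorem~\ref{thm-local-1}, with the interior truncation Lemma~\ref{lemma-p1} replaced by the boundary result Theorem~\ref{thm-p5}. After a translation I assume $y=0$, and after an orthogonal change of coordinates I assume $n(y)=(0,\dots,0,-1)$, so that $\mathbb{H}_{n(y)}=\R^d_+$ and, near the origin, $\Omega=\{(x',x_d):x_d>\phi(x')\}$ with $\phi\in C^{1,1}$, $\phi(0)=0$, $\nabla\phi(0)=0$ and $\|\nabla^2\phi\|_\infty\le M$, where $M$ depends on the $C^{1,1}$ character of $\Omega$; in particular $|\nabla\phi(x')|\le M|x'|$. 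Extending $\phi$ to a global $C^{1,1}$ graph does not change $\b(0,r)\cap\Omega$ for $\beta$ large, so Theorem~\ref{thm-p5} applies. Under this rotation $\lambda^D(\A_y,\mathbb{H}_{n(y)})$ becomes $\lambda^D(\A_0,\R^d_+)$, where $\A_0$ is a homogeneous polynomial of degree $\kappa+1$ whose curl is the $\kappa$-th Taylor polynomial of the rotated field at $0$.

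First I would run the gauge reduction exactly as in Theorem~\ref{thm-local-1}: up to a gauge transformation on $\b(0,r)\cap\Omega$ one writes $\A=\A_0+\mathcal{R}$ with $|\mathcal{R}(x)|\le C|x|^{\kappa+2}$, $C$ depending on $\|\B\|_{C^{\kappa+1}(\overline{\Omega})}$, and the two-sided perturbation estimate there gives
\[
\bigl|\lambda^D(\beta\A,\b(0,r)\cap\Omega)-\lambda^D(\beta\A_0,\b(0,r)\cap\Omega)\bigr|\le C\beta r^{\kappa+2}\sqrt{\lambda^D(\beta\A_0,\b(0,r)\cap\Omega)}+C\beta^2 r^{2(\kappa+2)}.
\]
Setting $r=R\beta^{-\frac{1}{\kappa+2}}$ and rescaling by $\beta^{\frac{1}{\kappa+2}}$ (using $\A_0(tx)=t^{\kappa+1}\A_0(x)$) gives $\lambda^D(\beta\A_0,\b(0,r)\cap\Omega)=\beta^{\frac{2}{\kappa+2}}\lambda^D(\A_0,\b(0,R)\cap\Omega_\beta)$, where $\Omega_\beta=\beta^{\frac{1}{\kappa+2}}\Omega$ is again a graph domain with slope function $\phi_\beta(x')=\beta^{\frac{1}{\kappa+2}}\phi(\beta^{-\frac{1}{\kappa+2}}x')$, so $|\nabla\phi_\beta(x')|\le M\beta^{-\frac{1}{\kappa+2}}|x'|$ and hence $M_R\le MR\beta^{-\frac{1}{\kappa+2}}$ on $\Omega_\beta$ in the sense of Theorem~\ref{thm-p5}.

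Next, sandwiching the half-ball $\b(0,R)\cap\Omega_\beta$ between cubes of comparable side length and using the domain monotonicity of $\lambda^D$ together with Theorem~\ref{thm-p5} applied to $\Omega_\beta$, I obtain
\[
\bigl|\lambda^D(\A_0,\b(0,R)\cap\Omega_\beta)-\lambda^D(\A_0,\R^d_+)\bigr|\le C\bigl\{R^{\kappa+1}\,MR\beta^{-\frac{1}{\kappa+2}}+R^{-2}\bigr\},
\]
where, as in Lemma~\ref{lemma-p2} and Remark~\ref{re-de-1}, $C$ depends on $\sigma(y)$ and, when $V_y\not\subset\partial\R^d_+$, on $\tau(y)>0$. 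Combining the two displays, using the crude bound $\lambda^D(\beta\A_0,\b(0,r)\cap\Omega)\le C\beta^{\frac{2}{\kappa+2}}$ in the first one and the identities $\beta r^{\kappa+2}=R^{\kappa+2}$, $\beta^2 r^{2(\kappa+2)}=R^{2(\kappa+2)}$, yields
\[
\bigl|\lambda^D(\beta\A,\b(0,r)\cap\Omega)-\beta^{\frac{2}{\kappa+2}}\lambda^D(\A_0,\R^d_+)\bigr|\le C\bigl\{\beta^{\frac{2}{\kappa+2}}R^{-2}+\beta^{\frac{1}{\kappa+2}}R^{\kappa+2}+R^{2(\kappa+2)}\bigr\}.
\]
Choosing $R=\gamma\beta^{\frac{1}{(\kappa+2)(\kappa+4)}}$, so that $r=\gamma\beta^{-\frac{\kappa+3}{(\kappa+2)(\kappa+4)}}$, balances the first two terms at order $\beta^{\frac{1}{\kappa+2}+\frac{1}{\kappa+4}}$, while the last term is $O(\beta^{\frac{2}{\kappa+4}})$ and thus of lower order; this is \eqref{AE-D}. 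The regime of bounded $\beta$, where the local graph chart need not cover $\b(0,r)$, is absorbed into $C$.

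The hard part is bookkeeping rather than new ideas. The one genuinely new contribution to the error, the boundary-flattening term $\beta^{\frac{2}{\kappa+2}}R^{\kappa+1}M_R$ coming from Theorem~\ref{thm-p5}, turns out to be of the \emph{same} order $\beta^{\frac{1}{\kappa+2}}R^{\kappa+2}$ as the gauge-perturbation term precisely because the $C^{1,1}$ hypothesis forces $M_R\le CR\beta^{-\frac{1}{\kappa+2}}$ after rescaling; this is exactly why only a $\limsup$ bound survives when $\Omega$ is merely $C^1$. One must also track the dependence on $\gamma$, $\kappa$, $\|\B\|_{C^{\kappa+1}(\overline{\Omega})}$, $\sigma(y)$ and $\tau(y)$ cleanly through Theorem~\ref{thm-p5}: the initial rotation moves the invariant subspace $V_y$, and the hypothesis $\tau(y)>0$ ensures that the tiling parallelotope in the proof of Lemma~\ref{lemma-p2} remains quantitatively non-degenerate in the normal direction (the case $\tau(y)=0$, where $V_y$ is tangent to $\partial\Omega$, is actually the easier first case of that lemma).
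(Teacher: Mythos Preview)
Your proposal is correct and follows essentially the same route as the paper: translation/rotation to a graph domain, gauge perturbation from $\A$ to the homogeneous $\A_0$, rescaling by $\beta^{1/(\kappa+2)}$ to land in $\Omega_\beta$, invoking Theorem~\ref{thm-p5} with the $C^{1,1}$ bound $M_R\le CR\beta^{-1/(\kappa+2)}$, and optimizing $R$. The only cosmetic difference is that the paper works directly with cubes $Q(0,r)\cap\Omega$ rather than balls, which spares your sandwiching step; your tracking of the $\sigma(y),\tau(y)$ dependence via Remark~\ref{re-de-1} and your remark on why only the $\limsup$ survives for $C^1$ domains are both accurate.
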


\begin{proof}

By translation and rotation, we may assume that $y=0$, $n=(0, \dots, 0, -1)$, and
\begin{equation}\label{o-l}
\Omega \cap Q(0, 2r_0)
=\big\{ (x^\prime, x_d)\in \R^d: \ x_d > \phi (x^\prime) \big\} \cap Q(0, 2r_0),
\end{equation}
where  $\phi: \R^{d-1} \to \R$ is a $C^{1, 1}$ function with $\phi (0)=0$ and $\nabla \phi (0)=0$.
We may assume $\beta$ is so large that $0< r< r_0$.
As in the proof of the previous theorem,
$$
\aligned
& |  \lambda^D (\beta\A, Q (0, r)\cap \Omega)
  -\lambda^D (\beta \A_0, Q (0, r)\cap \Omega) | \\
 &\qquad \le C \beta r^{\kappa+2} \sqrt{\lambda^D(\beta \A_0, Q(0, r)\cap \Omega)}
+ C \beta^2 r^{2 (\kappa+2)}
\endaligned
$$
for $0< r< r_0$.
Let $r= R\beta^{-\frac{1}{\kappa+2}}< r_0$, where $R>1$.
By rescaling,
\begin{equation}
\lambda^D(\beta\A_0, Q(0, r)\cap \Omega)
=\beta^{\frac{2}{\kappa+2}}
\lambda^D (\A_0, Q (0, R)\cap \Omega_\beta ),
\end{equation}
where
\begin{equation}\label{o-b}
\Omega_\beta
=\big\{ (x^\prime, x_d)\in \R^d:\
x_d > \phi_\beta (x^\prime)  \big\}
\end{equation}
and 
$$
\phi_\beta (x^\prime) =\beta^{\frac{1}{\kappa +2}} \phi (\beta^{-\frac{1}{\kappa +2}} x^\prime).
$$
It follows that
\begin{equation}\label{up2-2a}
\aligned
|  \lambda^D (\beta \A; Q (0, r) \cap\Omega)
 - \beta^{\frac{2}{\kappa+2}}
\lambda^D (\A_0, Q (0, R)\cap \Omega_\beta) |
\le C \beta^{\frac{1}{\kappa+2}} R^{\kappa+2}
+ C R^{2(\kappa+2)},
\endaligned
\end{equation}
where $1< R< r_0 \beta^{\frac{1}{\kappa+2}}$.
Note that if $|x^\prime|< R$, 
\begin{equation}\label{up2-2b}
|\nabla \phi_\beta (x^\prime)|
=|\nabla \phi (\beta^{-\frac{1}{\kappa +2}}x^\prime ) |
\le C R \beta^{-\frac{1}{\kappa +2}}, 
\end{equation}
where we have used the assumption that $\nabla \phi (0)=0$ and  $\phi$ is $C^{1, 1}$.
It follows by Theorem \ref{thm-p5}  and the estimate  \eqref{up2-2b} that
$$
 | \lambda ^D (\A_0, \Omega_\beta \cap Q(0, R))
 - \lambda^D (\A_0, \R^d_+)  |
\le  C R^{\kappa+2} \beta^{-\frac{1}{\kappa +2}}
+ C R^{-2},
$$
where $C$ depends on $\sigma (y)$ and $\tau (y)$.
This, together with \eqref{up2-2a}, gives
$$
 | \lambda ^D (\beta \A,  Q(0, r) \cap \Omega)
- \beta^{\frac{2}{\kappa+2}} 
\lambda^D (\A_0, \R^d_+) |
\le C R^{-2} \beta^{\frac{2}{\kappa+2}} 
+C \beta^{\frac{1}{\kappa+2}} R^{\kappa+2}
+ C R^{2(\kappa+2)}.
$$
Finally, by choosing $R=\gamma \beta^{\frac{1}{(\kappa+2)(\kappa+4)}}$, we obtain \eqref{AE-D}.
\end{proof}

\begin{thm}\label{thm-local-3}
Let $\Omega$ be a bounded $C^{1, 1}$ domain.
Let $y\in \partial  \Omega$ and
 $r=\gamma \beta^{-\frac{\kappa+3}{(\kappa+2)(\kappa+4)}}$, where $\beta>1$ and $0< \gamma\le 1$. 
 Let $\kappa=\kappa (y)$ and $n =n (y)$.
 Then 
\begin{equation}\label{AE-N}
|\mu^N(\beta \A, \b(y, r)\cap \Omega, \Omega)-\beta^{\frac{2}{\kappa+2} } \lambda ^N(\A_{y}, \mathbb{H}_n)|
\le C \beta^{\frac{1}{\kappa+2} +\frac{1}{\kappa+4}},
\end{equation}
where $C$ depends on $\gamma$,  $\Omega$,  $\kappa$, $\|\B \|_{C^{\kappa+1}(\overline{\Omega})}$, $\sigma (y)$ and $\tau (y)$.
\end{thm}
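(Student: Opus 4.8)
The plan is to prove Theorem \ref{thm-local-3} as the Neumann analogue of Theorem \ref{thm-local-2}, reproducing that argument with $\lambda^D$ replaced by $\mu^N$, Lemma \ref{lemma-p2} by Lemma \ref{lemma-p3}, and Theorem \ref{thm-p5} by Theorem \ref{thm-p6}. First I would normalize, by a translation and rotation, so that $y=0$, $n(y)=(0,\dots,0,-1)$, $\Omega\cap Q(0,2r_0)=\{(x',x_d): x_d>\phi(x')\}\cap Q(0,2r_0)$ with $\phi\in C^{1,1}$, $\phi(0)=0$, $\nabla\phi(0)=0$, and with $\mathbb{H}_{n(y)}$ becoming $\R^d_+$ and $\A_y$ becoming a homogeneous polynomial $\A_0$ of degree $\kappa+1$ such that $\lambda^N(\A_y,\mathbb{H}_{n(y)})=\lambda^N(\A_0,\R^d_+)$. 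Using $Q(0,cr)\cap\Omega\subset\b(0,r)\cap\Omega\subset Q(0,Cr)\cap\Omega$ and the monotonicity of $\mu^N(\,\cdot\,,\mathcal{O},\Omega)$ in $\mathcal{O}$, it suffices to prove the estimate with $\b(0,r)\cap\Omega$ replaced by $Q(0,r)\cap\Omega$ (the bounds for the two comparable cubes have the same form, only the constant $\gamma$ changing). On the contractible set $Q(0,r)\cap\Omega$ I would then invoke the gauge transformation of Theorem \ref{thm-local-1} to write $\A=\A_0+\mathcal{R}$ with $|\mathcal{R}(x)|\le C|x|^{\kappa+2}$, $C$ depending on $\|\B\|_{C^{\kappa+1}(\overline{\Omega})}$; since multiplication by $e^{i\beta\theta}$ preserves $|\psi|$ and the vanishing condition on $\Omega\cap\partial Q(0,r)$, it leaves $\mu^N$ unchanged.

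Next, the triangle inequality in the Dirichlet form, exactly as in Theorems \ref{thm-local-1}--\ref{thm-local-2}, gives \[|\mu^N(\beta\A,Q(0,r)\cap\Omega,\Omega)-\mu^N(\beta\A_0,Q(0,r)\cap\Omega,\Omega)|\le C\beta r^{\kappa+2}\sqrt{\mu^N(\beta\A_0,Q(0,r)\cap\Omega,\Omega)}+C\beta^2 r^{2(\kappa+2)}.\] Setting $r=R\beta^{-\frac1{\kappa+2}}$ with $R>1$, the homogeneity of $\A_0$ gives, after rescaling by $\beta^{-\frac1{\kappa+2}}$, the exact identity \[\mu^N(\beta\A_0,Q(0,r)\cap\Omega,\Omega)=\beta^{\frac{2}{\kappa+2}}\mu^N(\A_0,Q(0,R)\cap\Omega_\beta,\Omega_\beta),\] where $\Omega_\beta=\{x_d>\phi_\beta(x')\}$ and $\phi_\beta(x')=\beta^{\frac1{\kappa+2}}\phi(\beta^{-\frac1{\kappa+2}}x')$. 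Since $\nabla\phi(0)=0$ and $\phi\in C^{1,1}$, the slope of the rescaled boundary satisfies $M_R:=\max_{|x'|<R}|\nabla\phi_\beta(x')|\le CR\beta^{-\frac1{\kappa+2}}$, and $\mu^N(\A_0,Q(0,R)\cap\Omega_\beta,\Omega_\beta)\le\lambda^D(\A_0,Q(0,R)\cap\Omega_\beta)\le C$ for $R>1$ by \eqref{low-a}, so the square-root factor above is $\le C\beta^{\frac1{\kappa+2}}$.

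I would then apply Theorem \ref{thm-p6} to $\A_0$ and $\Omega_\beta$ to obtain \[|\mu^N(\A_0,\Omega_\beta\cap Q(0,R),\Omega_\beta)-\lambda^N(\A_0,\R^d_+)|\le C(R^{\kappa+1}M_R+R^{-2})\le C(R^{\kappa+2}\beta^{-\frac1{\kappa+2}}+R^{-2}),\] where the constant depends only on the fixed polynomial $\P=\nabla\times\A_0$ (hence on $\|\B\|_{C^{\kappa+1}(\overline{\Omega})}$) and, through the tiling argument behind Lemma \ref{lemma-p3} (cf.\ Remark \ref{re-de-1}), on $\sigma(y)$ and $\tau(y)$, but not on $\phi$. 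Combining the three displays gives \[|\mu^N(\beta\A,Q(0,r)\cap\Omega,\Omega)-\beta^{\frac{2}{\kappa+2}}\lambda^N(\A_0,\R^d_+)|\le C\beta^{\frac{2}{\kappa+2}}R^{-2}+C\beta^{\frac1{\kappa+2}}R^{\kappa+2}+CR^{2(\kappa+2)}.\] Choosing $R=\gamma\beta^{\frac1{(\kappa+2)(\kappa+4)}}$ makes $r=\gamma\beta^{-\frac{\kappa+3}{(\kappa+2)(\kappa+4)}}$ as required, and, using $\frac1{\kappa+2}+\frac1{\kappa+4}=\frac{2\kappa+6}{(\kappa+2)(\kappa+4)}$ together with $\frac2{\kappa+4}\le\frac1{\kappa+2}+\frac1{\kappa+4}$, each of the three terms on the right is $O(\beta^{\frac1{\kappa+2}+\frac1{\kappa+4}})$. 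Since $\lambda^N(\A_0,\R^d_+)=\lambda^N(\A_y,\mathbb{H}_{n(y)})$, this is \eqref{AE-N}; the whole chain of estimates is two-sided, so no separate lower-bound argument is needed.

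The main obstacle is not located in this argument, which is essentially a transcription of the proof of Theorem \ref{thm-local-2}: it is already packaged into Theorem \ref{thm-p6}, whose proof must simultaneously control the perturbation of the magnetic potential and the Jacobian of the boundary-flattening map $\Phi$ on the rescaled domain, the tiling of the half-space by parallelotopes adapted to $V_y$, and the resulting dependence of constants on $\sigma(y)$ and $\tau(y)$. Granting that, the one point I would monitor carefully here is that the constant furnished by Theorem \ref{thm-p6} be uniform in $\beta$; this holds because the only $\beta$-dependence of $\Omega_\beta$ enters through $M_R$, which we have already bounded by $CR\beta^{-\frac1{\kappa+2}}$, while the remainder of the constant depends only on the fixed homogeneous polynomial $\A_0$ and on $\sigma(y),\tau(y)$.
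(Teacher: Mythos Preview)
Your proposal is correct and follows essentially the same route as the paper: the paper's own proof of Theorem \ref{thm-local-3} consists of the single sentence ``The proof is similar to that of Lemma \ref{thm-local-2}, using Theorem \ref{thm-p6},'' and what you have written is precisely that transcription, with the ball--cube passage, the gauge invariance of $\mu^N$, and the uniformity-in-$\beta$ of the constant from Theorem \ref{thm-p6} made explicit.
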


\begin{proof}

The proof is similar to that of Lemma \ref{thm-local-2}, using Theorem \ref{thm-p6},
\end{proof}

\begin{thm}\label{thm-local-4}
Let $\Omega$ be a bounded $C^{1, 1}$ domain.
Let $y\in \partial  \Omega$ and
 $r=\gamma \beta^{-\frac{\kappa+3}{(\kappa+2)(\kappa+4)}}$, where $\beta>1$ and $0< \gamma \le 1$. 
 Let $\kappa=\kappa (y)$ and $n =n (y)$.
 Then 
\begin{equation}\label{AE-DN}
|\mu^{D\!N} (\beta \A, \b(y, r)\cap \Omega, \Omega)-\beta^{\frac{1}{\kappa+2} } \lambda ^{D\!N}(\A_{y}, \mathbb{H}_n)|
\le C \beta^{\frac{1}{\kappa+4}}, 
\end{equation}
where $C$ depends on $\gamma$,   $\Omega$,  $\kappa$, $\|\B \|_{C^{\kappa+1}(\overline{\Omega})}$, $\sigma (y)$ and $\tau (y)$.
\end{thm}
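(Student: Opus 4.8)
The plan is to adapt the proof of Theorem~\ref{thm-local-2}, using Theorem~\ref{thm-p7} in place of Theorem~\ref{thm-p5} and carrying the boundary normalization through the exponent bookkeeping. After a translation and rotation I would assume $y=0$, $n=n(y)=(0,\dots,0,-1)$, and $\Omega\cap Q(0,2r_0)=\{(x',x_d):x_d>\phi(x')\}\cap Q(0,2r_0)$ with $\phi\in C^{1,1}$, $\phi(0)=0$ and $\nabla\phi(0)=0$. Writing $\A_0=\A_y$, a gauge transformation gives $\A=\A_0+\mathcal{R}$ on $\b(0,r)$ with $\A_0$ homogeneous of degree $\kappa+1$ and $|\mathcal{R}(x)|\le C|x|^{\kappa+2}\le Cr^{\kappa+2}$, $C$ depending on $\|\B\|_{C^{\kappa+1}(\overline{\Omega})}$. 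Since $\mu^{D\!N}(\beta\A,\,\cdot\,,\Omega)$ is monotone in its first argument, balls and cubes of comparable size may be interchanged at the cost of adjusting $\gamma$, so it suffices to estimate $\mu^{D\!N}(\beta\A,\mathcal{O},\Omega)$ with $\mathcal{O}=Q(0,2r)\cap\Omega$.

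The step that genuinely differs from the Dirichlet and Neumann cases is the passage from $\A$ to $\A_0$: because the Rayleigh quotient for $\mu^{D\!N}$ has a \emph{boundary} integral in the denominator, the contribution of $\mathcal{R}$ cannot be absorbed into $\beta\max|\mathcal{R}|$ as in Theorem~\ref{thm-local-2}. Instead, for each admissible $\psi$ (vanishing on $\Omega\cap\partial\mathcal{O}$) I would route $\int_{\mathcal{O}}|\psi|^2$ through the Neumann gap:
\begin{equation*}
\frac{\big(\int_{\mathcal{O}}|(D+\beta\A)\psi|^2\big)^{1/2}}{\big(\int_{\partial\mathcal{O}\cap\partial\Omega}|\psi|^2\big)^{1/2}}\le\Big(1+\frac{\beta\max_{\mathcal{O}}|\mathcal{R}|}{\sqrt{\mu^N(\beta\A_0,\mathcal{O},\Omega)}}\Big)\frac{\big(\int_{\mathcal{O}}|(D+\beta\A_0)\psi|^2\big)^{1/2}}{\big(\int_{\partial\mathcal{O}\cap\partial\Omega}|\psi|^2\big)^{1/2}},
\end{equation*}
using $\int_{\mathcal{O}}|\psi|^2\le\mu^N(\beta\A_0,\mathcal{O},\Omega)^{-1}\int_{\mathcal{O}}|(D+\beta\A_0)\psi|^2$, together with the mirror inequality with $\A$ and $\A_0$ interchanged. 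The required gaps $\mu^N(\beta\A_0,\mathcal{O},\Omega),\ \mu^N(\beta\A,\mathcal{O},\Omega)\ge c\,\beta^{2/(\kappa+2)}$ follow from Theorem~\ref{thm-local-3} and the lower bound for $\lambda^N(\,\cdot\,,\mathbb{H}_n)$ in~\eqref{low-a}. Taking $r=R\beta^{-1/(\kappa+2)}$ gives $\beta\max_{\mathcal{O}}|\mathcal{R}|\le CR^{\kappa+2}$ and $\sqrt{\mu^N(\beta\A_0,\mathcal{O},\Omega)}\ge c\beta^{1/(\kappa+2)}$; then, passing to infima over $\psi$ and using the local upper bound $\mu^{D\!N}(\beta\A_0,\mathcal{O},\Omega)\le C\beta^{1/(\kappa+2)}$ (obtained as in Theorem~\ref{thm-u4}) to convert the multiplicative error into an additive one, one gets
\begin{equation*}
\big|\mu^{D\!N}(\beta\A,\mathcal{O},\Omega)-\mu^{D\!N}(\beta\A_0,\mathcal{O},\Omega)\big|\le CR^{\kappa+2}.
\end{equation*}

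Next I would rescale by $x=\beta^{-1/(\kappa+2)}z$. Since $\A_0$ is homogeneous of degree $\kappa+1$, this turns $\mu^{D\!N}(\beta\A_0,\mathcal{O},\Omega)$ into $\beta^{1/(\kappa+2)}\mu^{D\!N}(\A_0,\Omega_\beta\cap Q(0,2R),\Omega_\beta)$, where $\Omega_\beta=\{x_d>\phi_\beta(x')\}$ and $\phi_\beta(x')=\beta^{1/(\kappa+2)}\phi(\beta^{-1/(\kappa+2)}x')$; the prefactor is $\beta^{1/(\kappa+2)}$, not $\beta^{2/(\kappa+2)}$, because the surface measure on $\partial\Omega$ carries one fewer factor of $\beta^{-1/(\kappa+2)}$ than the volume measure. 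From $\nabla\phi(0)=0$ and $\phi\in C^{1,1}$ one has $M_R:=\max_{|x'|<R}|\nabla\phi_\beta(x')|\le CR\beta^{-1/(\kappa+2)}$ for $1<R<r_0\beta^{1/(\kappa+2)}$, so Theorem~\ref{thm-p7} gives
\begin{equation*}
\big|\mu^{D\!N}(\A_0,\Omega_\beta\cap Q(0,2R),\Omega_\beta)-\lambda^{D\!N}(\A_y,\R^d_+)\big|\le C\big(R^{\kappa+1}M_R+R^{-2}\big)\le C\big(R^{\kappa+2}\beta^{-1/(\kappa+2)}+R^{-2}\big).
\end{equation*}
Combining the last two displays,
\begin{equation*}
\big|\mu^{D\!N}(\beta\A,\b(0,r)\cap\Omega,\Omega)-\beta^{1/(\kappa+2)}\lambda^{D\!N}(\A_y,\R^d_+)\big|\le C\big(R^{\kappa+2}+\beta^{1/(\kappa+2)}R^{-2}\big),
\end{equation*}
and the choice $R=\gamma\beta^{1/((\kappa+2)(\kappa+4))}$ balances the two terms, giving $r=\gamma\beta^{-(\kappa+3)/((\kappa+2)(\kappa+4))}$ and the bound $C\beta^{1/(\kappa+4)}$, which is~\eqref{AE-DN}. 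The lower bound comes from reading the same chain of inequalities in reverse, and tracking the constants through Theorem~\ref{thm-p7} — hence through Lemmas~\ref{lemma-p2} and~\ref{lemma-p4} and Remark~\ref{re-de-1} — shows $C$ depends only on $\gamma$, $\Omega$, $\kappa$, $\|\B\|_{C^{\kappa+1}(\overline{\Omega})}$, $\sigma(y)$ and $\tau(y)$.

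I expect the perturbation step to be the only genuinely delicate point, specifically the uniform Neumann gap $\mu^N(\beta\A,\mathcal{O},\Omega)\gtrsim\beta^{2/(\kappa+2)}$ on the small curved piece $\mathcal{O}$, which is why it is convenient to have Theorem~\ref{thm-local-3} already in hand. Everything else — the gauge reduction, the rescaling, and the optimization in $R$ — is the same bookkeeping as in Theorems~\ref{thm-local-1}–\ref{thm-local-3}, with the exponent $\tfrac{1}{\kappa+2}$ replacing $\tfrac{2}{\kappa+2}$ and the error exponent $\tfrac{1}{\kappa+4}$ emerging from the balance $R^{\kappa+2}\sim\beta^{1/(\kappa+2)}R^{-2}$.
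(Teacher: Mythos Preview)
Your proposal is correct and follows essentially the same approach as the paper: gauge reduction $\A\to\A_0$ with the remainder absorbed via the Neumann gap $\mu^N(\beta\A_0,\mathcal{O},\Omega)\ge c\,\beta^{2/(\kappa+2)}$, then rescaling to $\Omega_\beta$ and invoking Theorem~\ref{thm-p7}, with the final choice $R=\gamma\beta^{1/((\kappa+2)(\kappa+4))}$. The only cosmetic difference is that the paper obtains the Neumann gap directly from the rescaling identity $\mu^N(\beta\A_0,Q(0,r)\cap\Omega,\Omega)=\beta^{2/(\kappa+2)}\mu^N(\A_0,Q(0,R)\cap\Omega_\beta,\Omega_\beta)$ rather than citing Theorem~\ref{thm-local-3}, and thereby only ever needs the gap for $\beta\A_0$ (the reverse inequality is obtained by rearranging rather than by a separate gap for $\beta\A$); but this is the same argument.
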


\begin{proof}

By translation and rotation we may assume that $y=0$,  $n=(0, \dots, 0, -1)$, and 
$\Omega \cap Q(0, 2r_0)$ is given by \eqref{o-l}.
Let $r=R \beta^{-\frac{1}{\kappa +2}}< r_0$, where $R>1$.
Assume that $R^{\kappa +2} \beta^{-\frac{1}{\kappa +2}}<<1$.
Note that for $\psi \in C^1(Q(0, r)\cap \Omega; \C)$ such that $\psi =0$ on $\Omega\cap \partial Q(0, r)$, 
$$
\aligned
 & \left(\int_{Q(0, r)\cap \Omega}
|(D+\beta \A)\psi|^2\right)^{1/2}\\
&\le 
\left(\int_{Q(0, r)\cap \Omega}
|(D+\beta \A_0)\psi|^2\right)^{1/2}
+ \beta \max_{Q(0, r)} |\A -\A_0|
\left(\int_{Q(0, r)\cap \Omega}
|\psi|^2 \right)^{1/2}\\
&\le \left( 1 + C \beta r^{\kappa+2} [ \mu^N (\beta \A_0, Q(0, r)\cap \Omega, \Omega) ]^{-1/2} \right)
\left(\int_{Q(0, r)\cap \Omega}
|(D+\beta \A_0)\psi|^2\right)^{1/2}\\
 &\le \left( 1 + C  R^{\kappa+2} \beta^{-\frac{1}{\kappa+2}}  \right)
\left(\int_{Q(0, r)\cap \Omega}
|(D+\beta \A_0)\psi|^2\right)^{1/2},\\
\endaligned
$$
where we have used the fact 
$$
\mu^N(\beta \A_0, Q(0, r)\cap \Omega, \Omega)
=\beta^{\frac{2}{\kappa+2}}
\mu^N(\A_0, Q(0, R) \cap \Omega_\beta, \Omega_\beta) 
\ge c \beta^{\frac{2}{\kappa +2}},
$$
with $\Omega_\beta$ given by \eqref{o-b}.
A similar argument gives
$$
 \left(\int_{Q(0, r)\cap \Omega}
|(D+\beta \A_0)\psi|^2\right)^{1/2}
\le \left( 1 - C  R^{\kappa+2} \beta^{-\frac{1}{\kappa+2}}  \right)^{-1} 
\left(\int_{Q(0, r)\cap \Omega}
|(D+\beta \A)\psi|^2\right)^{1/2}.
$$
 It follows that
\begin{equation}\label{loc-101}
\aligned
& \mu^{D\!N}(\beta\A, Q(0, r) \cap \Omega, \Omega)\\
& \le  \left( 1 + C  R^{\kappa+2} \beta^{-\frac{1}{\kappa+2}}   \right)^2
\mu^{D\!N}(\beta\A_0, Q(0, r) \cap \Omega, \Omega).\\
&=  \left( 1 + C  R^{\kappa+2} \beta^{-\frac{1}{\kappa+2}}   \right)
\beta^{\frac{1}{\kappa+2}}
\mu^{D\!N} (\A_0, Q(0, R)\cap \Omega_\beta, \Omega_\beta).
\endaligned
\end{equation}
and
\begin{equation}
\beta^{\frac{1}{\kappa+2}}
\mu^{D\!N} (\A_0, Q(0, R)\cap \Omega_\beta, \Omega_\beta)
\le   \left( 1 + C  R^{\kappa+2} \beta^{-\frac{1}{\kappa+2}}   \right)
 \mu^{D\!N}(\beta\A, Q(0, r) \cap \Omega, \Omega).
\end{equation}
This, together with  Theorem \ref{thm-p7}, yields 
$$
\aligned
 &  \mu^{D\!N}(\beta\A, Q(0, r) \cap \Omega, \Omega)\\
&\qquad\le \beta^{\frac{1}{\kappa+2}}
\left\{ \lambda^{D\!N} (\A_0, \R^d_+)
+ C R^{\kappa+2} \beta^{-\frac{1}{\kappa+2}}
+ CR^{-2} \right\}
  \left\{ 1+ C R^{\kappa+2} \beta^{-\frac{1}{\kappa+2}} \right\}\\
&\qquad  \le \beta^{\frac{1}{\kappa+2} }\lambda^{D\!N} (\A_0, \R^d_+)
+C R^{\kappa+2} + C R^{-2} \beta^{\frac{1}{\kappa+2}}, 
\endaligned
$$
and
$$
\aligned
 \beta^{\frac{1}{\kappa+2} }\lambda^{D\!N} (\A_0, \R^d_+)
 &\le \beta^{\frac{1}{\kappa+2}} \mu^{D\!N} (A_0, Q(0, R)\cap \Omega_\beta, \Omega_\beta)
 +C R^{\kappa+2}  + CR^{-2} \beta^{\frac{1}{\kappa+2}}\\
 & \le   \mu^{D\!N}(\beta\A, Q(0, r) \cap \Omega, \Omega)
 + C R^{\kappa+2}  + CR^{-2} \beta^{\frac{1}{\kappa+2}}.
\endaligned
$$
By choosing $R=\gamma \beta^{\frac{1}{(\kappa+2) (\kappa+4)}}$, we obtain  \eqref{AE-DN}.
\end{proof}


\section{Upper bounds, part II}

 Let $\Omega$ be a bounded Lipschitz domain.
 Let $\kappa_*$ and $\kappa_0$ be defined by \eqref{kappa} and \eqref{kappa-1}, respectively.
 It follows  by Theorem \ref{main-thm-1}  that if   $|\B|$ does not vanish to infinite order at any point in $\overline{\Omega}$, then
$$
\lambda^D(\beta \A, \Omega), \lambda^N (\beta\A, \Omega)
\approx  \beta^{\frac{2}{\kappa_* +2}} 
$$
for $\beta>1$ large. Also, by Theorem \ref{main-thm-2},  if  $|\B|$ does not vanish to infinite order at any point on $\partial \Omega$,  then
$$
\lambda^{D\!N} (\beta \A, \Omega)
\approx \beta^{\frac{1}{k_0 +2}}
$$
for $\beta>1$ large.
In this section we derive more precise upper bounds for 
$$
\beta^{-\frac{2}{\kappa_*+2}} \lambda^D (\beta \A, \Omega), \quad 
\beta^{-\frac{2}{\kappa_*+2}} \lambda^N (\beta \A, \Omega), \quad \text{ and } \quad 
\beta^{-\frac{1}{\kappa_0+2}} \lambda^{D\!N} (\beta \A, \Omega),
$$
 as $\beta \to \infty$.
 
\begin{lemma}\label{lemma-up2-1}
Let $\Omega$ be a bounded Lipschitz domain in $\R^d$.
Let $y\in \Omega$ and $\kappa=\kappa(y)$.
Then 
\begin{equation}\label{up2-1-0}
\lambda^N (\beta \A, \Omega) \le \lambda^D (\beta\A, \Omega)
\le\  \beta^{\frac{2}{\kappa+2}}
\lambda (\A_{y}, \R^d)
+ C \beta^{\frac{1}{\kappa+2} +\frac{1}{\kappa+4}}
\end{equation}
for $\beta>\beta_{y}$, where $\A_{y} $  is   a homogeneous  polynomial  of degree $\kappa+1$ given by \eqref{A-0}.
The constant $C$ depends on $\kappa$, $\|\B\|_{C^{\kappa+1}(\overline{\Omega})}$ and $\sigma (y)$ in \eqref{sigma-1}.
\end{lemma}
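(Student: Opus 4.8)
The plan is to deduce this directly from the interior local asymptotic expansion in Theorem~\ref{thm-local-1} together with the domain monotonicity of the Dirichlet ground state energy. First, the inequality $\lambda^N(\beta\A,\Omega)\le\lambda^D(\beta\A,\Omega)$ is immediate, since any $\psi\in C_0^1(\Omega;\C)$ extends by zero to an element of $C^1(\overline{\Omega};\C)$, so the infimum defining $\lambda^N(\beta\A,\Omega)$ is taken over a larger class. It therefore remains only to bound $\lambda^D(\beta\A,\Omega)$ from above.

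Fix $\gamma=1$ (any value in $(0,1]$ works) and set $r=r(\beta)=\beta^{-\frac{\kappa+3}{(\kappa+2)(\kappa+4)}}$. Since $y\in\Omega$, we have $\delta:=\text{\rm dist}(y,\partial\Omega)>0$, and because $r(\beta)\to 0$ as $\beta\to\infty$, there is a threshold $\beta_y>1$, depending only on $\kappa$ and $\delta$, such that $r(\beta)<\delta$, and hence $\b(y,r)\subset\Omega$, for all $\beta>\beta_y$. For such $\beta$, extending test functions by zero gives the inclusion $C_0^1(\b(y,r);\C)\subset C_0^1(\Omega;\C)$, so from the variational characterization of the Dirichlet ground state energy,
$$
\lambda^D(\beta\A,\Omega)\le\lambda^D(\beta\A,\b(y,r)).
$$

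Now Theorem~\ref{thm-local-1}, which applies precisely because $\b(y,r)\subset\Omega$ with $r$ of the prescribed form, yields
$$
\lambda^D(\beta\A,\b(y,r))\le\beta^{\frac{2}{\kappa+2}}\lambda(\A_y,\R^d)+C\beta^{\frac{1}{\kappa+2}+\frac{1}{\kappa+4}},
$$
where $C$ depends on $\kappa$, $\|\B\|_{C^{\kappa+1}(\overline{\Omega})}$ and $\sigma(y)$. Combining the two displays gives \eqref{up2-1-0}.

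I do not expect any real obstacle here: all the analytic work has already been carried out in Theorem~\ref{thm-local-1}, and the argument is just monotonicity plus that expansion. The only point requiring a line of care is the threshold $\beta_y$, which must be chosen so that the shrinking ball $\b(y,r)$ stays inside $\Omega$; it therefore depends on how close $y$ is to $\partial\Omega$, which is why the estimate holds for $\beta>\beta_y$ rather than uniformly in $y$. (When uniformity over $y$ ranging in a fixed compact subset of $\Omega$ is needed in the sequel, one simply lets $\beta_y$ depend only on a positive lower bound for $\text{\rm dist}(y,\partial\Omega)$.)
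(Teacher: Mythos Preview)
Your proof is correct and follows essentially the same approach as the paper: choose $r=\beta^{-\frac{\kappa+3}{(\kappa+2)(\kappa+4)}}$, take $\beta$ large enough that $\b(y,r)\subset\Omega$, then combine the monotonicity $\lambda^N(\beta\A,\Omega)\le\lambda^D(\beta\A,\Omega)\le\lambda^D(\beta\A,\b(y,r))$ with the estimate \eqref{AE-I} from Theorem~\ref{thm-local-1}. Your additional remarks on the dependence of $\beta_y$ on $\text{\rm dist}(y,\partial\Omega)$ are accurate and useful.
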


\begin{proof}

Let $r=\beta^{-\frac{\kappa+3}{(\kappa+2)(\kappa+4)}}$.
Suppose $\b(y, r)\subset \Omega$. Then,
$$
\aligned
\lambda^N(\beta \A, \Omega)  & \le 
\lambda^D (\beta \A, \Omega)\\
& \le \lambda^D (\beta \A, \b(y, r) )
\le \beta^{\frac{2}{\kappa+2}} \lambda (\A_y, \R^d) + C \beta^{\frac{1}{\kappa+2} +\frac{1}{\kappa+4}},
\endaligned
$$
where we have used \eqref{AE-I} for the last inequality.
\end{proof}

\begin{lemma}\label{lemma-up2-2}
Let $\Omega$ be a bounded $C^{1, 1}$ domain in $\R^d$.
Let $n=n(y)$ denote the outward unit normal to $\partial\Omega$ at $y\in \partial\Omega$ and 
$\kappa = \kappa(y)$.
Then 
\begin{equation}\label{up2-2-0}
\lambda^D (\beta\A, \Omega)
\le\  \beta^{\frac{2}{\kappa+2}}
\lambda^D (\A_{y},  \mathbb{H}_n)
+ C \beta^{\frac{1}{\kappa+2} +\frac{1}{\kappa+4}},
\end{equation}
\begin{equation}\label{up2-3-0}
\lambda^N (\beta\A, \Omega)
\le\  \beta^{\frac{2}{\kappa+2}}
\lambda^N (\A_{y},  \mathbb{H}_n )
+ C \beta^{\frac{1}{\kappa+2} +\frac{1}{\kappa+4}}, 
\end{equation}
for $\beta> 1$, where $\A_{y}$ is a homogeneous  polynomial  of degree $\kappa+1$ given by \eqref{A-0}.
The constant $C$ depends on $\kappa$, $\|\B\|_{C^{\kappa+1}(\overline{\Omega})}$, $\sigma (y)$ and $\tau (y)$ in \eqref{tau}.
\end{lemma}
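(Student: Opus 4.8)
The plan is to reduce the two global bounds to the local asymptotic expansions at the boundary point $y$ that were established in Theorems~\ref{thm-local-2} and~\ref{thm-local-3}, by extending competitors by zero, and then to specialize the radius as in those theorems. Fix $y\in\partial\Omega$, write $\kappa=\kappa(y)$ and $n=n(y)$, and set $r=\beta^{-\frac{\kappa+3}{(\kappa+2)(\kappa+4)}}$. For $\beta$ larger than a threshold $\beta_\Omega$ depending only on $\Omega$ and $\kappa$, we have $0<r<r_0$, so $\b(y,r)\cap\Omega$ sits inside a boundary coordinate patch of $\Omega$ and Theorems~\ref{thm-local-2}--\ref{thm-local-3} apply. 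For the remaining range $1<\beta\le\beta_\Omega$ the estimates \eqref{up2-2-0}--\eqref{up2-3-0} are immediate after enlarging $C$: their left-hand sides are $\le C\beta^{\frac{2}{\kappa_*+2}}\le C$ by Theorem~\ref{main-thm-1}, while their right-hand sides are bounded below by a positive constant in view of \eqref{low-a}. So from now on I would assume $\beta$ is large.

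For \eqref{up2-2-0}: any $\psi\in C_0^1(\b(y,r)\cap\Omega;\C)$ has support at a positive distance from $\partial\Omega$, so its extension by zero lies in $C_0^1(\Omega;\C)$ and has exactly the same magnetic Rayleigh quotient; taking the infimum over such $\psi$ gives $\lambda^D(\beta\A,\Omega)\le\lambda^D(\beta\A,\b(y,r)\cap\Omega)$. Applying Theorem~\ref{thm-local-2} with $\gamma=1$ then yields
\[
\lambda^D(\beta\A,\Omega)\le\lambda^D(\beta\A,\b(y,r)\cap\Omega)\le\beta^{\frac{2}{\kappa+2}}\lambda^D(\A_y,\mathbb{H}_n)+C\beta^{\frac{1}{\kappa+2}+\frac{1}{\kappa+4}},
\]
which is \eqref{up2-2-0}. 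For \eqref{up2-3-0}: given $\psi\in C^1(\overline{\b(y,r)\cap\Omega};\C)$ with $\psi=0$ on $\Omega\cap\partial\b(y,r)$, its extension $\widetilde\psi$ by zero belongs to $H^1(\Omega)$ (there is no jump across $\Omega\cap\partial\b(y,r)$) and the Rayleigh quotient of $\widetilde\psi$ on $\Omega$ equals that of $\psi$ on $\b(y,r)\cap\Omega$; since $\Omega$ is Lipschitz, $C^1(\overline\Omega;\C)$ is dense in $H^1(\Omega)$, so $\widetilde\psi$ is approximated by admissible competitors for $\lambda^N(\beta\A,\Omega)$, giving $\lambda^N(\beta\A,\Omega)\le\mu^N(\beta\A,\b(y,r)\cap\Omega,\Omega)$. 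Theorem~\ref{thm-local-3} with $\gamma=1$ then gives
\[
\lambda^N(\beta\A,\Omega)\le\mu^N(\beta\A,\b(y,r)\cap\Omega,\Omega)\le\beta^{\frac{2}{\kappa+2}}\lambda^N(\A_y,\mathbb{H}_n)+C\beta^{\frac{1}{\kappa+2}+\frac{1}{\kappa+4}},
\]
which is \eqref{up2-3-0}. In both cases $C$ is the constant produced by the corresponding local theorem, hence depends on $\kappa$, $\|\B\|_{C^{\kappa+1}(\overline\Omega)}$, $\sigma(y)$ and $\tau(y)$ (and the $C^{1,1}$ character of $\Omega$).

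Since the substantive work is already contained in Theorems~\ref{thm-local-2} and~\ref{thm-local-3}, I do not expect a genuine analytic obstacle here; this lemma is essentially a repackaging of those results by domain monotonicity. The only point that deserves care is the admissibility of the extended-by-zero test function in the Neumann case, i.e.\ staying within (the $H^1$-closure of) the class $C^1(\overline\Omega;\C)$ used to define $\lambda^N(\beta\A,\Omega)$; this is taken care of by the density of $C^1(\overline\Omega;\C)$ in $H^1(\Omega)$ for Lipschitz $\Omega$ together with the continuity of the magnetic Rayleigh quotient along $H^1$-convergent sequences.
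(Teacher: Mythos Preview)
Your proof is correct and follows exactly the paper's approach: the paper's proof consists of the single observation that $\lambda^D(\beta\A,\Omega)\le\lambda^D(\beta\A,\b(y,r)\cap\Omega)$ and $\lambda^N(\beta\A,\Omega)\le\mu^N(\beta\A,\b(y,r)\cap\Omega,\Omega)$ with $r=c\beta^{-\frac{\kappa+3}{(\kappa+2)(\kappa+4)}}<r_0$, and then invokes \eqref{AE-D} and \eqref{AE-N}. You have simply fleshed out the domain-monotonicity step (extension by zero and density) and handled the trivial small-$\beta$ range explicitly.
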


\begin{proof}

Since $\lambda^D(\beta \A, \Omega) \le \lambda^D (\beta\A, \b(y, r)\cap \Omega)$
and $\lambda^N (\beta\A, \Omega) \le \mu^N (\beta\A, \b(y, r) \cap \Omega,  \Omega)$,
 where $r=c\beta^{-\frac{\kappa+3}{(\kappa+2)(\kappa+4)}}<r_0$,
this follows readily from \eqref{AE-D} and \eqref{AE-N}.
\end{proof}

\begin{thm}\label{thm-up2a}
Suppose that $\A\in C^\infty(\R^d; \R^d)$.
Let $\Omega$ be a bounded $C^1$ domain.
Assume that $|\B|$ does not vanish to infinite order
at any point in $\overline{\Omega}$.
Let $\kappa_*$ be defined by \eqref{kappa}.
Then
\begin{equation}\label{up2-3a}
\aligned
 & \limsup_{\beta\to \infty} 
\beta^{-\frac{2}{\kappa_* +2}}
\lambda^D (\beta \A, \Omega)
\le \Theta_D, 
\endaligned
\end{equation}
\begin{equation}\label{up2-3b}
\aligned
  \limsup_{\beta\to \infty} 
\beta^{-\frac{2}{\kappa_* +2}}
\lambda^N (\beta \A, \Omega)
 \le \Theta_N, 
\endaligned
\end{equation}
where $\Theta_D$  and $ \Theta_N$ are  given by \eqref{co-1}.
\end{thm}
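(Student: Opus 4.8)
The plan is to bound each ground state energy from above by its restriction to a small neighborhood of a well point $y\in\Gamma_1\cup\Gamma_2$ and then feed in the local asymptotics of Section~\ref{section-local}, essentially reproducing the proofs of Lemmas~\ref{lemma-up2-1} and~\ref{lemma-up2-2} but keeping only the information that the remainders are of lower order. Since only a $\limsup$ is asserted, no uniformity in $y$ is needed; this is why the well conditions \eqref{V-1}--\eqref{V-2} may be dropped, and—by choosing the order in which limits are taken—why a merely $C^1$ boundary suffices in place of $C^{1,1}$. Because $|\B|$ does not vanish to infinite order, $\kappa_*$ is attained on $\overline{\Omega}=\Gamma_1\cup\Gamma_2$, so at least one of $\Gamma_1,\Gamma_2$ is nonempty, and $\Theta_D,\Theta_N$ are finite by \eqref{low-a}. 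With the convention $\inf\emptyset=+\infty$, it suffices to prove: for every $y\in\Gamma_1$, $\limsup_{\beta\to\infty}\beta^{-\frac{2}{\kappa_*+2}}\lambda^D(\beta\A,\Omega)\le\lambda(\A_y,\R^d)$; and for every $y\in\Gamma_2$, $\limsup_{\beta\to\infty}\beta^{-\frac{2}{\kappa_*+2}}\lambda^D(\beta\A,\Omega)\le\lambda^D(\A_y,\mathbb{H}_{n(y)})$ and $\limsup_{\beta\to\infty}\beta^{-\frac{2}{\kappa_*+2}}\lambda^N(\beta\A,\Omega)\le\lambda^N(\A_y,\mathbb{H}_{n(y)})$. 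Taking infima over $y\in\Gamma_1$ and $y\in\Gamma_2$ and then the minimum produces \eqref{up2-3a} and \eqref{up2-3b}, using $\lambda^N(\beta\A,\Omega)\le\lambda^D(\beta\A,\Omega)$ for the interior contribution to \eqref{up2-3b}.

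For an interior well point $y\in\Gamma_1$ (so $\kappa(y)=\kappa_*$), I would take $r=\beta^{-\frac{\kappa_*+3}{(\kappa_*+2)(\kappa_*+4)}}$; then $\b(y,r)\subset\Omega$ for $\beta$ large, so $\lambda^D(\beta\A,\Omega)\le\lambda^D(\beta\A,\b(y,r))$ and Lemma~\ref{lemma-up2-1} (equivalently Theorem~\ref{thm-local-1}) gives $\lambda^D(\beta\A,\Omega)\le\beta^{\frac{2}{\kappa_*+2}}\lambda(\A_y,\R^d)+C\beta^{\frac{1}{\kappa_*+2}+\frac{1}{\kappa_*+4}}$ with $C$ a fixed finite constant. (For $\kappa_*\ge1$ the Taylor polynomial $\P_y$ is a nonzero homogeneous polynomial of positive degree, so $\sigma(y)>0$ and Lemma~\ref{lemma-up2-1} applies as stated; for $\kappa_*=0$ the field near $y$ is a nonzero constant and $\beta^{-1}\lambda^D(\beta\A,\b(y,r))\to\lambda(\A_y,\R^d)$ is classical.) Dividing by $\beta^{\frac{2}{\kappa_*+2}}$ and letting $\beta\to\infty$, the remainder has exponent $-\tfrac{1}{\kappa_*+2}+\tfrac{1}{\kappa_*+4}<0$ and vanishes, giving the claim.

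For a boundary well point $y\in\Gamma_2$, set $\kappa=\kappa_*$, $n=n(y)$, and after translating and rotating assume $y=0$, $n=(0,\dots,0,-1)$, and $\Omega\cap Q(0,2r_0)=\{x_d>\phi(x')\}\cap Q(0,2r_0)$ with $\phi\in C^1$, $\phi(0)=0$, $\nabla\phi(0)=0$. Fix a parameter $R>1$ and put $r=R\beta^{-\frac{1}{\kappa+2}}$. The perturbation-and-rescaling step in the proof of Theorem~\ref{thm-local-2} (gauge $\A$ to $\A_y+\mathcal{R}$ with $|\mathcal{R}(x)|\le C|x|^{\kappa+2}$ near $0$, then rescale by $\beta^{-\frac{1}{\kappa+2}}$) yields, for $\beta$ large,
\[
\big|\lambda^D(\beta\A,Q(0,r)\cap\Omega)-\beta^{\frac{2}{\kappa+2}}\lambda^D(\A_y,Q(0,R)\cap\Omega_\beta)\big|\le C\beta^{\frac{1}{\kappa+2}}R^{\kappa+2}+CR^{2(\kappa+2)},
\]
where $\Omega_\beta=\{x_d>\phi_\beta(x')\}$, $\phi_\beta(x')=\beta^{\frac{1}{\kappa+2}}\phi(\beta^{-\frac{1}{\kappa+2}}x')$, and $C$ depends on $\kappa$ and $\|\B\|_{C^{\kappa+1}(\overline{\Omega})}$ but not on $R$. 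The only place $C^{1,1}$ entered the earlier argument is in bounding $M_R:=\max_{|x'|<R}|\nabla\phi_\beta(x')|=\max_{|z'|<R\beta^{-1/(\kappa+2)}}|\nabla\phi(z')|$; but for $R$ fixed one has $R\beta^{-1/(\kappa+2)}\to0$, hence $M_R\to0$ as $\beta\to\infty$ by continuity of $\nabla\phi$ together with $\nabla\phi(0)=0$. Thus, by Theorem~\ref{thm-p5} (with radius comparable to $R$), $\limsup_{\beta\to\infty}\lambda^D(\A_y,Q(0,R)\cap\Omega_\beta)\le\lambda^D(\A_y,\R^d_+)+CR^{-2}$, while the right side of the display, divided by $\beta^{\frac{2}{\kappa+2}}$, tends to $0$ as $\beta\to\infty$ with $R$ fixed. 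Since $\lambda^D(\beta\A,\Omega)\le\lambda^D(\beta\A,Q(0,r)\cap\Omega)$, this gives $\limsup_{\beta\to\infty}\beta^{-\frac{2}{\kappa_*+2}}\lambda^D(\beta\A,\Omega)\le\lambda^D(\A_y,\R^d_+)+CR^{-2}$ for every fixed $R>1$, and letting $R\to\infty$ yields the bound $\le\lambda^D(\A_y,\mathbb{H}_{n(y)})$. Running the same argument with $\mu^N(\beta\A,Q(0,r)\cap\Omega,\Omega)$ and Theorem~\ref{thm-p6} in place of $\lambda^D(\beta\A,Q(0,r)\cap\Omega)$ and Theorem~\ref{thm-p5} (using $\lambda^N(\beta\A,\Omega)\le\mu^N(\beta\A,Q(0,r)\cap\Omega,\Omega)$ via extension by zero) gives $\limsup_{\beta\to\infty}\beta^{-\frac{2}{\kappa_*+2}}\lambda^N(\beta\A,\Omega)\le\lambda^N(\A_y,\mathbb{H}_{n(y)})$.

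The main obstacle is the boundary contribution under the weakened $C^1$ hypothesis: the quantitative rate $M_R\le CR\beta^{-1/(\kappa+2)}$ used in Theorems~\ref{thm-local-2}--\ref{thm-local-3} is no longer available, and the remedy is to keep $R$ fixed while sending $\beta\to\infty$—so that $M_R\to0$ follows merely from $\nabla\phi(0)=0$ and continuity—and only afterwards send $R\to\infty$. For this order of limits to be legitimate one must check that the constants in the intermediate bounds do not depend on $R$, which is visible from the proofs: the perturbation constant depends only on $\|\B\|_{C^{\kappa+1}(\overline{\Omega})}$ and $\kappa$, and the constant in Theorem~\ref{thm-p5} depends only on $d$, $\kappa$, and $\A_y$. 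The interior contribution, by contrast, is essentially a direct application of Section~\ref{section-local}.
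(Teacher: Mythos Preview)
Your proposal is correct and follows essentially the same approach as the paper's proof: Lemma~\ref{lemma-up2-1} for interior wells, and for boundary wells the perturbation--rescaling display \eqref{up2-2a} from the proof of Theorem~\ref{thm-local-2} together with Theorem~\ref{thm-p5} (respectively Theorem~\ref{thm-p6}), sending $\beta\to\infty$ first with $R$ fixed so that $M_R\to 0$ by mere continuity of $\nabla\phi$, and only then letting $R\to\infty$. Your explanation of why the two-step limit circumvents the $C^{1,1}$ rate is in fact cleaner than the paper's own phrasing.
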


\begin{proof}

We give the proof for \eqref{up2-3a}.
The proof for \eqref{up2-3b} is similar.
If $\Omega$ is $C^{1, 1}$, the inequality \eqref{up2-3a} follows readily from Lemmas \ref{lemma-up2-1} and \ref{lemma-up2-2}.
If $\Omega$ is $C^1$, we first use  Lemma \ref{lemma-up2-1} to obtain 
\begin{equation}\label{up2a-1}
 \limsup_{\beta\to \infty} 
\beta^{-\frac{2}{\kappa_* +2}}
\lambda^D (\beta \A, \Omega)
\le
\inf_{y\in \Gamma_1} \lambda(\A_{y}, \R^d).
\end{equation}
To complete the proof of  \eqref{up2-3a}, we use the proof of Lemma \ref{lemma-up2-2}.
Let $y\in \Gamma_2$.
Without the loss of generality, we may assume $y=0$, $n(y)=(0, \dots, 0, -1)$ and \eqref{o-l} holds.
Since $\Omega$ is $C^1$, we have 
$$
|\nabla \phi_\beta (x^\prime)| = o(R\beta^{-\frac{1}{\kappa_* +2}}) \qquad \text{ as } \beta\to \infty.
$$
By Theorem \ref{thm-p5}, this gives 
$$
\lambda^D (\A_0, \Omega_\beta \cap Q(0, R)) \le \lambda^D (\A_0, \R^d_+) + R^{\kappa_*+1} o(R \beta^{-\frac{1}{\kappa_*+2}})
+ C R^{-2},
$$
where $1<R< r_0\beta^{\frac{1}{\kappa_*+2}}$.
It then follows from \eqref{up2-2a} that
$$
\aligned
\beta^{-\frac{2}{\kappa_*+2}} \lambda^D (\beta \A, \Omega)
&\le \lambda^D (\A_0, Q(0, R)\cap \Omega_\beta)
+ C \beta^{-\frac{1}{\kappa_*+2}} R^{\kappa_*+2}
+C \beta^{-\frac{2}{\kappa_*+2}} R^{2(\kappa_*+2)}\\
&\le  \lambda^D (\A_0, \R^d_+) + R^{\kappa_*+1} o(R \beta^{-\frac{1}{\kappa_*+2}})
+ C R^{-2}\\
&\qquad\qquad
+ C \beta^{-\frac{1}{\kappa_*+2}} R^{\kappa_*+2}
+C \beta^{-\frac{2}{\kappa_*+2}} R^{2(\kappa_*+2)}.\\
\endaligned
$$
Hence, for  any $R>1$,
$$
\limsup_{\beta \to \infty} \beta^{-\frac{2}{\kappa_*+2}}
\lambda^D (\beta \A, \Omega)
\le    \lambda^D (\A_0, \R^d_+) 
+ C R^{-2}.
$$
By letting $R\to \infty$, we obtain 
$$
\limsup_{\beta \to \infty} \beta^{-\frac{2}{\kappa_*+2}}
\lambda^D (\beta \A, \Omega)
\le    \lambda^D (\A_0, \R^d_+), 
$$
which, together with \eqref{up2a-1}, yields \eqref{up2-3a}.
\end{proof}

\begin{thm}\label{thm-mup1}
Suppose that $\A\in C^\infty(\R^d; \R^d)$.
Let $\Omega$ be a bounded $C^{1, 1} $ domain.
Assume that $|\B|$ does not vanish to infinite order
at any point in $\overline{\Omega}$.
Let $\kappa_*$ be defined by \eqref{kappa} and
\begin{equation}\label{W}
\aligned
\Gamma_1 & = \left\{ y \in \Omega: \ \kappa (y) = \kappa_* \right\},\\
\Gamma_2 & = \left\{ y \in \partial \Omega: \ \kappa (y) = \kappa_* \right\}.
\endaligned
\end{equation}
Suppose that there exists $c>0$ such that 
\begin{equation}\label{cond-1g}
\min_{v\in \mathbb{S}^{d-1}\cap V_y^\perp}
\sum_{j, \ell} \sum_{|\alpha|=\kappa_*-1}
|\langle v, \nabla \partial^\alpha B_{j \ell} (y) \rangle |\ge c
\end{equation}
 for  $y\in \Gamma_*=\Gamma_1\cup \Gamma_2$, and that 
\begin{equation}\label{cond-2g}
\max_{v\in V_y} |\langle v, n(y) \rangle | \ge c \qquad
\end{equation}
 for  $ y \in \Gamma_2$, where $V_y$ is given by \eqref{sigma-1a}.
 We further assume that  for any $y_0\in \overline{\Gamma}_1\cap\partial \Omega$ and $0< r< cr_0$,
 there exists $y_r\in \Gamma_1$ such that 
\begin{equation}\label{cond-3g}
\b(y_r, cr) \subset \Omega \quad \text{ and } \quad
|y_r-y_0|\le C r.
\end{equation}
Then
\begin{equation}\label{up2-3aa}
\lambda^D (\beta \A, \Omega)
\le \Theta_D \beta^{\frac{2}{\kappa_*+2}} 
+ C \beta^{\frac{1}{\kappa_*+2} + \frac{1}{\kappa_*+4}},
\end{equation}
\begin{equation}\label{up2-3ab}
\lambda^N (\beta \A, \Omega)
\le \Theta_N \beta^{\frac{2}{\kappa_*+2}} 
+ C \beta^{\frac{1}{\kappa_*+2} + \frac{1}{\kappa_*+4}},
\end{equation}
for $\beta$ large, 
where $\Theta_D$ and $\Theta_N$ are  given by  \eqref{co-1}.
\end{thm}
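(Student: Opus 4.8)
The plan is to prove the two one-sided estimates
\[
\lambda^D(\beta\A,\Omega)\ \le\ \Big(\inf_{y\in\Gamma_1}\lambda(\A_y,\R^d)\Big)\beta^{\frac{2}{\kappa_*+2}}+C\beta^{\frac{1}{\kappa_*+2}+\frac{1}{\kappa_*+4}}
\]
and
\[
\lambda^D(\beta\A,\Omega)\ \le\ \Big(\inf_{y\in\Gamma_2}\lambda^D(\A_y,\mathbb{H}_{n(y)})\Big)\beta^{\frac{2}{\kappa_*+2}}+C\beta^{\frac{1}{\kappa_*+2}+\frac{1}{\kappa_*+4}}
\]
separately; since $\Theta_D$ is the minimum of the two infima, taking the smaller of the two right-hand sides yields \eqref{up2-3aa}, while \eqref{up2-3ab} follows verbatim after replacing \eqref{up2-2-0} by \eqref{up2-3-0} (the interior estimate of Lemma \ref{lemma-up2-1} already bounds $\lambda^N\le\lambda^D$, so the interior infimum is again $\inf_{y\in\Gamma_1}\lambda(\A_y,\R^d)$). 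The role of \eqref{cond-1g}--\eqref{cond-3g} is precisely to render the constants $C$ in Lemmas \ref{lemma-up2-1} and \ref{lemma-up2-2} — which a priori depend on $\sigma(y)$ and $\tau(y)$ — uniform over the relevant well sets, and to guarantee that the quasimode underlying those lemmas can be built at the point realizing the infimum.

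The boundary bound is the easier of the two. The set $\Gamma_2$ is compact, being the intersection of $\partial\Omega$ with the closed set $\{x\in\overline{\Omega}:\kappa(x)\ge\kappa_*\}$; since $\Omega$ is $C^{1,1}$ the normal $y\mapsto n(y)$ is continuous and $y\mapsto\A_y$ is continuous, so $y\mapsto\lambda^D(\A_y,\mathbb{H}_{n(y)})$ is continuous on $\Gamma_2$ and attains its infimum at some $y_2\in\Gamma_2$. At $y_2$ one has $\kappa(y_2)=\kappa_*$, so \eqref{cond-1g} (with $|\alpha|=\kappa_*-1$) gives $\sigma(y_2)\ge c$ and \eqref{cond-2g} gives $\tau(y_2)\ge c$; hence the constant in \eqref{up2-2-0} applied at $y_2$ is controlled by $\kappa_*$, $\|\B\|_{C^{\kappa_*+1}(\overline{\Omega})}$ and $c$ alone. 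This proves the second displayed bound, and its Neumann analogue via \eqref{up2-3-0}.

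For the interior bound, note that $\overline{\Gamma}_1$ is a compact subset of $\overline{\Omega}$ with $\overline{\Gamma}_1\cap\partial\Omega\subset\Gamma_2$ (a limit of points where $\B$ vanishes to order at least $\kappa_*$ again vanishes to order at least $\kappa_*$, hence to order exactly $\kappa_*$), so \eqref{cond-1g} forces $\sigma(y)\ge c$ for every $y\in\overline{\Gamma}_1$. A perturbation estimate of the type carried out in the proof of Theorem \ref{thm-local-1} — comparing $\lambda^D(\A_y,\b(0,R))$ with $\lambda^D(\A_{y'},\b(0,R))$, using $\sigma\ge c$ and Lemma \ref{lemma-p1} to pass to $R\to\infty$ — shows that $y\mapsto\lambda(\A_y,\R^d)$ extends Lipschitz-continuously to $\overline{\Gamma}_1$, so $\inf_{y\in\Gamma_1}\lambda(\A_y,\R^d)$ is attained at some $y_1\in\overline{\Gamma}_1$. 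If $y_1\in\Gamma_1$, then $\b\big(y_1,\beta^{-\frac{\kappa_*+3}{(\kappa_*+2)(\kappa_*+4)}}\big)\subset\Omega$ for $\beta$ large, and Lemma \ref{lemma-up2-1} applied at $y_1$ (constant uniform since $\sigma(y_1)\ge c$) gives the first displayed bound at once. If $y_1\in\overline{\Gamma}_1\cap\partial\Omega$, set $\rho=\gamma\beta^{-\frac{\kappa_*+3}{(\kappa_*+2)(\kappa_*+4)}}$ and invoke \eqref{cond-3g} with $y_0=y_1$ and $r=\rho$ to produce $y_\rho\in\Gamma_1$ with $\b(y_\rho,c\rho)\subset\Omega$ and $|y_\rho-y_1|\le C\rho$; applying \eqref{AE-I} at $y_\rho$ (with a smaller value of the parameter $\gamma$) and then the Lipschitz continuity above gives $\lambda^D(\beta\A,\Omega)\le\big(\inf_{y\in\Gamma_1}\lambda(\A_y,\R^d)+C\rho\big)\beta^{\frac{2}{\kappa_*+2}}+C\beta^{\frac{1}{\kappa_*+2}+\frac{1}{\kappa_*+4}}$, and since $\rho\,\beta^{\frac{2}{\kappa_*+2}}\le\beta^{\frac{1}{\kappa_*+2}+\frac{1}{\kappa_*+4}}$ for $\beta>1$ by the definition of $\rho$, the extra term is absorbed into the remainder. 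This completes the interior bound.

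The main obstacle is exactly this last step. The interior infimum may be realized only in the limit along a sequence of points of $\Gamma_1$ approaching $\partial\Omega$, where the admissibility threshold $\beta_y$ of Lemma \ref{lemma-up2-1} (which needs a fixed-radius ball inside $\Omega$) degenerates; worse, at such a boundary point $y_1\in\Gamma_2$ the bound one gets directly from Lemma \ref{lemma-up2-2} involves the half-space quantity $\lambda^D(\A_{y_1},\mathbb{H}_{n(y_1)})$, which may be strictly larger than $\lambda(\A_{y_1},\R^d)$ and hence too weak to recover $\Theta_D$. Hypothesis \eqref{cond-3g} is what lets us replace the would-be minimizer by a genuine interior point carrying a ball of the exact radius $c\rho$ required by the quasimode, at distance $O(\rho)$, and the Lipschitz continuity of $y\mapsto\lambda(\A_y,\R^d)$ — which itself rests on the uniform nondegeneracy \eqref{cond-1g} — ensures that this displacement perturbs the leading coefficient by only $O(\rho)$, within the claimed error. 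Everything else is routine: compactness of $\overline{\Gamma}_1$ and $\Gamma_2$ together with direct citation of Lemmas \ref{lemma-up2-1} and \ref{lemma-up2-2} and of estimate \eqref{AE-I}.
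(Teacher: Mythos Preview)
Your approach is essentially the paper's own: split into the boundary and interior infima, use Lemma~\ref{lemma-up2-2} uniformly on $\Gamma_2$ for the former, and for the latter extend $y\mapsto\lambda(\A_y,\R^d)$ continuously to $\overline{\Gamma}_1$, locate the minimizer, and if it lands on $\partial\Omega$ push it back into $\Gamma_1$ via \eqref{cond-3g} and Theorem~\ref{thm-local-1}. One small correction: the perturbation argument you sketch (compare on $\b(0,R)$, then invoke Lemma~\ref{lemma-p1}) gives
\[
|\lambda(\A_{y_1},\R^d)-\lambda(\A_{y_2},\R^d)|\le C|y_1-y_2|\,R^{\kappa_*+1}+CR^{-2},
\]
and optimizing in $R$ yields only H\"older continuity with exponent $\tfrac{2}{\kappa_*+3}$, not Lipschitz. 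With the correct exponent the extra term becomes $C\rho^{2/(\kappa_*+3)}\beta^{2/(\kappa_*+2)}=C\beta^{\frac{1}{\kappa_*+2}+\frac{1}{\kappa_*+4}}$, which is exactly the remainder, so your conclusion is unaffected. (For the boundary part you do not actually need a minimizer: since the constant in \eqref{up2-2-0} is uniform over $\Gamma_2$ by \eqref{cond-1g}--\eqref{cond-2g}, taking the infimum of the right-hand side over $y\in\Gamma_2$ already gives the bound, which also covers $\Gamma_2=\emptyset$.)
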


\begin{proof}

Under the conditions \eqref{cond-1g}-\eqref{cond-2g},
it follows by Lemma \ref{lemma-up2-2} that \eqref{up2-2-0} holds uniformly in $y \in \Gamma_2$. Hence, 
$$
\lambda^D (\beta\A, \Omega)
\le\  \beta^{\frac{2}{\kappa_*+2}}
\inf_{y\in \Gamma_2}\lambda^D (\A_{y},  \mathbb{H}_{n(y)} )
+ C \beta^{\frac{1}{\kappa_*+2} +\frac{1}{\kappa_*+4}}
$$
for $\beta$ large. This gives \eqref{up2-3aa} in the case
$\Theta_D =\inf_{y\in \Gamma_2}\lambda^D (\A_{y},  \mathbb{H}_{n(y)} )$.

Next, suppose that 
\begin{equation}\label{U-C}
\Theta_D=\inf_{y\in \Gamma_1} \lambda(\A_y, \R^d) < \inf_{y\in \Gamma_2}\lambda^D (\A_{y},  \mathbb{H}_{n(y)} ).
\end{equation}
To see \eqref{up2-3aa}, 
we first note that under the condition \eqref{cond-1g}, 
\begin{equation}\label{A-C}
|\lambda(\A_{y_1}, \R^d) -\lambda (\A_{y_2}, \R^d)|
\le C | y_1-y_2|^{\frac{2}{\kappa_*+3}}
\end{equation}
for any $y_1, y_2\in \Gamma_*$.
Indeed,  by Lemma \ref{lemma-p1},
$$
\aligned
|\lambda(\A_{y_1}, \R^d) -\lambda (\A_{y_2}, \R^d)|
&\le |\lambda(\A_{y_1}, \b(0, R)) -\lambda (\A_{y_2}, \b(0, R))| + C R^{-2}\\
& \le C \| \A_{y_1} -\A_{y_2} \|_{L^\infty(\b(0, R))} + C R^{-2}\\
&\le C |y_1-y_2| R^{\kappa_*+1} + C R^{-2}
\endaligned
$$
for $R>1$, where $C$ depends on $\kappa_*$, $\| \B \|_{C^{\kappa_*+1}(\overline{\Omega})}$ and $c$ in \eqref{cond-2g}.
We obtain \eqref{A-C} by choosing $R=|y_1-y_2|^{-\frac{1}{\kappa_*+3}}$.
It follows from \eqref{A-C} that $\lambda(\A_y, \R^d)$ as a function of $y$ is uniformly continuous in $\Gamma_1$.
As a result, 
$$
\Theta_D=\min_{y\in \overline{\Gamma}_1} \lambda(\A_y, \R^d)
=\lambda(\A_{y_0}, \R^d)
$$
for some $y_0 \in \overline{\Gamma}_1$.
Note that if $y_0\in \Gamma_1$, then \eqref{up2-3aa} follows directly from \eqref{up2-1-0}.

Finally, suppose $y_0\in \overline{\Gamma}_1 \setminus \Gamma_1$.  Then $y_0\in \overline{\Gamma}_1 \cap \partial\Omega$.
For $ \beta>1$, let $r=c \beta^{-\frac{\kappa_*+3}{(\kappa_*+2)(\kappa_*+4)}} < r_0$.
Choose $y_r\in \Gamma_1$ such that $|y_r -y_0|\le C r$ and $\b(y_r, cr)\subset \Omega$.
Then
$$
\aligned
\lambda^D (\beta \A, \Omega)
& \le \lambda^D (\beta \A, \b(y_r, cr))\\
& \le \beta^{\frac{2}{\kappa_*+2}}  
\lambda (\A_{y_r}, \R^d) + C \beta^{\frac{1}{\kappa_*+2} +\frac{1}{\kappa_*+4}}\\
& \le  \beta^{\frac{2}{\kappa_*+2}}  
\lambda (\A_{y_0}, \R^d) +  C \beta^{\frac{2}{\kappa_*+2}} |y_1 -y_0|^{\frac{2}{\kappa_*+3}}
+C \beta^{\frac{1}{\kappa_*+2} +\frac{1}{\kappa_*+4}}\\
&\le \beta^{\frac{2}{\kappa_*+2}}  
\Theta_{D}  +\beta^{ \frac{1}{\kappa_*+4}}, 
\endaligned
$$
where we have used \eqref{A-C} for the third inequality.
\end{proof}

\begin{lemma}\label{lemma-up2-4}
Let $\Omega$ be a bounded $C^{1, 1}$ domain in $\R^d$. For $y\in \partial\Omega$, 
let  $n=n(y)$ denote the outward unit normal to $\partial\Omega$ at $y$ and 
$\kappa = \kappa(y)$.
Then 
\begin{equation}\label{up2-4-0}
\lambda^{D\!N} (\beta\A, \Omega)
\le\  \beta^{\frac{1}{\kappa+2}}
\lambda^{D\!N} (\A_{y},  \mathbb{H}_n )
+ C \beta^{\frac{1}{\kappa+4}}
\end{equation}
for $\beta>1$, where $\A_{y}$ is a polynomial  of degree $\kappa+1$ given by \eqref{A-0}.
The constant $C$ depends on $\kappa$, $\|\B\|_{C^{\kappa+1}(\overline{\Omega})}$, $\sigma (y)$ and $\tau (y)$ in \eqref{tau}.
\end{lemma}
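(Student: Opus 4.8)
The plan is to localize the Dirichlet-to-Neumann Rayleigh quotient to a small boundary ball centered at $y$ and then to invoke the local asymptotic expansion of Theorem~\ref{thm-local-4}. The first step is the elementary monotonicity bound
$$
\lambda^{D\!N}(\beta\A,\Omega)\le\mu^{D\!N}(\beta\A,\b(y,r)\cap\Omega,\Omega),\qquad 0<r<r_0 .
$$
To see this, take any admissible $\psi\in C^1(\overline{\Omega\cap\b(y,r)};\C)$ with $\psi=0$ on $\Omega\cap\partial\b(y,r)$ and extend it by zero to all of $\Omega$. Since $\psi$ vanishes on $\Omega\cap\partial\b(y,r)$, the resulting function $\widetilde\psi$ lies in $H^1(\Omega)$, with $\nabla\widetilde\psi$ supported in $\b(y,r)\cap\Omega$ and with trace on $\partial\Omega$ supported in $\b(y,r)\cap\partial\Omega$ (the exceptional set $\partial\Omega\cap\partial\b(y,r)$ being of surface measure zero in $\partial\Omega$). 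Because $\lambda^{D\!N}(\beta\A,\Omega)$ is also the infimum of the same quotient over $H^1(\Omega)$, the function $\widetilde\psi$ is a competitor realizing exactly the local quotient of $\psi$; taking the infimum over $\psi$ gives the inequality.

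Next, I would fix $\gamma=\gamma(\Omega)\in(0,1]$ with $\gamma<r_0$, and set $r=\gamma\beta^{-\frac{\kappa+3}{(\kappa+2)(\kappa+4)}}$. Since the exponent is positive, $r<\gamma<r_0$ for every $\beta>1$, so $\b(y,r)\cap\Omega$ stays inside the coordinate patch at $y$. For $\beta$ large, say $\beta\ge\beta_y$ with $\beta_y$ depending on $\gamma$, $\kappa$ and $\Omega$, Theorem~\ref{thm-local-4} applies with this $r$ and yields
$$
\mu^{D\!N}(\beta\A,\b(y,r)\cap\Omega,\Omega)\le\beta^{\frac{1}{\kappa+2}}\lambda^{D\!N}(\A_y,\mathbb{H}_n)+C\beta^{\frac{1}{\kappa+4}},
$$
where $C$ depends on $\gamma$, $\kappa$, $\|\B\|_{C^{\kappa+1}(\overline\Omega)}$, $\sigma(y)$ and $\tau(y)$. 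Combined with the monotonicity bound, this proves \eqref{up2-4-0} for all $\beta\ge\beta_y$.

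It remains to handle the range $1<\beta<\beta_y$, which is purely cosmetic. Testing the quotient defining $\lambda^{D\!N}(\beta\A,\Omega)$ with $\psi\equiv1$ gives $\lambda^{D\!N}(\beta\A,\Omega)\le\beta^2\|\A\|_{L^2(\Omega)}^2/|\partial\Omega|\le\beta_y^2\|\A\|_{L^2(\Omega)}^2/|\partial\Omega|$ there, while the right-hand side of \eqref{up2-4-0} is bounded below by $C\beta^{\frac{1}{\kappa+4}}\ge C$ for $\beta>1$; enlarging $C$ to absorb the constant $\beta_y^2\|\A\|_{L^2(\Omega)}^2/|\partial\Omega|$ extends \eqref{up2-4-0} to all $\beta>1$. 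There is no real obstacle in this argument; the only points needing a word of care are the admissibility of the zero-extension (resolved by using the $H^1$ form of the variational characterization, or equivalently by mollifying) and the fact that Theorem~\ref{thm-local-4} is only stated for large $\beta$, which the trivial bound above disposes of. The argument is the exact Dirichlet-to-Neumann counterpart of the proof of Lemma~\ref{lemma-up2-2}.
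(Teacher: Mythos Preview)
Your proof is correct and follows essentially the same approach as the paper: the paper's proof consists of the single observation that $\lambda^{D\!N}(\beta\A,\Omega)\le\mu^{D\!N}(\beta\A,\b(y,r)\cap\Omega,\Omega)$ for $0<r<r_0$, after which it invokes \eqref{AE-DN} (Theorem~\ref{thm-local-4}) directly. Your version simply fleshes out the justification of the monotonicity bound via zero-extension and adds the cosmetic treatment of the range $1<\beta<\beta_y$, both of which the paper leaves implicit.
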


\begin{proof}

Since $\lambda^{D\!N} (\beta \A, \Omega)
\le \mu^{D\!N} (\beta \A, \b(y, r) \cap \Omega, \Omega)$ for $y\in \partial\Omega$ and $0< r< r_0$,
the inequality \eqref{up2-4-0} follows readily from \eqref{AE-DN}.
\end{proof}

\begin{thm}\label{thm-up5a}
Suppose that $\A\in C^\infty(\R^d; \R^d)$.
Let $\Omega$ be a bounded $C^{1, 1}$ domain. Assume that  $|\B|$ does not vanish to infinite order
at any point on $\partial \Omega$.
Let $\kappa_0$ be defined by \eqref{kappa-1} and
\begin{equation}\label{W-3}
\aligned
\Gamma_0  & = \left\{ y \in \partial \Omega: \ \kappa (y) = \kappa_0 \right\}.
\endaligned
\end{equation}
Suppose that there exists $c>0$ such that \eqref{cond-1g}-\eqref{cond-2g}
hold for any $y\in \Gamma_0$.
Then
\begin{equation}\label{up-dn-5}
\lambda^{D\!N} (\beta \A, \Omega)
\le \beta^{\frac{1}{\kappa_0 +2}} \Theta_{D\!N}
+ C \beta^{\frac{1}{\kappa_0+4}}
\end{equation}
for $\beta>1$, where $\Theta_{D\!N} $ is given by \eqref{co-2}.
\end{thm}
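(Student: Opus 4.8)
The plan is to obtain \eqref{up-dn-5} directly from the local upper bound in Lemma \ref{lemma-up2-4}, in the same spirit in which Theorem \ref{thm-mup1} was derived from Lemma \ref{lemma-up2-2}. The argument is in fact shorter here: since the constant $\Theta_{D\!N}$ in \eqref{co-2} is an infimum over the boundary set $\Gamma_0$ alone, there is no interior/boundary matching to perform and no need for the uniform-continuity machinery (the estimate \eqref{A-C} and the approximation condition \eqref{cond-3g}) that was required for $\Theta_D$; once the error constant is shown to be uniform over $\Gamma_0$, one simply passes to the infimum.

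First I would fix $y\in\Gamma_0$, so that $\kappa(y)=\kappa_0$ and $\A_y$ is the homogeneous polynomial of degree $\kappa_0+1$ defined by \eqref{A-0}. Hypothesis \eqref{cond-1g} says precisely that $\sigma(y)\ge c>0$, with $\sigma(y)$ as in \eqref{sigma-1}, and hypothesis \eqref{cond-2g} says $\tau(y)\ge c>0$, with $\tau(y)$ as in \eqref{tau}; moreover $\|\B\|_{C^{\kappa_0+1}(\overline\Omega)}<\infty$ is fixed since $\A\in C^\infty$ and $\Omega$ is bounded. I would then invoke Lemma \ref{lemma-up2-4}: for each $y\in\Gamma_0$,
\begin{equation*}
\lambda^{D\!N}(\beta\A,\Omega)\le\beta^{\frac{1}{\kappa_0+2}}\lambda^{D\!N}(\A_y,\mathbb{H}_{n(y)})+C\beta^{\frac{1}{\kappa_0+4}}\qquad(\beta>1),
\end{equation*}
the crucial point being that $C$ can be chosen independent of $y\in\Gamma_0$: the constant in Lemma \ref{lemma-up2-4} comes from Theorem \ref{thm-local-4}, hence from Theorem \ref{thm-p7}, Lemma \ref{lemma-p4} and Remark \ref{re-de-1}, and in each of these the dependence on $y$ enters only through $\kappa(y)$, $\|\B\|_{C^{\kappa+1}(\overline\Omega)}$, and \emph{lower} bounds for $\sigma(y)$ and $\tau(y)$, all of which are controlled uniformly on $\Gamma_0$ by the hypotheses.

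Having fixed such a uniform $C$, I would take the infimum over $y\in\Gamma_0$ on the right-hand side. Since $\inf_{y}\big(a\,f(y)+b\big)=a\,\inf_y f(y)+b$ for the positive constants $a=\beta^{1/(\kappa_0+2)}$ and $b=C\beta^{1/(\kappa_0+4)}$, the definition \eqref{co-2} of $\Theta_{D\!N}$ yields
\begin{equation*}
\lambda^{D\!N}(\beta\A,\Omega)\le\Theta_{D\!N}\,\beta^{\frac{1}{\kappa_0+2}}+C\beta^{\frac{1}{\kappa_0+4}}\qquad(\beta>1),
\end{equation*}
which is \eqref{up-dn-5}. Note that, unlike in Theorem \ref{thm-mup1}, we do not need the infimum defining $\Theta_{D\!N}$ to be attained, nor a modulus-of-continuity estimate for $y\mapsto\lambda^{D\!N}(\A_y,\mathbb{H}_{n(y)})$.

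The main — and essentially the only — obstacle is the bookkeeping behind the uniformity claim: one must trace back through the gauge/perturbation step replacing $\A$ by $\A_y$ on $Q(0,r)\cap\Omega$, the rescaling to the blown-up domain $\Omega_\beta$, the boundary-flattening diffeomorphism $\Phi$ of Theorem \ref{thm-p7}, and the parallelotope tiling of $\R^d_+$ in Lemma \ref{lemma-p4}, and verify at each stage that the constants are nonincreasing in $\sigma(y)$ and $\tau(y)$ and otherwise depend only on the fixed data $\kappa_0$, $\|\B\|_{C^{\kappa_0+1}(\overline\Omega)}$ and the $C^{1,1}$ character of $\Omega$. Since \eqref{cond-1g}–\eqref{cond-2g} supply exactly the needed uniform lower bounds, this goes through and the theorem follows.
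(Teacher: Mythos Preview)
Your proposal is correct and follows exactly the paper's approach: the paper's proof is simply ``This follows directly from Lemma \ref{lemma-up2-4},'' and you have spelled out precisely the missing details, namely that the constant in Lemma \ref{lemma-up2-4} depends on $y\in\Gamma_0$ only through lower bounds on $\sigma(y)$ and $\tau(y)$, which are controlled uniformly by \eqref{cond-1g}--\eqref{cond-2g}, so one may pass to the infimum over $\Gamma_0$.
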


\begin{proof}

This follows directly from Lemma \ref{lemma-up2-4}.
\end{proof}

\begin{thm}\label{thm-up2c}
Suppose that $\A\in C^\infty(\R^d; \R^d)$.
Let $\Omega$ be a bounded $C^1 $ domain. Assume that  $|\B|$ does not vanish to infinite order
at any point on $\partial \Omega$.
Then
\begin{equation}\label{up2-c}
\aligned
  \limsup_{\beta\to \infty} 
\beta^{-\frac{1}{\kappa_0 +2}}
\lambda^{D\!N} (\beta \A, \Omega)
\le \Theta_{D\!N}.
\endaligned
\end{equation}
\end{thm}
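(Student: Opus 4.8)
The plan is to follow the proof of Theorem~\ref{thm-up2a}, with Theorem~\ref{thm-p7} playing the role of Theorem~\ref{thm-p5}. Since the statement only asserts a $\limsup$ bound, no uniformity in $y$ is needed, so the hypotheses \eqref{cond-1g}--\eqref{cond-2g} of Theorem~\ref{thm-up5a} can be dispensed with. First I would reduce to proving, for each fixed $y\in\Gamma_0$,
\[
\limsup_{\beta\to\infty}\beta^{-\frac{1}{\kappa_0+2}}\lambda^{D\!N}(\beta\A,\Omega)\le\lambda^{D\!N}(\A_y,\mathbb{H}_{n(y)}),
\]
after which taking the infimum over $y\in\Gamma_0$ yields \eqref{up2-c} in view of \eqref{co-2}. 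Fixing such a $y$ and writing $\kappa=\kappa_0$, I would normalize by translation and rotation so that $y=0$, $n(y)=(0,\dots,0,-1)$, and $\Omega\cap Q(0,2r_0)$ is the region above the graph of a $C^1$ function $\phi$ with $\phi(0)=0$ and $\nabla\phi(0)=0$.

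Next, for a fixed $R>1$ and $\beta$ large, set $r=R\beta^{-\frac{1}{\kappa+2}}$. Using $\lambda^{D\!N}(\beta\A,\Omega)\le\mu^{D\!N}(\beta\A,Q(0,r)\cap\Omega,\Omega)$ (extension by zero) and then repeating verbatim the first part of the proof of Theorem~\ref{thm-local-4} --- the gauge reduction $\A\mapsto\A_0$ to the homogeneous polynomial part, the lower bound $\mu^N(\beta\A_0,Q(0,r)\cap\Omega,\Omega)\ge c\beta^{\frac{2}{\kappa+2}}$, and the rescaling identity --- I would obtain
\[
\mu^{D\!N}(\beta\A,Q(0,r)\cap\Omega,\Omega)\le\bigl(1+CR^{\kappa+2}\beta^{-\frac{1}{\kappa+2}}\bigr)\,\beta^{\frac{1}{\kappa+2}}\,\mu^{D\!N}(\A_0,Q(0,R)\cap\Omega_\beta,\Omega_\beta),
\]
where $\Omega_\beta$ is the region above the graph of $\phi_\beta(x')=\beta^{\frac{1}{\kappa+2}}\phi(\beta^{-\frac{1}{\kappa+2}}x')$. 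The only place where the weaker $C^1$ hypothesis enters is here: since $\nabla\phi$ is merely continuous with $\nabla\phi(0)=0$, the quantity $M_R=\max_{|x'|<R}|\nabla\phi_\beta(x')|=\max_{|x'|<R}|\nabla\phi(\beta^{-\frac{1}{\kappa+2}}x')|$ is no longer bounded by $CR\beta^{-\frac{1}{\kappa+2}}$, but it still tends to $0$ as $\beta\to\infty$ with $R$ held fixed. Applying Theorem~\ref{thm-p7} to $\Omega_\beta$ and the fixed polynomial $\A_0$ gives $\mu^{D\!N}(\A_0,Q(0,R)\cap\Omega_\beta,\Omega_\beta)\le\lambda^{D\!N}(\A_0,\R^d_+)+C(R^{\kappa+1}M_R+R^{-2})$. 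Combining, dividing by $\beta^{\frac{1}{\kappa+2}}$, letting $\beta\to\infty$ with $R$ fixed (so $R^{\kappa+2}\beta^{-\frac{1}{\kappa+2}}\to0$ and $M_R\to0$), and finally letting $R\to\infty$, I would recover the per-point bound, using $\lambda^{D\!N}(\A_0,\R^d_+)=\lambda^{D\!N}(\A_y,\mathbb{H}_{n(y)})$.

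I do not expect a genuine obstacle: all the substantial analysis is already contained in Theorems~\ref{thm-p7} and~\ref{thm-local-4} and in the template of Theorem~\ref{thm-up2a}. The one point requiring care is precisely the passage from $C^{1,1}$ to $C^1$ --- because the boundary-flattening error term $R^{\kappa+1}M_R$ is then controlled only through the modulus of continuity of $\nabla\phi$ at $y$, with no explicit rate, one can extract only a $\limsup$ statement (with $R\to\infty$ taken after $\beta\to\infty$) rather than the quantitative remainder \eqref{up-dn-5}. A minor bookkeeping point is that the constants coming from Theorem~\ref{thm-p7} depend on $\sigma(y)$ and $\tau(y)$ via the shape of the tiling parallelotope of Lemma~\ref{lemma-p2}; for a single fixed $y\in\Gamma_0$ these are fixed finite quantities, so this causes no trouble, and this is exactly why the uniformity conditions \eqref{cond-1g}--\eqref{cond-2g} are not needed for the present weaker conclusion.
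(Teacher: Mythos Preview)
Your proposal is correct and follows essentially the same route as the paper's own proof: fix $y\in\Gamma_0$, normalize coordinates, invoke the perturbation/rescaling inequality \eqref{loc-101} from the proof of Theorem~\ref{thm-local-4}, apply Theorem~\ref{thm-p7} using only that $M_R\to 0$ as $\beta\to\infty$ for fixed $R$ (the $C^1$ input), and then take $\beta\to\infty$ followed by $R\to\infty$. Your discussion of why the uniformity conditions \eqref{cond-1g}--\eqref{cond-2g} are unnecessary here, and why only a $\limsup$ statement survives under the $C^1$ hypothesis, matches the paper's reasoning exactly.
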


\begin{proof}

This follows from the proof of Lemma \ref{lemma-up2-4}.
Let $y\in \Gamma_0$.
Without the loss of generality, we may assume $y=0$, $n(y)=(0, \dots, 0, -1)$ and
\eqref{o-l} holds.
Since $\Omega$ is $C^1$,  we have  $|\nabla \phi_\beta(x^\prime)|= o(R \beta^{-\frac{1}{\kappa_0+2}})$.
Thus, by  Theorem \ref{thm-p7}, 
$$
\mu^{D\!N}(\A_0, \Omega_\beta\cap Q(0, 2R), \Omega_\beta) 
\le \lambda^{D\!N}(\A_0, \R^d_+) + R^{\kappa_0+1} o(R \beta^{-\frac{1}{\kappa_0+2}}) +C R^{-2}.
$$
This, together with \eqref{loc-101}, gives
$$
\aligned
\lambda^{D\!N} (\beta\A, \Omega)
&\le \mu^{D\!N}(\beta\A, Q(0, r) \cap \Omega, \Omega)\\
&\le \beta^{\frac{1}{\kappa_0+2}}
\left\{ \lambda^{D\!N} (\A_0, \R^d_+)
+ C R^{\kappa_0+1} o(R \beta^{-\frac{1}{\kappa_0+2}}) 
+ CR^{-2} \right\}
  \left\{ 1+ C R^{\kappa_0+2} \beta^{-\frac{1}{\kappa_0+2}} \right\}\\
\endaligned
$$
for any $1< R< \beta ^{\frac{1}{(\kappa_0+2)^2}}$.
It follows that
$$
\limsup_{\beta\to \infty}
\beta^{- \frac{1}{\kappa_0+2}}
\lambda^{D\!N} (\beta \A, \Omega)
\le \lambda^{D\!N} (\A_0, \R^d_+) +C R^{-2}
$$
for any $R>1$. By letting $R\to \infty$, we obtain \eqref{up2-c}.
\end{proof}


\section{Asymptotic expansions}\label{section-ae}

In this section  we establish the asymptotic expansion formulas \eqref{a-1} and \eqref{aa-1}  for 
$\lambda^D(\beta\A, \Omega)$,
$\lambda^N (\beta\A, \Omega)$,
and $\lambda^{D\!N} (\beta\A, \Omega)$.
Throughout the section, unless indicated otherwise,  we assume that $\A\in C^\infty(\R^d; \R^d)$ and 
$\Omega$ is a bounded $C^{1, 1} $ domain. As before, 
for $\lambda^D(\beta\A, \Omega) $ and $\lambda^N (\beta \A, \Omega)$, we assume that 
 $|\B|$ does not vanish to infinite order at any point in $\overline{\Omega}$.
It follows that there exist $C_0, c_0>0$ such that 
\begin{equation}\label{ae-m}
c_0\le \sum_{|\alpha|\le \kappa_* }
|\partial^\alpha \B (x) |
\quad
\text{ and } \quad
\sum_{|\alpha|\le \kappa_*+1}
|\partial^\alpha \B(x)|\le C_0
\end{equation}
for any $x\in\overline{\Omega}$, where $\kappa_*$  is  defined by \eqref{kappa}.
By continuity, without the loss of generality,  we may assume that \eqref{ae-m} holds for any $x\in \widetilde{\Omega}$, where $\widetilde{\Omega}
=\{ x\in  \R^d: \text{\rm dist}(x, \Omega)< r_0 \}$.
Thus,  by Remark \ref{re-op-2},  $\beta \B$ satisfies the condition \eqref{c-1} in $\widetilde{\Omega}$ with $\eta (t)=Ct$.
Consequently, by Theorem \ref{thm-op1}, for $\beta > C r_0^{-2}$, 
\begin{equation}\label{ae-op}
c\int_\Omega
\{ m(x, \beta \B )\}^2 |\psi|^2
\le \int_\Omega |(D+\beta \A)\psi|^2
\end{equation}
for any $\psi  \in C^1(\overline{\Omega}; \C)$,
where $C, c>0$ depend only on $\kappa_*$, $\Omega$ and $(c_0, C_0)$ in \eqref{ae-m}.

\begin{lemma}\label{lemma-m}
Suppose  $\B$ satisfies \eqref{ae-m} for $x\in \widetilde{\Omega}$. 
Then, 
\begin{equation}\label{ae-m-2}
m(x, \beta\B) \ge c \bigg\{ \beta^{\frac{1}{\kappa_*+2}}
+  \beta^{\frac{1}{\kappa_*+1} } \bigg(  \sum_{|\alpha| \le \kappa_*-1}
|\partial^\alpha  \B(x)|\bigg)^{\frac{1}{\kappa_*+1}}
\bigg\}
\end{equation}
for any $x\in \overline{\Omega}$ and $\beta\ge C$, where
$C, c>0$ depend only on $\kappa_*$, $\Omega$,  and $(c_0, C_0)$ in \eqref{ae-m}.
\end{lemma}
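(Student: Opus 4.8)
The plan is to bound $\max_{Q(x,r)}|\B|$ from below by comparing $\B$ to its degree-$\kappa_*$ Taylor polynomial at $x$, and then to read off the estimate for $m(x,\beta\B)$ directly from the definition \eqref{m}. Throughout, set $S(x)=\sum_{|\alpha|\le\kappa_*-1}|\partial^\alpha\B(x)|$, with the convention $S\equiv0$ when $\kappa_*=0$ (in which case $\sum_{|\alpha|\le0}|\partial^\alpha\B(x)|=|\B(x)|\ge c_0$ and the bound is immediate, so I may assume $\kappa_*\ge1$).

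First I would fix $x\in\overline\Omega$ and let $\P_x$ denote the Taylor polynomial of $\B$ at $x$ of degree $\kappa_*$, so $\partial^\alpha\P_x(x)=\partial^\alpha\B(x)$ for $|\alpha|\le\kappa_*$. Since $Q(x,r)$ is convex and contained in $\widetilde\Omega$ once $r\sqrt d<2r_0$, and $\sum_{|\alpha|\le\kappa_*+1}|\partial^\alpha\B|\le C_0$ on $\widetilde\Omega$ by \eqref{ae-m}, Taylor's theorem gives $|\B(y)-\P_x(y)|\le Cr^{\kappa_*+1}$ for $y\in Q(x,r)$, with $C$ depending on $d,\kappa_*,C_0$. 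On the other hand the norm equivalence for polynomials recorded in Section~\ref{section-p} (the display following \eqref{mp-1}) yields
\[
\max_{Q(x,r)}|\P_x|\ \ge\ c\sum_{|\alpha|\le\kappa_*}|\partial^\alpha\B(x)|\,r^{|\alpha|}.
\]

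The one genuinely non-mechanical step is to bound the right-hand sum below by $c\big(c_0 r^{\kappa_*}+S(x)\,r^{\kappa_*-1}\big)$ for $0<r\le1$, keeping \emph{both} scales. Writing $\sum_{|\alpha|\le\kappa_*}=\sum_{|\alpha|\le\kappa_*-1}+\sum_{|\alpha|=\kappa_*}$ and using $r^{|\alpha|}\ge r^{\kappa_*-1}$ for $|\alpha|\le\kappa_*-1$, the first part is $\ge S(x)r^{\kappa_*-1}$; then I would split on cases. If $S(x)\le c_0/2$ then $\sum_{|\alpha|=\kappa_*}|\partial^\alpha\B(x)|\ge c_0/2$ by \eqref{ae-m}, so the sum is $\ge S(x)r^{\kappa_*-1}+(c_0/2)r^{\kappa_*}$; if $S(x)>c_0/2$ then $c_0 r^{\kappa_*}\le2S(x)r^{\kappa_*-1}$ for $r\le1$, so the bound $\ge S(x)r^{\kappa_*-1}$ already gives the two-term estimate up to a constant. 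Combining with the remainder bound, $\max_{Q(x,r)}|\B|\ge c(c_0r^{\kappa_*}+S(x)r^{\kappa_*-1})-Cr^{\kappa_*+1}$, and absorbing $Cr^{\kappa_*+1}$ into the $r^{\kappa_*}$ term for $r$ below a small threshold $r_1=r_1(d,\kappa_*,c_0,C_0)\le\min(1,r_0)$ gives
\[
\max_{Q(x,r)}|\B|\ \ge\ c\big(c_0 r^{\kappa_*}+S(x)\,r^{\kappa_*-1}\big),\qquad 0<r\le r_1 .
\]

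Finally I would feed this into \eqref{m} for $\beta\B$. If $0<r\le r_1$ and $\beta\max_{Q(x,r)}|\B|\le r^{-2}$, then $cc_0\,\beta r^{\kappa_*+2}\le1$ and $c\,\beta S(x)r^{\kappa_*+1}\le1$, forcing $r\le C\beta^{-\frac1{\kappa_*+2}}$ and $r\le C\big(\beta S(x)\big)^{-\frac1{\kappa_*+1}}$ (the latter vacuous when $S(x)=0$). To discard $r>r_1$, note $\max_{Q(x,r)}|\B|\ge\max_{Q(x,r_1)}|\B|\ge cc_0 r_1^{\kappa_*}$, so $\beta\max_{Q(x,r)}|\B|\le r^{-2}\le r_1^{-2}$ is impossible once $\beta\ge C:=(cc_0)^{-1}r_1^{-\kappa_*-2}$. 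Hence for $\beta\ge C$, $1/m(x,\beta\B)\le C\min\{\beta^{-\frac1{\kappa_*+2}},\,\beta^{-\frac1{\kappa_*+1}}S(x)^{-\frac1{\kappa_*+1}}\}$; taking reciprocals and using $\max\{a,b\}\ge\tfrac12(a+b)$ yields \eqref{ae-m-2}, with constants depending only on $d,\kappa_*,\Omega,(c_0,C_0)$. The only point requiring care is the simultaneous two-term lower bound on $\sum_{|\alpha|\le\kappa_*}|\partial^\alpha\B(x)|r^{|\alpha|}$, handled by the case split on $S(x)$ versus $c_0$; everything else is routine.
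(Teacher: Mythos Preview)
Your proof is correct and somewhat cleaner than the paper's. The paper argues the two terms in \eqref{ae-m-2} separately: it first obtains $m(x,\beta\B)\ge c\beta^{1/(\kappa_*+2)}$ from the degree-$\kappa_*$ Taylor polynomial, and then, assuming a domination condition of the form $\beta^{1/(\kappa_*+2)}\le\delta\beta^{1/(\kappa_*+1)}(\sum_{|\alpha|=\ell}|\partial^\alpha\B(x_0)|)^{1/(\kappa_*+1)}$ for some $0\le\ell\le\kappa_*-1$, uses the degree-$\ell$ Taylor polynomial to extract the second term. This requires juggling a threshold $\delta$, an index $\ell$, and a scale parameter $T$, and verifying several auxiliary inequalities. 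By contrast, you work only with the degree-$\kappa_*$ Taylor polynomial and get the \emph{two-term} lower bound $\max_{Q(x,r)}|\B|\ge c(c_0 r^{\kappa_*}+S(x)r^{\kappa_*-1})$ in one stroke via the case split on $S(x)$ versus $c_0/2$; both pieces of \eqref{ae-m-2} then drop out simultaneously from the definition of $m$. Your route is shorter and avoids the layered case analysis, at the modest cost of the small observation that the case split recovers the $c_0 r^{\kappa_*}$ contribution even when $\sum_{|\alpha|=\kappa_*}|\partial^\alpha\B(x)|$ may be small.
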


\begin{proof}

We first show that $m(x, \beta\B)\ge c\, \beta^{\frac{1}{\kappa_*+2}}$.
Fix $x_0\in \overline{\Omega}$.
Let $\P_\ell $ denote the $\ell^{th}$ Taylor polynomial of $\B$ at $x_0$, 
where  $0\le \ell \le \kappa_*$. Then, for $0< t< c$, 
$$
\aligned
\max_{Q(x_0, t)} |\B|
 &\ge  \max_{Q(x_0, t)} |\P_{\kappa_*}   | - C t^{\kappa_* +1}
 \ge c \sum_{|\alpha| \le \kappa_* }
|\partial^\alpha \B (x_0)| t^{|\alpha|} - C t^{\kappa_*+1}\\
&\ge c\,  t^{\kappa_*} \sum_{|\alpha|\le \kappa_*} |\partial^\alpha \B (x_0)| - C t^{\kappa_* +1}\\
&\ge c \,  t^{\kappa_*}, 
\endaligned
$$
where we have used  \eqref{ae-m}.
It follows that 
$$
\max_{Q(x_0, t)} |\beta \B| \ge c \beta t^{\kappa_*} > t^{-2}, 
$$
if  $t> C \beta^{-\frac{1}{\kappa_*+2}}$ and $C>1 $ is large.
This implies that $\frac{1}{m(x_0, \beta\B)} \le  C \beta^{-\frac{1}{\kappa_*+2}}$ and hence, $m(x_0, \beta \B) \ge c\,  \beta^{\frac{1}{\kappa_*+2}}$.

Next,   we fix $\delta \in (0, 1)$ and suppose that
\begin{equation}\label{m-11}
\beta^{\frac{1}{\kappa_*+2}}
\le  \delta  \beta^{\frac{1}{\kappa_*+1}  }
\bigg( \sum_{|\alpha|= \ell }
|\partial^\alpha \B(x_0)| \bigg)^{\frac{1}{\kappa_*+1}}
\end{equation}
for some $0\le \ell \le \kappa_*-1$.
Note that 
$$
\aligned
\max_{Q(x_0, t)} |\B|
 &\ge  \max_{Q(x_0, t)} |\P_\ell  | - C t^{\ell +1} \ge c \sum_{|\alpha| \le \ell}
|\partial^\alpha \B (x_0)| t^{|\alpha|} - C t^{\ell+1}\\
&\ge c\,  t^\ell \sum_{|\alpha|= \ell} |\partial^\alpha \B (x_0)| - C t^{\ell +1}\\
&\ge c \,  t^\ell \sum_{|\alpha| =  \ell} |\partial^\alpha \B (x_0)|
\endaligned
$$
if  $0< t< c$ and $0< t < c\sum_{|\alpha|= \ell} |\partial^\alpha \B(x_0)|$.
It follows that
$$
\max_{Q(x_0, t)} |\beta \B|
\ge c \beta  t^\ell \sum_{|\alpha| = \ell} |\partial^\alpha \B (x_0)|
> t^{-2}
$$
if $t \in (0, c)$ satisfies the conditions that  $t < c\sum_{|\alpha|= \ell} |\partial^\alpha \B(x_0)|$ and
$$
 c \beta  t^{\ell+2}  \sum_{|\alpha|= \ell} |\partial^\alpha \B (x_0)|>1.
 $$

Finally, let 
$$
t_0=  T    \beta^{-\frac{1}{\kappa_*+1 }} 
 \bigg(\sum_{|\alpha|= \ell} 
|\partial^\alpha \B (x_0)| \bigg)^{-\frac{1}{\kappa_*+1}}.
$$
By \eqref{m-11}, we have  $t_0\le  C T   \beta^{-\frac{1}{\kappa_*+2}}< c$, if $\beta$ is large,  and
$$
c \beta  t_0^{\ell +2 }  \sum_{|\alpha|= \ell} |\partial^\alpha \B (x_0)|
\ge c\, T^{\ell +2}>1,
$$
if $ T $ is large.
Moreover, using \eqref{m-11}, one can verify that  $t_0< c \sum_{|\alpha|=\ell} |\partial^\alpha \B (x_0)|$ if  $\delta $ is small.
As a result, we obtain  $\frac{1}{m (x_0, \beta \B)} < t_0$ and hence,
$$
m(x_0, \beta \B)> t_0^{-1} \ge c \beta^{\frac{1}{\kappa_*+1}} \bigg(  \sum_{|\alpha|=\ell}
|\partial^\alpha \B(x_0)|\bigg)^{\frac{1}{\kappa_*+1}},
$$
which leads to \eqref{ae-m-2}.
\end{proof}

\begin{remark}\label{re-m-1}
{\rm

For $\lambda^{D\!N}(\beta\A, \Omega)$,
we shall assume that $|\B|$ does not vanish to infinite order at any point on $\partial\Omega$.
Let $\kappa_0$ be defined by \eqref{kappa-1}.
Then there exists $ c_1>0$ such that 
\begin{equation}\label{ae-m-1}
c_1\le \sum_{|\alpha|\le \kappa_0 }
|\partial^\alpha \B (x) |
\end{equation}
for any $x\in\partial\Omega$.
By continuity, without the loss of generality,  we may assume that  \eqref{ae-m-1} holds for any $x\in \R^d$ with dist$(x, \partial\Omega)< r_0$.
It follows from the proof of Lemma \ref{lemma-m} that 
\begin{equation}\label{ae-m-3}
m(x, \beta\B) \ge c
\bigg\{ \beta^{\frac{1}{\kappa_0+2}}
+ \beta^{\frac{1}{\kappa_0+1}}
\bigg( \sum_{|\alpha|\le \kappa_0-1}
| \partial^\alpha \B (x)| \bigg)^{\frac{1}{\kappa_0+1}}  \bigg\}
\end{equation}
for  $\beta>C$ and $x\in \R^d$ with dist$(x, \partial\Omega)< r_0/2$, where $C, c>0$  depend only on $\kappa_0$,
$\| \B \|_{C^{\kappa_0+1} (\overline{\Omega})}$  and $c_1$ in \eqref{ae-m-1}.
Also, note that by the proof of  Theorem \ref{thm-op1},
\begin{equation}\label{as-m-3a}
c\int_{  \{ x\in \Omega: \text{\rm dist}(x, \partial\Omega)< cr_0\} }
\{ m(x, \beta \B)\}^2 |\psi|^2
\le \int_\Omega | (D+\beta \A)\psi|^2
\end{equation}
for any $\psi \in C^1(\overline{\Omega}; \C)$.
}
\end{remark}

\begin{lemma}\label{lemma-as-a}
Let $0< r< cr_0$. 
Then there exists a  finite set  of balls  $\{ \b(z_\ell, t_\ell)\}$  with the  properties 
that  (1) either $z_\ell \in \partial\Omega$ or $\b(z_\ell, 2 t_\ell )\subset \Omega$;  (2) $r\le t_\ell \le 48r$; 
(3)  if $z_\ell \in \partial\Omega$, then either $z_\ell \in \Gamma_2$ or  $\b(z_\ell,2 t_\ell)  \cap \Gamma_2 = \emptyset$;
(4) if $z_\ell \in \Omega$, then either $z_\ell \in \Gamma_1$ or $\b(z_\ell, 2t_\ell )\cap \Gamma_1 =\emptyset$.
Moreover, 
\begin{equation}\label{ae-a1}
\aligned
 \lambda^D (\beta \A, \Omega) 
\ge  \min_{\ell}  \lambda^D (\beta \A, \b (z_\ell, t_\ell ) \cap \Omega) -\frac{C}{r^2}, 
\endaligned
\end{equation}
where  $C$ depends only on $d$ and $\Omega$.
\end{lemma}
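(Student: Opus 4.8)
The plan is to build the finite family $\{\b(z_\ell,t_\ell)\}$ by a Vitali-type selection adapted to the three geometric regimes near a point, and then combine the local eigenvalue bounds via a partition of unity as in the identity \eqref{partition}. First I would cover $\overline{\Omega}$ by balls of radius comparable to $r$: for each $x\in\overline{\Omega}$ I choose a radius $t(x)\in[r,Cr]$ and a center $z(x)$ so that the resulting ball satisfies properties (1), (3), (4). Concretely, if $\b(x,2r)\subset\Omega$ I take $z(x)=x$, $t(x)=r$ when $\b(x,2r)\cap\Gamma_1=\emptyset$; otherwise I pick $z(x)\in\Gamma_1\cap\b(x,2r)$ and enlarge the radius to $t(x)=3r$ (or use the hypothesis \eqref{cond-3g}-type mechanism to ensure $\b(z(x),2t(x))\subset\Omega$) — the doubling of radii is what forces the bound $t_\ell\le 48r$ after the Vitali step. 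If $x$ is near $\partial\Omega$, I center at the nearest boundary point $z(x)\in\partial\Omega$ and choose $t(x)$ comparable to $r$; if $\b(z(x),2t(x))$ meets $\Gamma_2$ I re-center at a point of $\Gamma_2$ and again enlarge the radius by a bounded factor. Then I apply the Vitali covering lemma to the family $\{\b(x,t(x)/5)\}$ to extract a countable disjoint subfamily whose $5$-fold dilates cover $\overline{\Omega}$; compactness of $\overline{\Omega}$ makes this subfamily finite, and the bounded number of radius-doublings (each by a factor $\le 3$, at most a couple of times) gives $r\le t_\ell\le 48r$, yielding properties (1)–(4).

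Next I would produce a smooth partition of unity $\{\varphi_\ell\}$ subordinate to $\{\b(z_\ell,t_\ell)\}$ with $\sum_\ell\varphi_\ell^2=1$ on $\overline{\Omega}$, $\varphi_\ell\in C_0^1(\b(z_\ell,t_\ell))$, $|\nabla\varphi_\ell|\le C/r$, and bounded overlap $\sum_\ell\chi_{\b(z_\ell,t_\ell)}\le C$ (this finite-overlap property is automatic from the Vitali construction). Applying the IMS-type localization formula
\[
\int_\Omega|(D+\beta\A)\psi|^2
=\sum_\ell\int_{\Omega}|(D+\beta\A)(\psi\varphi_\ell)|^2
-\sum_\ell\int_\Omega|\nabla\varphi_\ell|^2|\psi|^2,
\]
valid for $\psi\in C^1(\overline{\Omega};\C)$, I bound each term $\int_\Omega|(D+\beta\A)(\psi\varphi_\ell)|^2$ from below by $\lambda^D(\beta\A,\b(z_\ell,t_\ell)\cap\Omega)\int_\Omega|\psi\varphi_\ell|^2$ — legitimate because $\psi\varphi_\ell$ vanishes on $\Omega\cap\partial\b(z_\ell,t_\ell)$, so it is an admissible test function for the mixed Dirichlet problem defining $\lambda^D(\beta\A,\b(z_\ell,t_\ell)\cap\Omega)$. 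The error term is controlled by $\sum_\ell\int_\Omega|\nabla\varphi_\ell|^2|\psi|^2\le (C/r^2)\sum_\ell\int_{\b(z_\ell,t_\ell)}|\psi|^2\le (C/r^2)\int_\Omega|\psi|^2$ using the finite overlap. Hence
\[
\int_\Omega|(D+\beta\A)\psi|^2
\ge\Big(\min_\ell\lambda^D(\beta\A,\b(z_\ell,t_\ell)\cap\Omega)-\frac{C}{r^2}\Big)\int_\Omega|\psi|^2,
\]
and taking the infimum over normalized $\psi$ gives \eqref{ae-a1}.

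The main obstacle, and the part requiring the most care, is the bookkeeping in the selection step so that properties (3) and (4) hold simultaneously with the uniform radius bound: whenever a candidate ball meets $\Gamma_1$ or $\Gamma_2$ I must be able to re-center onto that set while keeping the ball inside $\Omega$ (for interior balls) or centered on $\partial\Omega$ (for boundary balls), and while only enlarging the radius by a bounded factor. For interior balls near $\overline{\Gamma}_1\cap\partial\Omega$ this is exactly where a condition like \eqref{cond-3g} is needed (it guarantees a nearby point $y_r\in\Gamma_1$ with $\b(y_r,cr)\subset\Omega$); away from the boundary it is elementary since $\Gamma_1$ is relatively closed in $\Omega$. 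For boundary balls the Lipschitz (indeed $C^{1,1}$) character of $\Omega$ ensures that the nearest-boundary-point map is well behaved at scale $r<cr_0$, so re-centering onto $\Gamma_2$ costs only a bounded dilation. Once this combinatorial scaffolding is in place the analytic estimate is the routine IMS argument already used in Lemmas \ref{lemma-p1}–\ref{lemma-p2}, so I expect the proof to be short modulo the covering construction.
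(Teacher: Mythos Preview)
Your analytic core (the IMS localization identity and the lower bound by the local Dirichlet eigenvalues) is exactly what the paper uses. The difference, and the source of the difficulty you flag at the end, is the order of operations. The paper first takes a \emph{standard} partition of unity $\{\varphi_\ell\}$ with $\sum_\ell\varphi_\ell^2=1$ on $\overline\Omega$, subordinate to balls $\b(x_\ell,r_\ell)$ where either $x_\ell\in\partial\Omega$ with $r_\ell=16r$, or $x_\ell\in\Omega$ with $\b(x_\ell,8r)\subset\Omega$ and $r_\ell=r$. The IMS formula immediately gives
\[
\lambda^D(\beta\A,\Omega)\ge \min_\ell \lambda^D(\beta\A,\b(x_\ell,r_\ell)\cap\Omega)-Cr^{-2}.
\]
Only \emph{after} this does the paper re-center: if $\b(x_\ell,2r_\ell)$ meets $\Gamma_2$ (boundary case) or $\Gamma_1$ (interior case), pick $z_\ell$ in that set inside $\b(x_\ell,2r_\ell)$ and set $t_\ell=3r_\ell$; otherwise keep $z_\ell=x_\ell$, $t_\ell=r_\ell$. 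Then $\b(x_\ell,r_\ell)\subset\b(z_\ell,t_\ell)$, so monotonicity of $\lambda^D$ in the domain finishes. No Vitali step and no partition subordinate to the re-centered balls is needed.

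Your belief that \eqref{cond-3g} is required here is the one genuine gap: the lemma does not assume it, and you should not invoke it. The paper's two-scale choice of $r_\ell$ sidesteps the issue entirely. Since every interior center $x_\ell$ satisfies $\b(x_\ell,8r)\subset\Omega$, a re-centering $z_\ell\in\Gamma_1\cap\b(x_\ell,2r)$ with $t_\ell=3r$ automatically gives $\b(z_\ell,2t_\ell)=\b(z_\ell,6r)\subset\b(x_\ell,8r)\subset\Omega$, verifying property~(1). Points of $\overline\Omega$ within distance $8r$ of $\partial\Omega$ --- in particular those near $\overline{\Gamma}_1\cap\partial\Omega$ --- are covered by the boundary balls of radius $16r$, so the delicate region you worry about is simply absorbed into the boundary case and never produces an interior ball. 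This is also what forces the factor $48=3\cdot 16$ in the radius bound.
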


\begin{proof}

Let $\{\varphi_\ell\}$ be a partition of unity such that  (1) $\sum_\ell \varphi_\ell^2  =1$ in $\overline{\Omega}$, (2)
either $x_\ell \in \partial \Omega$ or $\b(x_\ell, 8r)\subset \Omega$, (3) supp$(\varphi_\ell ) \subset \b(x_\ell, r_\ell)$
where $r_\ell =16r$ if $x_\ell \in \partial\Omega$ and $r_\ell =r$ if $x_\ell \in \Omega$,
(4) $|\nabla \varphi_\ell|\le C/r$, and
 (5) $\sum_\ell \chi_{\b(x_\ell, r_\ell )} \le C$.
It follows that 
\begin{equation}\label{ae-a3}
\lambda^D (\beta \A, \Omega)
\ge \min_\ell \lambda^D (\beta\A, \b (x_\ell, r_\ell )\cap \Omega) -\frac{C}{r^2}.
\end{equation}
For each $\ell$, the ball $\b(z_\ell, t_\ell)$ is selected as follows.
If  $x_\ell \in \partial\Omega$ and  $\b(x_\ell, 2r_\ell) \cap \Gamma_2 =\emptyset $, we let $z_\ell =x_\ell$ and  $t_\ell =r_\ell =16r$.
If $x_\ell \in \partial\Omega$ and $\b(x_\ell,2 r_\ell ) \cap \Gamma_2 \neq \emptyset$,
we choose $z_\ell \in \Gamma_2 $ such that $|x_\ell -z_\ell |<2 r_\ell $.
Then $\b (x_\ell, r_\ell ) \subset \b (z_\ell, 3r_\ell)$ and hence, 
\begin{equation}\label{ae-a2}
\lambda^D (\beta \A, \b(x_\ell, r_\ell ) \cap \Omega) \ge 
\lambda^D (\beta \A, \b(z_\ell, t_\ell) \cap \Omega),
\end{equation}
where $t_\ell =3 r_\ell =48r$.
If $x_\ell \in \Omega$ and $\b(x_\ell ,2 r_\ell) \cap \Gamma_1=\emptyset$, we let $z_\ell =x_\ell$ and $t_\ell=r_\ell=r$.
Finally, if $x_\ell \in \Omega$ and $\b(x_\ell, 2r_\ell )\cap \Gamma_1 \neq \emptyset$, we choose $z_\ell \in \Gamma_1$
such that $|z_\ell -x_\ell |<2 r_\ell$ and let $t_\ell =3 r_\ell =3r$.
As a result, $\b(z_\ell,  2t_\ell)\subset \b (x_\ell, 8r) \subset \Omega$ and the inequality \eqref{ae-a2} continues to hold.
This, together with \eqref{ae-a3}, gives \eqref{ae-a1}.
\end{proof}

\begin{remark}\label{re-ae-1}
{\rm
Let $\{ \b(z_\ell, t_\ell) \}$ be the  finite  set of balls constructed in the  proof of Lemma \ref{lemma-as-a}.
A  similar argument yields 
\begin{equation}\label{ae-an1}
\aligned
\lambda^N (\beta \A, \Omega)
 & \ge \min \left\{
 \min_{z_\ell \in \partial \Omega}
  \mu^N (\beta \A, \b (z_\ell, t_\ell ) \cap \Omega, \Omega), 
  \min_{z_\ell \in \Omega}
  \lambda^D (\beta \A, \b (z_\ell, t_\ell) )
\right\}-\frac{C}{r^2},\\
\endaligned
\end{equation}
using the monotonicity of $\mu^N(\beta \A, \mathcal{O}, \Omega)$  in $\mathcal{O}$.
}
\end{remark}

\begin{thm}\label{main-thm-gL}
Let $\Omega$ be a bounded $C^{1, 1}$ domain in $\R^d$.
Suppose that  $|\B|$ does not vanish to infinite order at any point in $\overline{\Omega}$.
Assume \eqref{cond-1g} holds  for any $y\in \Gamma_*$ and \eqref{cond-2g} holds for any $y\in \Gamma_2$.
Further assume that  there exists $c>0$ such that 
\begin{equation}\label{cond-4g}
\sum_{|\alpha|\le \kappa_*-1}  |\partial^\alpha \B (x)|
 \ge c \, \text{\rm dist}(x, \Gamma_*)
 \qquad
 \text{ for any } x\in \overline{\Omega},
\end{equation}
and that 
\begin{equation}\label{cond-5g}
\text{\rm dist} (y, \partial\Omega) \ge c\,  \text{\rm dist} (y, \Gamma_2) \quad \text{ for any } y\in \Gamma_1.
\end{equation}
Then
\begin{equation}\label{ae-2g}
\lambda^D(\beta \A, \Omega)
\ge  \Theta_D \beta^{\frac{2}{\kappa_*+2}} - C \beta^{\frac{1}{\kappa_*+2} +\frac{1}{\kappa_*+4}}, 
\end{equation}
\begin{equation}\label{ae-3g}
\lambda^N(\beta \A, \Omega)
\ge  \Theta_N \beta^{\frac{2}{\kappa_*+2}} -C \beta^{\frac{1}{\kappa_*+2} +\frac{1}{\kappa_*+4}}, 
\end{equation}
for  $\beta > 1$,
where  $\Theta_D, \Theta_N $ are given by \eqref{co-1}. 
\end{thm}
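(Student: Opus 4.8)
The plan is to combine the covering estimates \eqref{ae-a1} (for $\lambda^D$) and \eqref{ae-an1} (for $\lambda^N$) of Lemma \ref{lemma-as-a} and Remark \ref{re-ae-1} with the local expansions of Section \ref{section-local}, handling each ball $\b(z_\ell,t_\ell)$ of the finite family according to whether its center lies on $\Gamma_*=\Gamma_1\cup\Gamma_2$ or stays a definite distance from it. I take $r=c\,\beta^{-\frac{\kappa_*+3}{(\kappa_*+2)(\kappa_*+4)}}$, which is calibrated so that $1/r^2=c^{-2}\beta^{\frac{1}{\kappa_*+2}+\frac{1}{\kappa_*+4}}$ is exactly of the order of the asserted error, and apply Lemma \ref{lemma-as-a} at this scale, after (if needed) re-running its covering argument with a larger separation constant $K=K(c)$ in place of the constant $2$ in properties (3)--(4), which is a routine modification. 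The constant $c$ is chosen so that each $t_\ell=\gamma_\ell\beta^{-\frac{\kappa_*+3}{(\kappa_*+2)(\kappa_*+4)}}$ with $\gamma_\ell$ between fixed positive constants that are $\le 1$.

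For a ball centered on $\Gamma_*$: if $z_\ell\in\Gamma_1$ I apply Theorem \ref{thm-local-1} (note $\b(z_\ell,t_\ell)\subset\Omega$) to get $\lambda^D(\beta\A,\b(z_\ell,t_\ell))\ge\beta^{\frac{2}{\kappa_*+2}}\lambda(\A_{z_\ell},\R^d)-C\beta^{\frac{1}{\kappa_*+2}+\frac{1}{\kappa_*+4}}$; if $z_\ell\in\Gamma_2$ I apply Theorem \ref{thm-local-2}, resp.\ Theorem \ref{thm-local-3}, to bound the corresponding local quantity below by $\beta^{\frac{2}{\kappa_*+2}}\lambda^D(\A_{z_\ell},\mathbb{H}_{n(z_\ell)})$, resp.\ $\beta^{\frac{2}{\kappa_*+2}}\lambda^N(\A_{z_\ell},\mathbb{H}_{n(z_\ell)})$, minus the same error. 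Hypotheses \eqref{cond-1g} and \eqref{cond-2g} are used precisely here: they give $\sigma(z_\ell)\ge c$ and $\tau(z_\ell)\ge c$ uniformly, so the constants $C$ produced by Theorems \ref{thm-local-1}--\ref{thm-local-3} are uniform over $z_\ell\in\Gamma_*$. Since $\lambda(\A_{z_\ell},\R^d)\ge\Theta_D$, $\lambda^D(\A_{z_\ell},\mathbb{H}_{n(z_\ell)})\ge\Theta_D$ and $\lambda^N(\A_{z_\ell},\mathbb{H}_{n(z_\ell)})\ge\Theta_N$ directly from the definitions \eqref{co-1}, no continuity of $y\mapsto\lambda(\A_y,\cdot)$ is needed for this lower bound, unlike for the upper bound in Theorem \ref{thm-mup1}.

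For a ball not centered on $\Gamma_*$ (so the enlarged ball $\b(z_\ell,Kt_\ell)$ misses $\Gamma_1$, resp.\ $\Gamma_2$): I first show $\operatorname{dist}(x,\Gamma_*)\ge c\,t_\ell$ for every $x\in\b(z_\ell,t_\ell)\cap\Omega$. The distance to $\Gamma_2$ is controlled directly by the separation, and the distance to $\Gamma_1$ is controlled by \eqref{cond-5g}, since a point of $\Gamma_1$ lying close to $\partial\Omega$ must lie close to $\Gamma_2$, hence --- for $K=K(c)$ large enough --- far from $z_\ell$. Then \eqref{cond-4g} gives $\sum_{|\alpha|\le\kappa_*-1}|\partial^\alpha\B(x)|\ge c\,t_\ell\ge c\,r$ on the ball, and Lemma \ref{lemma-m} yields $m(x,\beta\B)\ge c\,\beta^{\frac{1}{\kappa_*+1}}r^{\frac{1}{\kappa_*+1}}$ there. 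Feeding this into \eqref{op-g1} (extending $\psi$ by zero, for a Dirichlet ball), or into Theorem \ref{thm-op1} applied to the Lipschitz domain $\b(z_\ell,t_\ell)\cap\Omega$ together with the trivial bound $\mu^N(\cdot,\mathcal{O},\Omega)\ge\lambda^N(\cdot,\mathcal{O})$ (for a Neumann boundary ball), bounds the local quantity below by $c\,\beta^{\frac{2}{\kappa_*+1}}r^{\frac{2}{\kappa_*+1}}$. Since the exponent $\frac{2}{\kappa_*+1}\cdot\frac{\kappa_*^2+5\kappa_*+5}{(\kappa_*+2)(\kappa_*+4)}$ strictly exceeds $\frac{2}{\kappa_*+2}$ (equivalently $\kappa_*^2+5\kappa_*+5>\kappa_*^2+5\kappa_*+4$), this local quantity is $\gg\beta^{\frac{2}{\kappa_*+2}}$ and hence exceeds $\Theta_D\beta^{\frac{2}{\kappa_*+2}}$, resp.\ $\Theta_N\beta^{\frac{2}{\kappa_*+2}}$, for $\beta$ large, so such balls never realize the minimum in \eqref{ae-a1} or \eqref{ae-an1}. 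Taking the minimum over $\ell$ and absorbing $C/r^2=C\beta^{\frac{1}{\kappa_*+2}+\frac{1}{\kappa_*+4}}$ into the error term then yields \eqref{ae-2g} and \eqref{ae-3g}.

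The main obstacle is the geometry of the off-$\Gamma_*$ balls: one must choose the separation constant $K$ of the covering in terms of the constants $c$ in \eqref{cond-4g}--\eqref{cond-5g} so that $\operatorname{dist}(\cdot,\Gamma_*)\gtrsim t_\ell$ persists over the whole ball $\b(z_\ell,t_\ell)$ --- in particular so that $\Gamma_1$-points sitting just outside the ball but near $\partial\Omega$ are pushed away --- and one must check that the Lipschitz characters of the truncated domains $\b(z_\ell,t_\ell)\cap\Omega$, and hence the constants in Theorem \ref{thm-op1}, are uniform in $\ell$; this is harmless since $\Omega$ is at least Lipschitz and the $t_\ell$ are small. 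Everything else is bookkeeping with the exponents.
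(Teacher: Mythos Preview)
Your proof is correct and follows the same overall strategy as the paper: choose $r$ of order $\beta^{-\frac{\kappa_*+3}{(\kappa_*+2)(\kappa_*+4)}}$, use the covering of Lemma \ref{lemma-as-a}, apply Theorems \ref{thm-local-1}--\ref{thm-local-3} on balls centered on $\Gamma_*$ (with uniformity supplied by \eqref{cond-1g}--\eqref{cond-2g}), and use Lemma \ref{lemma-m} together with \eqref{cond-4g} on the remaining balls to see they never realize the minimum. The exponent check $\kappa_*^2+5\kappa_*+5>(\kappa_*+1)(\kappa_*+4)$ is the same.

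The one genuine difference is in how you treat a boundary ball $z_\ell\in\partial\Omega$ whose enlargement misses $\Gamma_2$ but might still contain points of $\Gamma_1$. The paper keeps the separation constant $2$ and handles this subcase directly: if such a $\Gamma_1$ point exists, \eqref{cond-5g} produces a nearby $y\in\Gamma_2$, the ball is enlarged to $\b(y,Cr)$, and Theorem \ref{thm-local-2} (resp.\ \ref{thm-local-3}) is applied there. You instead absorb this subcase into the covering step by inflating the separation constant to $K=K(c)$, so that \eqref{cond-5g} forces $\operatorname{dist}(\cdot,\Gamma_*)\gtrsim t_\ell$ on every off-$\Gamma_*$ ball and only the $m(x,\beta\B)$ bound is needed. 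Both routes are valid; yours trades a small modification of the covering lemma for a cleaner case split, while the paper's avoids touching the lemma at the cost of one extra subcase. For the Neumann boundary balls it is slightly simpler to extend the test function by zero to $\Omega$ and invoke \eqref{ae-op} directly (as the paper implicitly does) rather than apply Theorem \ref{thm-op1} to the truncated domain, but your version works too once the uniform Lipschitz character is noted.
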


\begin{proof}
Assuming  $\beta$ is sufficiently large, 
we give the proof for \eqref{ae-2g}.
A  similar argument, using  Remark \ref{re-ae-1} and Theorem \ref{thm-local-3},
 yields   \eqref{ae-3g}.

First, we use Lemma \ref{lemma-as-a} with 
\begin{equation}\label{r1}
r=\beta^{-\frac{1}{2( \kappa_* +2)} -\frac{1}{2(\kappa_*+4)}}
\end{equation}
   to obtain 
\begin{equation}\label{d-1-1}
\lambda^D (\beta \A, \Omega)
\ge \min_\ell  \lambda^D (\beta \A, \b(z_\ell, t_\ell)\cap \Omega) - C \beta^{\frac{1}{\kappa_* +2} +\frac{1}{\kappa_*+4}}.
\end{equation}
We will show that for each $\ell$,
\begin{equation}\label{d-1-2a}
\lambda^D (\beta \A,  \b(z_\ell, t_\ell ) \cap \Omega)
\ge \Theta_D \beta ^{\frac{2}{\kappa_* +2}} - C \beta^{\frac{1}{\kappa_* +2} +\frac{1}{\kappa_*+4}}
\end{equation}
for $\beta$ large.

Next, observe that if $z_\ell\in \Gamma_*$, the inequality \eqref{d-1-2a} follows readily from Theorems \ref{thm-local-1}
and \ref{thm-local-2}. Here we have used the fact that either $z_\ell \in \partial\Omega$ or $\b(z_\ell, 2 t_\ell)\subset \Omega$
as well as  the conditions \eqref{cond-1g}-\eqref{cond-2g}, which ensure the constants $C$ in \eqref{AE-I} and \eqref{AE-D} are uniform
in $y\in \Gamma_*$.
We now consider the case $z_\ell \in  \Omega$ and $\b(z_\ell, 2t_\ell) \cap \Gamma_1 =\emptyset$.
Since   $\b(z_\ell, 2 t_\ell)\subset \Omega$, we have $\b(z_\ell, 2t_\ell)\cap \Gamma_*=\emptyset$.
It follows by \eqref{ae-op} that  
$$
\aligned
\lambda^D (\beta \A,  \b(z_\ell, t_\ell ) )
 & \ge c \inf_{x\in \b(z_\ell, t_\ell)}  \{  m (x, \beta \B) \}^2\\
 &\ge c  \inf_{x\in \b(z_\ell, t_\ell)}  \beta^{\frac{2}{\kappa_*+1}}
 \bigg(  \sum_{|\alpha|\le \kappa_*-1} | \partial^\alpha \B (x)| \bigg)^{\frac{2}{\kappa_*+1}}\\
 &\ge c \inf_{x\in \b(z_\ell, t_\ell)}
 \beta^{\frac{2}{\kappa_*+1}}
 \big[ \text{\rm dist} (x, \Gamma_*)\big]^{\frac{2}{\kappa_*+1}}, 
\endaligned
$$
where we have used  Lemma \ref{lemma-m} and the assumption  \eqref{cond-4g}  for the second and third inequalities.
Since $\b(z_\ell, 2 t_\ell)\cap \Gamma_*)=\emptyset$,
we have dist$(x, \Gamma_*) \ge c\,  r$ for any $x\in \b(z_\ell,  t_\ell)$.
Thus, 
$$
\lambda^D (\beta \A,  \b(z_\ell, t_\ell ) )
\ge c\, \beta^{\frac{2}{\kappa_*+1}}
\beta^{-\frac{2 (\kappa_* +3)}{(\kappa_*+1)(\kappa_*+2)(\kappa_*+4)}},
$$
where we have used  \eqref{r1}.
A computation shows that 
$$
\frac{2}{\kappa_*+1} -\frac{2(\kappa_*+3)}{(\kappa_* +1)(\kappa_*+2)(\kappa_*+4)}
>   \frac{2}{\kappa_*+2}.
$$
Consequently,  \eqref{d-1-2a} holds for large $\beta$,   if $z_\ell \in \Omega$ and $\b(z_\ell, 2t_\ell)\cap \Gamma_1=\emptyset$.

Finally, consider the case $z_\ell \in \partial\Omega$ and $\b(z_\ell, 2 t_\ell) \cap \Gamma_2 =\emptyset$.
If $\b(z_\ell, 2t_\ell)\cap \Gamma_*=\emptyset$, the desired estimate \eqref{d-1-2a} follows as in the previous case.
Suppose   $\b(z_\ell, 2 t_\ell) \cap \Gamma_1 \neq \emptyset$.
It follows from the condition \eqref{cond-5g} that dist$(z_\ell, \Gamma_2) \le C r$.
Let $y\in \Gamma_2$ such that $\b(z_\ell, t_\ell) \subset \b(y,Cr)$. Then,
$$
\aligned
\lambda^D (\beta\A, \b(z_\ell, t_\ell)\cap \Omega)
 &\ge \lambda^D (\beta\A, \b(y, C r)\cap \Omega)\\
 &\ge \Theta_D \beta^{\frac{2}{\kappa_* +2}} -C \beta^{\frac{1}{\kappa_*+2} +\frac{1}{\kappa_*+4}},
\endaligned
$$
where we have used Theorem \ref{thm-local-2} for the last inequality.
\end{proof}

The next theorem gives asymptotic expansions for $\lambda^D(\beta\A, \Omega)$ and $\lambda^N(\beta\A, \Omega)$.

\begin{thm}\label{main-thm-g1}
Let $\Omega$ be a bounded $C^{1, 1}$ domain in $\R^d$.
Suppose that $|\B|$ does not vanish to infinite order at any point in $\overline{\Omega}$.
Assume that  \eqref{cond-1g} holds  for any $y\in \Gamma_*$ and   \eqref{cond-2g} holds for any $y\in \Gamma_2$.
 Also assume that  \eqref{cond-4g} and  \eqref{cond-5g} hold.
 Further suppose  that for any $y_0\in \overline{\Gamma}_1\cap \partial\Omega$ and $0< r< cr_0$, there exists $y_r\in \Gamma_1$ such that 
 \eqref{cond-3g} holds.
 Then
 \begin{equation}\label{AE-DN1}
 \aligned
 \lambda^D (\beta \A, \Omega) & = \Theta_D \beta^{\frac{2}{\kappa_*+2}} + O(\beta^{\frac{1}{\kappa_*+2} +\frac{1}{\kappa_*+4}}),\\
 \lambda^N (\beta \A, \Omega) & = \Theta_N \beta^{\frac{2}{\kappa_*+2}} + O(\beta^{\frac{1}{\kappa_*+2} +\frac{1}{\kappa_*+4}}),
\endaligned
\end{equation}
for $\beta>1$.
\end{thm}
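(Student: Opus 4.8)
The plan is to obtain \eqref{AE-DN1} by matching the upper bounds already proved in the earlier sections with the lower bounds of Theorem \ref{main-thm-gL}. The hypotheses of the present theorem contain \emph{all} the hypotheses of Theorem \ref{thm-mup1} ($\A\in C^\infty$, $\Omega$ a bounded $C^{1,1}$ domain, $|\B|$ not vanishing to infinite order, \eqref{cond-1g} on $\Gamma_*$, \eqref{cond-2g} on $\Gamma_2$, and the covering condition \eqref{cond-3g}), so that theorem yields the upper bounds \eqref{up2-3aa}--\eqref{up2-3ab}. They also contain all the hypotheses of Theorem \ref{main-thm-gL} (the same conditions together with \eqref{cond-4g} and \eqref{cond-5g}), so that theorem yields the matching lower bounds \eqref{ae-2g}--\eqref{ae-3g}. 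Adding the two pairs of inequalities gives \eqref{AE-DN1} for $\beta>1$. In other words, the proof of this theorem itself is a one-line consolidation; all the work lies in the earlier sections.

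If one unwinds those ingredients, the structure is as follows. For the upper bounds, the Rayleigh quotients are tested with quasimodes supported in a ball $\b(y,r)\cap\Omega$ with $r=\gamma\beta^{-\frac{\kappa_*+3}{(\kappa_*+2)(\kappa_*+4)}}$ centred at a point $y$ where $\B$ vanishes to order $\kappa_*$; after a gauge transformation writing $\A$ as the homogeneous Taylor polynomial $\A_y$ plus a remainder of size $|x|^{\kappa_*+2}$, and a dilation $x\mapsto\beta^{-1/(\kappa_*+2)}x$, the problem is reduced to the model operators on $\R^d$ or on $\mathbb{H}_{n(y)}$ (Theorems \ref{thm-local-1}, \ref{thm-local-2}, \ref{thm-local-3}); taking the infimum over $y\in\Gamma_1\cup\Gamma_2$ and using \eqref{cond-3g} to approximate the boundary points of $\overline{\Gamma}_1$ by interior balls produces the coefficients $\Theta_D$, $\Theta_N$ from \eqref{co-1}. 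For the lower bounds one uses a partition of unity subordinate to the covering of Lemma \ref{lemma-as-a}, so that $\lambda^D(\beta\A,\Omega)$ is at least the minimum of the local ground state energies minus $C/r^2$; on balls centred on $\Gamma_*$ the local asymptotic expansions give the lower bound $\Theta_D\beta^{2/(\kappa_*+2)}$ up to the stated error, while on balls disjoint from $\Gamma_*$ the operator lower bound \eqref{ae-op} together with Lemma \ref{lemma-m} and the non-degeneracy \eqref{cond-4g} forces the local energy above $\Theta_D\beta^{2/(\kappa_*+2)}$ for large $\beta$ (one checks that $\frac{2}{\kappa_*+1}-\frac{2(\kappa_*+3)}{(\kappa_*+1)(\kappa_*+2)(\kappa_*+4)}>\frac{2}{\kappa_*+2}$), and \eqref{cond-5g} handles the boundary balls that meet $\Gamma_1$.

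The real difficulty is therefore not in the final assembly but in making all of these local estimates hold uniformly over $\Gamma_1$, $\Gamma_2$, and the boundary layer simultaneously. This is precisely the purpose of the quantitative conditions \eqref{cond-1g}--\eqref{cond-5g}: they keep $\sigma(y)$ in \eqref{sigma-1} and $\tau(y)$ in \eqref{tau} bounded away from zero and control the geometry of the vanishing set relative to $\partial\Omega$, which lets one fix a single radius $r$ and obtain error constants independent of the base point. Once the uniformity is in hand, matching the two occurrences of $\Theta_D$ (resp. $\Theta_N$) from above and below is immediate, and the common error exponent $\frac{1}{\kappa_*+2}+\frac{1}{\kappa_*+4}$ is exactly the one produced by the optimal choice of $r$ in the local analysis and of the covering scale in Lemma \ref{lemma-as-a}.
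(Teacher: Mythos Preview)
Your proposal is correct and matches the paper's own proof exactly: the paper's proof of Theorem~\ref{main-thm-g1} is precisely the one-line consolidation you describe, citing Theorem~\ref{thm-mup1} for the upper bounds and Theorem~\ref{main-thm-gL} for the lower bounds. Your additional unpacking of the ingredients is accurate and helpful, but the formal proof requires nothing more than verifying (as you do) that the hypotheses of the present theorem contain those of both cited theorems.
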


\begin{proof}

The upper bounds for $\lambda^D(\beta\A, \Omega)$ and $\lambda^N(\beta\A, \Omega)$ are given by  Theorem \ref{thm-mup1},
 while the lower bounds are given by  Theorem \ref{main-thm-gL}.
\end{proof}

The rest of this section is devoted to $\lambda^{D\!N} (\beta \A, \Omega)$.
Let
\begin{equation}\label{o-r}
\Omega_r =\left\{ x\in \Omega: \text{\rm dist}(x, \partial\Omega)< r \right\}
\end{equation}
for $0< r< r_0$. Recall that $\Gamma_0$ is defined by \eqref{ga-1}.

\begin{lemma}\label {lemma-ae-dn}
Let $0< r< cr_0$.
Then there exists a finite set of balls $\{\b(y_\ell, s_\ell)\}$ with the properties that 
(1) $y_\ell \in \partial\Omega$ and
 $ r \le s_\ell \le 48r$;
(2) either $y_\ell \in \Gamma_0$ or $\b(y_\ell, 2 s_\ell) \cap \Gamma_0 =\emptyset$.
Moreover, if there exist  $\sigma>0$ and $c_0>0$  such that 
\begin{equation}\label{as-dn1}
\aligned
c_0\, \beta^{2\sigma} \int_{\Omega_{8r}} 
|\psi|^2 \le \int_\Omega |(D+ \beta \A)\psi|^2
\endaligned
\end{equation}
for any $\psi \in C^1 (\overline{\Omega}; \C)$ and $r \beta^\sigma\ge 1$, then
\begin{equation}\label{as-dn-2}
\aligned
\lambda^{D\!N} (\beta \A,  \Omega)
 & \ge \left\{ 1-  C (r \beta^\sigma )^{-1} \right\}
\min_\ell \mu^{D\!N} (\beta \A, \b (y_\ell, s_\ell )\cap \Omega,  \Omega),\\
\endaligned
\end{equation}
where $C$ depends only on  $\Omega$ and $(c_0, \sigma)$ in \eqref{as-dn1}.
\end{lemma}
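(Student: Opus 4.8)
The plan is to mimic the localization argument in the proof of Lemma \ref{lemma-as-a}, adapted to the Dirichlet-to-Neumann quotient, the new feature being that the denominator of $\lambda^{D\!N}$ lives only on $\partial\Omega$ while the hypothesis \eqref{as-dn1} controls $\int_{\Omega_{8r}}|\psi|^2$ by the numerator; consequently the interior of $\Omega$ must be absorbed into a single cut-off whose transition region hugs $\partial\Omega$, rather than tiled by interior balls. Fix $0<r<cr_0$ with $r\beta^\sigma\ge 1$. First I would construct a partition of unity $\{\varphi_\ell\}_{\ell\ge 0}$ in $C^1(\overline\Omega;\R)$ with $\sum_{\ell\ge 0}\varphi_\ell^2=1$ on $\overline\Omega$, bounded overlap and $|\nabla\varphi_\ell|\le C/r$, where $\varphi_0=1$ on $\Omega\setminus\Omega_{4r}$, $\varphi_0=0$ on $\Omega_{2r}$, and for $\ell\ge 1$ each $\varphi_\ell$ is supported in $\b(x_\ell,16r)\cap\overline{\Omega_{4r}}$ with $x_\ell\in\partial\Omega$. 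Then $\varphi_0$ vanishes on $\partial\Omega$, so $\sum_{\ell\ge 1}\varphi_\ell^2=1$ on $\partial\Omega$, and all the gradients $\nabla\varphi_\ell$ are supported in $\Omega_{8r}$. The balls $\b(y_\ell,s_\ell)$ are then extracted from the $\b(x_\ell,16r)$ exactly as in Lemma \ref{lemma-as-a}: if $\b(x_\ell,32r)\cap\Gamma_0=\emptyset$ set $y_\ell=x_\ell$, $s_\ell=16r$; otherwise pick $y_\ell\in\Gamma_0$ with $|x_\ell-y_\ell|<32r$ and set $s_\ell=48r$, so that $\b(x_\ell,16r)\subset\b(y_\ell,s_\ell)$ and properties (1)--(2) hold; finitely many balls suffice by compactness of $\partial\Omega$.

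Next I would insert this partition into the localization identity \eqref{partition}, with $\A$ replaced by $\beta\A$ and $\R^d$ replaced by $\Omega$; since \eqref{partition} is a pointwise algebraic identity, no boundary terms arise and it remains valid for $\psi\in C^1(\overline\Omega;\C)$. The $\ell=0$ term $\int_\Omega|(D+\beta\A)(\varphi_0\psi)|^2$ is nonnegative and is simply discarded. For each $\ell\ge 1$, $\varphi_\ell\psi$ is supported in $\b(x_\ell,16r)\cap\overline\Omega$ and vanishes on $\Omega\cap\partial\b(x_\ell,16r)$, hence is admissible in the variational problem defining $\mu^{D\!N}(\beta\A,\b(x_\ell,16r)\cap\Omega,\Omega)$; combining its monotonicity in the second argument with $\b(x_\ell,16r)\subset\b(y_\ell,s_\ell)$ gives $\int_\Omega|(D+\beta\A)(\varphi_\ell\psi)|^2\ge \mu_*\int_{\partial\Omega}|\varphi_\ell\psi|^2$, where $\mu_*:=\min_j\mu^{D\!N}(\beta\A,\b(y_j,s_j)\cap\Omega,\Omega)\ge 0$. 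Summing over $\ell\ge 1$ and using $\sum_{\ell\ge 1}\varphi_\ell^2=1$ on $\partial\Omega$ yields $\sum_{\ell\ge 1}\int_\Omega|(D+\beta\A)(\varphi_\ell\psi)|^2\ge \mu_*\int_{\partial\Omega}|\psi|^2$. For the remainder, bounded overlap, $|\nabla\varphi_\ell|\le C/r$ and confinement of the $\nabla\varphi_\ell$ to $\Omega_{8r}$ give $\sum_\ell\int_\Omega|\nabla\varphi_\ell|^2|\psi|^2\le Cr^{-2}\int_{\Omega_{8r}}|\psi|^2\le Cc_0^{-1}(r\beta^\sigma)^{-2}\int_\Omega|(D+\beta\A)\psi|^2$, the last inequality being \eqref{as-dn1}.

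Combining the three parts of \eqref{partition} yields $\bigl(1+Cc_0^{-1}(r\beta^\sigma)^{-2}\bigr)\int_\Omega|(D+\beta\A)\psi|^2\ge \mu_*\int_{\partial\Omega}|\psi|^2$ for all $\psi\in C^1(\overline\Omega;\C)$. Dividing by $\int_{\partial\Omega}|\psi|^2$ and taking the infimum over $\psi$, then using $\frac{1}{1+t}\ge 1-t$ for $t\ge 0$ together with $(r\beta^\sigma)^{-2}\le(r\beta^\sigma)^{-1}$ (valid since $r\beta^\sigma\ge 1$) and $\mu_*\ge 0$, we arrive at $\lambda^{D\!N}(\beta\A,\Omega)\ge\bigl(1-C(r\beta^\sigma)^{-1}\bigr)\mu_*$, which is \eqref{as-dn-2}. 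I expect the one delicate point to be the construction of the partition of unity: one needs $\sum_\ell\varphi_\ell^2=1$ with $|\nabla\varphi_\ell|\le C/r$ and, crucially, all gradients confined to the boundary layer $\Omega_{8r}$ where \eqref{as-dn1} applies, which forces the interior of $\Omega$ to be swept into the single function $\varphi_0$ rather than tiled by interior balls as in Lemma \ref{lemma-as-a}. Once this is in place, the rest is the routine IMS-type bookkeeping already carried out there.
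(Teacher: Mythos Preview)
Your proof is correct. It differs from the paper's in one organizational choice: the paper keeps the full partition of unity from Lemma~\ref{lemma-as-a} (with both boundary balls and small interior balls), sums the identity
\[
\text{Re}\int_\Omega (D+\beta\A)\psi\cdot\overline{(D+\beta\A)(\varphi_\ell^2\psi)}
=\int_\Omega|(D+\beta\A)(\varphi_\ell\psi)|^2-\int_\Omega|\nabla\varphi_\ell|^2|\psi|^2
\]
only over the boundary $\ell$'s, and is left with a factor $\Phi=\sum_{\text{bdry}}\varphi_\ell^2\not\equiv 1$; expanding $(D+\beta\A)(\Phi\psi)$ produces a cross term controlled by Cauchy--Schwarz, which yields the error $(r\beta^\sigma)^{-1}$ directly. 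You instead collapse the entire interior into a single cutoff $\varphi_0$, keep $\sum_{\ell\ge 0}\varphi_\ell^2\equiv 1$, and apply the standard IMS identity \eqref{partition}; discarding the nonnegative $\ell=0$ contribution then leaves only the gradient error $(r\beta^\sigma)^{-2}$, which you weaken to $(r\beta^\sigma)^{-1}$ at the end. Your route is marginally cleaner (no cross term) and in fact gives a slightly sharper intermediate bound, at the cost of the extra care you flag in building a partition whose gradients are all confined to the boundary layer $\Omega_{8r}$; the paper's route reuses the partition already constructed and accepts the cross term. Both are routine variants of the same localization scheme.
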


\begin{proof}

Let $\{\varphi_\ell \}$ be a partition of unity satisfying the same conditions as those in the proof of Lemma \ref{lemma-as-a}.
Using the identity 
$$
\text{Re}
\int_\Omega
(D+\beta \A) \psi \cdot 
\overline{ (D+\beta \A) (\varphi^2 \psi) }
=\int_\Omega |(D+\beta \A) (\varphi \psi)|^2
-\int_\Omega |\nabla \varphi |^2 |\psi|^2,
$$
 we obtain 
\begin{equation}\label{d-2-2a}
\aligned
 & \min_\ell \mu^{D\!N} (\beta \A, \b(x_\ell, r_\ell) \cap \Omega, \Omega)\int_{\partial\Omega} |\psi|^2\\
&\le \sum_\ell 
\mu^{D\!N} (\beta \A, \b(x_\ell, r_\ell) \cap \Omega, \Omega) \int_{\partial\Omega}
|\varphi_\ell \psi|^2\\
&\le \sum_\ell \int_{\Omega} |(D+\beta\A)(\psi \varphi_\ell)|^2\\
&=\text{Re}
\int_\Omega (D+\beta\A)\psi  \cdot \overline{(D+\beta\A) (\Phi  \psi )}
+\sum_\ell \int_\Omega  |\nabla \varphi_\ell|^2 |\psi|^2
\endaligned
\end{equation}
for any $\psi \in C^1(\overline{\Omega}, \C)$,
where $\Phi=\sum_\ell \varphi_\ell^2   $. We point out that 
 the minimum and sums in \eqref{d-2-2a} and $\Phi$
    are   taken over only  those $\ell$'s for which $x_\ell \in \partial\Omega$.
 It is not hard to see that the right-hand side  of \eqref{d-2-2a} is bounded by
 \begin{equation}\label{d-2-2b}
 \aligned
 \int_{\Omega}  |(D+\beta\A)\psi|^2
 + \frac{C}{r}
 \left(\int_{\Omega} |(D+\beta\A)\psi|^2 \right)^{1/2} \left(\int_{\Omega_{8r}} |\psi|^2 \right)^{1/2}
 + \frac{C}{r^2}
 \int_{\Omega_{8r} } |\psi|^2.
 \endaligned
 \end{equation}
Using \eqref{as-dn1}, we deduce that  \eqref{d-2-2b} is bounded by
 $$
 \left\{ 1+ C r^{-1} \beta^{-\sigma} \right\}
 \int_\Omega |(D+\beta \A)\psi|^2.
 $$
 As a result, we have proved that 
 $$
  \min_\ell \mu^{D\!N} (\beta \A, \b(x_\ell, r_\ell) \cap \Omega, \Omega)\int_{\partial\Omega} |\psi|^2
  \le 
 \left\{ 1+ C r^{-1} \beta^{-\sigma } \right\}
 \int_\Omega |(D+ \beta \A)\psi|^2
$$
for any $\psi \in C^1(\overline{\Omega}; \C)$.
 This implies that
\begin{equation}\label{d-2-4}
\lambda^{D\!N} (\beta \A, \Omega)
\ge 
 \left\{ 1- C r^{-1} \beta^{-\sigma} \right\}\min_\ell \mu^{D\!N} (\beta \A, \b(x_\ell, r_\ell) \cap \Omega, \Omega).
 \end{equation}
Finally, if $\b(x_\ell, 2 r_\ell) \cap \Gamma_0 =\emptyset$, we let $y_\ell=x_\ell$ and $s_\ell =r_\ell$.
If $\b(x_\ell, 2 r_\ell) \cap \Gamma_0 \neq \emptyset$,
choose $y_\ell \in \b(x_\ell, 2 r_\ell)$ and let $s_\ell= 3r_\ell$.
Since $\b (x_\ell, r_\ell)\subset \b(y_z, 3r_\ell)$, we have
$$
\mu^{D\!N} (\beta\A, \b(x_\ell, r_\ell) \cap \Omega, \Omega)
\ge  \mu^{D\!N} (\beta\A, \b(y_\ell, s_\ell) \cap \Omega, \Omega),
$$
which gives \eqref{as-dn-2}.
\end{proof}

\begin{thm}\label{main-thm-gL1}
Let $\Omega$ be a bounded $C^{1, 1}$ domain in $\R^d$.
Suppose that $|\B|$ does not vanish to infinite order at any point on $\partial\Omega$.
Assume \eqref{cond-1g}  and \eqref{cond-2g} hold for any $y\in \Gamma_0$.
Further assume that  there exists $c>0$ such that 
\begin{equation}\label{cond-4g1}
\sum_{|\alpha|\le \kappa_0-1}  |\partial^\alpha \B (x)|
 \ge c \, \text{\rm dist}(x, \Gamma_{0})
 \qquad
 \text{ for any } x\in \partial\Omega.
\end{equation}
Then
\begin{equation}\label{ae-2g1}
 - C \beta^{\frac{\kappa_0+3}{(\kappa_0+2)(\kappa_0+4)}}
  \le 
  \lambda^{D\!N} (\beta \A, \Omega)
  - \Theta_{D\!N} \beta^{\frac{1}{\kappa_0+2}}
  \le  C \beta^{\frac{1}{\kappa_0+4}}
\end{equation}
for  $\beta > 1$,
where $\Theta_{D\!N} $ is given by \eqref{co-2}. 
\end{thm}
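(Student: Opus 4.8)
Since \eqref{cond-1g} and \eqref{cond-2g} hold on $\Gamma_0$, Theorem \ref{thm-up5a} already gives the upper bound $\lambda^{D\!N}(\beta\A,\Omega)\le\Theta_{D\!N}\beta^{\frac{1}{\kappa_0+2}}+C\beta^{\frac{1}{\kappa_0+4}}$, which is the right-hand inequality in \eqref{ae-2g1}; so it remains to prove the lower bound. The plan is to mimic the proof of Theorem \ref{main-thm-gL}, using the boundary covering of Lemma \ref{lemma-ae-dn}. Put $r=\tfrac{1}{48}\beta^{-\frac{\kappa_0+3}{(\kappa_0+2)(\kappa_0+4)}}$. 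First I verify the hypothesis \eqref{as-dn1}: by \eqref{ae-m-3}, $m(x,\beta\B)\ge c\beta^{\frac{1}{\kappa_0+2}}$ whenever $\text{dist}(x,\partial\Omega)<r_0/2$ and $\beta$ is large, so combining this with the boundary-layer inequality \eqref{as-m-3a} yields $c\beta^{\frac{2}{\kappa_0+2}}\int_{\Omega_{8r}}|\psi|^2\le\int_\Omega|(D+\beta\A)\psi|^2$ for all $\psi\in C^1(\overline\Omega;\C)$ once $8r$ is below the admissible thickness; thus \eqref{as-dn1} holds with $\sigma=\tfrac{1}{\kappa_0+2}$. Since $r\beta^{\sigma}=\tfrac{1}{48}\beta^{\frac{1}{(\kappa_0+2)(\kappa_0+4)}}\to\infty$, Lemma \ref{lemma-ae-dn} produces a finite family of balls $\b(y_\ell,s_\ell)$ with $y_\ell\in\partial\Omega$, $r\le s_\ell\le48r$, each satisfying either $y_\ell\in\Gamma_0$ or $\b(y_\ell,2s_\ell)\cap\Gamma_0=\emptyset$, together with
\[
\lambda^{D\!N}(\beta\A,\Omega)\ \ge\ \bigl(1-C\beta^{-\frac{1}{(\kappa_0+2)(\kappa_0+4)}}\bigr)\min_\ell\mu^{D\!N}(\beta\A,\b(y_\ell,s_\ell)\cap\Omega,\Omega).
\]

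The problem is thereby reduced to lower bounds for the local quantities $\mu^{D\!N}(\beta\A,\b(y_\ell,s_\ell)\cap\Omega,\Omega)$. If $y_\ell\in\Gamma_0$, then $\kappa(y_\ell)=\kappa_0$ and $s_\ell=\gamma_\ell\beta^{-\frac{\kappa_0+3}{(\kappa_0+2)(\kappa_0+4)}}$ with $\gamma_\ell\in[\tfrac{1}{48},1]$, so Theorem \ref{thm-local-4} applies, and the constant in \eqref{AE-DN} is uniform over these $\ell$ because \eqref{cond-1g}--\eqref{cond-2g} force $\sigma(y_\ell)\ge c$ and $\tau(y_\ell)\ge c$. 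Hence, using $\lambda^{D\!N}(\A_{y_\ell},\mathbb{H}_{n(y_\ell)})\ge\Theta_{D\!N}$ from \eqref{co-2},
\[
\mu^{D\!N}(\beta\A,\b(y_\ell,s_\ell)\cap\Omega,\Omega)\ \ge\ \beta^{\frac{1}{\kappa_0+2}}\lambda^{D\!N}(\A_{y_\ell},\mathbb{H}_{n(y_\ell)})-C\beta^{\frac{1}{\kappa_0+4}}\ \ge\ \Theta_{D\!N}\beta^{\frac{1}{\kappa_0+2}}-C\beta^{\frac{1}{\kappa_0+4}}.
\]

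If instead $\b(y_\ell,2s_\ell)\cap\Gamma_0=\emptyset$, then $\text{dist}(x,\Gamma_0)\ge s_\ell$ for every $x\in\b(y_\ell,s_\ell)$, so \eqref{cond-4g1} and \eqref{ae-m-3} give $m(x,\beta\B)\ge c\beta^{\frac{1}{\kappa_0+1}}s_\ell^{\frac{1}{\kappa_0+1}}$ for all $x\in\b(y_\ell,s_\ell)\cap\partial\Omega$; this lower bound is much larger than $s_\ell^{-1}$ for $\beta$ large, since $s_\ell\ge r\gg\beta^{-\frac{1}{\kappa_0+2}}$, so the associated boundary layer is thin relative to $s_\ell$. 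Localizing the covering/trace argument from the proof of Theorem \ref{thm-op-2} to $\b(y_\ell,s_\ell)\cap\partial\Omega$ — covering by balls $\b(x_j,\rho_j)$ with $\rho_j\approx\{m(x_j,\beta\B)\}^{-1}\ll s_\ell$, using the trace inequality together with \eqref{op4-2} at each $x_j$ (whose $6\rho_j$-neighborhood lies in $\b(y_\ell,2s_\ell)\cap\Omega$, where a competitor $\psi$ — extended by zero off $\b(y_\ell,s_\ell)\cap\Omega$, which is legitimate in $H^1(\Omega)$ since $\psi=0$ on $\Omega\cap\partial\b(y_\ell,s_\ell)$ — contributes nothing extra to $\int|(D+\beta\A)\psi|^2$), and comparing $m$ across adjacent balls via Lemma \ref{lemma-op3a} — one obtains
\[
c\,\beta^{\frac{1}{\kappa_0+1}}s_\ell^{\frac{1}{\kappa_0+1}}\int_{\b(y_\ell,s_\ell)\cap\partial\Omega}|\psi|^2\ \le\ \int_{\b(y_\ell,s_\ell)\cap\Omega}|(D+\beta\A)\psi|^2 ,
\]
hence $\mu^{D\!N}(\beta\A,\b(y_\ell,s_\ell)\cap\Omega,\Omega)\ge c\beta^{\frac{1}{\kappa_0+1}}s_\ell^{\frac{1}{\kappa_0+1}}$. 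Since $s_\ell\ge r$, a short computation gives $\beta^{\frac{1}{\kappa_0+1}}s_\ell^{\frac{1}{\kappa_0+1}}\ge c\,\beta^{\frac{1}{\kappa_0+2}+\frac{1}{(\kappa_0+1)(\kappa_0+2)(\kappa_0+4)}}$, which exceeds $\Theta_{D\!N}\beta^{\frac{1}{\kappa_0+2}}$ once $\beta$ is large; thus such balls never attain the minimum.

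Since $\Gamma_0\subseteq\partial\Omega$ is covered by the partition of unity underlying Lemma \ref{lemma-ae-dn}, at least one ball has $y_\ell\in\Gamma_0$, so for $\beta$ large $\min_\ell\mu^{D\!N}(\beta\A,\b(y_\ell,s_\ell)\cap\Omega,\Omega)\ge\Theta_{D\!N}\beta^{\frac{1}{\kappa_0+2}}-C\beta^{\frac{1}{\kappa_0+4}}$. Inserting this into the displayed consequence of Lemma \ref{lemma-ae-dn} and using
\[
\frac{1}{\kappa_0+2}-\frac{1}{(\kappa_0+2)(\kappa_0+4)}=\frac{\kappa_0+3}{(\kappa_0+2)(\kappa_0+4)}>\frac{1}{\kappa_0+4},
\]
we get $\lambda^{D\!N}(\beta\A,\Omega)\ge\Theta_{D\!N}\beta^{\frac{1}{\kappa_0+2}}-C\beta^{\frac{\kappa_0+3}{(\kappa_0+2)(\kappa_0+4)}}$ for $\beta$ large; the finitely many remaining $\beta\in(1,\beta_0]$ are absorbed into $C$ since $\lambda^{D\!N}(\beta\A,\Omega)\ge0$. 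This gives \eqref{ae-2g1}. The main obstacle is the local boundary-layer inequality in the third paragraph: one must carefully localize the covering and trace arguments of Theorem \ref{thm-op-2} to $\b(y_\ell,s_\ell)$, keeping the enlarged balls inside $\b(y_\ell,2s_\ell)\cap\Omega$, justifying the zero extension of the competitor, and checking that \eqref{op4-2} — established for the full domain $\Omega$ — may be invoked at each boundary point $x_j$ with radius $\approx\{m(x_j,\beta\B)\}^{-1}$; the remainder of the argument is bookkeeping with exponents.
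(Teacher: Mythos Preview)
Your proof is correct and follows essentially the same architecture as the paper's: the upper bound from Theorem~\ref{thm-up5a}, the covering from Lemma~\ref{lemma-ae-dn} with $r\approx\beta^{-\frac{\kappa_0+3}{(\kappa_0+2)(\kappa_0+4)}}$, the verification of \eqref{as-dn1} via Remark~\ref{re-m-1}, and the two-case analysis (balls centered on $\Gamma_0$ handled by Theorem~\ref{thm-local-4}, balls disjoint from $\Gamma_0$ shown to carry a strictly larger local eigenvalue) all match the paper.

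The one place you work harder than necessary is the case $\b(y_\ell,2s_\ell)\cap\Gamma_0=\emptyset$. You localize the entire covering/trace mechanism of Theorem~\ref{thm-op-2} inside $\b(y_\ell,2s_\ell)$, checking that the small balls of radius $\approx\{m(x,\beta\B)\}^{-1}$ do not leak out. This is valid, but the paper proceeds more directly: since any competitor $\psi$ for $\mu^{D\!N}(\beta\A,\b(y_\ell,s_\ell)\cap\Omega,\Omega)$ vanishes on $\Omega\cap\partial\b(y_\ell,s_\ell)$, its zero-extension lies in $H^1(\Omega)$ (a point you already note), and then the \emph{global} inequality \eqref{op5-1} applies to it verbatim, giving
\[
\mu^{D\!N}(\beta\A,\b(y_\ell,s_\ell)\cap\Omega,\Omega)\ \ge\ c\inf_{x\in\b(y_\ell,s_\ell)\cap\partial\Omega}m(x,\beta\B)
\]
in one line. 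From there the paper bounds $m(x,\beta\B)$ below using \eqref{ae-m-3} and \eqref{cond-4g1}, exactly as you do, arriving at the exponent $\frac{\kappa_0^2+5\kappa_0+5}{(\kappa_0+1)(\kappa_0+2)(\kappa_0+4)}>\frac{1}{\kappa_0+2}$ (equivalent to your $\frac{1}{\kappa_0+2}+\frac{1}{(\kappa_0+1)(\kappa_0+2)(\kappa_0+4)}$). So what you flag as ``the main obstacle'' is in fact a non-issue once you invoke \eqref{op5-1} globally rather than reprove it locally.
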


\begin{proof}

The upper bound for $\lambda^{D\!N} (\beta \A, \Omega) -\Theta_{D\!N} \beta^{\frac{1}{\kappa_0+2}}$ in \eqref{ae-2g1} is given by Theorem \ref{thm-up5a}
under the assumptions \eqref{cond-1g}-\eqref{cond-2g} for any $y\in \Gamma_0$.
To establish  the lower bound, 
we apply Lemma  \ref{lemma-ae-dn}  with 
\begin{equation}\label{r-2}
r=\beta^{-\frac{\kappa_0+3}{(\kappa_0+2)(\kappa_0+4)}}.
\end{equation}
To this end, 
we first  use Remark  \ref{re-m-1} to obtain 
 $$
 c\, \beta^{\frac{2}{\kappa_0+2}} \int_{\Omega_{cr_0}} |\psi|^2
 \le \int_\Omega |(D+\beta\A)\psi|^2
 $$
 for any $\psi \in C^1(\overline{\Omega}; \C)$.
 Thus, by Lemma \ref{lemma-ae-dn}, 
 \begin{equation}\label{d-2-10}
 \lambda^{D\!N} (\beta \A, \Omega)
 \ge \left\{ 1-C (r \beta^{\frac{1}{\kappa_0+2}})^{-1} \right\}
 \min_\ell \mu^{D\!N}(\beta \A, \b(y_\ell, s_\ell)\cap \Omega, \Omega).
 \end{equation}
 Assume $\beta$ is sufficiently large. 
We will show that  for each $\ell$, 
\begin{equation}\label{d-2-2}
\mu^{D\!N} (\beta \A,  \b(y_\ell, s_\ell  ) \cap \Omega, \Omega)
\ge \Theta_{D\!N} \beta ^{\frac{1}{\kappa_0 +2}} - C \beta^{\frac{1}{\kappa_0+4}},
\end{equation}
which, together with \eqref{d-2-10}, gives the first inequality in  \eqref{ae-2g1}.

We consider two cases.
Suppose $\b(y_\ell, 2 s_\ell)\cap \Gamma_{0}=\emptyset$.
It follows  by \eqref{op5-1} that 
$$
\aligned
\mu^{D\!N}  (\beta \A,  \b(y_\ell, s_\ell ) \cap \Omega, \Omega)
 & \ge c \inf_{x\in \b(y_\ell, s_\ell)\cap \partial \Omega}    m (x, \beta \B) \\
 &\ge c \beta^{\frac{1}{\kappa_0+1}}
   \inf_{x\in \b(y_\ell, s_\ell)\cap\partial \Omega}  \bigg( \sum_{|\alpha|\le \kappa_0-1} 
   | \partial^\alpha \B (x)|\bigg)^{\frac{1}{\kappa_0+1}}\\
 &\ge c \beta^{\frac{1}{\kappa_0+1}} \inf_{x\in \b(y_\ell, s_\ell)\cap\partial\Omega}
 \big[ \text{\rm dist} (x, \Gamma_0) \big]^{\frac{1}{\kappa_0+1}}\\
 &\ge  c  (\beta r )^{\frac{1}{\kappa_0+1}}\\
 &=c \beta^{\frac{\kappa_0^2 + 5\kappa_0 +5}{(\kappa_0+1)(\kappa_0+2)(\kappa_0+4)}}, 
  \endaligned
$$
where we have used Remark \ref{re-m-1} and \eqref{cond-4g1} for the second and third  inequalities.
Since
$$
{\frac{\kappa_0^2 + 5\kappa_0 +5}{(\kappa_0+1)(\kappa_0+2)(\kappa_0+4)}}
> \frac{1}{\kappa_0+2},
$$
we obtain \eqref{d-2-2} for the case $\b(y_\ell, 2s_\ell)\cap \Gamma_0=\emptyset$.

Finally, suppose $\b(y_\ell, 2s_\ell)\cap \Gamma_0  \neq \emptyset$. Then $y_\ell \in \Gamma_0$.
The inequality \eqref{d-2-2} follows readily from \eqref{AE-DN}.
\end{proof}


\section{Examples}

\subsection{The non-vanishing case}\label{sub-nv}

\begin{lemma}\label{lemma-const}
Suppose $\B$ is constant. Then
\begin{equation}
\lambda(\A, \R^d)=\lambda^D (\A, \mathbb{H}_n)=\text{\rm Tr}^+(\B)
\end{equation}
for any $n \in \mathbb{S}^{d-1}$, where $\text{\rm Tr}^+ (\B)= (1/2)  \text{\rm Tr} ( [ \B^* \B ]^{1/2})$.
\end{lemma}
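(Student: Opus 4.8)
My plan is to establish both equalities by: (i) reducing, via an orthogonal change of variables, to a canonical ``block'' form of the constant field $\B$; (ii) proving the sharp lower bound $\text{\rm Tr}^+(\B)$ on an \emph{arbitrary} open set (in particular on $\R^d$ and on $\mathbb{H}_n$) by a ground-state substitution; and (iii) proving the matching upper bound with an explicit cut-off Gaussian quasimode, plus a translation trick for the half-space. As a preliminary: since $\B$ is a constant real antisymmetric matrix, its nonzero eigenvalues come in pairs $\pm i b_1,\dots,\pm i b_m$ with $b_j>0$ and $2m\le d$, so $[\B^*\B]^{1/2}=[-\B^2]^{1/2}$ has eigenvalues $b_1,\dots,b_m$, each of multiplicity two, together with $d-2m$ zeros, whence $\text{\rm Tr}^+(\B)=\sum_{j=1}^m b_j$. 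Choosing an orthogonal $O$ so that $O^T\B O$ is block-diagonal with $2\times 2$ blocks $\left(\begin{smallmatrix}0&b_j\\-b_j&0\end{smallmatrix}\right)$ and a zero block of size $d-2m$, the map $\psi\mapsto\psi\circ O$ preserves $C_0^1(\R^d;\C)$, carries $C_0^1(\mathbb{H}_n;\C)$ onto $C_0^1(\mathbb{H}_{O^Tn};\C)$, and changes $\int|(D+\A)\psi|^2$ into $\int|(D+\widetilde\A)(\psi\circ O)|^2$ with $\nabla\times\widetilde\A=O^T\B O$; since $O^Tn$ is again an arbitrary unit vector, I may assume $\B$ is in this canonical form. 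Writing $x=(x',x'')\in\R^{2m}\times\R^{d-2m}$ and taking the symmetric gauge in each block, $\A$ depends only on $x'$, $A_k=0$ for $k>2m$, and $(D+\A)^2=\sum_{j=1}^m L_j+\sum_{k>2m}D_k^2$, where $L_j=\pi_{2j-1}^2+\pi_{2j}^2$ with $\pi_k:=D_k+A_k$ and field $B_{2j-1,2j}=b_j$.

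For the lower bound I use the identity $(\pi_{2j-1}+i\pi_{2j})(\pi_{2j-1}-i\pi_{2j})=L_j-b_j$, which follows from $[\pi_{2j-1},\pi_{2j}]=-iB_{2j-1,2j}=-ib_j$. Hence for any open $\mathcal O\subset\R^d$ and $\psi\in C_0^1(\mathcal O;\C)$, integration by parts (legitimate because $\psi$ is compactly supported in $\mathcal O$) gives
\begin{equation*}
\int_{\mathcal O}\big(|\pi_{2j-1}\psi|^2+|\pi_{2j}\psi|^2\big)=b_j\int_{\mathcal O}|\psi|^2+\int_{\mathcal O}\big|(\pi_{2j-1}-i\pi_{2j})\psi\big|^2\ \ge\ b_j\int_{\mathcal O}|\psi|^2.
\end{equation*}
Summing over $j$ and adding $\sum_{k>2m}\int_{\mathcal O}|D_k\psi|^2\ge 0$ yields $\int_{\mathcal O}|(D+\A)\psi|^2\ge\big(\sum_{j}b_j\big)\int_{\mathcal O}|\psi|^2$. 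Taking $\mathcal O=\R^d$ and $\mathcal O=\mathbb{H}_n$ gives $\lambda(\A,\R^d)\ge\text{\rm Tr}^+(\B)$ and $\lambda^D(\A,\mathbb{H}_n)\ge\text{\rm Tr}^+(\B)$.

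For the upper bound I take $g(x)=\exp\!\big(-\tfrac14\sum_{j}b_j(x_{2j-1}^2+x_{2j}^2)\big)$, which depends only on $x'$ and satisfies $(\pi_{2j-1}-i\pi_{2j})g=0$, hence $L_jg=b_jg$; consequently $\sum_{k\le 2m}|\pi_kg|^2=W(x')g(x')^2$ with $\int_{\R^{2m}}Wg^2\,dx'=\big(\sum_jb_j\big)\int_{\R^{2m}}g^2\,dx'$, while $D_kg=0$ for $k>2m$. For $R>1$ let $\chi_R(x)=\eta(|x|/R)$ with $\eta$ a fixed smooth cutoff equal to $1$ on $[0,1]$ and $0$ on $[2,\infty)$, and put $\Psi_R=g\chi_R\in C_0^1(\R^d;\C)$. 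Expanding $|(D+\A)\Psi_R|^2$, the leading term integrates to $\int\chi_R^2\,Wg^2$, while the remainder ($\int g^2|\nabla\chi_R|^2$ and a cross term equal after integration by parts to $\tfrac12\int\chi_R^2\,\Delta_{x'}(g^2)$) is $o\big(\int\chi_R^2g^2\big)$ as $R\to\infty$; since $\chi_R\uparrow 1$ and $\int_{\R^{2m}}Wg^2=\big(\sum_jb_j\big)\int_{\R^{2m}}g^2$, the quotient $\int|(D+\A)\Psi_R|^2/\int|\Psi_R|^2$ tends to $\sum_jb_j$, so $\lambda(\A,\R^d)\le\text{\rm Tr}^+(\B)$ and, with the lower bound, equality holds. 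For the half-space I note that $\lambda^D(\A,\mathbb{H}_n)=\lambda^D(\A,\{x:\langle x,n\rangle<T\})$ for every $T\in\R$: the coordinate shift $x\mapsto x-Tn$ together with the fact that $\A(\cdot-Tn)-\A=-T\A(n)$ is a constant vector, hence a pure gauge, gives a Rayleigh-quotient--preserving bijection between the two test-function classes. Since $\Psi_R$ is supported in $\{|x|<2R\}\subset\{\langle x,n\rangle<2R\}$, choosing $T=2R$ shows $\lambda^D(\A,\mathbb{H}_n)=\lambda^D(\A,\{\langle x,n\rangle<2R\})\le\sum_jb_j+o(1)$, hence $\lambda^D(\A,\mathbb{H}_n)\le\text{\rm Tr}^+(\B)$, and equality follows.

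The step I expect to require the most care is the error analysis of the quasimode $\Psi_R=g\chi_R$: because $g$ is not normalizable when $2m<d$, one must work with Rayleigh quotients and check that both $\int g^2|\nabla\chi_R|^2$ and the cross term are negligible relative to $\int\chi_R^2 g^2$; the radial choice of $\chi_R$ (so that $\nabla_{x'}\chi_R$ is small where $x'$ is small) is exactly what keeps the cross term under control. The half-space assertion then comes essentially for free from the shift-invariance in the previous paragraph, which uses that $\A$ is linear so that translating the half-space costs only a gauge transformation. Everything else is the classical computation of the lowest Landau level together with the orthogonal invariance of the Rayleigh quotients.
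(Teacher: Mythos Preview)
Your argument is correct. The commutator lower bound, the Gaussian quasimode for the upper bound in $\R^d$, and the translation--gauge trick for $\mathbb{H}_n$ all work as you describe; your remark about the radial cutoff controlling $\nabla_{x'}\chi_R$ on the support of $g$ is exactly the point that makes the cross term $o\big(\int\chi_R^2 g^2\big)$ when $2m<d$.

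Your route is, however, genuinely different from the paper's. The paper does not redo the Landau-level computation: it simply quotes $\lambda(\A,\R^d)=\text{Tr}^+(\B)$ as known, and then proves $\lambda^D(\A,\mathbb{H}_n)=\lambda(\A,\R^d)$ by sandwiching. After rotating so that $\mathbb{H}_n=\R^d_+$, it uses the chain
\[
\lambda(\A,\R^d)\le\lambda^D(\A,\R^d_+)\le\lambda^D(\A,Q_+(0,R))=\lambda^D\big(\A,(0,\dots,0,-R/4)+Q_+(0,R)\big)\le\lambda^D(\A,\b(0,R/4)),
\]
where the equality is the same translation--gauge invariance you invoke, and then applies its Lemma~\ref{lemma-p1} (the ball approximation $\lambda^D(\A,\b(0,R))\le\lambda(\A,\R^d)+CR^{-2}$, proved earlier by partition of unity and the invariant-subspace machinery) to close the loop as $R\to\infty$. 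So the paper trades the explicit quasimode for a black-box appeal to Lemma~\ref{lemma-p1}; your version is longer but self-contained, using nothing beyond the elementary identity $\int|(\pi_j-i\pi_k)\psi|^2=\int(|\pi_j\psi|^2+|\pi_k\psi|^2)-\int B_{jk}|\psi|^2$ and a direct test function. Both proofs share the key observation that a constant field makes the problem translation-invariant up to gauge.
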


\begin{proof}

It is known that $\lambda(\A, \R^d) = \text{\rm Tr}^+(\B)$.
See e.g. \cite{Helffer-book}.
To show $ \lambda^D (\A, \mathbb{H}_n)=\text{\rm Tr}^+(\B)$, 
by rotation, we may assume $\mathbb{H}_n =\R^d_+$.
In view of Lemma \ref{lemma-p1}, we have
$$
\aligned
\lambda(\A, \R^d)
& \le \lambda^D (\A, \R^d_+)
\le \lambda^D(\A, Q_+(0, R))\\
& =\lambda^D (\A, (0, -R/4) + Q_+ (0, R)) \\
&\le \lambda^D(\A, \b(0, R/4)) \\
&\le \lambda (\A, \R^d) + C R^{-2},
\endaligned
$$
where we have used the fact that $\B$ is constant for the equation.
By letting $R\to \infty$, we obtain $\lambda(\A, \R^d)=\lambda^D(\A, \R^d_+)$.
\end{proof}

 Assume that 
\begin{equation}\label{n-1}
\min_{x\in \overline{\Omega}} |\B(x)|>0.
\end{equation}
It follows  by Lemma \ref{lemma-const} that, 
\begin{equation}\label{const-1}
\Theta_D = \min_{y\in \overline{\Omega}} \text{\rm Tr}^+ (\B(y)),
\end{equation}
where we also use the fact that  $ \text{\rm Tr}^+ (\B(y))$ is continuous.

\begin{thm}\label{thm-n-1}
Suppose $\A\in C^2 (\R^d; \R^d)$ and \eqref{n-1} holds.
Let $\Omega$ be a bounded $C^{1, 1}$ domain. Then
\begin{equation}\label{n-1a}
\aligned
\lambda^D (\beta\A, \Omega)
 & =\beta  \min_{y\in \overline{\Omega}} \text{\rm Tr}^+ (\B(y)) +O(\beta^{\frac{3}{4}}), \\
\lambda^N (\beta\A, \Omega)
 &=\beta \Theta_N
+O(\beta^{\frac{3}{4}}),
\endaligned
\end{equation}
as $\beta\to \infty$, where $ \Theta_N$ is given by \eqref{co-1} with $\Gamma_1=\Omega$ and $\Gamma_2=\partial\Omega$.
\end{thm}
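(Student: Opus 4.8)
The plan is to obtain Theorem~\ref{thm-n-1} directly from Theorem~\ref{main-thm-g1}, whose proof (via Theorems~\ref{main-thm-gL} and \ref{thm-mup1} and the local estimates of Section~\ref{section-local}) uses only $\B\in C^{\kappa_*+1}(\overline{\Omega})$ and is therefore available under the hypothesis $\A\in C^2$. First I would record the structural consequences of \eqref{n-1}: since $|\B(x)|>0$ on $\overline{\Omega}$, the integer $\kappa(x)$ of \eqref{ex-1} vanishes at every point, so $\kappa_*=\kappa_0=0$, the sets of \eqref{gamma} are $\Gamma_1=\Omega$ and $\Gamma_2=\partial\Omega$, hence $\Gamma_*=\overline{\Omega}$. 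For $y\in\overline{\Omega}$ the $\kappa_*^{th}$ Taylor polynomial $\P_y$ of $\B(\cdot+y)$ is the constant matrix $\B(y)$, so $\A_y$ of \eqref{A-0} is linear with $\nabla\times\A_y=\B(y)$, and by definition its invariant subspace is $V_y=\R^d$, whence $V_y^\perp=\{0\}$.

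Next I would verify the hypotheses of Theorem~\ref{main-thm-g1}. Since $V_y^\perp=\{0\}$, the minimum in \eqref{cond-1g} is over the empty set $\mathbb{S}^{d-1}\cap V_y^\perp$, hence equals $+\infty$, and \eqref{cond-1g} holds; \eqref{cond-2g} holds because $n(y)\in V_y=\R^d$ gives $\max_{v\in V_y}|\langle v,n(y)\rangle|\ge 1$. Condition \eqref{cond-4g} holds trivially since $\Gamma_*=\overline{\Omega}$, so $\text{\rm dist}(x,\Gamma_*)=0$ for $x\in\overline{\Omega}$, and \eqref{cond-5g} reduces, as $\Gamma_2=\partial\Omega$, to $\text{\rm dist}(y,\partial\Omega)\ge c\,\text{\rm dist}(y,\partial\Omega)$, true for $c\le1$. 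Finally \eqref{cond-3g} holds for a bounded Lipschitz (in particular $C^{1,1}$) domain: moving inward from any $y_0\in\partial\Omega$ a distance comparable to $r$ along a cone direction produces $y_r\in\Omega=\Gamma_1$ with $\b(y_r,cr)\subset\Omega$ and $|y_r-y_0|\le Cr$. I would also note that for a \emph{constant} field the proofs of Lemmas~\ref{lemma-p1}--\ref{lemma-p3}, and hence of Theorems~\ref{thm-local-1}--\ref{thm-local-3}, run through the separate branch $\dim V=d$ and never invoke $\sigma(y)$ or $\tau(y)$, so the constants there depend only on $\|\B\|_{C^1(\overline{\Omega})}$ and on $\Omega$. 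This is, I expect, the only genuinely delicate point of the argument: one must check that the degenerate-looking quantities $\sigma(y)$, $\tau(y)$, defined through the trivial space $V_y^\perp$, are never actually used when $\kappa_*=0$ — which is precisely the constant-coefficient case already isolated in Section~\ref{section-p}.

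With all hypotheses in force, Theorem~\ref{main-thm-g1} yields
\[
\lambda^D(\beta\A,\Omega)=\Theta_D\,\beta^{\frac{2}{\kappa_*+2}}+O\!\left(\beta^{\frac{1}{\kappa_*+2}+\frac{1}{\kappa_*+4}}\right),\qquad
\lambda^N(\beta\A,\Omega)=\Theta_N\,\beta^{\frac{2}{\kappa_*+2}}+O\!\left(\beta^{\frac{1}{\kappa_*+2}+\frac{1}{\kappa_*+4}}\right),
\]
and inserting $\kappa_*=0$, for which $\tfrac{2}{\kappa_*+2}=1$ and $\tfrac{1}{\kappa_*+2}+\tfrac{1}{\kappa_*+4}=\tfrac12+\tfrac14=\tfrac34$, gives the two expansions of \eqref{n-1a} with remainder $O(\beta^{3/4})$. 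It remains to identify the Dirichlet constant: since $\nabla\times\A_y=\B(y)$ is constant, Lemma~\ref{lemma-const} gives $\lambda(\A_y,\R^d)=\lambda^D(\A_y,\mathbb{H}_{n(y)})=\text{\rm Tr}^+(\B(y))$, so by \eqref{co-1},
\[
\Theta_D=\min\!\Big\{\inf_{y\in\Omega}\text{\rm Tr}^+(\B(y)),\ \inf_{y\in\partial\Omega}\text{\rm Tr}^+(\B(y))\Big\}=\min_{y\in\overline{\Omega}}\text{\rm Tr}^+(\B(y)),
\]
the last equality being \eqref{const-1}, which uses the continuity of $y\mapsto\text{\rm Tr}^+(\B(y))$ on $\overline{\Omega}$. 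For $\Theta_N$ one simply keeps the expression from \eqref{co-1} with $\Gamma_1=\Omega$ and $\Gamma_2=\partial\Omega$; it cannot in general be made more explicit, since $\lambda^N(\A_y,\mathbb{H}_{n(y)})$ depends on the angle between $\B(y)$ and $n(y)$. This completes the proof.
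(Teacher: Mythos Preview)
Your proof is correct and follows the same route as the paper: reduce to Theorem~\ref{main-thm-g1} by observing that $\kappa_*=0$, $\Gamma_1=\Omega$, $\Gamma_2=\partial\Omega$, and $V_y=\R^d$ for every $y$, so all the structural hypotheses hold trivially. The paper's proof is a three-line version of exactly this argument; your additional care in noting that the constant-field branch of Lemmas~\ref{lemma-p1}--\ref{lemma-p3} bypasses $\sigma(y)$ and $\tau(y)$ is a legitimate point that the paper leaves implicit.
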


\begin{proof}
By the condition  \eqref{n-1}, we have  $\kappa_*=0$, $\Gamma_1 =\Omega$ and $\Gamma_2=\partial\Omega$. 
Moreover,  for any  $y\in \overline{\Omega}$,   $\P_{y} (x) = \B(y)$ is constant in $x$ and thus  its invariant subspace $V=\R^d$.
As a result, $\B$ satisfies the conditions in Theorem \ref{main-thm-g1}, from which \eqref{n-1a} follows.
\end{proof}

\begin{remark}
{\rm 
In the case of the Dirichlet condition,  under the assumption \eqref{n-1}, it was proved in \cite{Helffer-1996} that 
$$
\Theta_D \beta - C \beta^{\frac34}
\le \lambda^D(\beta \A, \Omega)
\le \Theta_D \beta + C \beta^{\frac23}, 
$$
which givers a better upper bound.
The case of the Neumann condition  was studied in \cite{Lu-1999, Helffer-2001} for $d=2$. 
The asymptotic expansion for $\lambda^N(\beta\A, \Omega)$ in \eqref{n-1a} was established  in \cite{Helffer-2001}  with 
$$
\Theta_N = \min \left( \inf_{\overline{\Omega}} |B_{12}|, \Theta \inf_{\partial\Omega} |B_{12}| \right),
$$
where $\Theta \in (0, 1)$ is a universal constant.
}
\end{remark}

\begin{thm}\label{thm-n-2}
Suppose $\A\in C^2 (\R^d; \R^d)$.
Let $\Omega$ be a bounded $C^{1, 1}$ domain in $\R^d$. Assume that 
\begin{equation}\label{n-2}
\min_{y\in \partial\Omega} |\B(y)|>0.
\end{equation}
Then
\begin{equation} \label{n2-b}
\Theta_{D\!N }\beta^{\frac12} - C \beta^{\frac38}
\le \lambda^{D\!N} (\beta\A, \Omega)
\le \Theta_{D\!N} \beta^{\frac12} + C \beta^{\frac14},
\end{equation}
for $\beta$ large, where $\Theta_{D\!N}$ is given by \eqref{co-2} with $\Gamma_0=\partial\Omega$.
\end{thm}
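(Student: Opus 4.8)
The plan is to read off this statement as the $\kappa_0=0$ instance of Theorem \ref{main-thm-gL1}, so that, beyond this identification, the only work is to check that the structural hypotheses of that theorem hold --- indeed become vacuous or trivial --- in the non-vanishing regime. First I would note that \eqref{n-2} forces $\kappa(y)=0$ for every $y\in\partial\Omega$; hence $|\B|$ does not vanish to infinite order at any point of $\partial\Omega$, and in the notation of \eqref{kappa-1}--\eqref{ga-1} we have $\kappa_0=0$ and $\Gamma_0=\partial\Omega$. For each $y\in\Gamma_0$ the Taylor polynomial $\P_y(x)=\B(y)$ is constant, so by Proposition \ref{prop-1} its invariant subspace is $V_y=\R^d$ and $\A_y$ is the linear potential with $\nabla\times\A_y=\B(y)$.

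Next I would verify the three hypotheses invoked in Theorem \ref{main-thm-gL1}. Condition \eqref{cond-1g} concerns a minimum over $\mathbb{S}^{d-1}\cap V_y^\perp=\emptyset$ and is therefore satisfied vacuously; condition \eqref{cond-2g} holds with the choice $v=n(y)$, since $|\langle n(y),n(y)\rangle|=1$; and in \eqref{cond-4g1} the left-hand side is an empty sum while $\text{\rm dist}(x,\Gamma_0)=0$ for $x\in\partial\Omega=\Gamma_0$, so that inequality is trivially true. The one genuine bookkeeping point is regularity: Theorem \ref{main-thm-gL1}, as well as the ingredients it uses --- the operator lower bound of Theorem \ref{thm-op1} (through Remark \ref{re-op-2}), the local estimate of Theorem \ref{thm-local-4}, and the upper bound of Theorem \ref{thm-up5a} --- are stated assuming $\A\in C^\infty$, but that hypothesis enters only through the norm $\|\B\|_{C^{\kappa_0+1}(\overline\Omega)}$, which here is $\|\B\|_{C^1(\overline\Omega)}$; hence $\A\in C^2$ suffices. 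With all hypotheses in place, Theorem \ref{main-thm-gL1} yields $-C\beta^{\frac{\kappa_0+3}{(\kappa_0+2)(\kappa_0+4)}}\le\lambda^{D\!N}(\beta\A,\Omega)-\Theta_{D\!N}\beta^{\frac{1}{\kappa_0+2}}\le C\beta^{\frac1{\kappa_0+4}}$, and substituting $\kappa_0=0$, i.e. $\frac{\kappa_0+3}{(\kappa_0+2)(\kappa_0+4)}=\frac38$, $\frac1{\kappa_0+2}=\frac12$, $\frac1{\kappa_0+4}=\frac14$, gives \eqref{n2-b}.

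If a self-contained derivation were preferred, I would argue directly: the upper bound follows from Lemma \ref{lemma-up2-4}, whose constant is uniform over $y\in\partial\Omega$ here because $\tau(y)\equiv 1$ (since $n(y)\in V_y=\R^d$) and the tiling construction in Lemma \ref{lemma-p4} degenerates for a constant magnetic field; the lower bound follows from Lemma \ref{lemma-ae-dn} applied with $r=\beta^{-3/8}$ and $\sigma=\tfrac12$, whose hypothesis \eqref{as-dn1} reduces to $c\,\beta\int_{\Omega_{8r}}|\psi|^2\le\int_\Omega|(D+\beta\A)\psi|^2$, which holds because $m(x,\beta\B)\ge c\,\beta^{1/2}$ by Remark \ref{re-m-1}; then \eqref{AE-DN} of Theorem \ref{thm-local-4} is applied to each ball $\b(y_\ell,s_\ell)$ furnished by that lemma, all of which have $y_\ell\in\partial\Omega=\Gamma_0$. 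I expect no substantial obstacle; the only points requiring care are the empty-set conventions that trivialize \eqref{cond-1g} and \eqref{cond-4g1}, and the $C^\infty\to C^2$ regularity reduction described above.
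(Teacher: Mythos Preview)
Your proposal is correct and follows exactly the paper's approach: observe that \eqref{n-2} gives $\kappa_0=0$ and $\Gamma_0=\partial\Omega$, then invoke Theorem \ref{main-thm-gL1}. The paper's proof is just this one line, whereas you have (helpfully) spelled out why the structural hypotheses \eqref{cond-1g}, \eqref{cond-2g}, \eqref{cond-4g1} are vacuous or trivial when $V_y=\R^d$, and addressed the $C^\infty$ versus $C^2$ regularity point that the paper leaves implicit.
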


\begin{proof}
By \eqref{n-2},  we have 
 $\kappa_0=0$ and $\Gamma_0 =\partial\Omega$.
 The estimates in \eqref{n2-b} follow directly  from Theorem \ref{main-thm-gL1}.
 \end{proof}
 
 \begin{remark}
 {\rm
 In the case $d=2$, under the assumption \eqref{n-1}, it was proved in \cite{Helffer-2024} that
 $$
 \lambda^{D\!N} (\beta \A, \Omega)
 = \hat{\alpha} \inf_{x\in \partial\Omega} | B_{12} (x)|^{\frac12}  \beta^{\frac12}
 +o(\beta^{\frac12}),
 $$
 where $\hat{\alpha}\in (0, 1)$ is a universal constant.
 If $B_{12}$ is constant, a two-term asymptotic expansion  for $\lambda^{D\!N} (\beta \A, \Omega)$ is also obtained in \cite{Helffer-2024}.
 }
 \end{remark}
 

\subsection{The case of discrete wells}\label{sub-dis}

\begin{thm}\label{thm-d-1}
Let $\Omega$ be a bounded $C^{1, 1}$ domain in $\R^d$.
Suppose  that $|\B|$ does not vanish to infinite order at any points in $\overline{\Omega}$.
Also assume that the set $\Gamma_*=\Gamma_1\cup\Gamma_2$ in \eqref{gamma} is finite and  that  there exists $c>0$ such that 
\begin{equation}\label{dist-1}
\sum_{|\alpha|\le \kappa_*-1}  |\partial^\alpha \B (x)|
 \ge c \, \text{\rm dist}(x, \Gamma_*)
\end{equation}
for any $x\in \overline{\Omega} $.
Then
\begin{equation}\label{ae-2}
\lambda^D(\beta \A, \Omega)
= \Theta_D \beta^{\frac{2}{\kappa_*+2}} + O(\beta^{\frac{1}{\kappa_*+2} +\frac{1}{\kappa_*+4}}), 
\end{equation}
\begin{equation}\label{ae-2a}
\lambda^N(\beta \A, \Omega)
= \Theta_N \beta^{\frac{2}{\kappa_*+2}} + O(\beta^{\frac{1}{\kappa_*+2} +\frac{1}{\kappa_*+4}}), 
\end{equation}
as $\beta \to \infty$,
where  $\Theta_D, \Theta_N $ are given by \eqref{co-1}. 
\end{thm}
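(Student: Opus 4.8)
The plan is to deduce Theorem~\ref{thm-d-1} from Theorem~\ref{main-thm-g1} — equivalently, from Theorem~\ref{thm-mup1} for the upper bounds and Theorem~\ref{main-thm-gL} for the lower bounds — by checking that, in the present setting, the finiteness of $\Gamma_*$ together with \eqref{dist-1} forces all the structural hypotheses of those theorems. Once this verification is complete the conclusions \eqref{ae-2}--\eqref{ae-2a} follow verbatim from the arguments already given.

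First I would unwind the consequences of finiteness. Since $\Gamma_1\subset\Omega$ is finite it is closed, so $\overline{\Gamma}_1\cap\partial\Omega=\emptyset$ and \eqref{cond-3g} is vacuous; similarly $\min_{y\in\Gamma_1}\mathrm{dist}(y,\partial\Omega)>0$ while $\mathrm{dist}(y,\Gamma_2)$ is bounded on $\Gamma_1$ (the condition being empty if $\Gamma_2=\emptyset$), so \eqref{cond-5g} holds. Finiteness of $\Gamma_1=\{y\in\Omega:\kappa(y)=\kappa_*\}$ also forces $\kappa_*\ge 1$, since $\kappa_*=0$ would mean $\B\neq 0$ on $\overline{\Omega}$ and hence $\Gamma_1=\Omega$. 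Consequently, for each $y\in\Gamma_*$ the polynomial $\P_y=\nabla\times\A_y$ is homogeneous of degree $\kappa_*\ge 1$ with a nonzero top-order part, so by Proposition~\ref{prop-1} its invariant subspace $V_y$ is proper; then $\sigma(y)>0$ (see \eqref{mp-1b}), and taking the minimum over the finite set $\Gamma_*$ yields \eqref{cond-1g} with $c=\min_{\Gamma_*}\sigma(y)$. Finally, \eqref{cond-4g} is exactly the hypothesis \eqref{dist-1}.

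The one delicate point is \eqref{cond-2g}, the lower bound $\tau(y)\ge c$ on $\Gamma_2$, which is the step I expect to require the most care. In the proofs of Theorems~\ref{thm-mup1} and~\ref{main-thm-gL}, \eqref{cond-2g} is invoked only to make the boundary local estimates \eqref{AE-D}--\eqref{AE-N} of Theorems~\ref{thm-local-2}--\ref{thm-local-3} hold \emph{uniformly} over $y\in\Gamma_2$; when $\Gamma_2$ is finite this uniformity is automatic, provided each individual estimate holds with a finite (possibly $y$-dependent) constant. For $y\in\Gamma_2$ with $\tau(y)>0$ that is precisely the content of Theorems~\ref{thm-local-2}--\ref{thm-local-3} as stated. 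For $y\in\Gamma_2$ with $\tau(y)=0$, i.e.\ $V_y$ tangent to $\partial\Omega$, one reruns the proof of Theorem~\ref{thm-local-2} through Theorem~\ref{thm-p5} and Lemma~\ref{lemma-p2}, taking in Lemma~\ref{lemma-p2} the ``cube'' choice of the tiling parallelotope $P$ that is valid when $V\subset\partial\R^d_+$; the resulting constant then depends only on $\sigma(y)$ and is again finite. Taking the maximum of these finitely many constants over $\Gamma_2$ recovers the uniform versions of \eqref{AE-D}--\eqref{AE-N}. With all hypotheses in place, Theorem~\ref{thm-mup1} gives the upper bounds in \eqref{ae-2}--\eqref{ae-2a} and Theorem~\ref{main-thm-gL} the matching lower bounds; apart from the bookkeeping around $\tau(y)=0$ just described, every step is a direct specialization of the general machinery.
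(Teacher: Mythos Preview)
Your proposal is correct and follows essentially the same route as the paper, which simply says ``This follows from the proof of Theorem~\ref{main-thm-g1}. Indeed, since $\Gamma_*$ is finite, the estimates \eqref{AE-D} and \eqref{AE-N} hold uniformly for $y\in\Gamma_*$.'' You supply considerably more detail---verifying \eqref{cond-1g}, \eqref{cond-3g}, \eqref{cond-4g}, \eqref{cond-5g} explicitly and, more importantly, handling the case $\tau(y)=0$ by observing that Lemma~\ref{lemma-p2} then uses the cube tiling with constant depending only on $\sigma(y)$---whereas the paper leaves all of this implicit in the phrase ``follows from the proof.''
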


\begin{proof}

This follows from  the proof of Theorem  \ref{main-thm-g1}. Indeed, since $\Gamma_*$ is a finite, 
the estimates \eqref{AE-D} and \eqref{AE-N} hold uniformly for $y\in \Gamma_*$.
\end{proof}

\begin{remark}
{\rm 
Under the assumption that $\Gamma_*$ is finite, \eqref{ae-2} was proved in \cite{Helffer-1996}, which 
also established asymptotic expansions for all eigenvalues of $(D+\beta \A)^2$ subject to the Dirichlet condition.
}
\end{remark}

\begin{thm}\label{thm-d3}
Let $\Omega$ be a bounded $C^{1, 1}$ domain.
Suppose that  $|\B|$ does not vanish to infinite order at any point on $\partial\Omega$.
Also assume that the set $\Gamma_0$ in \eqref{ga-1} is finite and  that  there exists $c>0$ such that
\begin{equation}\label{dist-2}
\sum_{|\alpha|\le \kappa_0-1}  |\partial^\alpha \B (x)|
 \ge c \, \text{\rm dist}(x, \Gamma_0)
\end{equation}
for any $x\in \partial\Omega$.
Then
\begin{equation}\label{ae-3}
 \Theta_{D\!N}  \beta^{\frac{1}{\kappa_0+2}} -C \beta^{\frac{\kappa_0+3}{(\kappa_0+2)(\kappa_0+4)}}
 \le 
\lambda^{D\!N}(\beta \A, \Omega)
\le  \Theta_{D\!N}  \beta^{\frac{1}{\kappa_0+2}} + C\beta^{\frac{1}{\kappa_0+4} }
\end{equation}
for $\beta>1$,
where  $\Theta_{D\!N}  $ is  given by \eqref{co-2}. 
\end{thm}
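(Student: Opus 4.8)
The plan is to deduce this statement from the proof of Theorem~\ref{main-thm-gL1}, in exact analogy with the way Theorem~\ref{thm-d-1} is obtained from Theorem~\ref{main-thm-g1}: the hypothesis \eqref{dist-2} is literally condition \eqref{cond-4g1}, while the role of \eqref{cond-1g}--\eqref{cond-2g} in Theorem~\ref{main-thm-gL1} was only to render the constant in \eqref{AE-DN} uniform over $\Gamma_0$, and this uniformity is automatic once $\Gamma_0$ is a finite set. To set this up I would first record two elementary facts. Since $\{y\in\partial\Omega:\B(y)\neq 0\}$ is open and nonempty, finiteness of $\Gamma_0$ forces $\kappa_0\ge 1$; hence for every $y\in\Gamma_0$ we have $\kappa(y)=\kappa_0\ge 1$ and $\nabla\partial^\alpha B_{j\ell}(y)\neq 0$ for some $|\alpha|=\kappa_0-1$, so $\sigma(y)>0$ in \eqref{sigma-1}. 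Consequently Theorem~\ref{thm-local-4} applies at each $y\in\Gamma_0$ with a finite constant depending on $\sigma(y)$ and $\tau(y)$ (the degenerate subcase $\tau(y)=0$, i.e.\ $V_y\subset T_y\partial\Omega$, being covered by the first alternative of the tiling construction in Lemma~\ref{lemma-p2} with no blow-up); replacing all these finitely many constants by their maximum gives \eqref{AE-DN} with a single constant valid for all $y\in\Gamma_0$.

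For the upper bound in \eqref{ae-3} I would apply Lemma~\ref{lemma-up2-4} at each $y\in\Gamma_0$, which yields
\[
\lambda^{D\!N}(\beta\A,\Omega)\le\beta^{\frac{1}{\kappa_0+2}}\lambda^{D\!N}(\A_y,\mathbb{H}_{n(y)})+C\,\beta^{\frac{1}{\kappa_0+4}},
\]
the constant $C$ being independent of $y$ by the uniformity just discussed. Taking the minimum over the finitely many $y\in\Gamma_0$ produces $\lambda^{D\!N}(\beta\A,\Omega)\le\Theta_{D\!N}\beta^{\frac{1}{\kappa_0+2}}+C\beta^{\frac{1}{\kappa_0+4}}$ with $\Theta_{D\!N}$ as in \eqref{co-2}.

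For the lower bound I would rerun the proof of Theorem~\ref{main-thm-gL1}. Because $|\B|$ does not vanish to infinite order on $\partial\Omega$, Remark~\ref{re-m-1} (estimates \eqref{ae-m-3} and \eqref{as-m-3a}) provides the hypothesis \eqref{as-dn1} of Lemma~\ref{lemma-ae-dn} with $\sigma=\frac{1}{\kappa_0+2}$. Choosing $r=\beta^{-\frac{\kappa_0+3}{(\kappa_0+2)(\kappa_0+4)}}$, so that $r\beta^{1/(\kappa_0+2)}=\beta^{1/((\kappa_0+2)(\kappa_0+4))}\to\infty$, Lemma~\ref{lemma-ae-dn} reduces the problem to bounding $\mu^{D\!N}(\beta\A,\b(y_\ell,s_\ell)\cap\Omega,\Omega)$ from below for each ball of the finite cover, the prefactor $1-C(r\beta^{1/(\kappa_0+2)})^{-1}$ contributing precisely the error $\beta^{(\kappa_0+3)/((\kappa_0+2)(\kappa_0+4))}$ on the left of \eqref{ae-3}. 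For balls with $\b(y_\ell,2s_\ell)\cap\Gamma_0=\emptyset$ I would use the boundary operator bound \eqref{op5-1} together with the pointwise estimate $m(x,\beta\B)\ge c\,\beta^{\frac{1}{\kappa_0+1}}\bigl(\mathrm{dist}(x,\Gamma_0)\bigr)^{\frac{1}{\kappa_0+1}}$ (from \eqref{ae-m-3} and \eqref{dist-2}) and $\mathrm{dist}(x,\Gamma_0)\ge cr$ on such a ball, which gives $\mu^{D\!N}\ge c\,\beta^{\frac{\kappa_0^2+5\kappa_0+5}{(\kappa_0+1)(\kappa_0+2)(\kappa_0+4)}}$; since $\frac{\kappa_0^2+5\kappa_0+5}{(\kappa_0+1)(\kappa_0+2)(\kappa_0+4)}>\frac{1}{\kappa_0+2}$, this far exceeds $\Theta_{D\!N}\beta^{1/(\kappa_0+2)}$. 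For the remaining balls one necessarily has $y_\ell\in\Gamma_0$, and \eqref{AE-DN} applies directly with the uniform constant. Putting the pieces together yields $\lambda^{D\!N}(\beta\A,\Omega)\ge\Theta_{D\!N}\beta^{\frac{1}{\kappa_0+2}}-C\beta^{\frac{\kappa_0+3}{(\kappa_0+2)(\kappa_0+4)}}$.

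The single point demanding care is the uniformity assertion invoked in both halves: one must trace through Theorem~\ref{thm-local-4} (hence Theorem~\ref{thm-p7}, Lemma~\ref{lemma-p4}, and the tiling in Lemma~\ref{lemma-p2}) to confirm that its constant depends only on $\gamma$, $\Omega$, $\kappa_0$, $\|\B\|_{C^{\kappa_0+1}(\overline{\Omega})}$, $\sigma(y)$ and $\tau(y)$, with $\sigma(y)>0$ at every point of $\Gamma_0$ and with $\tau(y)=0$ causing no degeneration. Because $\Gamma_0$ is finite, no continuity-in-$y$ estimate for $\lambda^{D\!N}(\A_y,\mathbb{H}_{n(y)})$ — of the kind needed in the proof of Theorem~\ref{thm-mup1} — is required here, which is exactly what makes the discrete-wells case lighter than the general situation of Theorem~\ref{main-thm-gL1}.
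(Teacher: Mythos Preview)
Your proposal is correct and follows essentially the same route as the paper's own proof, which reads in full: ``This follows from the proof of Theorem~\ref{main-thm-gL1}. The assumption that $\Gamma_0$ is finite ensures that the estimate \eqref{AE-DN} holds uniformly for $y\in \Gamma_0$.'' You supply considerably more detail than the paper does---in particular the observation that finiteness of $\Gamma_0$ forces $\kappa_0\ge 1$ (hence $\sigma(y)>0$ automatically at each well), and the explicit remark that the degenerate case $\tau(y)=0$ falls under the cube-tiling branch of Lemma~\ref{lemma-p2} and so causes no blow-up---points the paper leaves implicit in both Theorems~\ref{thm-d-1} and~\ref{thm-d3}.
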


\begin{proof}

This follows from the proof of Theorem \ref{main-thm-gL1}.
The assumption that $\Gamma_0$ is finite ensures that  the estimate \eqref{AE-DN}
holds uniformly for $y\in \Gamma_0$.
\end{proof}


\subsection{The first-order vanishing for $d=2$}\label{sub-order}

Let $d=2$ and  $B_{12} =\partial_1 A_2 -\partial_2 A_1$.
Suppose $\kappa_*=1$. It follows that  there exists $c_0>0$ such that 
\begin{equation}\label{d2-1}
|B_{12}(x) | +|\nabla B_{12} (x)|\ge c_0
\end{equation}
for any $x\in \overline{\Omega}$. 

\begin{thm}\label{thm-d2-1}
Suppose $\A\in C^2(\R^2; \R^2)$.
Let $\Omega$ be a bounded $C^{1, 1}$ domain in $\R^2$. Suppose   $\kappa_*=1$.
Also assume that if $y \in \partial\Omega$ and $B_{12}(y)=0$, then
\begin{equation}\label{d2-2}
|\langle \nabla B_{12}(y), T (y)\rangle|\ge c_0,
\end{equation}
where $T(y)$ is a unit tangent vector to $\partial\Omega$ at $y$.
Then
\begin{equation}\label{d2-2a}
\aligned
\lambda^D (\beta\A, \Omega) & =\Theta_D \beta^{\frac23} + O(\beta^{\frac{8}{15}}),\\
\lambda^N (\beta \A,  \Omega) & =\Theta_N \beta^{\frac23} + O(\beta^{\frac{8}{15}}),\\
\endaligned
\end{equation}
for $\beta$ large.
\end{thm}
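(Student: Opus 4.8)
The plan is to check that, when $d=2$ and $\kappa_*=1$, all the hypotheses of Theorem \ref{main-thm-g1} are satisfied; then \eqref{d2-2a} follows immediately, since for $\kappa_*=1$ one has $\tfrac{2}{\kappa_*+2}=\tfrac23$ and $\tfrac{1}{\kappa_*+2}+\tfrac{1}{\kappa_*+4}=\tfrac13+\tfrac15=\tfrac{8}{15}$.

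First I would identify the objects attached to the case $\kappa_*=1$. By \eqref{d2-1}, $\nabla B_{12}(x)\neq 0$ whenever $B_{12}(x)=0$ in $\overline{\Omega}$; hence $\kappa(x)=1$ precisely on the zero set of $B_{12}$, so the sets in \eqref{gamma} are $\Gamma_1=\{x\in\Omega:B_{12}(x)=0\}$ and $\Gamma_2=\{x\in\partial\Omega:B_{12}(x)=0\}$, and by the implicit function theorem $\Gamma_1$ is a $C^1$ curve. The Taylor polynomial $\P_y$ is linear, so by \eqref{sigma-1a} the invariant subspace $V_y$ is the line through the origin orthogonal to $\nabla B_{12}(y)$, and $\sigma(y)$ in \eqref{sigma-1} equals $|\nabla B_{12}(y)|$. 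Writing $\nabla B_{12}(y)=\langle\nabla B_{12}(y),T(y)\rangle\,T(y)+\langle\nabla B_{12}(y),n(y)\rangle\,n(y)$ for $y\in\partial\Omega$ and using that in $\R^2$ the unit vector spanning $V_y$ is the $90^\circ$ rotation of $\nabla B_{12}(y)/|\nabla B_{12}(y)|$, a one-line computation gives $\tau(y)=|\langle\nabla B_{12}(y),T(y)\rangle|/|\nabla B_{12}(y)|$ in \eqref{tau}. With these identifications the analytic hypotheses are immediate from \eqref{d2-1}, \eqref{d2-2} and compactness of $\overline{\Omega}$: condition \eqref{cond-1g} reads $\sigma(y)=|\nabla B_{12}(y)|\ge c_0$ on $\Gamma_*$, which is \eqref{d2-1}; condition \eqref{cond-2g} reads $\tau(y)\ge c_0/\|\nabla B_{12}\|_{L^\infty(\overline{\Omega})}$ on $\Gamma_2$, which follows from \eqref{d2-2}; and \eqref{cond-4g}, namely $|B_{12}(x)|\ge c\,\text{\rm dist}(x,\Gamma_*)$, holds because $B_{12}$ vanishes transversally along the compact set $\Gamma_*$ (Taylor's theorem near $\Gamma_*$, compactness away from it).

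The only real work is the two geometric hypotheses \eqref{cond-5g} and \eqref{cond-3g}, and this is where I expect the main obstacle. The key point is that \eqref{d2-2} says $B_{12}$ restricted to $\partial\Omega$ has nonvanishing derivative at each of its zeros, so $\Gamma_2$ is a finite set and, at each point of $\Gamma_2$, the zero curve $\Gamma=\Gamma_1\cup\Gamma_2$ meets $\partial\Omega$ at an angle bounded below in terms of $c_0$, $\|\B\|_{C^2(\overline{\Omega})}$ and the $C^{1,1}$ character of $\Omega$ — indeed the tangent to $\Gamma$ is orthogonal to $\nabla B_{12}$, which by \eqref{d2-2} is not orthogonal to $T(y)$. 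For \eqref{cond-5g}: if $y\in\Gamma_1$ lies in a fixed neighbourhood of $\partial\Omega$, then it sits on one of the finitely many arcs of $\Gamma$ emanating transversally from $\Gamma_2$, and along such an arc $\text{\rm dist}(y,\partial\Omega)$ is comparable to the arc-length from the endpoint in $\Gamma_2$, which dominates $\text{\rm dist}(y,\Gamma_2)$; for $y$ bounded away from $\partial\Omega$ the inequality is trivial. (If $\Gamma_2=\emptyset$, then $\Gamma_1$ is compact and at positive distance from $\partial\Omega$, the boundary terms disappear from $\Theta_D,\Theta_N$, and \eqref{cond-5g} plays no role.) For \eqref{cond-3g}: given $y_0\in\overline{\Gamma}_1\cap\partial\Omega\subset\Gamma_2$ and small $r$, following the arc of $\Gamma$ through $y_0$ into $\Omega$ for arc-length of order $r$ produces $y_r\in\Gamma_1$ with $|y_r-y_0|\le Cr$ and $\text{\rm dist}(y_r,\partial\Omega)\ge cr$, so $\b(y_r,cr)\subset\Omega$. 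Once all of \eqref{cond-1g}, \eqref{cond-2g}, \eqref{cond-3g}, \eqref{cond-4g}, \eqref{cond-5g} are verified, Theorem \ref{main-thm-g1} yields \eqref{d2-2a} for both the Dirichlet and Neumann ground state energies.
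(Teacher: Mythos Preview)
Your proposal is correct and follows essentially the same approach as the paper: both arguments reduce the theorem to verifying the hypotheses of Theorem~\ref{main-thm-g1}, identifying $V_y$, $\sigma(y)$, and the transversality of $\Gamma_*$ with $\partial\Omega$ from \eqref{d2-1}--\eqref{d2-2}. You are in fact more thorough than the paper, which checks \eqref{cond-1g} and \eqref{cond-4g} explicitly but leaves the verification of \eqref{cond-2g}, \eqref{cond-3g}, and \eqref{cond-5g} to the reader; your explicit computation of $\tau(y)=|\langle\nabla B_{12}(y),T(y)\rangle|/|\nabla B_{12}(y)|$ and your treatment of the geometric conditions via the transversal arcs emanating from the finite set $\Gamma_2$ are exactly what is needed.
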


\begin{proof}
Let $y\in \Gamma_* =\Gamma_1\cup \Gamma_2$.
Then $B_{12} (y)=0$.
By \eqref{d2-1}, we have $  |\nabla B_{12}(y)|\ge c_0$.
By the Implicit Function Theorem, it follows that $\Gamma_*$ is the union of a  finite number non-intersecting $C^1$ curves in $\overline{\Omega}$.
Moreover, 
\begin{equation}\label{d2-3}
|B_{12} (x)|\ge c \, \text{\rm dist}(x, \Gamma_*)
\end{equation}
for any $x\in \overline{\Omega}$.
Furthermore, the condition \eqref{d2-2} ensures that  if one of these curves intersects with $\partial\Omega$, 
they must  interest at a non-zero angle. In particular, $\Gamma_2$ is a finite set.

Next, note that  the first-order Taylor polynomial for $ B_{12}(x+y)$ at $0$ is 
$P(x)=\langle \nabla B_{12} (y), x \rangle$.
Its invariant subspace  is given by 
$$
V_y=\{ x\in \R^d: \langle x, \nabla B_{12}(y )\rangle =0  \}.
$$
Moreover,
$$
\sigma (y)=\min_{{\substack{ x\in V^\perp\\ |x|=1}}}
|P(x)|^{\frac12}= |\nabla B_{12} (y)|^{\frac12}\ge c_0^{\frac12}>0.
$$
Hence, $\B$ satisfies the assumptions in Theorem \ref{main-thm-g1}.
Consequently, \eqref{d2-2a}  follows from \eqref{AE-DN1}.
\end{proof}

\begin{remark}
{\rm 
The asymptotic expansion for $\lambda^N(\beta \A, \Omega)$ in \eqref{d2-2a} was established in \cite{Miqueu-2018},
while the formula for $\lambda^N(\beta \A, \Omega)$ was obtained earlier
in \cite{Helffer-1996}
under the additional assumption $\Gamma_2=\emptyset$.
As mentioned in Introduction, \cite{Helffer-1996} also treated the case $d\ge 3$, assuming $\Gamma_2=\emptyset $ and
$\Gamma_1$ is a submanifold. 
}
\end{remark}

\begin{remark}\label{re-d2}
{\rm
The argument used in the proof of Theorem \ref{main-thm-g1} can be used to treat the case where $\kappa_*=1$ and  $\Gamma_*=\partial\Omega$; i.e., $B_{12}(y)=0$ and $\nabla B_{12} (y)\neq 0$ for $y\in \partial\Omega$.
Indeed, as  in the proof of Theorem \ref{thm-d2-1}, for $y\in \Gamma_*$, 
the invariant subspace for the first-order Taylor polynomial for $B_{12}(x+y )$ is given by
$V=\{ x\in \R^2: \langle x, \nabla B_{12}(y) \rangle =0 \}$.
It follows that \
 $\sigma (y)=|\nabla B_{12}(y)|^{1/2}\ge c_0>$ and  that $n(y)=\pm \nabla B_{12}(y)/ |\nabla B_{12}(y)|$.
Thus,  the estimates in \eqref{AE-D} and \eqref{AE-N}  hold uniformly for $y \in \partial\Omega$.
As a result, the  asymptotic expansions in \eqref{d2-2a} continue to hold in the case $\kappa_*=1$ and $\Gamma_*=\partial\Omega$.
}
\end{remark}

\begin{thm}
Suppose $\A\in C^2(\R^2; \R^2)$.
Let $\Omega$ be a bounded $C^{1, 1}$ domain in $\R^2$. Suppose  $\kappa_0=1$.
Also assume that  \eqref{d2-2} holds for any $y \in \Gamma_0$.
Then
\begin{equation}\label{d2-5d}
\aligned
\Theta_{D\!N} \beta^{\frac13} - C \beta^{\frac{4}{15}}
\le \lambda^{D\!N} (\beta\A, \Omega)
\le \Theta_{D\!N} \beta^{\frac13} + C \beta^{\frac{1}{5}}
 \endaligned
\end{equation}
for $\beta$ large.
\end{thm}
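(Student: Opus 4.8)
The plan is to verify that $\B$ satisfies all the hypotheses of Theorem~\ref{main-thm-gL1} with $\kappa_0=1$, and then to obtain \eqref{d2-5d} directly from \eqref{ae-2g1}, since at $\kappa_0=1$ the three exponents there are $\tfrac{1}{\kappa_0+2}=\tfrac13$, $\tfrac{\kappa_0+3}{(\kappa_0+2)(\kappa_0+4)}=\tfrac{4}{15}$, and $\tfrac{1}{\kappa_0+4}=\tfrac15$. Equivalently, once one knows $\Gamma_0$ is finite and that $|B_{12}(x)|\ge c\,\mathrm{dist}(x,\Gamma_0)$ on $\partial\Omega$, one may simply quote Theorem~\ref{thm-d3}; I will set things up so that either route works.

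First, from $\kappa_0=1$ and the compactness of $\partial\Omega$ one gets $c_1>0$ with $|B_{12}(y)|+|\nabla B_{12}(y)|\ge c_1$ for all $y\in\partial\Omega$, and $\Gamma_0=\{y\in\partial\Omega:\ B_{12}(y)=0\}$, with $\nabla B_{12}(y)\neq0$ there. For $y\in\Gamma_0$ the first-order Taylor polynomial of $B_{12}(\cdot+y)$ at $0$ is $P(x)=\langle\nabla B_{12}(y),x\rangle$, so by Proposition~\ref{prop-1} its invariant subspace is the line $V_y=\{x\in\R^2:\ \langle x,\nabla B_{12}(y)\rangle=0\}$. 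In $d=2$ the only index pair is $(1,2)$ and $|\alpha|=\kappa_0-1=0$, hence the quantity in \eqref{sigma-1} is $\sigma(y)=|\nabla B_{12}(y)|\ge c_1$, which is \eqref{cond-1g}. Writing $\nabla B_{12}(y)=a\,T(y)+b\,n(y)$ with $a=\langle\nabla B_{12}(y),T(y)\rangle$, assumption \eqref{d2-2} gives $|a|\ge c_0$; a unit vector spanning $V_y$ is $v_0=\bigl(b\,T(y)-a\,n(y)\bigr)/|\nabla B_{12}(y)|$, so $\tau(y)=|\langle v_0,n(y)\rangle|=|a|/|\nabla B_{12}(y)|\ge c_0/\|\nabla B_{12}\|_{L^\infty(\overline\Omega)}>0$, which is \eqref{cond-2g}. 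In particular $\lambda^{D\!N}(\A_y,\mathbb{H}_{n(y)})$ is well-defined and the constants in \eqref{AE-DN} are bounded uniformly for $y\in\Gamma_0$.

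It remains to check the distance estimate \eqref{cond-4g1}, i.e.\ $|B_{12}(x)|\ge c\,\mathrm{dist}(x,\Gamma_0)$ for $x\in\partial\Omega$. Parametrizing $\partial\Omega$ by arclength $\gamma$, the function $s\mapsto B_{12}(\gamma(s))$ has derivative $\langle\nabla B_{12}(\gamma(s)),T(\gamma(s))\rangle$, which by \eqref{d2-2} and continuity stays $\ge c_0/2$ in absolute value on a neighborhood of each point of $\Gamma_0$; hence every point of $\Gamma_0$ is isolated, so $\Gamma_0$ is finite, and along $\partial\Omega$ near $\Gamma_0$ one has $|B_{12}(x)|\gtrsim\mathrm{dist}(x,\Gamma_0)$. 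Away from $\Gamma_0$, $|B_{12}|$ is bounded below by compactness and the definition of $\Gamma_0$, so \eqref{cond-4g1} holds on all of $\partial\Omega$. With \eqref{cond-1g}, \eqref{cond-2g}, \eqref{cond-4g1} verified, Theorem~\ref{main-thm-gL1} (or Theorem~\ref{thm-d3}, using that $\Gamma_0$ is finite) applies and gives \eqref{d2-5d}.

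The only step carrying any content is the distance bound \eqref{cond-4g1}, where the full strength of \eqref{d2-2} — transversality of the level set $\{B_{12}=0\}$ to $\partial\Omega$, i.e.\ nonvanishing of the derivative of $B_{12}$ \emph{along the boundary curve}, rather than merely $\nabla B_{12}\neq0$ in $\R^2$ — is what is used, and one must be careful that the relevant nondegeneracy is of $B_{12}|_{\partial\Omega}$. The verification of the invariant-subspace conditions \eqref{cond-1g}--\eqref{cond-2g} is immediate in two dimensions, and the claimed exponents are simply the values of those in \eqref{ae-2g1} at $\kappa_0=1$.
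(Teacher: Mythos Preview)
Your proof is correct and follows the same route as the paper's: verify the hypotheses of Theorem~\ref{main-thm-gL1} (equivalently Theorem~\ref{thm-d3}) at $\kappa_0=1$ and read off \eqref{d2-5d}. You are actually more explicit than the paper, which checks only the distance bound \eqref{cond-4g1} (via the Implicit Function Theorem rather than your arclength parametrization) and then invokes Theorem~\ref{main-thm-gL1} without separately re-verifying \eqref{cond-1g}--\eqref{cond-2g}; your direct computation of $\sigma(y)$ and $\tau(y)$ fills in what the paper leaves implicit from the parallel argument in Theorem~\ref{thm-d2-1}.
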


\begin{proof}
Recall that $\Gamma_0 =\{ y\in \partial\Omega: B_{12}(y)=0 \}$.
It follows from the condition \eqref{d2-2} that $\Gamma_0$ is a finite set.
Moreover, if $B_{12}(\gamma(t))=0 $ for some curve $\gamma (t)$ in $\R^2$ such that $\gamma (0)=y\in \Gamma_0$,
then $ \langle\nabla B_{12} (y), \gamma^\prime (0)  \rangle =0$.
In view of \eqref{d2-3}, $\gamma^\prime (0)$ is not tangential to $\partial\Omega$ at $y$.
As a result, we deduce  by the Implicit Function Theorem that
$$
|B_{12} (x)| \ge c\, \text{\rm dist}(x, \Gamma_0)
$$
for any $x\in \partial\Omega$. Consequently, the inequalities in \eqref{d2-5d} follows from Theorem \ref{main-thm-gL1}
with $\kappa_0=1$.
\end{proof}

\begin{remark}
{\rm
The  inequalities in \eqref{d2-5d} continue to hold in the case $\kappa_0=1$ and $\Gamma_0=\partial\Omega$.
}
\end{remark}



 \bibliographystyle{amsplain}
 
\bibliography{S2025-1.bbl}

\bigskip

\begin{flushleft}

Zhongwei Shen,
Department of Mathematics,
University of Kentucky,
Lexington, Kentucky 40506,
USA.
E-mail: zshen2@uky.edu
\end{flushleft}

\bigskip

\end{document}